\let\C\undefined
\numberwithin{equation}{section}
\newtheorem{proposition}{Proposition}[section]
\newtheorem{theorem}[proposition]{Theorem}
\newtheorem{lemma}[proposition]{Lemma}
\theoremstyle{definition}
\newtheorem{definition}[proposition]{Definition}
\newtheorem{remark}[proposition]{Remark}
\newcommand{\thprime}{\texorpdfstring{$\smash{'}$}{'}}
\newcommand{\thsecond}{\texorpdfstring{$\smash{''}$}{''}}
\theoremstyle{plain}
\newenvironment{manualtheorem}[1]{%
  \manualtheoreminner
}{\endmanualtheoreminner}
\crefname{manualtheoreminner}{Theorem}{Theorems}
\Crefname{manualtheoreminner}{Theorem}{Theorems}
\DeclareRobustCommand\widecheck[1]{{\mathpalette\@widecheck{#1}}}
\def\@widecheck#1#2{%
    \setbox\z@\hbox{\m@th$#1#2$}%
    \setbox\tw@\hbox{\m@th$#1%
       \widehat{%
          \vrule\@width\z@\@height\ht\z@
          \vrule\@height\z@\@width\wd\z@}$}%
    \dp\tw@-\ht\z@
    \@tempdima\ht\z@ \advance\@tempdima2\ht\tw@ \divide\@tempdima\thr@@
    \setbox\tw@\hbox{%
       \raise\@tempdima\hbox{\scalebox{1}[-1]{\lower\@tempdima\box
\tw@}}}%
    {\ooalign{\box\tw@ \cr \box\z@}}}
\let\save@mathaccent\mathaccent
\newcommand*\if@single[3]{%
  \setbox0\hbox{${\mathaccent"0362{#1}}^H$}%
  \setbox2\hbox{${\mathaccent"0362{\kern0pt#1}}^H$}%
  \ifdim\ht0=\ht2 #3\else #2\fi
  }
\newcommand*\rel@kern[1]{\kern#1\dimexpr\macc@kerna}
\newcommand*\widebar[1]{\@ifnextchar^{{\wide@bar{#1}{0}}}{\wide@bar{#1}{1}}}
\newcommand*\wide@bar[2]{\if@single{#1}{\wide@bar@{#1}{#2}{1}}{\wide@bar@{#1}{#2}{2}}}
\newcommand*\wide@bar@[3]{%
  \begingroup
  \def\mathaccent##1##2{%
    \let\mathaccent\save@mathaccent
    \if#32 \let\macc@nucleus\first@char \fi
    \setbox\z@\hbox{$\macc@style{\macc@nucleus}_{}$}%
    \setbox\tw@\hbox{$\macc@style{\macc@nucleus}{}_{}$}%
    \dimen@\wd\tw@
    \advance\dimen@-\wd\z@
    \divide\dimen@ 3
    \@tempdima\wd\tw@
    \advance\@tempdima-\scriptspace
    \divide\@tempdima 10
    \advance\dimen@-\@tempdima
    \ifdim\dimen@>\z@ \dimen@0pt\fi
    \rel@kern{0.6}\kern-\dimen@
    \if#31
      \overline{\rel@kern{-0.6}\kern\dimen@\macc@nucleus\rel@kern{0.4}\kern\dimen@}%
      \advance\dimen@0.4\dimexpr\macc@kerna
      \let\final@kern#2%
      \ifdim\dimen@<\z@ \let\final@kern1\fi
      \if\final@kern1 \kern-\dimen@\fi
    \else
      \overline{\rel@kern{-0.6}\kern\dimen@#1}%
    \fi
  }%
  \macc@depth\@ne
  \let\math@bgroup\@empty \let\math@egroup\macc@set@skewchar
  \mathsurround\z@ \frozen@everymath{\mathgroup\macc@group\relax}%
  \macc@set@skewchar\relax
  \let\mathaccentV\macc@nested@a
  \if#31
    \macc@nested@a\relax111{#1}%
  \else
    \def\gobble@till@marker##1\endmarker{}%
    \futurelet\first@char\gobble@till@marker#1\endmarker
    \ifcat\noexpand\first@char A\else
      \def\first@char{}%
    \fi
    \macc@nested@a\relax111{\first@char}%
  \fi
  \endgroup
}
\newcommand{\defeq}{\coloneqq}
\newcommand{\Nset}{\mathbb{N}}
\newcommand{\Zset}{\mathbb{Z}}
\newcommand{\Rset}{\mathbb{R}}
\newcommand{\Sset}{\mathbb{S}}
\newcommand{\Qset}{\mathbb{Q}}
\newcommand{\Bset}{\mathbb{B}}
\newcommand{\dif}{\,\mathrm{d}}
\newcommand{\compose}{\,\circ\,}
\newcommand{\manifold}[1]{\mathcal{#1}}
\newcommand{\lifting}[1]{\smash{\widetilde{#1}}}
\DeclarePairedDelimiter{\brk}{(}{)}
\DeclarePairedDelimiter{\abs}{\lvert}{\rvert}
\DeclarePairedDelimiter{\norm}{\lVert}{\rVert}
\DeclarePairedDelimiter{\seminorm}{\lvert}{\rvert}
\DeclarePairedDelimiter{\floor}{\lfloor}{\rfloor}
\DeclarePairedDelimiterX{\intvc}[2]{[}{]}{#1,#2}
\DeclarePairedDelimiterX{\intvl}[2]{(}{]}{#1,#2}
\DeclarePairedDelimiterX{\intvr}[2]{[}{)}{#1,#2}
\DeclarePairedDelimiterX{\intvo}[2]{(}{)}{#1,#2}
\DeclarePairedDelimiterX{\setcond}[2]{\{}{\}}{#1 \,\delimsize\vert\, #2}
\newcommand{\restr}[1]{\vert_{#1}}
\newcommand{\Deriv}{\mathrm{D}}
\newcommand{\meanosc}{\mathrm{MO}}
\DeclareMathOperator{\id}{id}
\newcommand\stSymbol[1][]{%
\nonscript\;#1\vert
\allowbreak
\nonscript\;
\mathopen{}}
\DeclarePairedDelimiterX\set[1]\{\}{%
\renewcommand\st{\stSymbol[\delimsize]}
#1
}
\providecommand{\st}{\stSymbol}
\DeclareMathOperator{\dist}{dist}
\DeclareMathOperator{\supp}{supp}
\DeclareMathOperator{\diam}{diam}
\DeclareMathOperator{\edge}{edge}
\DeclareMathOperator{\tr}{tr}
\renewcommand{\PrintDOI}[1]{%
  \href{http://dx.doi.org/#1}{doi:#1}%
}
\title
{%
The extension of traces for Sobolev mappings between manifolds%
}
\author{Jean Van Schaftingen}
\keywords{Extension of traces in Sobolev spaces;
trace theory;
Sobolev-Slobodecki\u{\i} spaces; linear estimates; finite homotopy groups; approximation of Sobolev maps by smooth maps}
\subjclass[2020]{58D15 (46E35, 46T10, 58C25, 58J32)}
\newcommand{\sectref}[1]{\hyperref[#1]{\S \ref*{#1}}}
\begin{document}

\address{
Universit\'e catholique de Louvain, Institut de Recherche en Math\'ematique et Physique, Chemin du Cyclotron 2 bte L7.01.01, 1348 Louvain-la-Neuve, Belgium}

\email{Jean.VanSchaftingen@UCLouvain.be}

\begin{abstract}
The compact Riemannian manifolds $\mathcal{M}$ and $\mathcal{N}$
for which the trace operator from the first-order Sobolev space of mappings $\smash{\dot{W}}^{1, p} (\mathcal{M}, \mathcal{N})$ to
the fractional Sobolev-Slobodecki\u{\i} space $\smash{\smash{\dot{W}}^{1 - 1/p, p}} (\partial \mathcal{M}, \mathcal{N})$ is surjective when $1 < p < \dim \mathcal{M}$ are characterised.
The traces are extended using a new construction which can be carried out assuming the absence of the known topological and analytical obstructions.
When $p \ge \dim \mathcal{M}$ the same construction provides a Sobolev extension with linear estimates for maps that have a continuous extension, provided that there are no known analytical obstructions to such a control.
\end{abstract}

\thanks{The author was supported by the Mandat d'Impulsion Scientifique F.4523.17, ``Topological singularities of Sobolev maps'' and by the Projet de Recherche T.0229.21 ``Singular Harmonic Maps and Asymptotics of Ginzburg--Landau Relaxations'' of the Fonds de la Recherche Scientifique--FNRS.
The author acknowledges the hospitality of the Mathematical Institute of the University of Oxford where this work was initiated}

\maketitle

\setcounter{tocdepth}{1}
\tableofcontents
\setcounter{tocdepth}{5}

\section{Introduction}

\subsection{Surjectivity of the trace}
According to Gagliardo's classical trace theory in linear Sobolev spaces \cite{Gagliardo_1957}, given an \(m\)-dimensional Riemannian manifold \(\smash{\manifold{M}}\) with boundary \(\partial \manifold{M}\) satisfying reasonable assumptions, including \(\manifold{M} = \smash{\widebar{\Rset}}^m_+ \defeq \Rset^{m - 1} \times \intvr{0}{\infty}\) and \(\partial \manifold{M}\) compact and connected, for each \(p \in \intvo{1}{\infty}\) there is a well-defined surjective trace operator \(\tr_{\partial \manifold{M}} \colon \smash{\dot{W}}^{1, p} \brk{\manifold{M}, \Rset}\to \smash{\dot{W}}^{1-1/p, p} (\partial \manifold{M}, \Rset)\) which coincides with the restriction to the boundary \(\partial \manifold{M}\) on the subset of continuous functions \(\smash{\dot{W}}^{1, p} \brk{\manifold{M}, \Rset} \cap C \brk{\manifold{M}, \Rset}\), with the \emph{first-order homogeneous Sobolev space} being defined as
\begin{equation*} 
 \smash{\dot{W}}^{1, p} \brk{\manifold{M}, \Rset}
 \defeq \set[\Big]{U \colon \manifold{M} \to \Rset \st U \text{ is weakly differentiable and }
 \int_{\manifold{M}} \abs{\Deriv U}^p < \infty}
\end{equation*}
and the \emph{fractional homogeneous Sobolev space} \(\smash{\dot{W}}^{s, p} \brk{\manifold{M}', \Rset}\) being defined for \(s \in \intvo{0}{1}\) and an \((m-1)\)-dimensional Riemannian manifold \(\manifold{M}'\) as
\begin{multline*}
 \smash{\dot{W}}^{s, p} \brk{\manifold{M}', \Rset}
  \defeq \biggl\{ u \colon \manifold{M}' \to \Rset \st[\bigg] u \text{ is Borel-measurable} \\[-.5em] \text{and}\smashoperator[r]{\iint_{\manifold{M}' \times \manifold{M}'}} \frac{\abs{u (y) - u (z)}^p}{d\brk{y, z}^{m - 1 + sp}}\dif y \dif z < \infty\biggr\},
\end{multline*}
where \(d \colon \manifold{M}' \times \manifold{M}' \to \intvc{0}{\infty}\) is the geodesic distance on \(\manifold{M}'\) induced by its Riemannian metric.
Moreover, there is a constant \(C \in \intvo{0}{\infty}\) such that
for every function \(u \in \smash{\dot{W}}^{1-1/p, p} (\partial \manifold{M}, \Rset)\) there exists a function \(U \in \smash{\dot{W}}^{1, p} (\manifold{M}, \Rset)\) depending linearly on \(u\) such that \(\tr_{\partial \manifold{M}} U = u\)  and 
\begin{equation*}
  \int_{\manifold{M}} \abs{\Deriv U}^p
  \le C
  \smashoperator{\iint_{\partial \manifold{M} \times \partial \manifold{M}}} \frac{\abs{u (y) - u (z)}^p}{d\brk{y, z}^{p + m - 2}}\dif y \dif z,
\end{equation*}
that is, the trace operator \(\tr_{\partial \manifold{M}}\) has a  \emph{bounded linear left inverse}.

\medbreak 

Given a compact Riemannian manifold \(\manifold{N}\), which can be assumed
without loss of generality to be isometrically embedded into the Euclidean space \(\Rset^\nu\) for some \(\nu \in \Nset\) by Nash's isometric embedding theorem \cite{Nash_1956}, one can consider the homogeneous \emph{Sobolev spaces of mappings}
\begin{equation*}
  \smash{\dot{W}}^{1, p} \brk{\manifold{M}, \manifold{N}}
  \defeq \set[\big]{U \in \smash{\dot{W}}^{1, p} (\manifold{M}, \Rset^\nu) \st U \in \manifold{N} \text{ almost everywhere in \(\manifold{M}\)}}
\end{equation*}
and
\begin{multline*}
  \smash{\dot{W}}^{s, p} \brk{\manifold{M}', \manifold{N}}
  \defeq \biggl\{u \colon \manifold{M}' \to \manifold{N} \st[\bigg] u \text{ is  Borel-measurable}\\[-.5em]
  \text{and}
 \smashoperator[r]{\iint_{\manifold{M}' \times \manifold{M}'}} \frac{d \brk{u (y), u (z)}^p}{d\brk{y, z}^{m - 1 + sp}}\dif y \dif z < \infty\biggr\},
\end{multline*}
where \(d\colon \manifold{N} \times \manifold{N} \to \intvr{0}{\infty}\) in the numerator of the integrand is the geodesic distance on the target manifold \(\manifold{N}\).
Sobolev spaces of mappings appear naturally in calculus of variations and partial differential equations in many contexts, including the theory of harmonic maps \cite{Eells_Lemaire_1978}, physical models with non-linear order parameters \cite{Mermin1979}, Cosserat models in elasticity \citelist{\cite{Ericksen_Truesdell_1958}}, and the description of attitudes of a square or a cube in computer graphics \cite{Huang_Tong_Wei_Bao_2011}. 

As an immediate consequence of the trace theory in linear Sobolev spaces, the trace operator \(\tr_{\partial \manifold{M}}\) is \emph{well-defined} and \emph{continuous} from \(\smash{\dot{W}}^{1, p} \brk{\manifold{M}, \mathcal{N}}\) to \(\smash{\dot{W}}^{1 - 1/p, p} \brk{\partial \manifold{M}, \manifold{N}}\).
On the other hand, the \emph{surjectivity} of the trace is a much more delicate matter. 
Indeed, the classical linear trace theory merely yields an extension \(U \in \smash{\dot{W}}^{1, p} \brk{\manifold{M}, \Rset^\nu}\) and one cannot hope that,  by some miraculous coincidence, the function \(U\) would satisfy the constraint to be in the target manifold \(\manifold{N}\) almost everywhere in its domain \(\manifold{M}\).

\medbreak

In the \emph{supercritical} integrability case \(p > m = \dim \manifold{M}\), the \emph{essential continuity} of maps in non-linear Sobolev spaces \(\smash{\dot{W}}^{1, p} \brk{\manifold{M}, \mathcal{N}}\) and \(\smash{\dot{W}}^{1 - 1/p, p} \brk{\partial \manifold{M}, \manifold{N}}\), which follows from the Morrey-Sobolev embedding and its fractional counterpart, can be used to prove that the extension of traces of Sobolev mappings encounters exactly the \emph{same obstructions} as the extension of continuous mappings: each map in \(\smash{\dot{W}}^{1 - 1/p, p} \brk{\partial \manifold{M}, \manifold{N}}\) is the trace of a map in \(\smash{\dot{W}}^{1, p} \brk{\manifold{M}, \manifold{N}}\) if and only if every continuous map from \(\partial \manifold{M}\) to \(\manifold{N}\) is the restriction of a continuous map from \(\smash{\manifold{M}}\) to \(\manifold{N}\) \cite{Bethuel_Demengel_1995}*{Th.\thinspace{}1}.

In the \emph{critical} integrability case \(p = m = \dim \manifold{M}\), although maps in the spaces \(\smash{\dot{W}}^{1, m} \brk{\manifold{M}, \manifold{N}}\) and \(\smash{\smash{\dot{W}}^{1 - 1/m, m} \brk{\partial \manifold{M}, \manifold{N}}}\) can no longer be identified with continuous ones, the same characterisation remains valid \cite{Bethuel_Demengel_1995}*{Th.\thinspace{}2}; the essential additional ingredient, which goes back to Schoen and Uhlenbeck's approximation of Sobolev mappings \citelist{\cite{Schoen_Uhlenbeck_1982}*{\S 3}\cite{Schoen_Uhlenbeck_1983}*{\S 4}}, is the \emph{vanishing mean oscillation} (VMO) property of the critical Sobolev spaces \(\smash{\smash{\dot{W}}^{1, m} \brk{\manifold{M}, \manifold{N}}}\) and \(\smash{\smash{\dot{W}}^{1-1/m, m} \brk{\partial \manifold{M}, \manifold{N}}}\), which is weaker than continuity, but still strong enough to preserve the essential features of the homotopy and obstruction theories of continuous maps as developed by Brezis and Nirenberg \citelist{\cite{Brezis_Nirenberg_1995}\cite{Brezis_Nirenberg_1996}} (see also \cite{Abbondandolo_1996}).

\medbreak

In the \emph{subcritical} integrability régime \(1 < p < m= \dim \manifold{M}\),  the extension of traces encounters additional \emph{topological and analytical obstructions}. 

First a \emph{local topological obstruction} arises when \(p \ge 2\).
If
\begin{equation}
\label{eq_maLaeph9OongieveeSh9wahP}
\smash{\dot{W}}^{1 - 1/p, p} \brk{\partial \manifold{M}, \manifold{N}}
\subseteq
 \tr_{\partial \manifold{M}}
 \brk{\smash{\dot{W}}^{1, p} \brk{\manifold{M}, \manifold{N}}}
\end{equation}
and if \(2 \le p < m\),
then the homotopy group \(\pi_{\floor{p - 1}} \brk{\manifold{N}}\), where \(\floor{t} \in \Zset\) denotes the integer part of \(t \in \Rset\), should be \emph{trivial}, as proved by Hardt and Lin \cite{Hardt_Lin_1987}*{\S 6.3} and by Bethuel and Demengel \cite{Bethuel_Demengel_1995}*{Th.\thinspace{}4}. 
The archetypal non-extensible map has the form
\begin{equation}
\label{eq_ongohghoom3eiG7baiW3choh}
u \brk{x}= f \brk{x'/\abs{x'}} 
\end{equation}
for \(\brk{x', x''}\) in a local chart domain of \(\smash{\Rset^{\floor{p}} \times \Rset^{m - 1 - \floor{p}}}\) containing the origin \(0\) and a smooth map \(f \in C^1 \brk{\smash{\Sset^{\floor{p - 1}}}, \manifold{N}}\) not homotopic to a constant; similar local topological obstructions appear for the strong approximation by smooth mappings of Sobolev maps in \(\smash{\dot{W}}^{1, p} \brk{\manifold{M}, \manifold{N}}\) \citelist{\cite{Schoen_Uhlenbeck_1983}*{\S 4}\cite{Bethuel_Zheng_1988}*{Th.\thinspace{}2}\cite{Bethuel_1991}*{Th.\thinspace{}A0}}, for the corresponding weak-bounded approximation when \(p\) is not an integer \cite{Bethuel_1991}*{Th.\thinspace{}3} and for the lifting of Sobolev maps in \(\smash{\smash{\dot{W}}^{s, p} \brk{\manifold{M}, \manifold{N}}}\) over a covering map when \(1 \le sp < 2\) \citelist{\cite{Bourgain_Brezis_Mironescu_2000}\cite{Bethuel_Chiron_2007}}.

The extension of traces also encounters a \emph{global topological obstruction},
identified in the works of Bethuel and Demengel \cite{Bethuel_Demengel_1995}*{Proof of Th.\thinspace{}5} and of Isobe \cite{Isobe_2003}.
In order to describe it, given a manifold \(\manifold{M}\) with boundary \(\partial \manifold{M}\) endowed with a triangulation, we denote by \(\manifold{M}^\ell\) for \(\ell \in \set{0, \dotsc, \dim \manifold{M}}\) the \(\ell\)-dimensional component of the triangulation and by
\(\partial \manifold{M}^{\ell}\) for \(\ell \in \set{0, \dotsc, \dim \manifold{M} -1}\) the union of those contained in \(\partial \manifold{M}\) so that \(\partial \manifold{M}^{\ell} = \manifold{M}^\ell \cap \partial \manifold{M}\).

The global topological obstruction then reads as follows: if
\eqref{eq_maLaeph9OongieveeSh9wahP} holds and if \(2 \le p < m\), then every continuous map from \(\smash{\partial \manifold{M}^{\floor{p - 1}}}\) to \(\manifold{N}\) should be the restriction of a continuous map from \(\smash{\manifold{M}}^{\floor{p - 1}}\) to \(\manifold{N}\);
a non-extensible Sobolev map can be obtained by starting from a smooth map \(f\colon \smash{\partial \manifold{M}^{\floor{p - 1}}} \to \manifold{N}\) which has no continuous extension to \(\smash{\manifold{M}}^{\floor{p - 1}}\) and extending it homogeneously to higher-dimensional faces of the triangulation of \(\partial \manifold{M}\).
The global obstruction does not occur if for example the first homotopy groups \(\pi_1\brk{\manifold{N}}, \dotsc, \pi_{\floor{p - 1}} \brk{\manifold{N}}\) are trivial.

The local and global obstructions can be bundled into a single equivalent necessary condition that every continuous map from \(\smash{\partial \manifold{M}^{\floor{p - 1}}}\) to \(\manifold{N}\) should be the restriction of a continuous map from \(\smash{\manifold{M}^{\floor{p}}}\) to \(\manifold{N}\).

Similar global obstructions have been identified for the strong and weak-bounded approximation by smooth mappings of Sobolev mappings and in the classification of their homotopy classes \citelist{\cite{Hang_Lin_2003_I}\cite{Hang_Lin_2003_II}}.

\medbreak 

The surjectivity of the trace also encounters an \emph{analytical obstruction}: if
\begin{equation}
\label{eq_die4SooJoL3koov8aiwuo4nu}
\overline{
C^1 \brk{\partial \manifold{M}, \manifold{N}}}^{\smash{\dot{W}}^{1 - 1/p, p} \brk{\partial \manifold{M}, \manifold{N}}}
\subseteq
 \tr_{\partial \manifold{M}}
 \brk{\smash{\dot{W}}^{1, p} \brk{\manifold{M}, \manifold{N}}}
\end{equation}
and if \(2 \le p < m\), then the first homotopy groups \(\pi_1 \brk{\manifold{N}}, \dotsc, \smash{\pi_{\floor{p - 1}}} \brk{\manifold{N}}\) should all be \emph{finite}, as obtained by Bethuel and Demengel for the circle \(\manifold{N} = \Sset^1\) \cite{Bethuel_Demengel_1995}*{Th.\thinspace{}6} and by Bethuel for a general target manifold \cite{Bethuel_2014} (see also \cite{Mironescu_VanSchaftingen_2021_AFST}).
An obstructing map can be obtained by packing together a sequence of smooth maps that have arbitrarily bad lower estimates on the Sobolev energy of their extension; thanks to scaling properties of the Sobolev energies the resulting map is in \(\smash{\dot{W}}^{1 - 1/p, p} \brk{\partial \manifold{M}, \manifold{N}}\) but has no extension in \(\smash{\dot{W}}^{1, p} \brk{\manifold{M}, \manifold{N}}\).
Such a map is a strong limit in \(\smash{\dot{W}}^{1 - 1/p, p} \brk{\partial \manifold{M}, \manifold{N}}\) of smooth maps and is smooth outside a single point; it has smooth extensions outside small neighbourhoods of the singular point, but the Sobolev energy of the extension blows up when the neighbourhood becomes arbitrarily small. 
The analytical obstruction can be seen as a consequence of the failure of linear estimates in Sobolev spaces on the extension of smooth maps combined through a non-linear uniform boundedness principle \cite{Monteil_VanSchaftingen_2019}, which originated for the  problem of weak-bounded approximation of Sobolev mappings \cite{Hang_Lin_2003_III}*{Th.\thinspace{}9.6}.
Similar analytical obstructions were exhibited for the lifting of fractional Sobolev mappings when the covering space is not compact \citelist{\cite{Bourgain_Brezis_Mironescu_2000}*{Th.\thinspace{}2}\cite{Bethuel_Chiron_2007}*{Prop.\thinspace{}2}} and for the weak-bounded approximation of Sobolev mappings in \(W^{1, 3} \brk{\Bset^4, \Sset^2}\) \cite{Bethuel_2020}.

A last \emph{critical analytical obstruction} was obtained by Mironescu and the author \cite{Mironescu_VanSchaftingen_2021_AFST}*{Th.\thinspace{}1.5 (b)} when the integrability exponent \(p \in \set{2, \dotsc, m - 1}\), with \(m = \dim \manifold{M}\), is an \emph{integer}, requiring then the \emph{triviality} of the homotopy group  \(\pi_{p - 1} (\manifold{N})\) for \eqref{eq_die4SooJoL3koov8aiwuo4nu}, with a construction somehow similar to the one for the analytical obstruction.
Although a topological obstruction was already known in this case, the corresponding non-extensible map was not a strong limit of smooth maps whereas the one coming from the analytical obstruction is such a limit, showing the strength of the triviality condition on \(\pi_{p - 1} \brk{\manifold{N}}\) when \(p\) is an integer.
A similar critical analytical obstruction appears for the lifting of maps in fractional Sobolev spaces \(W^{s, p} \brk{\manifold{M}, \manifold{N}}\) when \(sp = 1\) \cite{Mironescu_VanSchaftingen_2021_APDE}.

\medbreak

In the other direction, the trace operator was shown to be \emph{surjective}  from \( \smash{\dot{W}}^{1, p} \brk{\manifold{M}, \manifold{N}}\) to \(\smash{\smash{\dot{W}}^{1 - 1/p, p}} \brk{\partial \manifold{M}, \manifold{N}}\) when the first homotopy groups \(\pi_1\brk{\manifold{N}}, \dotsc, \smash{\pi_{\floor{p - 1}}} \brk{\manifold{N}}\) are all \emph{trivial} in Hardt and Lin's seminal work \cite{Hardt_Lin_1987}*{Th.\thinspace{}6.2} (see also \cite{Hardt_Kinderlehrer_Lin_1988}), with a method based on the existence of a singular retraction from the ambient Euclidean space \(\Rset^\nu\)  to the target manifold \(\manifold{N}\) which crucially relies on the triviality of these first homotopy groups.
Combined with the topological and analytical obstructions, this has provided a complete answer to the extension problem when the target manifold \(\manifold{N}\) is a sphere \(\Sset^n\) of any dimension \(n \in \Nset \setminus \set{0}\) (see \cite{Brezis_Mironescu_2021}*{\S 11.3.2}).
Similar methods have been used to prove the density of smooth maps in various Sobolev spaces of mappings \citelist{\cite{Bethuel_Zheng_1988}\cite{Riviere_2000}\cite{Hajlasz_1994}\cite{Bousquet_Ponce_VanSchaftingen_2014}}.
When \(p \ge 3\), the assumption of the triviality of the fundamental group \(\pi_1 \brk{\manifold{N}}\) could be relaxed into its finiteness thanks to the construction of liftings of fractional Sobolev mappings over a compact covering \cite{Mironescu_VanSchaftingen_2021_APDE} since the universal covering space \(\smash{\widetilde{\manifold{N}}}\) of the target manifold \(\manifold{N}\) is then compact \cite{Mironescu_VanSchaftingen_2021_AFST}.
Liftings can also be used systematically to construct extensions of traces when \(\manifold{N} = \Sset^1\) \cite{Brezis_Mironescu_2021}*{\S 11.1}.

\medbreak

In the present work we give a complete answer to the problem of the surjectivity of traces from \(\smash{\dot{W}}^{1, p} \brk{\manifold{M}, \manifold{N}}\) to \(\smash{\dot{W}}^{1 - 1/p, p} \brk{\partial \manifold{M}, \manifold{N}}\). We prove through a new construction of extensions that there is no other obstruction beyond those already known and presented in the previous paragraphs.

\medbreak

We first state our result in the case of the half-space \(\manifold{M} = \Rset^m_+ = \Rset^{m - 1} \times \intvo{0}{\infty}\).

\begin{theorem}
\label{theorem_extension_halfspace}
If \(1 < p < m\), then 
\[
 \tr_{\Rset^{m - 1}}
 \brk{\smash{\dot{W}}^{1, p} \brk{\smash{\Rset^m_+}, \manifold{N}}} = \smash{\smash{\dot{W}}^{1 - 1/p, p}} (\Rset^{m - 1}, \manifold{N})
\]
if and only if the homotopy groups \(\pi_1 \brk{\manifold{N}}, \dotsc, \smash{\pi_{\floor{p - 2}}} \brk{\manifold{N}}\) are finite and the homotopy group \(\smash{\pi_{\floor{p - 1}} \brk{\manifold{N}}}\) is trivial.\\
Moreover, there is then a constant \(C \in \intvo{0}{\infty}\) such that for every mapping \(u \in \smash{\smash{\dot{W}}^{1 - 1/p, p} \brk{\Rset^{m - 1}, \manifold{N}}}\), one can construct a mapping \(U \in \smash{\smash{\dot{W}}^{1, p}} (\Rset^m_+, \manifold{N})\) such that \(\smash{\tr_{\Rset^{m - 1}} U} = u\) and
\begin{equation}
\label{eq_koQuee7zeex0EithaePheXae}
  \int_{\Rset^{m}_+}
  \abs{\Deriv U}^p
  \le C 
  \smashoperator{
  \iint_{\Rset^{m - 1} \times \Rset^{m - 1}}}
  \frac{d \brk{u \brk{y}, u \brk{z}}^p}{\abs{y -  z}^{p + m - 2}} \dif y \dif z.
\end{equation}
\end{theorem}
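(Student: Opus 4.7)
In the half-space setting $\Rset^{m-1}$ is contractible and so the only known obstructions to the surjectivity of the trace are the \emph{local} topological and analytical ones; the \emph{necessity} of the stated homotopical conditions therefore follows by collecting the results recalled in the Introduction, namely the local topological obstruction of \cite{Hardt_Lin_1987} and \cite{Bethuel_Demengel_1995} giving triviality of $\pi_{\lfloor p-1\rfloor}(\manifold{N})$ when $p$ is not an integer, the critical analytical obstruction of \cite{Mironescu_VanSchaftingen_2021_AFST} giving the same triviality when $p$ is an integer, and the analytical obstruction of \cite{Bethuel_2014} giving finiteness of $\pi_k(\manifold{N})$ for $1 \le k \le \lfloor p-2\rfloor$. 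Once mere surjectivity is proved for the sufficiency part, the linear estimate \eqref{eq_koQuee7zeex0EithaePheXae} can be extracted through a nonlinear uniform-boundedness argument along the lines of \cite{Monteil_VanSchaftingen_2019}.

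For the \emph{sufficiency} direction the plan is to proceed by a multiscale, Whitney-adapted construction. First, to the trace datum $u \in \dot W^{1-1/p,p}(\Rset^{m-1},\manifold{N})$ I would associate a Gagliardo-type linear extension $V \in \dot W^{1,p}(\Rset^m_+,\Rset^\nu)$ (averaged values of $u$ at the boundary scale matching the altitude) together with a Whitney decomposition $\mathcal{W}$ of $\Rset^m_+$ by cubes $Q$ of side length comparable to $\dist(Q,\partial\Rset^m_+)$. A stopping-time argument driven by an adapted Gagliardo energy density of $u$ would split $\mathcal{W}$ into \emph{good} cubes, on which the oscillation of $V$ is small enough that the nearest-point retraction onto $\manifold{N}$ applies directly, and \emph{bad} cubes, whose cumulated local energy contribution is controlled by a definite chunk of the boundary Gagliardo seminorm of $u$.

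The backbone of the extension $U$ in the bad region is a skeletal construction on an adapted cellular complex $\Sigma \subset \Rset^m_+$. Starting from vertex values taken among good averages of $u$, I would build $U$ on the skeleta $\Sigma^0 \subset \Sigma^1 \subset \dotsb \subset \Sigma^{\lfloor p\rfloor}$ inductively. At each step $k \le \lfloor p\rfloor$, the boundary data on a $k$-cell represent a class in $\pi_{k-1}(\manifold{N})$; for $k < \lfloor p\rfloor$ the finiteness of $\pi_{k-1}(\manifold{N})$ permits a quantitative surgery that kills this class at a uniformly bounded Sobolev cost (the minimum being taken over the finitely many classes), while at $k=\lfloor p\rfloor$ the hypothesis $\pi_{\lfloor p-1\rfloor}(\manifold{N}) \simeq \set{0}$ ensures the existence of a controlled continuous filling, which is essential since the homogeneous extension is no longer in $\dot W^{1,p}$ at this critical dimension. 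Above $\Sigma^{\lfloor p\rfloor}$, the extension into the remaining higher-dimensional cells is performed by the classical smoothed homogeneous (radial) extension, which has finite $\dot W^{1,p}$-norm because the cell dimension then exceeds $p$. A transition layer around the good--bad interface finally glues the two regimes into a single map $U \in \dot W^{1,p}(\Rset^m_+,\manifold{N})$ satisfying $\tr_{\Rset^{m-1}} U = u$.

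The main obstacle I anticipate is ensuring that the resulting estimate is genuinely \emph{linear}, i.e.\ proportional to the boundary Gagliardo seminorm of $u$, rather than merely giving an a priori finite extension. This hinges on three delicate ingredients: a Fuglede-type selection of the cellular complex $\Sigma$, steering its skeleta away from exceptional sets while preserving the $\dot W^{1-1/p,p}$ control; a careful allocation scheme assigning each cell contribution to a definite, non-overlapping piece of the boundary energy, summable over Whitney scales; and the \emph{quantitative} exploitation of finiteness (rather than just non-emptiness) of the relevant homotopy classes, so that the cost of the surgery at each intermediate skeleton is bounded uniformly in the class and localized to the Gagliardo energy of the corresponding portion of the boundary. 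Dovetailing these three items with the triviality input at the critical dimension, and handling the book-keeping across scales consistently with the gluing in the transition layer, is what the new construction must deliver.
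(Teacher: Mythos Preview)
Your proposal captures the broad architecture, and your handling of necessity is correct. But the sufficiency argument has a real gap. You write that for $k < \lfloor p\rfloor$ ``the finiteness of $\pi_{k-1}(\manifold{N})$ permits a quantitative surgery that kills this class at a uniformly bounded Sobolev cost''; this is false. If the boundary datum on a $k$-cell represents a nontrivial element of $\pi_{k-1}(\manifold{N})$, no interior construction can extend it continuously into $\manifold{N}$ --- finiteness of a group does not trivialize its obstructions. Building $U$ from scratch skeleton-by-skeleton therefore stalls already at $k=2$, since $\pi_1(\manifold{N})$ is only assumed finite, not trivial.

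The paper's mechanism is entirely different. The continuous extension to the $\lfloor p\rfloor$-skeleton of the bad region is obtained not by filling cells but by a \emph{descending retraction} of that skeleton onto the good region, where $\Pi_{\manifold{N}}\circ V$ is already defined. For such a retraction to exist one must enlarge the bad cubes to a carefully structured family of \emph{singular cubes}; the key new idea (\cref{proposition_general_modification}) is an averaging argument over the $2^{m-1}$ horizontal shifts of each Whitney layer, producing a decay factor $2^{-(m-\ell)}$ per scale in the propagation of singular vertical faces, which is precisely what closes the linear estimate when $\ell=\lfloor p\rfloor < p$. Finiteness of $\pi_1,\dotsc,\pi_{\ell-1}$ enters only afterwards (\cref{lemma_cube_extension_lipschitz_homotopic}): given that a continuous extension to the $\ell$-skeleton already exists, finiteness allows one to replace it on each $j$-face, $j<\ell$, by a Lipschitz map \emph{in the same relative homotopy class}, so that the continuous extension to the next skeleton survives. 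Finally, for non-integer $p$ the triviality of $\pi_{\lfloor p-1\rfloor}(\manifold{N})$ is used not to fill $\lfloor p\rfloor$-cells as you suggest, but to guarantee the strong density of $R^1_{m-\lfloor p\rfloor-2}(\Rset^{m-1},\manifold{N})$ in the trace space, which is what makes the retraction construction available on a dense class of boundary data.
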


When \(1 < p < 2\), the necessary and sufficient condition of \cref{theorem_extension_halfspace} degenerates and the trace operator is surjective for any compact Riemannian target manifold \(\manifold{N}\);
similarly when \(2 \le p < 3\) the condition of \cref{theorem_extension_halfspace} is merely that the target manifold \(\manifold{N}\) should be simply-connected.

The estimate \eqref{eq_koQuee7zeex0EithaePheXae} is \emph{linear} in the sense that it relates the semi-norm of the extension to a multiple of the semi-norm of the boundary datum. 
In this case, it follows in fact from a non-linear uniform boundedness principle \cite{Monteil_VanSchaftingen_2019} that if every Sobolev map has a Sobolev extension, then it should be possible to control the extension linearly as in \eqref{eq_koQuee7zeex0EithaePheXae}.

\medbreak

The statement of \cref{theorem_extension_halfspace} extends readily to the case where \(\manifold{M} = \manifold{M}' \times \intvr{0}{1}\) and \(\partial \manifold{M} = \manifold{M}' \times \set{0} \simeq \manifold{M}'\).

\begin{manualtheorem}{\ref*{theorem_extension_halfspace}\texorpdfstring{\('\)}{'}}
\label{theorem_extension_collar}
Let \(\manifold{M}'\) be an \((m - 1)\)-dimensional compact Riemannian manifold.
If \(1 < p < m\),
then 
\[
 \tr_{\manifold{M}'} \brk{\smash{\dot{W}}^{1, p} \brk{\manifold{M}' \times \intvr{0}{1}, \manifold{N}}}
 = \smash{\dot{W}}^{1 - 1/p, p} \brk{\manifold{M}', \manifold{N}}
\]
if and only if the homotopy groups \(\pi_1 \brk{\manifold{N}}, \dotsc, \smash{\pi_{\floor{p - 2}}} \brk{\manifold{N}}\) are finite and the homotopy group \(\smash{\pi_{\floor{p - 1}} \brk{\manifold{N}}}\) is trivial.\\
Moreover, there is then a constant \(C \in \intvo{0}{\infty}\) such that  for every mapping \(u \in \smash{\smash{\dot{W}}^{1 - 1/p, p} \brk{\manifold{M}', \manifold{N}}}\), one can construct a mapping \(U \in \smash{\dot{W}^{1, p} \brk{\manifold{M}' \times \intvr{0}{1}, \manifold{N}}}\) such that \(\tr_{\manifold{M}'} U = u\) and 
\begin{equation}
\label{eq_jooFofo1au1aephae7sa0joh}
  \smashoperator{\int_{\manifold{M}' \times \intvr{0}{1}}}
  \abs{\Deriv U}^p
  \le C
  \smashoperator{
  \iint_{\manifold{M}' \times \manifold{M}'}}
  \frac{d \brk{u \brk{y}, u \brk{z}}^p}{d\brk{y, z}^{p + m - 2}} \dif y \dif z.
\end{equation}
\end{manualtheorem}

\Cref{theorem_extension_halfspace} gives in particular an extension to a collar neighbourhood \(\partial \manifold{M} \times \intvr{0}{1}\) of the boundary of any Riemannian manifold \(\manifold{M}\) with compact boundary \(\partial \manifold{M}\).

The estimate \eqref{eq_jooFofo1au1aephae7sa0joh} is still linear, although the non-linear uniform boundedness principle \cite{Monteil_VanSchaftingen_2019} does not apply in this case.

\medbreak

Finally, we have the following global counterpart of \cref{theorem_extension_halfspace} and \cref{theorem_extension_collar} for a compact Riemannian manifold \(\manifold{M}\) with boundary \(\partial \manifold{M} \ne \emptyset\).

\begin{manualtheorem}{\ref*{theorem_extension_halfspace}\texorpdfstring{\(''\)}{''}}
\label{theorem_extension_global}
Let \(\manifold{M}\) be an \(m\)-dimensional manifold with compact boundary \(\partial \manifold{M} \ne \emptyset\).
If \(1 < p < m\), then 
\[
\tr_{\partial \manifold{M}}\brk{
 \smash{\dot{W}^{1, p} \brk{\manifold{M}, \manifold{N}}}}
 = \smash{\dot{W}}^{1 - 1/p, p} \brk{\partial \manifold{M}, \manifold{N}}
\]
if and only if the homotopy groups \(\pi_1 \brk{\manifold{N}}, \dotsc, \penalty 0 \smash{\pi_{\floor{p - 2}} \brk{\manifold{N}}}\) are finite and every continuous map from \(\smash{\partial \manifold{M}^{\floor{p - 1}}}\) to \(\manifold{N}\) is the restriction of a continuous map from \(\smash{\manifold{M}^{\floor{p}}}\) to \(\manifold{N}\).\\
Moreover, there exists then a convex function \(\Theta \colon  \intvr{0}{\infty} \to \intvr{0}{\infty}\) such that \(\Theta \brk{0} = 0\) and such that for every mapping \(u \in \smash{\dot{W}^{1 - 1/p, p} \brk{\partial \manifold{M}, \manifold{N}}}\), one can construct a mapping \(U \in \smash{\dot{W}^{1, p} \brk{\manifold{M}, \manifold{N}}}\) such that \(\tr_{\partial \manifold{M}} U = u\) and
\begin{equation}
\label{eq_ho0SheiChathesi3fai4nei3}
  \int_{\manifold{M}}
  \abs{\Deriv U}^p
  \le  \Theta \brk[\bigg]{
  \smashoperator[r]{
  \iint_{\partial \manifold{M} \times \partial \manifold{M}}}
  \frac{d \brk{u \brk{y}, u \brk{z}}^p}{d\brk{y, z}^{p + m - 2}} \dif y \dif z}.
\end{equation}
\end{manualtheorem}

The condition of \cref{theorem_extension_global} implies that the homotopy group \(\smash{\pi_{\floor{p - 1}}\brk{\manifold{N}}}\) is trivial; in fact, every continuous map from \(\partial \manifold{M}^{\ell - 1}\) to \(\manifold{N}\) is the restriction of a continuous map from \(\smash{\manifold{M}^{\ell}}\) to \(\manifold{N}\) if and only if the homotopy group \(\pi_{\ell - 1} \brk{\manifold{N}}\) is trivial \emph{and} every continuous map from \(\smash{\partial \manifold{M}^{\ell - 1}}\) to \(\manifold{N}\) is the restriction of a continuous map from \(\manifold{M}^{\ell - 1}\) to \(\manifold{N}\).

\subsection{Characterisation of the range of the trace}
When the trace operator is not surjective for Sobolev mappings, it is natural to seek a characterisation of the image of the trace operator. 
In the simplest case of the half-space \(\manifold{M} = \smash{\widebar{\Rset}}^m_+\), we show that the traces are strong limits of smooth maps when \(p\) is not an integer and the first homotopy groups are finite.

\begin{theorem}
\label{theorem_characterization_halfspace}
If \(1 < p < m\), then
\[
 \operatorname{tr}_{\Rset^{m - 1}} \brk{\smash{\smash{\dot{W}}^{1, p} \brk{\Rset^m_+, \manifold{N}}}}
 =
 \overline{C^1 \brk{\Rset^{m - 1}, \manifold{N}}}^{\smash{\dot{W}}^{1 - 1/p, p} \brk{\Rset^{m - 1}, \manifold{N}}}
\]
if and only if \(\pi_1 \brk{\manifold{N}}, \dotsc, \smash{\pi_{\floor{p - 1}} \brk{\manifold{N}}}\) are finite and, when \(p \in \Nset\), \(\pi_{p - 1} \brk{\manifold{N}}\) is trivial.\\
Moreover, every map \(u \in\operatorname{tr}_{\Rset^{m - 1}} \brk{\smash{\smash{\dot{W}}^{1, p} \brk{\Rset^m_+, \manifold{N}}}}\)
can then be written as \(u = \tr_{\Rset^{m - 1}} U\) with
\(U \in \smash{\smash{\dot{W}}^{1, p} \brk{\Rset^m_+, \manifold{N}}}\) satisfying the estimate
\eqref{eq_koQuee7zeex0EithaePheXae}.
\end{theorem}

The finiteness of the first homotopy groups \(\pi_1 \brk{\manifold{N}}, \dotsc, \smash{\pi_{\floor{p - 1}} \brk{\manifold{N}}}\) and the triviality of \(\pi_{p - 1} \brk{\manifold{N}}\) when \(p \in \Nset\) guarantees the \emph{absence of analytical obstructions,} which already occur for the extension of mappings which are the limits of smooth maps.

When \(p \in \Nset\), the triviality of \(\pi_{p - 1} \brk{\manifold{N}}\) is still necessary:
although considering map maps that are approximated by smooth maps prevents the topological obstruction, it cannot rule out the analytical obstruction, so that \cref{theorem_characterization_halfspace} does not provide a characterisation of the space of traces beyond \cref{theorem_extension_halfspace}.

The condition for the mapping \(u\) to be the strong limit of a sequence of smooth maps can be characterised through homological conditions when the target is a sphere \(\manifold{N} = \Sset^n\) and \(p < n + 1\).
This has been achieved thanks to a suitable distributional Jacobian when \(\manifold{N} = \Sset^1\) and \(2 < p < 3\) \cite{Brezis_Mironescu_2021}*{Th.\thinspace{}10.4 and 10.4\texorpdfstring{\('\)}{'}} and to generalised Cartesian currents \cite{Mucci_2010}.

When \(2 \le p < 3\) so that \(\smash{\floor{p - 1}} = 1\), the condition of \cref{theorem_characterization_halfspace} is also equivalent to the existence of a lifting over the universal covering \cite{Mironescu_VanSchaftingen_2021_APDE}*{Th.\thinspace{}3}.

\medbreak

When characterising the Sobolev maps that can be extended to \(\manifold{M}' \times \intvr{0}{1}\), it is no longer sufficient to be in the closure of smooth maps. This is related to the fact that even when the homotopy group \(\smash{\pi_{\floor{p - 1}}} \brk{\manifold{N}}\) is trivial, smooth maps may not be dense in  \(\smash{\dot{W}^{1 - 1/p, p}} \brk{\smash{\manifold{M}'}, \manifold{N}}\).

There are however still approximations by maps that are not necessarily smooth but whose singular set and whose behaviour next to it are suitably controlled.
If we define for each \(\ell \in \Zset\) such that \(\ell < m - 1\), the set
\begin{equation}
\label{eq_quaeP5sosiC8Nahbaipaek7W}
\begin{split}
 R^1_\ell \brk{\manifold{M}', \manifold{N}}
 \defeq 
 \Bigl\{ u \colon \manifold{M}' \to \manifold{N}
 \st[\Big] & u \in C^1 \brk{\manifold{M}' \setminus \Sigma, \manifold{N}}
 \text{ and } \smash{\smashoperator[r]{\sup_{x \in \manifold{M}'\setminus \Sigma}}} \dist \brk{x, \Sigma} \abs{\Deriv u \brk{x}} < \infty\\
 &\; \text{where \(\Sigma \subseteq \manifold{M}'\) is closed and contained in a finite }\\[-.5em]
 &\;\text{union of \(\ell\)-dimensional embedded smooth submanifolds}
 \Bigr\},
 \end{split}
 \raisetag{3.8em}
\end{equation}
with the understanding that \(\Sigma = \emptyset\) when \(\ell < 0\) so that \( R^1_\ell \brk{\manifold{M}', \manifold{N}}\) is then the set of functions in \(C^1 \brk{\manifold{M}', \manifold{N}}\) whose derivative is bounded, then one has \(R^1_\ell \brk{\manifold{M}' \times \intvr{0}{1}, \manifold{N}}
\subseteq \smash{\dot{W}}^{1, p}\brk{\manifold{M}' \times \intvr{0}{1}, \manifold{N}}\) if and only if \(\ell < m - p\); furthermore the set
\(\smash{R^1_{m - \floor{p} - 1} \brk{\manifold{M}' \times \intvr{0}{1}, \manifold{N}}}\) is strongly dense in \(\smash{\dot{W}}^{1, p}\brk{\manifold{M}' \times \intvr{0}{1}, \manifold{N}}\)
\citelist{\cite{Bethuel_1991}\cite{Hang_Lin_2003_II}}.
Similarly, the inclusion \(R^1_\ell \brk{\manifold{M}', \manifold{N}}
\subseteq \smash{\dot{W}}^{1 - 1/p, p}\brk{\manifold{M}', \manifold{N}}\) holds if and only if \(\ell < m - p\), and the set
\(\smash{R^1_{m - \floor{p} - 1} \brk{\manifold{M}', \manifold{N}}}\) is strongly dense in the space \(\smash{\smash{\dot{W}}^{1 - 1/p, p}\smash{\brk{\manifold{M}', \manifold{N}}}}\) \citelist{\cite{Bethuel_1995}\cite{Mucci_2009}*{Th.\thinspace{}1}\cite{Brezis_Mironescu_2015}*{Th.\thinspace{}3}}.

Although the set \(\smash{R^1_{m - \floor{p} - 1} \brk{\manifold{M}', \manifold{N}}}\) is strongly dense in \(\smash{\dot{W}}^{1 - 1/p, p}\brk{\manifold{M}', \manifold{N}}\), it is too large to allow Sobolev extensions of all its maps when the homotopy group \(\smash{\pi_{\floor{p - 1}} \brk{\manifold{N}}}\) is non-trivial, since the counterexamples of the form \eqref{eq_ongohghoom3eiG7baiW3choh} providing the topological obstruction are smooth outside an \(\brk{m - \floor{p} - 1}\)-dimensional subset of \(\manifold{M}'\) and thus belong to  \(\smash{R^1_{m - \floor{p} - 1} \brk{\manifold{M}', \manifold{N}}}\).
On the other hand, the density of the set \(\smash{R^1_{m - \floor{p} - 1} \brk{\manifold{M}' \times \intvr{0}{1}, \manifold{N}}}\) in the space \(\smash{\dot{W}}^{1, p} \brk{\manifold{M}' \times \intvr{0}{1}, \manifold{N}}\) and the
generic \(\brk{m - \smash{\floor{p}} - 2}\)-dimensionality of the intersection of an \(\brk{m - \smash{\floor{p}} - 1}\)-dimensional submanifold with \(\smash{\manifold{M}'} \times \set{0}\) suggest that traces of maps in \(\smash{\dot{W}}^{1, p} \brk{\manifold{M}' \times \intvr{0}{1}, \manifold{N}}\) should be strongly approximable by maps in \(\smash{R^1_{m - \floor{p} - 2} \smash{\brk{\manifold{M}', \manifold{N}}}}\).
This approximation condition does indeed characterise the Sobolev maps which are traces when the target manifold has its first homotopy groups that are finite.

\begin{manualtheorem}{\ref*{theorem_characterization_halfspace}\texorpdfstring{\('\)}{'}}
\label{theorem_characterization_collar}
Let \(\manifold{M}'\) be an \((m - 1)\)-dimensional compact Riemannian manifold.
If \(1 < p < m\), then
\[
\tr_{\manifold{M}'}
\brk{ \smash{\smash{\dot{W}}^{1, p} \brk{\manifold{M}' \times \intvr{0}{1}, \manifold{N}}}}
 = \smash{\overline{R^1_{m - \floor{p} - 2} \brk{\smash{\manifold{M}'}, \manifold{N}}}}^{\smash{\smash{\dot{W}}^{1 - 1/p, p} \brk{\manifold{M}', \manifold{N}}}}
\]
if and only if \(\pi_1 \brk{\manifold{N}}, \dotsc, \smash{\pi_{\floor{p - 1}} \brk{\manifold{N}}}\) are finite
and, when \(p \in \Nset\), \(\pi_{p - 1} \brk{\manifold{N}}\) is trivial.\\
Moreover, every map \(u \in\tr_{\manifold{M}'}
\brk{ \smash{\smash{\dot{W}}^{1, p} \brk{\manifold{M}' \times \intvr{0}{1}, \manifold{N}}}}\) can then be written \(u = \tr_{\manifold{M}'} U\) with \(U \in \smash{\smash{\dot{W}}^{1, p} \brk{\manifold{M}' \times \intvr{0}{1}, \manifold{N}}}\) satisfiying
\eqref{eq_jooFofo1au1aephae7sa0joh}.
\end{manualtheorem}

Since the set  \(\smash{R^1_{m - \floor{p} - 2} \brk{\manifold{M}', \manifold{N}}}\) is dense in \(\smash{\dot{W}}^{1 - 1/p, p} \brk{\manifold{M}', \manifold{N}}\) is dense if and only if \(1 < p < 2\) or the homotopy group \(\smash{\pi_{\floor{p - 1}} \brk{\manifold{N}}}\) is trivial (see \citelist{\cite{Brezis_Mironescu_2015}\cite{Hang_Lin_2003_II}*{Th.\thinspace{}6.3}}), \cref{theorem_extension_collar} is a consequence of \cref{theorem_characterization_collar}.

If \(1 < p < m - 1\) and if the inclusion \(\manifold{M}'{}^{\floor{p - 1}} \to \manifold{M}'\) is homotopic to a constant, which will be the case for example if the first homotopy groups \(\pi_{1} \brk{\manifold{M}'}, \dotsc, \smash{\pi_{\floor{p - 1}}} \brk{\manifold{M}'}\) of \(\manifold{M’}\) are trivial, then any mapping from the skeleton \(\smash{\manifold{M}'{}^{\floor{p - 1}}}\) to \(\manifold{N}\) has a continuous extension to the manifold \(\smash{\manifold{M}'}\) if and only if it has a continuous extension to its skeleton \(\manifold{M}'{}^{\floor{p}}\) (see \cite{White_1986}*{\S 6}), and thus any mapping in \(\smash{R^1_{m - \floor{p} - 2} \brk{\manifold{M}', \manifold{N}}}\) is the strong limit in  \(\smash{\smash{\dot{W}}^{1 - 1/p, p}} \brk{\manifold{M}', \manifold{N}}\) of a sequence of smooth maps (cfr. \citelist{\cite{Hang_Lin_2003_II}*{Cor.\thinspace{}1.6 \& 6.2}});  it follows then from  \cref{theorem_characterization_collar} that traces are characterised under this assumption as strong limits in \(\smash{\smash{\dot{W}}^{1 - 1/p, p}} \brk{\manifold{M}', \manifold{N}}\) of smooth maps, as in \cref{theorem_characterization_halfspace}.

If \(p \ge m - 1\), then \(\smash{R^1_{m - \floor{p} - 2} \brk{\manifold{M}', \manifold{N}}} = C^1 \brk{\manifold{M}', \manifold{N}}\) by definition, and \cref{theorem_characterization_collar} characterises traces as strong limits of smooth maps in \(\smash{\smash{\dot{W}}^{1 - 1/p, p}} \brk{\manifold{M}', \manifold{N}}\) as in  \cref{theorem_characterization_halfspace}.
More generally, if \(1 < p < m - 1\), and if the homotopy groups \(\smash{\pi_{\floor{p}}} \brk{\manifold{N}}, \dotsc, \pi_{m - 2} \brk{\manifold{N}}\) of the target \(\manifold{N}\) are trivial, then any mapping in \(\smash{R^1_{m - \floor{p} - 2} \brk{\manifold{M}', \manifold{N}}}\) is the strong limit in  \(\smash{\smash{\dot{W}}^{1 - 1/p, p}} \brk{\manifold{M}', \manifold{N}}\) of a sequence of smooth maps (cfr.\ \cite{Hang_Lin_2003_II}*{Cor.\thinspace{}1.7 \& 6.2}), and traces are also strong limits of smooth maps in \(\smash{\smash{\dot{W}}^{1 - 1/p, p}} \brk{\manifold{M}', \manifold{N}}\).

If \(2 \le p < 3\), if the fundamental group \(\pi_1 \brk{\manifold{N}}\) is finite and if the fundamental group \(\pi_1 \brk{\manifold{M}'}\) is trivial, then the condition of \cref{theorem_characterization_collar} is equivalent to the existence of a Sobolev lifting over the universal covering \(\pi \colon \smash{\widetilde{\manifold{N}}} \to \manifold{N}\) of \(\manifold{N}\), that is, the existence of some mapping \(\smash{\widetilde{u}} \in W^{1 - 1/p, p} \brk{\manifold{M}', \smash{\widetilde{\manifold{N}}}}\) such that \(\pi \compose \smash{\widetilde{u}} = u\) \cite{Isobe_2003}*{Th.\thinspace{}1.6}.

\medbreak

Finally we consider the characterisation of the traces of \(\smash{\dot{W}}^{1, p} \brk{\manifold{M}, \manifold{N}}\) when \(\manifold{M}\) is a compact Riemannian manifold with boundary \(\partial \manifold{M} \ne \emptyset\).
In order to describe the global obstructions, we define similarly to \eqref{eq_quaeP5sosiC8Nahbaipaek7W} for every \(\ell \in \Zset\) such that \(\ell < m\) the set 
\begin{equation}
\label{eq_TieQuee3iyoh9aequepo9Ahx}
\begin{split}
 R^1_\ell \brk{\manifold{M}, \manifold{N}}
 \defeq 
 \{ U \colon \manifold{M} \to \manifold{N}
 \!\st \!& U \in C^1 \brk{\manifold{M} \setminus \Sigma, \manifold{N}}  \text{ and }\smash{\smashoperator[r]{\sup_{x \in \manifold{M} \setminus \Sigma}}} \dist \brk{x, \Sigma} \abs{\Deriv U \brk{x}} < \infty\\
 &\text{where \(\Sigma \subseteq \manifold{M}\) is closed and contained in a finite union of }\\
 &\text{embedded \(\ell\)-dimensional submanifolds transversal to \(\partial \manifold{M}\)}
 \},
 \end{split}
 \raisetag{4.2em}
\end{equation}
again with the understanding that \(\Sigma = \emptyset\) when \(\ell < 0\).
The transversality assumption in the definition \eqref{eq_TieQuee3iyoh9aequepo9Ahx} of \(R^1_\ell \brk{\manifold{M}, \manifold{N}}\) prevents any tangent space to any of the submanifolds containing \(\Sigma\) from being contained in a tangent space to the boundary \(\partial \manifold{M}\); it ensures that
\[
 \tr_{\partial \manifold{M}} \brk{R^1_{\ell} \brk{\manifold{M}, \manifold{N}}}
 \subseteq R^1_{\ell - 1} \brk{\partial \manifold{M}, \manifold{N}},
\]
that is, if \(U \in R^1_\ell \brk{\manifold{M}, \manifold{N}}\) then \(U \restr{\partial \manifold{M}} \in R^1_{\ell - 1} \brk{\partial \manifold{M}, \manifold{N}}\).

As before, the set \(\smash{R^1_{m - \floor{p} - 1} \brk{\manifold{M}, \manifold{N}}}\) is strongly dense in \(\smash{\dot{W}}^{1, p} \brk{\manifold{M}, \manifold{N}}\) \citelist{\cite{Bethuel_1991}\cite{Hang_Lin_2005_IV}}, and thus, by the continuity of traces, the trace of any map \(U \in \smash{\dot{W}}^{1, p} \brk{\manifold{M}, \manifold{N}}\) is the strong limit in \(\smash{\smash{\dot{W}}^{1 - 1/p, p} \brk{\partial \manifold{M}, \manifold{N}}}\) of a sequence of maps in \(\smash{R^1_{m - \floor{p} - 2} \brk{\partial \manifold{M}, \manifold{N}}}\) that are traces of maps in \(\smash{R^1_{m - \floor{p} - 1} \brk{\manifold{M}, \manifold{N}}}\).
Again, this condition turns out to give a characterisation of the traces of Sobolev mappings when the first homotopy groups are finite, and when the last of these is trivial for \(p\) integer.

\begin{manualtheorem}{\ref*{theorem_characterization_halfspace}\texorpdfstring{\(''\)}{''}}
\label{theorem_characterization_global}
Let \(\manifold{M}\) be an \(m\)-dimensional manifold with boundary \(\partial \manifold{M} \ne \emptyset\).
If \(1 < p < m\),
then
\[
\tr_{\partial \manifold{M}}
 \smash{ \smash{\dot{W}}^{1, p} \brk{\manifold{M}, \manifold{N}}}
 = \smash{\overline{\tr_{\partial \manifold{M}} \brk{R^1_{m - \floor{p} - 1} \brk{\manifold{M}, \manifold{N}}}}}^{\smash{\dot{W}^{1 - 1/p, p} \brk{\partial \manifold{M}, \manifold{N}}}}
\]
if and only
if \(\pi_1 \brk{\manifold{N}}, \dotsc, \smash{\pi_{\floor{p - 1}}} \brk{\manifold{N}}\) are finite, and, when \(p \in \Nset\), if \(\pi_{p - 1} \brk{\manifold{N}}\) is trivial.\\
Moreover, every map \(u \in \tr_{\partial \manifold{M}}\brk{
 \smash{ \smash{\dot{W}}^{1, p} \brk{\manifold{M}, \manifold{N}}}}
\) can then be written \(u = \operatorname{tr}_{\partial \manifold{M}} U\) with \(U \in\smash{ \smash{\dot{W}}^{1, p} \brk{\manifold{M}, \manifold{N}}}\) satisfying \eqref{eq_ho0SheiChathesi3fai4nei3}.
\end{manualtheorem}

When \(p \ge m - 1\), we have \(\smash{R^1_{m - \floor{p} - 2} \brk{\partial \manifold{M}, \manifold{N}}} = C^1 \brk{\partial \manifold{M}, \manifold{N}}\) and  \(\smash{R^1_{m - \floor{p} - 1}\brk{\manifold{M}, \manifold{N}}} = C^1 \brk{\manifold{M}, \manifold{N}}\); and \cref{theorem_characterization_global} then states that \(u = \operatorname{tr}_{\partial \manifold{M}} U\) for some \(U \in\smash{ \smash{\dot{W}}^{1, p} \brk{\manifold{M}, \manifold{N}}}\) if and only  \(u\) is the strong limit in \(\smash{\dot{W}^{1 - 1/p, p} \brk{\partial \manifold{M}, \manifold{N}}}\) of a sequence of mappings in \(C^1 \brk{\partial \manifold{M}, \manifold{N}}\) that are the traces of mappings in \(C^1 \brk{\manifold{M}, \manifold{N}}\).

In the case where every mapping \(\smash{R^1_{m - \floor{p} - 2} \brk{\partial \manifold{M}, \manifold{N}}}\) is the trace of a mapping in \(\smash{R^1_{m - \floor{p} - 1} \brk{\manifold{M}, \manifold{N}}}\), or equivalently, where every continuous map from \(\partial \smash{\manifold{M}^{\floor{p}}}\) to \(\manifold{N}\) is the restriction of a continuous map from \(\smash{\manifold{M}^{\floor{p}}}\) to \(\manifold{N}\), we get from \cref{theorem_characterization_collar} and \cref{theorem_characterization_global} the equivalence between the Sobolev extension to the collar neighbourhood \(\partial \manifold{M} \times \intvo{0}{1}\) and to the entire manifold \(\manifold{M}\), which was already obtained under the slightly stronger condition that any continuous map from \(\smash{\partial \manifold{M}^{\floor{p - 1}}}\) to \(\manifold{N}\) is the restriction of a continuous map from \(\smash{\manifold{M}^{\floor{p}}}\) by Isobe \cite{Isobe_2003}.
The condition for every continuous map from \(\partial \smash{\manifold{M}^{\floor{p}}}\) to \(\manifold{N}\) to be the restriction of a continuous map from \(\smash{\manifold{M}^{\floor{p}}}\) to \(\manifold{N}\) will be satisfied if
there is a retraction from \(\smash{\manifold{M}^{\floor{p}}}\) to \(\partial \smash{\manifold{M}^{\floor{p}}}\), which will be the case for example if \(\manifold{M} = \smash{\widebar{\Rset}}^{m}_+\), if \(\manifold{M} = \Bar{\Bset}^m\) and \(p < m\), or more generally if \(\partial \manifold{M}\) is connected and for every \(j \in \set{1, \dotsc, \floor{p - 1}}\),
the homomorphism \(i_* \colon \pi_{j} \brk{\partial \manifold{M}} \to \pi_{j} \brk{\manifold{M}}\) induced by the canonical inclusion \(i \colon \partial \manifold{M} \to \manifold{M}\) is an isomorphism (see the proofs in  \citelist{\cite{Hatcher_2002}*{\S 4.1}\cite{Whitehead_1949}}).

If \(2 \le p < 3\) and if \(\pi \colon \smash{\widetilde{\manifold{N}}} \to \manifold{N}\) is the universal covering of \(\manifold{N}\), if we assume that for every \(F \in C \brk{\manifold{M}^2, \manifold{N}}\),
there exists \(\smash{\widetilde{f}} \in C \brk{\partial \manifold{M}^2,\manifold{N}}\) such that \(F \restr{\partial \manifold{M}^2} = \pi \compose \smash{\widetilde{f}}\)
(this will be the case if the homomorphism \(i_* \colon \pi_1 \brk{\partial \manifold{M}} \to \pi_1 \brk{\manifold{M}}\) induced by the inclusion \(i \colon \partial \manifold{M} \to \manifold{M}\) is trivial, and thus in particular when either \(\pi_1 \brk{\manifold{M}}\) or \(\pi_1 \brk{\partial \manifold{M}}\) is trivial), then for every \(U \in \smash{R^1_{m - \floor{p} - 1}\brk{\manifold{M}, \manifold{N}}}\) there exists \(\smash{\widetilde{u}} \in \smash{R^1_{m - \floor{p} - 2}} \brk{\partial \manifold{M}, \smash{\widetilde{\manifold{N}}}}\) such that \(U \restr{\partial \manifold{M}} = \pi \compose\smash{\widetilde{u}}\), and hence, if the homotopy group \(\smash{\pi_1 \brk{\manifold{N}}}\) is finite, it follows from \cref{theorem_characterization_global} that we have
\(u = \tr_{\partial \manifold{M}}U\) for some \(U \in\smash{ \smash{\dot{W}}^{1, p} \brk{\manifold{M}, \manifold{N}}}\) if and only if \(u = \pi \compose \smash{\widetilde{u}}\) for some \(\smash{\widetilde{u}}\in \smash{\smash{\dot{W}}^{1 - 1/p, p} \brk{\partial \manifold{M}, \lifting{\manifold{N}}}}\) \ \cite{Isobe_2003}*{Th.\thinspace{}1.6}.

\medbreak

When the mapping \(u\colon \partial \manifold{M} \to \manifold{N}\) is assumed to be Lipschitz-continuous, the criteria of Theorems \ref{theorem_characterization_halfspace}, \ref{theorem_characterization_collar} and \ref{theorem_characterization_global} were described by White \cite{White_1988}*{Th.\thinspace{}4.1}; the striking fact in our work is that the same characterisation holds for boundary data with minimal regularity provided the first homotopy groups are finite.

The characterisation of the trace spaces of Theorems \ref{theorem_characterization_halfspace}, \ref{theorem_characterization_collar} and \ref{theorem_characterization_global} when the first homotopy groups are finite seems somehow more satisfactory then the previous more abstract general characterisations of trace spaces \citelist{\cite{Isobe_2003}\cite{Mazowiecka_VanSchaftingen_2023}}.
It would be interesting to investigate whether the ideas of the present work can provide more insightful characterisations when the homotopy groups \(\pi_{1}\brk{\manifold{N}}, \dotsc, \smash{\pi_{\floor{p - 1}} \brk{\manifold{N}}}\) are not necessarily finite any more or when \(p\) is an integer and \(\pi_{p - 1} \brk{\manifold{N}}\) is not necessarily trivial any more.
The approximation conditions of Theorems \ref{theorem_characterization_halfspace}, \ref{theorem_characterization_collar} and \ref{theorem_characterization_global} remain then necessary but are not sufficient.

\subsection{Linear estimates with supercritical integrability}
The methods developed in the present work also provide significantly improved estimates for the extension to Sobolev mappings when \(p > m\).
The classical construction of the extension of Sobolev mappings \cite{Bethuel_Demengel_1995} consists in first extending linearly the mapping \(u\colon  \manifold{M}' \to \manifold{N}\) to some function \(V \in \smash{\dot{W}}^{1, p} \brk{\manifold{M}'\times \intvo{0}{\infty}, \Rset^\nu}\) and noting that thanks to the Morrey-Sobolev embedding, one has 
for each \(\brk{x', x_m} \in \manifold{M}'\times \intvo{0}{\infty}\)
\begin{equation}
\label{eq_rahdu5ICh7pei4WieMixaeh7}
 \dist \brk{V \brk{x', x_m}, \manifold{N}}^p
 \le C  x_m^{p - m}
 \smashoperator{\iint_{\manifold{M}' \times \manifold{M}'}}
 \frac{d \brk{u \brk{y}, u \brk{z}}^p}{d \brk{y, z}^{p + m - 2}} \dif y \dif z,
\end{equation}
for some constant \(C \in \intvo{0}{\infty}\).
Therefore, for \(x_m \in \intvo{0}{\infty}\) small enough to ensure that the right-hand side of \eqref{eq_rahdu5ICh7pei4WieMixaeh7} is also small, \(V \brk{x', x_m}\) is close enough to \(\manifold{N}\) to perform a nearest-point retraction, 
which combined with a suitable rescaling gives a mapping \(U \in \smash{\dot{W}}^{1, p} \brk{\manifold{M}'\times \intvo{0}{1}, \manifold{N}}\) such that \(\tr_{\manifold{M}'} U = u\) and 
\begin{equation}  
\label{eq_Aikichoh3elai8iec7faehoh}
\int_{\manifold{M}' \times \intvr{0}{1}}
  \abs{\Deriv U}^p
  \le C'
  \smashoperator{
  \iint_{\manifold{M}' \times \manifold{M}'}}
  \frac{d \brk{u \brk{y}, u \brk{z}}^p}{d\brk{y, z}^{p + m - 2}} \dif y \dif z
  +\brk[\bigg]{ \smashoperator[r]{
  \iint_{\manifold{M}' \times \manifold{M}'}}
  \frac{d \brk{u \brk{y}, u \brk{z}}^p}{d\brk{y, z}^{p + m - 2}} \dif y \dif z}^{1 + \frac{1}{p - m}}.
\end{equation}
A key difference from the classical linear extension of traces of Sobolev functions is that the control \eqref{eq_Aikichoh3elai8iec7faehoh} on the extension of Sobolev mappings is \emph{superlinear}.
When one of the first homotopy groups \(\pi_1 \brk{\manifold{N}}, \dotsc, \pi_{m - 1}\brk{\manifold{N}}\) is infinite, the same construction as for the analytical obstruction rules out a linear estimate \cite{Bethuel_2014} (see also \cite{Mironescu_VanSchaftingen_2021_AFST}*{Th.\thinspace{}1.10 (a)}).

When the first homotopy groups satisfy the necessary finiteness condition, we construct non-linear extensions that satisfy linear Sobolev estimates.

\medbreak

We first have a result on the half-space \(\Rset^m_+\).

\begin{theorem}
\label{theorem_estimate_supercritical_halfspace}
If \(p > m\), then there exists a constant \(C \in \intvo{0}{\infty}\) such that for every mapping  \(u \in \smash{\dot{W}}^{1 - 1/p, p}\brk{\Rset^{m - 1}, \manifold{N}}\) there is a mapping \(U \in \smash{\dot{W}}^{1, p}\brk{\Rset^m_+, \manifold{N}}\) such that \(\tr_{\Rset^{m - 1}} U = u\) and 
\begin{equation*}
  \smashoperator{\int_{\Rset^{m}_+}}
  \abs{\Deriv U}^p
  \le C
  \smashoperator{
  \iint_{\Rset^{m - 1} \times \Rset^{m - 1}}}
  \frac{d \brk{u \brk{y}, u \brk{z}}^p}{d\brk{y, z}^{p + m - 2}} \dif y \dif z
\end{equation*}
if and only if the homotopy groups \(\pi_1 \brk{\manifold{N}}, \dotsc, \pi_{m - 1} \brk{\manifold{N}}\) are finite.
\end{theorem}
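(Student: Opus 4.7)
The plan is to prove the two implications separately. The necessity follows the analytical obstruction scheme already used in the subcritical case by Bethuel and by Mironescu--Van Schaftingen. Assuming that $\pi_k(\manifold{N})$ is infinite for some $k \in \{1, \dotsc, m-1\}$, I would pick a sequence of smooth maps $g_i \colon \Sset^k \to \manifold{N}$ representing infinitely many distinct homotopy classes, rescale each $g_i$ to a small ball $B_{r_i}$ in a disjoint packing inside $\Rset^{m-1}$, and verify, through the exact scaling properties of the $\smash{\dot{W}}^{1-1/p,p}$ seminorm, that the boundary datum can be arranged to have any prescribed finite seminorm while any Sobolev extension must carry the topological charge $[g_i]$ across each tiny ball, forcing its energy to grow superlinearly. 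The non-linear uniform boundedness principle of Monteil--Van Schaftingen then packages the asymptotic lack of a uniform linear bound on the $g_i$ into a single boundary datum violating the estimate of the statement.

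For sufficiency, the starting observation is that when $p > m$ the fractional Morrey--Sobolev embedding yields $\smash{\dot{W}}^{1-1/p,p}\brk{\Rset^{m-1}, \manifold{N}} \subseteq C^{0,1 - m/p}\brk{\Rset^{m-1}, \manifold{N}}$, so the boundary datum is already continuous. Since $\Rset^{m-1}$ is contractible, both the local and the global topological obstructions described in the introduction are automatically absent; in the supercritical regime the only remaining barrier to a linear estimate is the analytical obstruction, which is ruled out by the finiteness of $\pi_1\brk{\manifold{N}}, \dotsc, \pi_{m-1}\brk{\manifold{N}}$. The task thus reduces to an \emph{effective} construction producing a linear estimate from these topological hypotheses.

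I would carry out this construction through a Whitney decomposition of $\Rset^m_+$ adapted to $u$. At each dyadic scale $2^{-j}$ I would replace $u$ by a piecewise approximation $u_j$ on cubes of side $2^{-j}$ along $\Rset^{m-1}$, define $U$ to coincide with $u_j$ on the Whitney layer at height of order $2^{-j}$, and interpolate between consecutive layers $u_j$ and $u_{j+1}$. The finiteness of $\pi_i\brk{\manifold{N}}$ for $i \le m-1$ provides the uniform bound on the Sobolev cost of filling any homotopy across a cube of a given dimension; this is precisely the role played by the paper's main extension construction, applied separately on each cube and then glued. The linear estimate then follows by summing the local contributions: each interpolation layer is controlled by the $p$-th power of the oscillation of $u$ on the corresponding cubes, and the dyadic sum of these oscillations telescopes into the Gagliardo seminorm of $u$ appearing on the right-hand side of the estimate.

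The main obstacle is producing the \emph{linear} control on the interpolation step across dyadic scales. The superlinear estimate \eqref{eq_Aikichoh3elai8iec7faehoh} arises from a crude nearest-point-retraction filling that does not exploit the finiteness of the homotopy groups, and replacing it requires choosing homotopies that realise the energy minimum on each homotopy class together with a consistent bookkeeping of how these minima compose across neighbouring dyadic cubes. Carrying this out without losing the scale-invariance that makes the telescoping sum converge linearly is the technical heart of the argument and is where the paper's new construction, rather than any off-the-shelf extension lemma, is essential.
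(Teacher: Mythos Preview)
Your necessity argument is correct; the paper likewise defers this direction to the literature (Bethuel, Mironescu--Van Schaftingen).

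For sufficiency you correctly sense that the technical heart is missing from your sketch, and the mechanism you outline differs from the paper's. The paper does \emph{not} interpolate between approximations $u_j$ and $u_{j+1}$ at every Whitney layer. It starts from the single linear convolution extension $V$ and uses $\Pi_{\manifold{N}}\circ V$ directly on the \emph{good cubes} where $\dist(V,\manifold{N})\le\delta_{\manifold{N}}$; there the Gagliardo bound \eqref{eq_joos6eteeB6aSh5pa2ahnga9} already gives the linear estimate with no homotopy input. The finite-homotopy machinery (\cref{proposition_Lipschitz_extension_skeleton} with $\ell=m$) is invoked only on a set of \emph{singular cubes} $\mathscr{Q}^{\mathrm{sing}}\supseteq\mathscr{Q}^{\mathrm{bad}}$ built in \cref{proposition_modifiedcubes_supercritical} by declaring a cube singular if it is bad or adjacent to a singular cube at the previous scale; a descending retraction onto the good region then supplies the continuous map $W$ needed as input to \cref{proposition_Lipschitz_extension_skeleton}. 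The supercritical hypothesis $p>m$ enters exactly once: it makes $\sum_{k\ge i}2^{k(m-p)}$ converge in \eqref{eq_oujoh1zaicae8OonaiHiefoa}, so the singular set has size linearly controlled by the bad set, which in turn is linearly controlled by the Gagliardo energy via \eqref{eq_yie2te0ahrie6eeL2mah5Boh}. Your claim that a layerwise filling cost ``telescopes into the Gagliardo seminorm'' conflates two separate estimates: the Gagliardo energy controls $\int\abs{\Deriv V}^p$ on good cubes, while on singular cubes the contribution is of order $\edge(Q)^{m-p}$ per cube from the uniform filling constant, summed via the geometric series---not a telescoping sum over all layers.
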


The main differences between \cref{theorem_estimate_supercritical_halfspace} and its subcritical counterpart \cref{theorem_extension_halfspace} is that the condition of finiteness of the first homotopy groups is not any more necessary for the \emph{existence} of an extension but remains so for an extension satisfying a \emph{linear estimate}, and that the number of homotopy groups involved in the condition no longer depends on the exponent \(p\).

\medbreak

We similarly get linear estimates on Sobolev extensions on collar neighbourhoods of the boundaries of manifolds.

\begin{manualtheorem}{\ref*{theorem_estimate_supercritical_halfspace}\thprime}
\label{theorem_estimate_supercritical_collar}
Let \(\manifold{M}'\) be an \((m - 1)\)-dimensional compact Riemannian manifold.
If \(p > m\), then there exists a constant \(C \in \intvo{0}{\infty}\) such that for every mapping \(u \in \smash{\smash{\dot{W}}^{1 - 1/p, p}} \brk{\manifold{M}', \manifold{N}}\) there is a mapping \(U \in \smash{\dot{W}}^{1, p}\brk{\manifold{M}' \times \intvr{0}{1}, \manifold{N}}\) such that \(\tr_{\manifold{M}'} U = u\) and
\begin{equation}
\label{eq_mei0keegeef7aiVaiSu1aegi}
  \smashoperator[r]{\int_{\manifold{M}' \times \intvr{0}{1}}}
  \abs{\Deriv U}^p
  \le C
  \smashoperator{
  \iint_{\manifold{M}' \times \manifold{M}'}}
  \frac{d \brk{u \brk{y}, u \brk{z}}^p}{d\brk{y, z}^{p + m - 2}} \dif y \dif z
\end{equation}
if and only if the homotopy groups
\(\pi_1 \brk{\manifold{N}}, \dotsc, \pi_{m - 1} \brk{\manifold{N}}\) are finite.
\end{manualtheorem}

\medbreak

In the global case, the same proof as the one of Theorems \ref{theorem_estimate_supercritical_halfspace} and \ref{theorem_estimate_supercritical_collar} would provide, when the first homotopy groups
\(\pi_1 \brk{\manifold{N}}, \dotsc, \pi_{m - 1} \brk{\manifold{N}}\) are all finite, 
for every \(u \in \smash{\dot{W}}^{1 - 1/p, p}\brk{\partial \manifold{M}, \manifold{N}}\) which is the restriction to \(\partial \manifold{M}\) of a continuous map, an extension \(U \in \smash{\dot{W}}^{1, p}\brk{\manifold{M}, \manifold{N}}\) such that \(\tr_{\partial \manifold{M}} U = u\) and \eqref{eq_ho0SheiChathesi3fai4nei3} holds for some convex function \(\Theta \colon  \intvr{0}{\infty} \to \intvr{0}{\infty}\) satisfying \(\Theta \brk{0} = 0\).
However, as one can also simply obtain such a map \(U\) through \eqref{eq_Aikichoh3elai8iec7faehoh} and a compactness argument \citelist{\cite{Petrache_VanSchaftingen_2017}*{Th.\thinspace{}4}\cite{Petrache_Riviere_2014}*{Prop.\thinspace{}2.8}}, without relying on the finiteness of the homotopy groups \(\pi_1 \brk{\manifold{N}}, \dotsc, \pi_{m - 1} \brk{\manifold{N}}\), our methods do not improve on the satisfying result of the classical approach and we do not pursue the discussion further.

\subsection{Estimates with critical integrability}
In the critical case \(p=m\), there is no estimate such as \eqref{eq_rahdu5ICh7pei4WieMixaeh7}; the extension of Sobolev mappings is performed classically thanks to the vanishing mean oscillation property and \emph{there is not any Sobolev control} on the extension by the semi-norm of the trace, unless the latter is small enough: there exists \(\eta \in \intvo{0}{\infty}\)
 such that if the map \(u \colon \partial \manifold{M} \to \manifold{N}\) satisfies
\begin{equation}
\label{eq_queecaocooh6OhxaiTheel6f}
\smashoperator[r]{\iint_{\partial \manifold{M} \times \partial \manifold{M}}}
  \frac{d \brk{u \brk{y}, u \brk{z}}^m}{d\brk{y, z}^{2 m - 2}} \dif y \dif z
  \le \eta,
\end{equation}
then \(u = \tr_{\partial \manifold{M}} U\) for some mapping \(U \in \smash{\dot{W}}^{1, m} \brk{\manifold{M}, \manifold{N}}\) satisfying 
\begin{equation}
\label{eq_umooceikuoy9raey2ahThahb}
\int_{\manifold{M}} \abs{\Deriv U}^m
  \le 
  C
  \smashoperator{\iint_{\partial \manifold{M} \times \partial \manifold{M}}}
  \frac{d \brk{u \brk{y}, u \brk{z}}^m}{d\brk{y, z}^{2 m - 2}} \dif y \dif z. 
\end{equation}
For reasons similar to the critical analytical obstructions described above when \(p \in \set{2, \dotsc, m - 2}\), an estimate on the extension of the form \eqref{eq_umooceikuoy9raey2ahThahb} cannot hold without the smallness assumption \eqref{eq_queecaocooh6OhxaiTheel6f} unless the first homotopy groups \(\pi_1 \brk{\manifold{N}}, \dotsc, \pi_{m - 2} \brk{\manifold{N}}\) are all \emph{finite} and the homotopy group \(\pi_{m - 1} \brk{\manifold{N}}\) is \emph{trivial} \cite{Mironescu_VanSchaftingen_2021_AFST}*{Th.\thinspace{}1.10 (b)}. 

\medbreak

We prove that the obstructions described above are the only ones to the construction of Sobolev extensions with linear estimates on the half-space \(\Rset^m_+\).

\begin{theorem}
\label{theorem_estimate_critical_halfspace}
There exists a constant \(C \in \intvo{0}{\infty}\) such that for every mapping \(u \in \smash{\dot{W}}^{1 - 1/m, m}\brk{\Rset^{m - 1}, \manifold{N}}\) there is a mapping \(U \in \smash{\dot{W}}^{1, m}\brk{\Rset^m_+, \manifold{N}}\) such that \(\tr_{\Rset^{m - 1}} U = u\) and 
\begin{equation*}
  \int_{\Rset^m_+}
  \abs{\Deriv U}^m
  \le C
  \smashoperator{
  \iint_{\Rset^{m - 1} \times \Rset^{m - 1}}}
  \frac{d \brk{u \brk{y}, u \brk{z}}^m}{d\brk{y, z}^{2 m - 2}} \dif y \dif z.
\end{equation*}
if and only if the homotopy groups \(\pi_1 \brk{\manifold{N}}, \dotsc, \pi_{m - 2} \brk{\manifold{N}}\) are finite and the homotopy group \(\pi_{m - 1}\brk{\manifold{N}}\) is trivial.
\end{theorem}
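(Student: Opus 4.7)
The plan is to adapt the new construction developed for the subcritical \cref{theorem_extension_halfspace} to the critical exponent $p = m$. Since $\floor{m - 1} = m - 1$ and $\floor{m - 2} = m - 2$, the topological hypotheses---finiteness of $\pi_1(\manifold{N}), \dotsc, \pi_{m-2}(\manifold{N})$ and triviality of $\pi_{m-1}(\manifold{N})$---are precisely those of \cref{theorem_extension_halfspace} formally evaluated at $p = m$, so the topological framework of that theorem carries over. Necessity is already contained in \cite{Mironescu_VanSchaftingen_2021_AFST}*{Th.\thinspace{}1.10 (b)}, so the work is in constructing, under the homotopy assumptions, an extension satisfying the claimed linear estimate.

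For sufficiency the crucial feature is that both sides of the inequality are invariant under the rescaling $x \mapsto \lambda x$: the construction must be scale-invariant, and no reduction to small data is available from homogeneity alone. I would proceed by a Whitney-type dyadic decomposition of the half-space $\Rset^m_+$ tied to the boundary $\Rset^{m-1}$, associating to each boundary cube $Q$ its local fractional energy
\[
 E(Q) \defeq \iint_{2Q \times 2Q} \frac{d(u(y), u(z))^m}{\abs{y - z}^{2m - 2}} \dif y \dif z,
\]
and fixing the smallness threshold $\eta \in \intvo{0}{\infty}$ beyond which the classical VMO-based extension furnishes a local analogue of \eqref{eq_umooceikuoy9raey2ahThahb}. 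On cubes with $E(Q) \le \eta$ (the \emph{good} cubes) I apply this classical extension, which reproduces the boundary seminorm over $2Q$ up to a fixed constant. On the remaining \emph{bad} cubes---whose cardinality at each dyadic scale is bounded by $\eta^{-1}$ times the total energy---I apply the new construction from the proof of \cref{theorem_extension_halfspace}: the finiteness of $\pi_1(\manifold{N}), \dotsc, \pi_{m-2}(\manifold{N})$ handles the obstruction-free extension through the lower-dimensional skeleta of a triangulation of $Q$, and the triviality of $\pi_{m-1}(\manifold{N})$ allows closing off the top-dimensional cells of the extension into $Q \times \intvo{0}{\diam Q}$. By scale invariance of that construction, each bad cube contributes a fixed amount to the total energy.

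The extensions are then glued across neighbouring cubes via a Lipschitz interpolation in a shell of width comparable to the cube size, using the linear trace theory to bound the interpolation energy by the boundary seminorm on $\partial Q$. Summing the good-cube contributions reproduces an absolute multiple of the full fractional energy of $u$, whereas summing the bounded bad-cube contributions is controlled by the number of bad cubes, hence again by the input energy. The case of a general $u \in \smash{\dot{W}}^{1 - 1/m, m}(\Rset^{m-1}, \manifold{N})$, not necessarily smooth, follows by approximation by maps in $R^1_{-1}(\Rset^{m-1}, \manifold{N}) = C^1(\Rset^{m-1}, \manifold{N})$ and lower semicontinuity of the Sobolev energy, the density being available at $sp = m - 1$ on an $(m-1)$-dimensional domain.

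The main obstacle is keeping the constants scale invariant throughout: at $p = m$ the transition between good and bad cubes, and especially the Lipschitz gluing across cube boundaries, must not introduce any multiplicative factor depending on the scale or on the number of dyadic levels involved. This requires the interpolation layer to sit in a shell whose thickness is a fixed fraction of the cube size and an accurate count of bad cubes by scale so that the total bad-cube cost telescopes to an absolute multiple of the input energy, rather than accumulating logarithmically across scales as a naive summation would yield.
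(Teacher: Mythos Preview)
The paper's proof is a one-line application of \cref{theorem_extension_tent_subcritical} with $\Omega = \Rset^{m-1}$: that theorem already covers the integer endpoint $p = m$ (taking $\ell = m-1$), using the finiteness of $\pi_1(\manifold{N}), \dotsc, \pi_{m-2}(\manifold{N})$ inside \cref{proposition_Lipschitz_extension_skeleton} and the triviality of $\pi_{m-1}(\manifold{N})$ to pass from the $(m-1)$-skeleton of the singular cubes to the full cubes; density of smooth maps in the critical fractional space supplies the approximation. You correctly identify the roles of the homotopy hypotheses, the approximation step, and that necessity is in \cite{Mironescu_VanSchaftingen_2021_AFST}.

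Your proposed route---boundary cubes, a local extension on each, then Lipschitz gluing---hides a genuine gap. The step ``on bad cubes, apply the new construction'' is not a self-contained operation: bad cubes can cluster arbitrarily, so extending into $Q \times \intvo{0}{\diam Q}$ for a bad $Q$ requires boundary data on the lateral faces, which depends on the as yet unconstructed extensions over neighbouring bad cubes. The paper resolves this globally, not cube by cube: it works with dyadic cubes \emph{in the half-space} and enlarges the bad cubes to a collection of \emph{singular} cubes via the propagation-and-decay construction of \cref{proposition_general_modification}, crucially averaging over the $2^{m-1}$ horizontal translations at each scale so that the singular-face count decays by a factor $2^{-(m-\ell)} = 2^{-1}$ per scale. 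This yields a single descending retraction $\Psi^{\mathrm{sing}}$ pulling the whole singular skeleton back into the good region, after which \cref{proposition_Lipschitz_extension_skeleton} applies; there is no separate gluing step. Your closing worry about logarithmic accumulation across scales is precisely the symptom of missing this averaging mechanism: without it, a bad cube at scale $2^{k}$ spawns one singular cube at every subsequent scale, and at $p = m$ their contributions $\sum_{i \ge k} 2^{i(m-p)}$ diverge.
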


\medbreak 

Similarly, we get extensions with linear estimates in the collar neighbourhood case.

\begin{manualtheorem}{\ref*{theorem_estimate_critical_halfspace}\thprime}
\label{theorem_estimate_critical_collar}
Let \(\manifold{M}'\) be an \((m - 1)\)-dimensional compact Riemannian manifold.
There exists a constant \(C \in \intvo{0}{\infty}\) such that for every mapping \(u \in \smash{\dot{W}}^{1 - 1/m, m}\brk{\manifold{M}', \manifold{N}}\) there is a mapping \(U \in \smash{\dot{W}}^{1, m} \brk{\manifold{M}' \times \intvr{0}{1}, \manifold{N}}\) such that \(\tr_{\manifold{M}'} U = u\) and 
\begin{equation}
\label{eq_eing7aeth4uk1yiemee4Wa4N}
  \smashoperator[r]{\int_{\manifold{M}' \times \intvr{0}{1}}}
  \abs{\Deriv U}^m
  \le C
  \smashoperator{
  \iint_{\manifold{M}' \times \manifold{M}'}}
  \frac{d \brk{u \brk{y}, u \brk{z}}^m}{d\brk{y, z}^{2 m - 2}} \dif y \dif z
\end{equation}
if and only if the homotopy groups \(\pi_1 \brk{\manifold{N}}, \dotsc, \pi_{m - 2} \brk{\manifold{N}}\) are finite and the homotopy group \(\pi_{m - 1}\brk{\manifold{N}}\) is trivial.
\end{manualtheorem}

\medbreak 

Finally, we get a sufficient condition for extensions satisfying a non-linear estimate in the global case.

\begin{manualtheorem}{\ref*{theorem_estimate_critical_halfspace}\thsecond}
\label{theorem_estimate_critical_global}
Let \(\manifold{M}\) be an \(m\)-dimensional manifold with compact boundary \(\partial \manifold{M} \ne \emptyset\).
If the homotopy groups \(\pi_1 \brk{\manifold{N}}, \dotsc, \pi_{m - 2} \brk{\manifold{N}}\) are finite and if the homotopy group \(\pi_{m - 1}\brk{\manifold{N}}\) is trivial, then there exists then a convex function \(\Theta \colon  \intvr{0}{\infty} \to \intvr{0}{\infty}\) such that \(\Theta \brk{0} = 0\)  and such that  for every mapping \(u \in \smash{\smash{\dot{W}}^{1 - 1/m, m}} \brk{\partial \manifold{M}, \manifold{N}}\) which is homotopic in \(\mathrm{VMO}\brk{\partial \manifold{M}, \manifold{N}}\) to the restriction to \(\partial \manifold{M}\) of a continuous map from \(\manifold{M}\) to \(\manifold{N}\), there is a mapping \(U \in \smash{\dot{W}}^{1, m}\brk{\manifold{M}, \manifold{N}}\) such that \(\tr_{\partial \manifold{M}} U = u\) and
\begin{equation}
\label{eq_saoN0uZeJor0leichieng7go}
  \int_{\manifold{M}}
  \abs{\Deriv U}^m
  \le  \Theta \brk[\bigg]{
  \smashoperator[r]{
  \iint_{\partial \manifold{M} \times \partial \manifold{M}}}
  \frac{d \brk{u \brk{y}, u \brk{z}}^m}{d\brk{y, z}^{2 m - 2}} \dif y \dif z}.
\end{equation} 
\end{manualtheorem}

In contrast to Theorems \ref{theorem_estimate_critical_halfspace} and \ref{theorem_estimate_critical_collar}, the conditions on the homotopy groups in \cref{theorem_estimate_critical_global} are not proved to be necessary any more because the known proofs of the necessity of such conditions are known for linear estimates \cite{Mironescu_VanSchaftingen_2021_AFST}*{Th.\thinspace{}1.10 (b)} and  \eqref{eq_saoN0uZeJor0leichieng7go} is not linear.
On the other hand, the conclusions of \cref{theorem_estimate_critical_global}
cannot hold for a general target manifold \(\manifold{N}\): quite dramatically, 
if \(\manifold{M} = \smash{\widebar{\Bset}}^m\) and if the homotopy group \(\pi_{m - 1} \brk{\manifold{N}}\) is non-trivial, then it is not even possible to find an  extension whose the Sobolev semi-norm is controlled non-linearly by the Sobolev semi-norm of the trace; this motivated the construction of singular extensions with Sobolev-Marcinkiewicz estimates
in which some weak-type \(L^m\) quantity of the weak derivative is controlled \citelist{\cite{Petrache_Riviere_2014}\cite{Petrache_VanSchaftingen_2017}\cite{Bulanyi_VanSchaftingen}}.

\subsection{Construction of the extension}
Before outlining how we construct the extension of traces of Sobolev mappings, let us explain why the existing strategies for constructing extensions do not seem to be adaptable to the case where the first homotopy groups are merely finite rather than trivial.

Given a boundary value \(u \in \smash{\dot{W}}^{1 - 1/p, p} \brk{\manifold{M}, \manifold{N}}\), Hardt and Lin first take in their original construction \cite{Hardt_Lin_1987} a linear extension \(V \in \smash{\smash{\dot{W}}^{1 - 1/p, p} \brk{\manifold{M}, \smash{\Rset^\nu}}}\) such that \(\tr_{\partial \manifold{M}} V = u\) but that does not necessarily satisfy the constraint to take its values in the target manifold \(\manifold{N}\).
The objective is then to suitably modify  \(V\) to satisfy the constraint on the target while keeping the same trace. 
Thanks to the assumption that the first homotopy groups \(\pi_1 \brk{\manifold{N}}, \dotsc, \smash{\pi_{\floor{p - 1}}\brk{\manifold{N}}}\) are all trivial, there exists a \(\brk{\nu - \floor{p} - 1}\)-dimensional set \(\manifold{S} \subseteq \Rset^\nu\) and a mapping \(\Phi \in C^1 \brk{\Rset^\nu \setminus \manifold{S}, \manifold{N}}\) such that \(\Phi \restr{\manifold{N}} = \id\) and for every \(x \in \Rset^\nu \setminus \manifold{S}\),  \(\abs{\Deriv \Phi (x)}\le C/\dist\brk{x, \manifold{S}}\) \cite{Hardt_Lin_1987}*{Lem.\thinspace{}6.1}.
Because of the singularity, the map \(\Phi \compose V \colon \manifold{M} \to \manifold{N}\) does not necessarily belong to \(\smash{\dot{W}}^{1, p} \brk{\manifold{M}, \manifold{N}}\); however a Fubini-type argument shows that for an appropriate \(h \in \Rset^\nu\) small enough, one has \(\Psi \brk{\cdot - h} \compose V \in \smash{\dot{W}}^{1, p} \brk{\manifold{M}, \manifold{N}}\) and one can then set \(U \defeq \Psi \brk{\cdot - h}\restr{\manifold{N}}^{-1} \compose \Psi \brk{\cdot - h} \compose V\) and obtain a suitable Sobolev extension.
The non-linear part of the construction of the extension is performed on the \emph{target side}, through the construction of \(\Phi\) and the composition of a perturbed version of \(\Phi\) with \(V\).
The topological arguments underlying the construction of \(\Phi\)  crucially exploit the triviality of the first homotopy groups to define \(\Phi\) inductively on skeletons of increasing dimension up to \(\floor{p}\) before performing some singular homogeneous extensions; the triviality of these homotopy groups is actually necessary for the existence of such a \(\Phi\).

The treatment of the case where the fundamental group \(\pi_{1} \brk{\manifold{N}}\) is merely finite \cite{Mironescu_VanSchaftingen_2021_AFST} relies on a classical trick of passing through a lifting over the universal covering which has no prospect for generalisation since there is no working analogue of the universal covering that would kill homotopy groups beyond the fundamental group.

The construction performed in this work takes a radically different paradigm, modifying the linear extension \(V\) in its domain rather than in its target as it has been done since the work of  Hardt and Lin. 
The relationship between our method and theirs can be compared to the one between Bethuel's strong approximation of Sobolev mappings \cite{Bethuel_1991} and that of Hajłasz \cite{Hajlasz_1994}, generalising the work of Bethuel and Zheng for spheres \cite{Bethuel_Zheng_1988}*{Th.\thinspace{}1}.

\medskip

We explain now our construction. 
To fix the ideas let us consider the case where we want to extend a mapping \(u \in \smash{\dot{W}}^{1 - 1/p, p} \brk{\Rset^{m - 1}, \manifold{N}}\) to some mapping \(U \in \smash{\dot{W}}^{1, p} \brk{\Rset^{m}_+, \manifold{N}}\).
We start our construction by taking as before a function \(V \in \smash{\dot{W}^{1, p} \brk{\Rset^{m}_+, \Rset^\nu}}\) which satisfies the trace condition \(\tr_{\Rset^{m - 1}} V = u\) \emph{but does not necessarily take its values in the target manifold \(\manifold{N}\).}

Since \(\manifold{N}\) is an isometrically embedded submanifold of the Euclidean space \(\Rset^{\nu}\), the nearest-point projection \(\Pi_{\manifold{N}}\) is well-defined and smooth in a neighbourhood of \(\manifold{N}\) in \(\Rset^\nu\).
It is natural to want to take for the extension \(U\) the mapping \(\Pi_{\manifold{N}} \compose V\) in the \emph{good set} \(\smash{\widehat{\Omega}}^{\mathrm{good}} \subseteq \Rset^{m}_+\) defined as the set of points where \(V\) is sufficiently close to \(\manifold{N}\) so that  \(\Pi_{\manifold{N}} \compose V\) is well-defined there (see \eqref{eq_BuCo1theeQuierowech2fai4}).
The size of the remaining \emph{bad set} \(\smash{\widehat{\Omega}}^{\mathrm{bad}} = \Rset^m_+ \setminus \smash{\widehat{\Omega}}^{\mathrm{good}}\) can be controlled (see \eqref{eq_ZeeRa8Eiteech3mooxaeh6wu}) as
\begin{equation}
\label{eq_ietiegeepiukaiGie9xah2Qu}
\int_{\smash{\widehat{\Omega}}^{\mathrm{bad}}}
 \frac{1}{x_m^p}
 \dif x 
 \le 
 \C \smashoperator{\iint_{\Rset^{m - 1} \times \Rset^{m - 1}}}
  \frac{d\brk{u \brk{y}, u \brk{z}}^p}{\abs{y - z}^{p + m - 2}} \dif y \dif z.
\end{equation}
In particular \eqref{eq_ietiegeepiukaiGie9xah2Qu} shows that the bad set is repelled by the boundary, with larger \(p\) producing a stronger repulsion.
The problem is then to extend the map \(\Pi_{\manifold{N}} \compose V\) to the bad set \(\smash{\widehat{\Omega}}^{\mathrm{bad}}\) --- or, more generally, as it will turn out to be necessary, to redefine \(\Pi_{\manifold{N}} \compose V\) on a small part of the  good set \(\smash{\widehat{\Omega}}^{\mathrm{good}}\) in such a way that the new map can be extended to the bad set \(\smash{\widehat{\Omega}}^{\mathrm{bad}}\) with Sobolev regularity and the same trace.

A  first rather technical difficulty with the bad set \(\smash{\widehat{\Omega}}^{\mathrm{bad}}\) is its \emph{lack of structure} as a merely open set.
Instead of the bad set \(\smash{\widehat{\Omega}}^{\mathrm{bad}}\), we consider the collection of \emph{bad cubes} \(\smash{\mathscr{Q}^{\mathrm{bad}}}\) of a Whitney decomposition of \(\Rset^m_+\) which intersect the bad set \(\smash{\widehat{\Omega}}^{\mathrm{bad}}\) (see \eqref{eq_definition_bad_cube} and \cref{figure_badsetcubes}). 
The size of the collection of bad cubes \(\smash{\mathscr{Q}^{\mathrm{bad}}}\) is controlled similarly to the bad set \(\smash{\smash{\widehat{\Omega}}^{\mathrm{bad}}}\) in \eqref{eq_ietiegeepiukaiGie9xah2Qu} as
\begin{equation}
\label{eq_eemi5be3ik4iej5Ang6jiewa}
\int_{\bigcup \mathscr{Q}^{\mathrm{bad}}}
 \frac{1}{x_m^p}
 \dif x 
 \le 
 \C \smashoperator{\iint_{\Rset^{m - 1} \times \Rset^{m - 1}}}
  \frac{d\brk{u \brk{y}, \brk{z}}^p}{\abs{y - z}^{p + m - 2}} \dif y \dif z,
\end{equation}
provided that the linear extension \(V\) is done by convolution (see \eqref{eq_shee0Ceingooghe2weefei4A}), which has been the most natural and usual way to perform an extension from the very beginning of the theory of traces in Sobolev spaces \cite{Gagliardo_1957}.
This part of the construction can be thought of as a counterpart for the extension of Sobolev mappings to Bethuel's construction of good and bad cubes for their approximation \cite{Bethuel_1991}.

Thus, given a collection of singular cubes \(\smash{\mathscr{Q}^{\mathrm{sing}}}\), we are led to construct a Sobolev extension from the boundary of its the union \( \mathscr{Q}^{\mathrm{sing}}\). 
The key observation in \sectref{section_topological_to_sobolev} is that if the first homotopy groups \(\pi_{1} \brk{\manifold{N}}, \dotsc, \smash{\pi_{\ell - 1} \brk{\manifold{N}}}\) are all \emph{finite} and if there exists a \emph{continuous extension} to the \(\ell\)-dimensional skeleton of the singular cubes with \(\ell \defeq \min \brk{\floor{p}, m}\), then there exists a \emph{Sobolev extension} \(U\) to \(\bigcup \mathscr{Q}^{\mathrm{sing}}\) such that
\begin{equation}
\label{eq_Ieseixai6eeS4of9oova5eiS}
\int_{\bigcup \mathscr{Q}^{\mathrm{sing}}}
\abs{\Deriv U}^p
\le 
\C 
\int_{\bigcup \mathscr{Q}^{\mathrm{sing}}}
 \frac{1}{x_m^p}
 \dif x 
\end{equation}
(see \cref{proposition_Lipschitz_extension_skeleton}).
This part of the proof is the only one where the finiteness assumption on the first homotopy groups \(\pi_{1} \brk{\manifold{N}}, \dotsc, \pi_{\ell - 1} \brk{\manifold{N}}\) plays a role; essentially it allows to obtain suitable finite constants as maxima among the finitely many homotopy classes to be considered when improving the continuous extension to a Sobolev one on each \(j\)-dimensional face when \(j \in \set{1, \dotsc, \ell - 1}\).

If stronger assumptions à la Hardt and Lin \cite{Hardt_Lin_1987} were satisfied, that is, if the homotopy groups \(\pi_1 \brk{\manifold{N}}, \dotsc, \pi_{\ell - 1} \brk{\manifold{N}}\) were all \emph{trivial}, then the construction of a Sobolev extension would be essentially complete.
Indeed, one could take \( \mathscr{Q}^{\mathrm{sing}} = \mathscr{Q}^{\mathrm{bad}}\) and the existence of a continuous extension to the \(\ell\)-dimensional skeleton of the singular cubes \( \mathscr{Q}^{\mathrm{sing}}\) would follow immediately from the assumption that the first homotopy groups \(\pi_1 \brk{\manifold{N}}, \dotsc, \smash{\pi_{\ell - 1} \brk{\manifold{N}}}\) are all trivial.
Combining \eqref{eq_eemi5be3ik4iej5Ang6jiewa} and \eqref{eq_Ieseixai6eeS4of9oova5eiS} would then give the conclusion. 
Even in this case, the resulting construction of the extension is based on manipulation on the domain side would be radically different from the one of Hardt and Lin who work on the target side.

\medbreak

In the general case where all the first homotopy groups  \(\pi_1 \brk{\manifold{N}}, \dotsc, \pi_{\ell - 1} \brk{\manifold{N}}\) are assumed to be \emph{finite} rather than trivial, we will need to construct a \emph{larger collection of singular cubes} \( \mathscr{Q}^{\mathrm{sing}} \supseteq \mathscr{Q}^{\mathrm{bad}}\) which is large enough to bear a continuous extension from the boundary of its union to its \(\floor{p}\)-dimensional skeleton but still small enough to keep the right-hand side of  \eqref{eq_Ieseixai6eeS4of9oova5eiS} under control.
Our goal is to choose the singular cubes \(\smash{\mathscr{Q}^{\mathrm{sing}}}\) by adding to the bad cubes \(\smash{\mathscr{Q}^{\mathrm{bad}}}\) as few cubes as possible to simplify enough the topology of \(\bigcup \mathscr{Q}^{\mathrm{sing}}\) so that we can get a continuous extension on its \(\floor{p}\)-dimensional skeleton.
Although we will not be working in those terms, 
one can think of our aim as adding a controlled small number of cubes so that some homotopy groups of the union of singular cubes become trivial.

The easiest case to construct such a collection of singular cubes is the \emph{supercritical} one \(p > m\) developed in \sectref{section_supercritical}. 
Singular cubes can then be constructed according to the following simple rule: they are defined recursively as those cubes that are either bad or are adjacent to a singular cube at the previous scale (see \cref{figure_singretr2} and the proof of \cref{proposition_modifiedcubes_supercritical}).
A given bad cube at the scale \(2^{k}\) will generate at most a single cube on any larger scale \(2^i\) with \(i \in \Zset\) and \(i > k\). The total contribution of these singular cubes to the integral on the right-hand side of \eqref{eq_Ieseixai6eeS4of9oova5eiS} will be of the order of
\begin{equation}
\label{eq_Reing5aicaitaiM7he8ohsha}
 \sum_{i = k}^{\infty} 2^{i \brk{m - p}} = \frac{2^{k \brk{m-p}}}{1 - 2^{-\brk{p - m}}},
\end{equation}
which is comparable to the contribution of the original bad cube in the integral on the left-hand side of \eqref{eq_ietiegeepiukaiGie9xah2Qu}.
Summing up all the contributions of all the singular cubes that are generated in this way, we then get the estimate
\begin{equation}
\label{eq_ceikat4ooJ5eegheQuaa6eeg}
 \int_{\bigcup \mathscr{Q}^{\mathrm{sing}}}
 \frac{1}{x_m^p}
 \dif x 
 \le \C \int_{\bigcup \mathscr{Q}^{\mathrm{bad}}}
 \frac{1}{x_m^p}
 \dif x,
\end{equation}
which, together with \eqref{eq_eemi5be3ik4iej5Ang6jiewa} and \eqref{eq_Ieseixai6eeS4of9oova5eiS}, gives the supercritical extension result in the half-space \cref{theorem_estimate_critical_halfspace}.
Incidentally, the estimates \eqref{eq_Reing5aicaitaiM7he8ohsha} and \eqref{eq_ceikat4ooJ5eegheQuaa6eeg} deteriorate as \(p \searrow m\) and fail in the limit \(p = m\); as explained later, in this case it will be necessary to tweak the argument and assume that the homotopy group \(\pi_{m - 1} \brk{\manifold{N}}\) is trivial.

We now consider the \emph{subcritical} case \(1 < p \le m\).
First, we note that defining singular cubes as we did when \(p > m\) would not work since the sum of the geometric series in \eqref{eq_Reing5aicaitaiM7he8ohsha} would diverge and thus \eqref{eq_ceikat4ooJ5eegheQuaa6eeg} would not hold.

\begin{figure}
\begin{center}
 \includegraphics{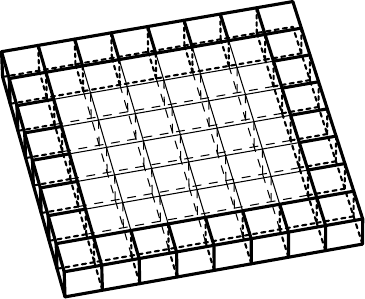}
 \includegraphics{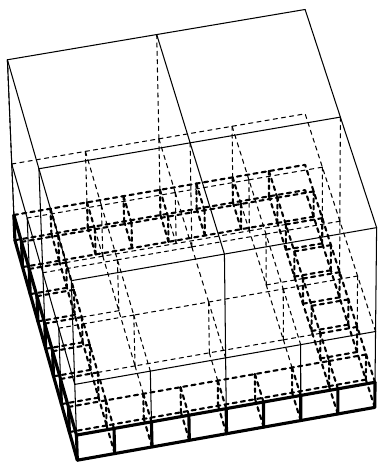}
\end{center}
\caption{Given the collection of bad cubes (with thick edges in both drawings), the \emph{slab construction} on the left fills the cubic toroid horizontally with singular cubes of the same scale; the size of the resulting singular set (see \eqref{eq_diesheeTeeh3eepus5ohngoo}) is asymptotically much larger than that of the original collection of bad cubes (see \eqref{eq_diesheeTeeh3eepus5ohngoo}).
The \emph{dome construction} on the right covers the cubic toroid of bad cubes with successive cubic toroids at larger scales up to the scale where the singular set becomes simply-connected; the size of the resulting singular set (see \eqref{eq_lohph8ahL2yuegootheephee}) is asymptotically comparable to the size of the original collection of bad cubes (see \eqref{eq_diesheeTeeh3eepus5ohngoo}).
}
\label{figure-slab-dome}
\end{figure}
Next, in order to fix the ideas and illustrate the situation in spaces of reasonable dimension, let us assume that we want to extend a map \(u \in C^1 \brk{\Rset^2, \manifold{N}}\cap \smash{\dot{W}}^{1 - 1/p, p} \brk{\Rset^2, \manifold{N}}\) to some \(U \in \smash{\dot{W}}^{1, p} \brk{\Rset^3_+, \manifold{N}}\) with \(2 < p < 3\), as in \cref{theorem_characterization_halfspace}.
Then we have to find a collection of singular cubes \(\smash{\mathscr{Q}^{\mathrm{sing}}}\) containing the bad cubes \(\smash{\mathscr{Q}^{\mathrm{bad}}}\) and such that we have a continuous extension to its \(2\)-dimensional skeleton.
For example, the bad cubes could form a cubic toroid in \(\Rset^3_+\); 
in general, a continuous map on the boundary of this cubic toroid cannot be extended to the \(2\)-dimensional faces that it contains.
To fix ideas even more imagine this cubic toroid consists of \(4 \cdot \brk{2^{j} - 1}\) cubes of edge-length \(2^k\) arranged along a square of side \(2^{j + k}\), corresponding to the cubes with thicker edges in \cref{figure-slab-dome}.
The contribution of this cubic toroid to the size of the collection of bad cubes as in \eqref{eq_eemi5be3ik4iej5Ang6jiewa} is of the order of
\begin{equation}
\label{eq_diesheeTeeh3eepus5ohngoo}
   4\brk{2^{j} - 1} 2^{k \brk{3 - p}} \simeq 
   4 \cdot 2^{j + k \brk{3 - p}}.
\end{equation}
One way of ensuring the existence of a continuous extension to the two-dimensional faces of the singular cubes is to have the singular cubes form a contractible set. 
There are two ways to achieve this:
the \emph{slab construction} in which cubes of edge-length \(2^k\) are used to fill in the square (as illustrated on the left of \cref{figure-slab-dome}), resulting in a contribution to the size of the collection of singular cubes of the order of
\begin{equation}
\label{eq_theebeipiirequoNa4Iethop}
 2^{2 j + k \brk{3 - p}},
\end{equation}
and the \emph{dome construction} which consists in taking as singular cubes all the cubes that are either bad or that are adjacent to singular cubes at the previous scale, and stopping at the scale \(2^{k + j - 1}\) where the singular set becomes contractible (as illustrated on the right of \cref{figure-slab-dome}), with a size of the collection of singular cubes which is then of the order of
\begin{equation}
\label{eq_lohph8ahL2yuegootheephee}
   \smashoperator{\sum_{i = k}^{k + j - 1}} 4\brk{2^{j + k - i} - 1} 2^{i \brk{3 - p}}
   \simeq 
   \smashoperator{\sum_{i = k}^{k + j - 1}} 4\brk{2^{j + k - i\brk{p - 2}}} \le \frac{4 \cdot 2^{j + k \brk{3 - p}}}{1 - 2^{-\brk{p - 2}}}.
\end{equation}
It turns out that while \eqref{eq_diesheeTeeh3eepus5ohngoo} does not control the size of the collection of singular cubes of the slab construction \eqref{eq_theebeipiirequoNa4Iethop}, it does control that of the dome construction \eqref{eq_lohph8ahL2yuegootheephee}, with a control sufficient to achieve \eqref{eq_ceikat4ooJ5eegheQuaa6eeg}.

In general the collection of bad cubes will not have such a simple toroidal structure as the example just outlined, but it is still possible to construct reasonably small collections of singular cubes.
Basically we construct vertical singular faces, by first defining vertical bad faces as vertical faces separating two bad cubes and defining vertical singular faces as either vertical bad faces or vertical faces adjacent to a singular vertical face at the previous scale. 
Although vertical singular faces could propagate vertically indefinitely, it is possible, by an \emph{averaging argument} through the \(2^2 = 4\) different possible configurations of a layer of cubes at each scale, to choose a good dyadic decomposition of \(\Rset^3_+\) such that the size of the collection of singular cubes \(\smash{\mathscr{Q}^{\mathrm{sing}}}\) is controlled as in \eqref{eq_ceikat4ooJ5eegheQuaa6eeg} (see \cref{proposition_general_modification}), which together with \eqref{eq_ietiegeepiukaiGie9xah2Qu} and \eqref{eq_Ieseixai6eeS4of9oova5eiS} gives then the conclusion for smooth maps (see \cref{theorem_extension_tent_subcritical}).
Finally, if the homotopy group \(\pi_2 \brk{\manifold{N}}\) is trivial then \(C^1 \brk{\Rset^2, \manifold{N}}\cap \smash{\smash{\dot{W}}^{1 - 1/p, p}} \brk{\Rset^2, \manifold{N}}\) is strongly dense in the space \(\smash{\smash{\dot{W}}^{1 - 1/p, p}} \brk{\Rset^2, \manifold{N}}\) \citelist{\cite{Bethuel_1995}\cite{Brezis_Mironescu_2015}\cite{Mucci_2009}} so that \cref{theorem_extension_halfspace} follows by a standard approximation and compactness argument.

The construction just described can be adapted to cover the \emph{critical case} \(p = 3\). 
Indeed, the collection of the singular cubes constructed still satisfies the estimate \eqref{eq_ceikat4ooJ5eegheQuaa6eeg} and there is a continuous extension to the faces of the singular cubes, which is not sufficient to get a Sobolev extension when \(p = 3\). 
However, the additional assumption that \(\pi_{2} \brk{\manifold{N}}\) is trivial can be used to perform a further continuous extension to the interior of the singular cubes, from which one can get a Sobolev extension.

As developed in \sectref{section_subcritical}, the approach outlined above works for any dimension \(m \in \Nset \setminus \set{0, 1}\) and any \(p \in \intvl{1}{m}\), under the additional assumption that the homotopy group \(\pi_{p - 1} \brk{\manifold{N}}\) is trivial when \(p\) is an integer which allows to upgrade the continuous extension to the \(\brk{p - 1}\)-dimensional skeleton provided by the construction of the singular cubes into a continuous extension to a \(p\)-dimensional skeleton, giving the proofs of Theorems \ref{theorem_extension_halfspace}, \ref{theorem_characterization_halfspace} and \ref{theorem_estimate_critical_halfspace}.

\medbreak 

The extension to a \emph{collar neighbourhood} (Theorems \ref{theorem_extension_collar}, \ref{theorem_characterization_collar}, \ref{theorem_estimate_supercritical_collar} and \ref{theorem_estimate_critical_collar}) is done in \sectref{section_collar} by decomposing the domain into suitable subsets on which the construction of the half-space works.

The \emph{global extension} (Theorems \ref{theorem_extension_global}, \ref{theorem_characterization_global} and \ref{theorem_estimate_critical_global}) performed in \sectref{section_global} is a combination of the extension to a collar neighbourhood and suitable compactness arguments for the extensions of nice mappings whose successive traces on skeletons lie in first-order Sobolev spaces.

We close our work in \sectref{section_necessity_approximation} with proofs of the necessity of the approximation conditions in the characterisations of traces of Theorems \ref{theorem_characterization_halfspace}, \ref{theorem_characterization_collar} and \ref{theorem_characterization_global}.

\section{Extension outside a controlled bad set}
\label{section_badset}
\resetconstant

\subsection{Good set and bad set}
It is natural to perform extensions on the \emph{tent} \(\smash{\widehat{\Omega}}   \subseteq \Rset^m_+\) over a given open set \(\Omega \subseteq \Rset^{m - 1}\) which is defined as 
\begin{equation}
\label{eq_epheiY6eilo4phaichoC9ton}
 \widehat{\Omega}
 \defeq 
 \set{ \brk{x', x_m} \in \Rset^m_+ \st 
 \overline{\Bset^{m - 1}_{x_m} \brk{x'}} \subseteq \Omega},
\end{equation}
where \(\overline{\Bset^\ell_r \brk{a}}\) is the closure of the open ball the 
\(\Bset^\ell_r \brk{a}\) of radius \(r\) around \(a\) in \(\Rset^\ell\).
Following linear extension methods, every Sobolev function \(u \in \smash{\dot{W}^{1 - 1/p, p} \brk{\Omega, \Rset^\nu}}\) is the trace of a function \(V \in W^{1, p} \brk{\smash{\widehat{\Omega}}, \Rset^\nu}\) constructed by averaging,
that is, given a fixed function \(\varphi \in C^\infty (\Rset^{m - 1}, \Rset)\) such that \(\int_{\Rset^{m - 1}} \varphi = 1\) and \(\supp \varphi \subseteq \Bset^{m - 1}\), the function \(V \colon \smash{\widehat{\Omega}} \to\Rset^\nu\) is defined for each \(x = \brk{x', x_m} \in \smash{\widehat{\Omega}}\) by 
\begin{equation}
\label{eq_shee0Ceingooghe2weefei4A}
 V \brk{x} \defeq \frac{1}{x_{m}^{m - 1}} \int_{\Omega} u \brk{z} \,\varphi \brk[\big]{\tfrac{x' - z}{x_{m}}} \dif z 
 = \int_{\Rset^{m - 1}} u\brk{x' - x_{m} y} \, \varphi\brk{y} \dif y.
\end{equation}
Following Gagliardo’s classical construction and estimate of extension of traces in Sobolev spaces \cite{Gagliardo_1957} (see also \citelist{\cite{Leoni_2023}*{Th.\thinspace{}9.4}\cite{Mazya_2011}*{\S 10.1.1 Th.\thinspace{}1}\cite{DiBenedetto_2016}}), there exists a constant \(\Cl{cst_uboh3EeShaiHoo8Geehew3Ei} \in \intvo{0}{\infty}\) such that for every function \(u \in \smash{\dot{W}}^{1 - 1/p, p} \brk{\Omega, \Rset^\nu}\),
the function \(V\) defined by \eqref{eq_shee0Ceingooghe2weefei4A} belongs to the Sobolev space \(\smash{\dot{W}}^{1, p} \brk{\smash{\widehat{\Omega}}, \Rset^\nu}\), and satisfies the condition \(\tr_{\Omega} V = u\) and the inequality
\begin{equation}
\label{eq_oojaeShoo1egh8shaid5sais}
  \int_{\widehat{\Omega}} \abs{\Deriv V}^p
  \le
  \Cr{cst_uboh3EeShaiHoo8Geehew3Ei}
  \smashoperator{\iint_{\Omega \times \Omega}}
  \frac{\abs{u (y) - u (z)}^p}{\abs{y - z}^{p + m - 2}} \dif y \dif z.
\end{equation}

The boundary datum \(u \in \smash{\dot{W}}^{1 - 1/p, p} \brk{\Omega, \manifold{N}}\) that we are interested in satisfies the non-linear constraint that \(u \in \manifold{N} \subseteq \Rset^\nu\) almost everywhere on \(\Omega\).
Since the manifold \(\manifold{N}\) is  isometrically embedded into the Euclidean space \(\Rset^\nu\), 
the Euclidean distance in \(\Rset^\nu\) is controlled by the geodesic distance in \(\manifold{N}\) and \eqref{eq_oojaeShoo1egh8shaid5sais} readily implies 
\begin{equation}
\label{eq_joos6eteeB6aSh5pa2ahnga9}
   \int_{\widehat{\Omega}} \abs{\Deriv V}^p
  \le
  \Cr{cst_uboh3EeShaiHoo8Geehew3Ei}
  \smashoperator{\iint_{\Omega \times \Omega}}
  \frac{d\brk{u (y), u (z)}^p}{\abs{y - z}^{p + m - 2}} \dif y \dif z.
\end{equation}
Since the manifold \(\manifold{N}\) is also compact, it is a bounded submanifold of  \(\Rset^\nu\) and it follows from the definition of \(V\) in \eqref{eq_shee0Ceingooghe2weefei4A} that for every \(x = \brk{x', x_m} \in \smash{\widehat{\Omega}}\), we also have the pointwise gradient estimate
\begin{equation}
\label{eq_Di5lainai8xoa5ruacuoxeih}
  \abs{\Deriv V \brk{x}} \le \C \frac{\diam \brk{\manifold{N}}}{x_m}.
\end{equation}

\medbreak

Since the submanifold \(\manifold{N}\) is not a convex subset of \(\Rset^\nu\), in general we do not have \(V \not \in \manifold{N}\) in \(\smash{\widehat{\Omega}}\). 
A classical technique to recover the manifold constraint for Sobolev mappings is the \emph{nearest point retraction} \(\Pi_{\manifold{N}} \colon \manifold{N} + \Bset^\nu_{\delta_{\manifold{N}}} \to \manifold{N}\) which is well-defined and smooth provided \(\delta_{\manifold{N}} \in \intvo{0}{\infty}\) is small enough \cite{Moser_2005}*{\S 3.1}. 
Defining the \emph{good set}
\begin{equation}
\label{eq_BuCo1theeQuierowech2fai4}
  \smash{\widehat{\Omega}}^{\mathrm{good}} \defeq  \set{x \in \widehat{\Omega} \st  \dist (V\brk{x}, \manifold{N})  \le \delta_{\manifold{N}}}, 
\end{equation}
and the \emph{bad set} 
\begin{equation}
\label{eq_vuaPh9vaengoo9aiHeeshaer}
  \smash{\widehat{\Omega}}^{\mathrm{bad}} \defeq \set{x \in \widehat{\Omega} \st  \dist (V\brk{x}, \manifold{N}) > \delta_{\manifold{N}}},
\end{equation}
so that the tent set \(\smash{\widehat{\Omega}}\) is the disjoint union of \(\smash{\widehat{\Omega}}^{\mathrm{good}}\) and \(\smash{\widehat{\Omega}}^{\mathrm{bad}}\), we get that \(\Pi_{\manifold{N}} \compose \smash{V \restr{\smash{\widehat{\Omega}}^{\mathrm{good}}}} \in \smash{\dot{W}}^{1, p} (\smash{\widehat{\Omega}}^{\mathrm{good}}, \manifold{N})\) and that 
\begin{equation}
\label{eq_xeikeipak0Bi5ie8eethue9c}
 \int_{\smash{\widehat{\Omega}}^{\mathrm{good}}} \abs{\Deriv \brk{\Pi_{\manifold{N}} \compose \smash{V \restr{\smash{\widehat{\Omega}}^{\mathrm{good}}}}}}^p 
 \le \C \smashoperator{\iint_{\Omega \times \Omega}} \frac{\abs{u \brk{y} - u \brk{z}}^p}{\abs{y - z}^{p + m - 2}} \dif y \dif z.
\end{equation}

The challenge of constructing a manifold-constrained Sobolev extension will be to define it on the bad set \(\smash{\smash{\widehat{\Omega}}^{\mathrm{bad}}}\) --- or, as it will turn out, on a larger set singular set containing \(\smash{\smash{\widehat{\Omega}}^{\mathrm{bad}}}\)  --- by suitably extending the mapping \(\Pi_{\manifold{N}} \compose \smash{V \restr{\smash{\widehat{\Omega}}^{\mathrm{good}}}}\).
In this approach, it is crucial to guarantee that the Sobolev energy of the extension stays finite and that its trace remains the given mapping \(u\).

\medbreak 

In order to define of an extension on the bad set \(\smash{\widehat{\Omega}}^{\mathrm{bad}}\) we first estimate the \emph{size} of this set.
A first estimate can be obtained from the Sobolev estimate on the extension \eqref{eq_joos6eteeB6aSh5pa2ahnga9} via the Chebyshev-Markov and Hardy inequalities and the chain rule:
\begin{equation}
\label{eq_ZeeRa8Eiteech3mooxaeh6wu}
\begin{split}
\int_{\smash{\widehat{\Omega}}^{\mathrm{bad}}}
 \frac{1}{x_m^p}
 \dif x 
 & \le \frac{1}{\delta_{\manifold{N}}^p}
 \int_{\widehat{\Omega}} \frac{\dist \brk{V \brk{x}, \manifold{N}}^p}{x_m^p} \dif x
 \le \frac{p^p}{\brk{p - 1}^p \delta_{\manifold{N}}^p}
 \int_{\widehat{\Omega}} \abs{\Deriv \dist \brk{V , \manifold{N}}}^p 
 \\
 &
 \le \frac{p^p}{\brk{p - 1}^p \delta_{\manifold{N}}^p}
 \int_{\widehat{\Omega}} \abs{\Deriv V}^p 
 \le 
 \frac{\Cr{cst_uboh3EeShaiHoo8Geehew3Ei} p^p}{\brk{p - 1}^p \delta_{\manifold{N}}^p}
  \smashoperator{\iint_{\Omega \times \Omega}}
  \frac{d\brk{u \brk{y}, u \brk{z}}^p}{\abs{y - z}^{p + m - 2}} \dif y \dif z.
\end{split}
\end{equation}
As a consequence of \eqref{eq_ZeeRa8Eiteech3mooxaeh6wu}, any extension \(U\) of \(\Pi_{\manifold{N}} \compose \smash{V \restr{\smash{\widehat{\Omega}}^{\mathrm{good}}}}\) should have \(u\) as a trace; this follows from the next criterion for equality of traces.

\begin{proposition}
\label{proposition_equal_traces}
Let \(U, V \in \smash{\dot{W}}^{1, p} \brk{\smash{\widehat{\Omega}}, \Rset^\nu} \cap L^{\infty} \brk{\smash{\widehat{\Omega}}, \Rset^\nu}\) and assume that \(\tr_{\Omega} V \in \manifold{N}\) almost everywhere in \(\Omega\).
If
\begin{equation}
\label{eq_chi8zaivao7Shahmob3eimah}
 \lim_{\varepsilon \to 0} 
 \frac{\mathcal{L}^{m} \brk{A_\varepsilon}}{\varepsilon}
 = 0,
\end{equation}
where
\begin{equation}
\label{eq_eiquee2Iax3mei2OobaeXooZ}
 A_\varepsilon \defeq \set{\brk{x', x_m} \in \widehat{\Omega} \st x_m < \varepsilon \text{ and }
 \abs{U (x)- V(x)} \ge \dist \brk{V (x), \manifold{N}}}
\end{equation}
then \(\tr_{\Omega \times \set{0}} U = \tr_{\Omega \times \set{0}} V\).
\end{proposition}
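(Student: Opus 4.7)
The plan is to apply the standard characterisation of the trace by averaged slab integrals to the difference \(U - V\), and then to bound this slab integral by splitting along \(A_\varepsilon\) and using the vanishing trace of \(\dist \brk{V \brk{\cdot}, \manifold{N}}\). Fix any compact subset \(\Omega' \subseteq \Omega\); by the definition of the tent \(\widehat{\Omega}\) in \eqref{eq_epheiY6eilo4phaichoC9ton}, there exists \(\varepsilon_0 \in \intvo{0}{\infty}\) such that \(\Omega' \times \intvo{0}{\varepsilon_0} \subseteq \widehat{\Omega}\). For any \(W \in \dot{W}^{1,p} \brk{\widehat{\Omega}, \Rset^\nu}\), the slicewise fundamental theorem of calculus, applied on almost every vertical line, gives
\[
 \abs{W \brk{x', x_m} - \tr_\Omega W \brk{x'}} \le \int_0^{x_m} \abs{\partial_m W \brk{x', t}} \dif t
\]
for almost every \(x' \in \Omega'\). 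Integrating over \(x' \in \Omega'\) and averaging in \(x_m \in \intvo{0}{\varepsilon}\) yields on the one hand
\[
 \int_{\Omega'} \abs{\tr_\Omega W} \dif x' \le \liminf_{\varepsilon \to 0} \frac{1}{\varepsilon} \int_{\Omega' \times \intvo{0}{\varepsilon}} \abs{W} \dif x,
\]
and on the other hand, in the particular case \(\tr_\Omega W = 0\) almost everywhere on \(\Omega\), the vanishing
\[
 \frac{1}{\varepsilon} \int_{\Omega' \times \intvo{0}{\varepsilon}} \abs{W} \dif x \to 0 \quad \text{as } \varepsilon \to 0,
\]
by dominated convergence applied to the bound \(\int_0^{\varepsilon} \abs{\partial_m W \brk{x', t}} \dif t\).

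Applying the first estimate to \(W = U - V\), it suffices to show that \(\frac{1}{\varepsilon} \int_{\Omega' \times \intvo{0}{\varepsilon}} \abs{U - V} \dif x \to 0\). I split this integral along \(A_\varepsilon\) and its complement. On the portion \(A_\varepsilon \cap \brk{\Omega' \times \intvo{0}{\varepsilon}}\), the integrand is bounded crudely by \(\norm{U}_{L^\infty} + \norm{V}_{L^\infty}\), so this contribution is at most \(\brk{\norm{U}_{L^\infty} + \norm{V}_{L^\infty}} \mathcal{L}^m \brk{A_\varepsilon} / \varepsilon\), which tends to \(0\) by the hypothesis \eqref{eq_chi8zaivao7Shahmob3eimah}. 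On the complementary portion, the definition \eqref{eq_eiquee2Iax3mei2OobaeXooZ} of \(A_\varepsilon\) gives \(\abs{U - V} < \dist \brk{V, \manifold{N}}\) pointwise; and since \(\dist \brk{\cdot, \manifold{N}}\) is \(1\)-Lipschitz on \(\Rset^\nu\), the composition \(\dist \brk{V \brk{\cdot}, \manifold{N}}\) lies in \(\dot{W}^{1,p} \brk{\widehat{\Omega}, \Rset}\) with trace \(\dist \brk{\tr_\Omega V, \manifold{N}}\), which vanishes almost everywhere on \(\Omega\) by assumption. The vanishing statement of the previous paragraph then applies and yields \(\frac{1}{\varepsilon} \int_{\Omega' \times \intvo{0}{\varepsilon}} \dist \brk{V, \manifold{N}} \dif x \to 0\).

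Combining both contributions, \(\int_{\Omega'} \abs{\tr_\Omega U - \tr_\Omega V} \dif x' = 0\); since \(\Omega' \subseteq \Omega\) is an arbitrary compact subset, this gives \(\tr_\Omega U = \tr_\Omega V\) almost everywhere on \(\Omega\). No significant obstacle is anticipated: the only non-trivial ingredient is the slicewise application of the fundamental theorem of calculus together with the chain rule for the composition of a Sobolev function with a Lipschitz map, both of which are standard and can be justified by a smooth approximation argument on compactly contained subcylinders of \(\widehat{\Omega}\).
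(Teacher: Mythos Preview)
Your proof is correct and rests on the same core idea as the paper's: localise to a compact \(K = \Omega' \subset \Omega\), control the trace of \(W = U - V\) by a slab quantity near the boundary, and split that slab quantity along \(A_\varepsilon\), using the \(L^\infty\) bound together with \eqref{eq_chi8zaivao7Shahmob3eimah} on \(A_\varepsilon\) and the vanishing trace of \(\dist\brk{V(\cdot),\manifold{N}}\) on the complement. The packaging differs: the paper works in \(L^p\), invoking the trace inequality
\[
\int_K \abs{w}^p \le C\brk[\Big]{\varepsilon^{p-1}\!\int_{K\times\intvo{0}{\varepsilon}}\abs{\Deriv W}^p + \tfrac{1}{\varepsilon}\!\int_{K\times\intvo{0}{\varepsilon}}\abs{W}^p}
\]
and then Hardy's inequality to control \(\int_{K\times\intvo{0}{\varepsilon}}\dist\brk{V,\manifold{N}}^p\) by \(\varepsilon^{p-1}\int\abs{\Deriv V}^p\), whereas you work in \(L^1\) and appeal only to the slicewise fundamental theorem of calculus, so that the slab average of any Sobolev function with zero trace vanishes directly. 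Your route is marginally more elementary in that it bypasses Hardy's inequality; the paper's route stays within the \(L^p\) framework natural for the rest of the article. Both reach the conclusion with essentially the same effort.
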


Here and in the sequel, \(\mathcal{L}^m\) denotes Lebesgue's measure on \(\Rset^m\).

If \(U\) coincides with \(\Pi_{\manifold{N}} \compose V\) on \(\smash{\widehat{\Omega}}^{\mathrm{good}}\), then \(A_\varepsilon \subseteq \smash{\widehat{\Omega}}^{\mathrm{bad}}\), and the condition \eqref{eq_chi8zaivao7Shahmob3eimah} follows from the estimate \eqref{eq_ZeeRa8Eiteech3mooxaeh6wu}; the statement of \cref{proposition_equal_traces} leaves the possibility of extending \(V\) from the restriction of \(\Pi_{\manifold{N}} \compose V\) to a subset of the good set \(\smash{\widehat{\Omega}}^{\mathrm{good}}\), provided that the condition \eqref{eq_chi8zaivao7Shahmob3eimah} still holds.

\begin{proof}[Proof of \cref{proposition_equal_traces}]
Setting \(W \defeq U - V\) and \(w \defeq \tr_{\Omega \times \set{0}} W\), we have by the \(L^p\) theory of traces (see for example \citelist{\cite{Brezis_2011}*{Lem.\thinspace{}9.9}\cite{DiBenedetto_2016}*{Prop.\thinspace{}16.2}\cite{Willem_2013}*{Lem.\thinspace{}6.2.2}}) and a cut-off argument, if the set \(K \subseteq \Omega\) is compact and if \(\varepsilon \in \intvo{0}{\infty}\) is small enough,
\begin{equation}
\label{eq_us2jief1iafuleeJahtabaem}
 \int_{K} \abs{w}^p
 \le 
 \C 
 \brk[\bigg]{\varepsilon^{p - 1} \int_{K \times \intvo{0}{\varepsilon}} \abs{\Deriv W}^p 
 + \frac{1}{\varepsilon} \int_{K \times \intvo{0}{\varepsilon}} \abs{W}^p
 }.
\end{equation}
Moreover, we have by convexity
\begin{equation}
\label{eq_coeGhaogh7kaeF3Phieh8ugh}
\int_{K \times \intvo{0}{\varepsilon}} \abs{\Deriv W}^p 
\le 2^{p - 1}\int_{K \times \intvo{0}{\varepsilon}} \abs{\Deriv U}^p + \abs{\Deriv V}^p, 
\end{equation}
and by definition of the set \(A_\varepsilon\) in \eqref{eq_eiquee2Iax3mei2OobaeXooZ} and by Hardy's inequality
\begin{equation}
\label{eq_jaiy1nahf9zae8thohVahqui}
\begin{split}
\int_{K \times \intvo{0}{\varepsilon}} \abs{W}^p 
&\le \int_{K \times \intvo{0}{\varepsilon}} \dist \brk{V, \manifold{N}}^p
+ \norm{U - V}_{L^\infty}^p \mathcal{L}^d \brk{A_\varepsilon} \\
&\le \frac{p^p \varepsilon^{p - 1}}{\brk{p - 1}^p} \int_{K \times \intvo{0}{\varepsilon}} \abs{\Deriv V}^p
+ \norm{U - V}_{L^\infty}^p \mathcal{L}^d \brk{A_\varepsilon}.
\end{split}
\end{equation}
Inserting \eqref{eq_coeGhaogh7kaeF3Phieh8ugh} and \eqref{eq_jaiy1nahf9zae8thohVahqui} in \eqref{eq_us2jief1iafuleeJahtabaem} we get
\begin{equation}
\label{eq_Phiach7oadeiteequee8eyo9}
 \int_{K} \abs{w}^p
 \le 
 \C \brk[\bigg]{\varepsilon^{p - 1} \int_{K \times \intvo{0}{\varepsilon}} \brk[\big]{\abs{\Deriv U}^p + \abs{\Deriv V}^p} + \norm{U - V}_{L^\infty}^p \frac{\mathcal{L}^d \brk{A_\varepsilon}}{\varepsilon}}.
\end{equation}
Letting \(\varepsilon \to 0\) in  \eqref{eq_Phiach7oadeiteequee8eyo9}, we reach the conclusion.
\end{proof}

\medbreak

Although the estimate \eqref{eq_ZeeRa8Eiteech3mooxaeh6wu} provides some quantitative information about the size of the bad set \(\smash{\widehat{\Omega}}^{\mathrm{bad}}\), its formulation and its proof are not versatile enough to be applied to enlargements of the bad set \(\smash{\widehat{\Omega}}^{\mathrm{bad}}\).
We will rely on a more subtle analysis of the bad set, based on the fundamental observation that the distance to the target manifold is controlled by the \emph{mean oscillation}, which arose in the approximation of Sobolev mappings with critical integrability by Schoen and Uhlenbeck \citelist{\cite{Schoen_Uhlenbeck_1982}*{\S 3}\cite{Schoen_Uhlenbeck_1983}*{\S 4}} and was systematised by Brezis and Nirenberg \citelist{\cite{Brezis_Nirenberg_1995}\cite{Brezis_Nirenberg_1996}}.

This method starts from the estimate that for each \(x = \brk{x', x_m} \in \smash{\widehat{\Omega}}\)
\begin{equation}
\label{eq_phaejateiGh6ceir1kohngis}
\dist (V \brk{x}, \manifold{N})
\le \Cl{cst_ahbie4ooreng8Maidoo0WaiG} \meanosc_0 \brk{u} \brk{x', x_m}
\le \Cr{cst_ahbie4ooreng8Maidoo0WaiG} \brk{\delta + \meanosc_\delta \brk{u} \brk{x', x_m}},
\end{equation}
where, for each \(\delta \in \intvr{0}{\infty}\) we have defined the \emph{truncated mean oscillation}  \(\meanosc_\delta \brk{u}\colon \smash{\widehat{\Omega}} \to \intvr{0}{\infty}\) of \(u\) at the point \(x = \brk{x', x_m} \in \smash{\widehat{\Omega}}\) as 
\begin{equation}
\label{eq_moo7thiequ2zug8ahSoh3oth}
 \meanosc_\delta \brk{u} \brk{x} \defeq \frac{1}{\mathcal{L}^{m-1} (\Bset^{m - 1}_{x_m} \brk{x'})^2}
 \smashoperator{\iint_{\brk{\Bset^{m - 1}_{x_m} \brk{x'}}^2}} \brk{d \brk{u\brk{y}, u\brk{z}} - \delta}_+ \dif y \dif z,
\end{equation}
with \(t_+ \defeq \max\set{t, 0}\) denoting the positive part of \(t \in \Rset\).
Setting 
\begin{equation*}
 \delta_* \defeq \frac{\delta_{\manifold{N}}}{2 \Cr{cst_ahbie4ooreng8Maidoo0WaiG}},
\end{equation*}
we have then by \eqref{eq_phaejateiGh6ceir1kohngis} and by definition of the good set \(\smash{\widehat{\Omega}}^{\mathrm{good}}\) in \eqref{eq_BuCo1theeQuierowech2fai4} and of the bad set \(\smash{\widehat{\Omega}}^{\mathrm{bad}}\) in \eqref{eq_vuaPh9vaengoo9aiHeeshaer} the inclusions
\begin{equation}
  \label{eq_ahhae0phoTh6iekais4eghae}
 \smash{\widehat{\Omega}}^{\mathrm{good}}
 \supseteq 
 \set{x \in \widehat{\Omega} \st \meanosc_{\delta_*} \brk{u} \brk{x} \le \delta_*}
\end{equation}
and 
\begin{equation}
\label{eq_haw0fai3ohlie0Eitha9Aes2}
 \smash{\widehat{\Omega}}^{\mathrm{bad}}
 \subseteq 
 \set{x \in \widehat{\Omega} \st \meanosc_{\delta_*} \brk{u} \brk{x} > \delta_*}.
\end{equation}
In particular, it follows from \eqref{eq_haw0fai3ohlie0Eitha9Aes2} that 
\begin{equation}
\label{eq_Ahka5kieng3ooGhouFaew5au}
\begin{split}
\int_{\smash{\widehat{\Omega}}^{\mathrm{bad}}}
 \frac{1}{x_m^p}
 \dif x 
 &\le \C \int_{\widehat{\Omega}} \frac{1}{x_{m}^{p + 2m - 2}} \smashoperator{\iint_{\brk{\Bset^{m - 1}_{x_m} \brk{x'}}^2}} \brk{d\brk{u \brk{y}, u\brk{z}} -  \delta_*}_+ \dif y \dif z  \dif x\\
 &\le \Cl{cst_cho2aiXo3ahHoohi9eiyoh4e} \smashoperator{\iint_{\Omega \times \Omega}} \frac{\brk{d \brk{u \brk{y}, u \brk{z}} - \delta_*}_+}{\abs{y - z}^{p + m - 2}} \dif y \dif z,
\end{split}
\end{equation}
and therefore,  since for every \(t \in \intvr{0}{\infty}\) one has \(\brk{t - \delta_*}_+ \le t^p/\delta_*^{p - 1}\),
\begin{equation}
\label{eq_eeM3miequieno5oom4Layav5}
 \int_{\smash{\widehat{\Omega}}^{\mathrm{bad}}}
 \frac{1}{x_m^p}\dif x
 \le \frac{\Cr{cst_cho2aiXo3ahHoohi9eiyoh4e}}{\delta_*^{p - 1}} \smashoperator{\iint_{\Omega \times \Omega}} \frac{d \brk{u \brk{y}, u \brk{z}}^p}{\abs{y - z}^{p + m - 2}} \dif y \dif z,
\end{equation}
similarly to what was given by  \eqref{eq_ZeeRa8Eiteech3mooxaeh6wu}; the inclusion \eqref{eq_haw0fai3ohlie0Eitha9Aes2} thus sums up all the quantitative information we have about the bad set \(\smash{\widehat{\Omega}}^{\mathrm{bad}}\) at this point.

\subsection{Good cubes and bad cubes}
\label{section_dyadic_decomposition}
In order to define a Sobolev extension over the bad set \(\smash{\smash{\widehat{\Omega}}^{\mathrm{bad}}}\),
we will instead consider a \emph{cubical complex} covering it. 

The construction of cubical complexes will be based on a Whitney-type dyadic decomposition of the half-space into cubes.

\begin{definition}
\label{definition_dyadic_decomposition}
A family \(\mathscr{Q} = \bigcup_{k \in \Zset} \mathscr{Q}_k\) is a \emph{dyadic decomposition} of \(\Rset^m_+\) whenever there exists a family \(\brk{\xi_{k}}_{k \in \Zset}\) in \(\Rset^{m - 1}\) such that for every \(k \in \Zset\),
\begin{equation}
\label{eq_toovahg2ashahc4ohtohRouc}
\xi_{k} \in 2^{k - 1} \Zset^{m - 1} + \xi_{k - 1}
\end{equation}
and
\begin{equation}
\label{eq_gooqu5ohrethuxiephoh2ETh}
  \mathscr{Q}_k
  =
  \set{ \brk{\intvc{0}{2^k}^{m - 1} + \zeta}\times \intvc{2^k}{2^{k + 1}} \st \zeta \in 2^k \Zset^{m - 1} + \xi_k }.
\end{equation}
\end{definition}

Geometrically, for every \(k \in \Zset\), the set \(\mathscr{Q}_k\) is a subdivision of the slab \(\Rset^{m - 1} \times \intvc{2^k} {2^{k + 1}}\) into cubes of edge-length \(2^{k}\) with sides parallel to the coordinate axes, satisfying the condition that the intersection with the hyperplane \(\Rset^{m - 1} \times \set{2^{k}}\) of the previous layer \(\mathscr{Q}_{k - 1}\)
is a subdivision of the intersections with the same hyperplane of the cubes of \(\mathscr{Q}_{k}\).

Any cube \(Q \in \mathscr{Q}\) satisfies \(\edge \brk{Q} = 2^k\) and \(\dist \brk{Q, \partial \Rset^m_+} = 2^k\),  for some \(k \in \Zset\), and therefore the identity 
\begin{equation}
\label{eq_edge_lambda_dist}
 \edge \brk{Q} = \dist \brk{Q, \partial \Rset^m_+}.
\end{equation}

\begin{remark}
\label{remark_solenoid}
From a more algebraic perspective, it appears from \eqref{eq_gooqu5ohrethuxiephoh2ETh} in  \cref{definition_dyadic_decomposition} that the vector \(\xi_k\) can be taken in the torus \(\Rset^{m - 1}/2^k \Zset^{m - 1} \simeq \brk{\Rset/2^k \Zset}^{m - 1}\) 
and from the condition \eqref{eq_toovahg2ashahc4ohtohRouc} that the increment \(\xi_{k} - \xi_{k - 1}\) can be taken in the finite abelian group  \(2^k \Zset^{m - 1}/2^{k - 1} \Zset^{m - 1}\simeq \brk{2^k \Zset/2^{k - 1} \Zset}^{m - 1}\).
In other words, the dyadic decomposition \(\mathscr{Q}\) described by \(\brk{\xi_k}_{k \in \Zset}\) is element of \(\brk{\boldsymbol{\Sigma}_2}^{m - 1}\), 
where \(\boldsymbol{\Sigma}_2\) is the \emph{dyadic solenoid} \citelist{\cite{Vietoris_1927}\cite{VanDantzig}} (see also \cite{Hewitt_Ross_1979}*{Ch.\thinspace{}2 Def.\thinspace{}(10.12)}); although our proofs will not rely on these deeper properties of solenoids, it is conceptually interesting to note that the dyadic decompositions of \(\Rset^m_+\) form a compact abelian topological group.
\end{remark}

The \(k\)-th generation of \(\mathscr{C}\) of any subset \(\mathscr{C} \subseteq \mathscr{Q}\) of a dyadic decomposition from \cref{definition_dyadic_decomposition} is defined as 
\begin{equation*}
  \mathscr{C}_k \defeq \mathscr{C} \cap \mathscr{Q}_k. 
\end{equation*}
Given an open set \(\Omega \subseteq \Rset^{m - 1}\) and its tent \(\smash{\widehat{\Omega}}\) defined in \eqref{eq_epheiY6eilo4phaichoC9ton},
if we define the collection dyadic cubes contained in \(\smash{\widehat{\Omega}}\) as 
\begin{equation}
\mathscr{Q}^{\vert \Omega}
\defeq \set{Q \in \mathscr{Q} \st Q \subseteq \widehat{\Omega}},
\end{equation}
then \(\bigcup \mathscr{Q}^{\vert \Omega}\) covers a smaller and flatter tent included in \(\smash{\widehat{\Omega}}\):
we have 
\begin{equation}
\label{eq_vu4jate7raeHae1ohN0wohdo}
  \bigcup \mathscr{Q}^{\vert \Omega}
 \supseteq \smash{\widehat{\Omega}}^\kappa,
\end{equation}
where 
\begin{equation*}
  \smash{\widehat{\Omega}}^\kappa
  \defeq 
  \set{x \in \Rset^{m}_+
  \st \overline{{\Bset^{m - 1}_{\kappa x_m}} (x')} \subseteq \Omega}
\end{equation*}
and 
\begin{equation}
\label{eq_eosh4xaen1ohvohkea0aSeeV}
 \kappa \defeq 2 \brk{1 + \sqrt{m - 1}}.
\end{equation}
Indeed, if \(x = \brk{x', x_m} \in \smash{\widehat{\Omega}}^\kappa\) and if the cube \(Q \in \mathscr{Q}\) contains \(x\), then for every \(y = \brk{y', y_m} \in Q\) and \(z' \in \smash{\overline{\smash{\Bset^{m-1}_{y_m}} \brk{y'}}}\), we have in view of \eqref{eq_edge_lambda_dist}
\begin{equation*}
\begin{split}
 \abs{z' - x'} 
 &\le \abs{z' - y'} + \abs{y' - x'} \\
 &\le \brk[\Big]{1 + \sqrt{m - 1} \, \frac{\edge \brk{Q}}{\dist \brk{Q, \partial \Rset^m_+}}}y_m
 =  \brk[\big]{1 + \sqrt{m - 1}}y_m
\end{split}
\end{equation*}
and 
\begin{equation*}
 y_m \le \brk[\Big]{1 + \frac{\edge \brk{Q}}{\dist \brk{Q, \partial \Rset^m_+}}}x_m 
 = 2 x_m,
\end{equation*}
so that \(\overline{\smash{\Bset^{m-1}_{y_m}} \brk{y'}} \subseteq \overline{\smash{\Bset^{m-1}_{\kappa x_m}} \brk{x'\vphantom{y'}}}\), with \(\kappa\) defined by \eqref{eq_eosh4xaen1ohvohkea0aSeeV}, and thus \(y \in \smash{\widehat{\Omega}}\).
We have thus proved that \(Q \subseteq \smash{\widehat{\Omega}}\) so that \(Q \in \smash{\mathscr{Q}^{\vert \Omega}}\), \(x \in \bigcup \smash{\mathscr{Q}^{\vert \Omega}}\) and the inclusion \eqref{eq_vu4jate7raeHae1ohN0wohdo} is proved.

\medbreak

We define the collection of \emph{good cubes} of a dyadic decomposition \(\mathscr{Q}\) as the set of cubes that are contained in the good set \(\smash{\widehat{\Omega}}^{\mathrm{good}}\) defined in \eqref{eq_BuCo1theeQuierowech2fai4}
\begin{equation}
\label{eq_definition_good_cube}
\begin{split}
 \mathscr{Q}^{\mathrm{good}}
 &\defeq \set{Q \in \mathscr{Q}^{\vert \Omega} \st \sup_{x \in Q} \dist \brk{V\brk{x}, \manifold{N}} \le \delta_{\manifold{N}}}\\
 & = \set{Q \in \mathscr{Q}^{\vert \Omega} \st Q \subseteq \smash{\widehat{\Omega}}^{\mathrm{good} }}
\end{split}
\end{equation}
and its complement the collection of \emph{bad cubes} of the dyadic decomposition \(\mathscr{Q}\) as the set of cubes that have a non-trivial intersection with the bad set \(\smash{\widehat{\Omega}}^{\mathrm{bad}}\) defined in \eqref{eq_vuaPh9vaengoo9aiHeeshaer}
\begin{equation}
\label{eq_definition_bad_cube}
\begin{split}
 \mathscr{Q}^{\mathrm{bad}}
 &\defeq \set{Q \in \mathscr{Q}^{\vert \Omega} \st \sup_{x \in Q} \dist \brk{V\brk{x}, \manifold{N}} > \delta_{\manifold{N}}}\\
 &= \set{Q \in \mathscr{Q}^{\vert \Omega} \st Q \cap \smash{\widehat{\Omega}}^{\mathrm{bad}}\ne \emptyset},
\end{split}
\end{equation}
so that the collection \(\mathscr{Q}^{\vert \Omega}\) is the disjoint union of \(\smash{\mathscr{Q}^{\mathrm{bad}}}\) and \(\smash{\mathscr{Q}^{\mathrm{good}}}\). 
The bad set and the bad cubes are illustrated on \cref{figure_badsetcubes}.

\begin{figure}
\begin{center}
 \includegraphics[alt={The bad set and the bad cubes whose complements are the good set and the good cubes respectively}]{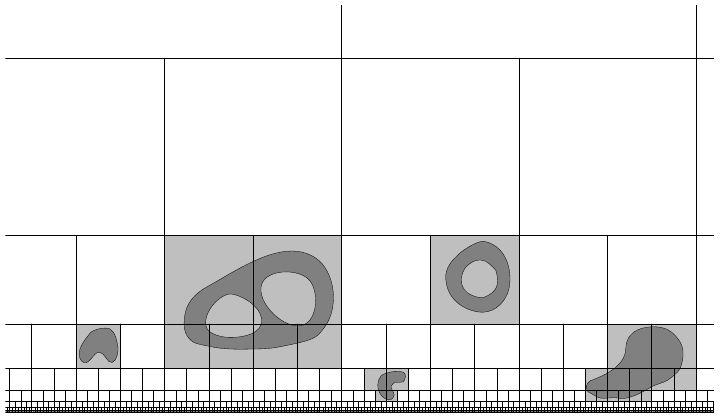}
\end{center}
\caption {On the bad set \(\smash{\widehat{\Omega}}^{\mathrm{bad}}\) defined in \eqref{eq_haw0fai3ohlie0Eitha9Aes2} (darker gray) and on the bad cubes \(\smash{\mathscr{Q}^{\mathrm{bad}}}\) defined in \eqref{eq_definition_bad_cube} (lighter gray) the linear extension \(V\) takes some values far away from the target manifold \(\manifold{N}\), whereas on the good set \(\smash{\smash{\widehat{\Omega}}^{\mathrm{good}}}\) and on the bad cubes \(\smash{\mathscr{Q}^{\mathrm{good}}}\), defined as their complements respectively, the function \(V\) takes its value close enough to \(\manifold{N}\) for its nearest-point retraction \(\Pi_{\manifold{N}}\) to be well-defined and smooth.}
\label{figure_badsetcubes}
\end{figure}

In order to estimate the size of the collection of bad cubes \(\smash{\mathscr{Q}^{\mathrm{bad}}}\), we start from the observation that if \(Q \in \smash{\mathscr{Q}^{\vert \Omega}}\), then by the definition of the tent \(\smash{\widehat{\Omega}}\) in \eqref{eq_epheiY6eilo4phaichoC9ton}, for every \(x = \brk{x', x_m} \in Q\), we have \(x_m \le 2 \edge \brk{Q}\) and thus 
\begin{equation}
\label{eq_OaQuee5peeweoShuk8zah4aW}
 \overline{\Bset^{m - 1}_{x_m} \brk{x'}} \subseteq \Omega \cap Q'_5,
\end{equation}
where \(Q'\) is the projection of \(Q\) on \(\Rset^{m - 1}\times \set{0}\) and \(Q'_5\) denotes the cube of \(\Rset^{m - 1}\) with the same center \(z'\) as \(Q'\) and double edge-length, that is,
\(
 Q'_5 \defeq \set{5 x' - 4z' \st x' \in Q'}
\).

If \(Q \in \smash{\mathscr{Q}^{\vert \Omega}}\), then by definition of the truncated mean oscillation \eqref{eq_moo7thiequ2zug8ahSoh3oth} and by the inclusion \eqref{eq_OaQuee5peeweoShuk8zah4aW}, we have for every \(x \in Q\), 
\begin{equation}
\label{eq_ahx6xeexizaShieshohpui8e}
 \meanosc_{\delta_*} \brk{u} (x)
 \le \frac{\Cl{cst_gishiS2oequ2chi8quan0eet}}{\mathcal{L}^{m - 1} \brk{Q'}^2}
 \smashoperator{\iint_{\brk{\Omega \cap Q'_5}\times \brk{\Omega \cap Q'_5}}} \brk{d \brk{u \brk{y}, u \brk{z}} - \delta_*}_+ \dif y \dif z .
\end{equation}
In particular it follows from \eqref{eq_haw0fai3ohlie0Eitha9Aes2}, \eqref{eq_definition_bad_cube} and \eqref{eq_ahx6xeexizaShieshohpui8e} that if \(Q \in \mathscr{Q}^{\mathrm{bad}}\),
\begin{equation}
\label{eq_nequiecaeth6aeShiedohF7L}
 \delta_*
 \le \frac{\Cr{cst_gishiS2oequ2chi8quan0eet}}{\mathcal{L}^{m - 1} \brk{Q'}^2}
 \smashoperator{\iint_{\brk{\Omega \cap Q'_5}\times \brk{\Omega \cap Q'_5}}} \brk{d \brk{u \brk{y}, u \brk{z}} - \delta_*}_+ \dif y \dif z .
\end{equation}
Summing the estimate \eqref{eq_nequiecaeth6aeShiedohF7L} over the cubes \(Q\) in \(\mathscr{Q}^{\mathrm{bad}}_k\), we get 
\begin{equation}
\label{eq_TeiNieh7uz5sahs7aewo1shu}
\begin{split}
 \# \mathscr{Q}^{\mathrm{bad}}_k
 & \le
 \frac{5^{2\brk{m - 1}}\Cr{cst_gishiS2oequ2chi8quan0eet}}{2^{2 k \brk{m - 1}}\delta_*} \smashoperator{\iint_{\substack{\brk{y, z} \in \Omega \times \Omega\\
 \abs{y - z} \le 5 \sqrt{m - 1} \, 2^k}} }
 \brk{d \brk{u \brk{y}, u \brk{z}} - \delta_*}_+ \dif y \dif z.
\end{split}
\end{equation}
The summation of the inequality \eqref{eq_TeiNieh7uz5sahs7aewo1shu} with respect to \(k\) over \(\Zset\) gives then 
\begin{equation}
\label{eq_yie2te0ahrie6eeL2mah5Boh}
 \sum_{k \in \Zset} 2^{k\brk{m - p}} \# \mathscr{Q}^{\mathrm{bad}}_k
 \le \C \smashoperator{\iint_{\Omega \times \Omega}}
 \frac{\brk{d \brk{u \brk{y}, u \brk{z}} - \delta_*}_+}{\abs{y - z}^{p + m - 2}} \dif y \dif z.
\end{equation}
In particular 
the size of the collection of bad cubes \(\bigcup \mathscr{Q}^{\mathrm{bad}}\) satisfies a similar and practically identical estimate \eqref{eq_yie2te0ahrie6eeL2mah5Boh} as the inequality \eqref{eq_eeM3miequieno5oom4Layav5} controlling the original bad set \(\smash{\widehat{\Omega}}^{\mathrm{bad}}\):
indeed it follows immediately from the estimate \eqref{eq_yie2te0ahrie6eeL2mah5Boh} that 
\begin{equation}
 \smashoperator[r]{\int_{\bigcup \mathscr{Q}^{\mathrm{bad}}}}
 \frac{1}{x_m^p} \dif x
 \le 
 \C \smashoperator{\iint_{\Omega \times \Omega}}
 \frac{\brk{d \brk{u \brk{y}, u \brk{z}} - \delta_*}_+}{\abs{y - z}^{p + m - 2}} \dif y \dif z.
\end{equation}

If we assume for a moment that we were able to perform an extension \(U \colon \bigcup \smash{\mathscr{Q}^{ \vert \Omega}} \to \manifold{N}\) such that 
\(\smash{U \restr{\bigcup \mathscr{Q}^{\mathrm{good}}}} = \smash{\Pi_{\manifold{N}} \compose V \restr{\bigcup \mathscr{Q}^{\mathrm{good}}}}\)
and such that for every \(x = \brk{x',x_m} \in \smash{\bigcup \mathscr{Q}^{\mathrm{bad}}}\) we had 
\begin{equation}
\label{eq_deiZeef7ahlieP0eNgaoqua2}
   \abs{\Deriv U \brk{x}} \le \frac{\C}{x_m},
\end{equation}
that is, \(U\) would satisfy an estimate \eqref{eq_deiZeef7ahlieP0eNgaoqua2} similar to the one satisfied thanks to \eqref{eq_Di5lainai8xoa5ruacuoxeih} by the value imposed by \(\Pi_{\manifold{N}} \compose \smash{V  \restr{\bigcup \mathscr{Q}^{\mathrm{good}}}}\) on any of the good cubes \(\smash{\mathscr{Q}^{\mathrm{good}}}\), then we would have constructed a suitable Sobolev extension of \(u\).
Although it will not be possible to enforce a condition as strong as  \eqref{eq_deiZeef7ahlieP0eNgaoqua2}, we will ensure a weaker condition that will still be strong enough for our purpose.

\section{From a partial topological extension to a Sobolev extension }
\label{section_topological_to_sobolev}

\resetconstant

As we have observed in \sectref{section_badset}, the boundary datum \(u\) has an extension \(\smash{\Pi_{\manifold{N}} \compose V\restr{\bigcup \mathscr{Q}^{\mathrm{good}}}}\) with appropriate regularity and corresponding estimates outside a collection of dyadic cubes \(\smash{\mathscr{Q}^{\mathrm{bad}}}\) satisfying the estimate \eqref{eq_yie2te0ahrie6eeL2mah5Boh}. 
It remains thus to perform an appropriate construction to take care of the bad cubes. 

We show in the present section that when the first homotopy groups \(\pi_1 \brk{\manifold{N}}, \dotsc, \pi_{\ell - 1} \brk{\manifold{N}}\) with \(\ell \defeq \min \brk{\floor{p}, m}\) are all finite, then the construction of a \emph{Sobolev} extension can be reduced to the production of a \emph{continuous} extension on the union of the \(\ell\)-dimensional faces of the bad cubes of \(\smash{\mathscr{Q}^{\mathrm{bad}}}\), or, as it will turn out to be useful later, on a larger collection \(\smash{\mathscr{Q}^{\mathrm{sing}}}\) of singular cubes.
\emph{It is in this part only of the proof that the finiteness assumption on the first homotopy groups intervenes.}

\medbreak

In order to give precise statements, we now describe the notion of lower-dimensional skeletons.
Given a collection of cubes \(\mathscr{C} \subseteq \mathscr{Q}\), where \(\mathscr{Q}\) is a dyadic decomposition of \(\Rset^m_+\) as defined in \cref{definition_dyadic_decomposition}, we define the set of \emph{\(\ell\)-dimensional horizontal faces} of \(\mathscr{C}\) with \(\ell \in \set{0, \dotsc, m - 1}\), by first setting for each \(k \in \Zset\),
\begin{equation}
  \brk{\mathscr{C}}^{\ell, \top}_k
  \defeq \set{\sigma \st \sigma \text{ is an \(\ell\)-dimensional face of \(Q \cap \brk{\Rset^{m - 1} \times \set{2^{k + 1}}}\) for some \(Q \in \mathscr{C}_k\)}}
\end{equation}
and then defining
\begin{equation}
 \brk{\mathscr{C}}^{\ell, \top} \defeq \bigcup_{k \in \Zset} \brk{\mathscr{C}}^{\ell, \top}_k,
\end{equation}
and the set of \emph{\(\ell\)-dimensional vertical faces} of \(\mathscr{C}\), with \(\ell \in \set{1, \dotsc, m}\), by letting for each \(k \in \Zset\),
\begin{equation}
\label{eq_deeyooy8gah0vai7pheiSo0E}
  \brk{\mathscr{C}}^{\ell, \perp}_k
  = \set{\sigma' \times \intvc{2^k}{2^{k + 1}} \st \sigma' \times \set{2^{k + 1}} \in \brk{\mathscr{C}}^{\ell - 1, \top}_k} 
\end{equation}
and then setting
\begin{equation}
\label{eq_Xai3tekei9ao1Ahsh9rae9ae}
 \brk{\mathscr{C}}^{\ell, \perp} \defeq \bigcup_{k \in \Zset} \brk{\mathscr{C}}^{\ell, \perp}_k.
\end{equation}
We finally define the set of \emph{\(\ell\)-dimensional faces} of \(\mathscr{C}\) with \(\ell \in \set{0, \dotsc, m}\) as
\begin{equation*}
 \brk{\mathscr{C}}^{\ell} = \brk{\mathscr{C}}^{\ell, \top} \cup \brk{\mathscr{C}}^{\ell, \perp},
\end{equation*}
with the convention that \( \brk{\mathscr{C}}^{m, \top} = \emptyset\) and \( \brk{\mathscr{C}}^{0, \perp} = \emptyset\) so that \(\brk{\mathscr{C}}^{m} =\brk{\mathscr{C}}^{m, \perp} = \mathscr{Q}\) and \(\brk{\mathscr{C}}^{0} =\brk{\mathscr{C}}^{0, \top}\).

The Sobolev extension that we will construct will be  continuous outside the \emph{dual skeleton} \(L^j \subseteq \Rset^m_+\)  of dimension \(j \in \set{-1, \dotsc, m - 1}\) relative to the dyadic decomposition \(\mathscr{Q}\) defined as follows.
If \(j = - 1\), one sets \(L^{-1} \defeq \emptyset\).
If \(j = 0\), then \(L^0\) is defined as the set of centres of the cubes in \(\mathscr{Q}\).
In general for \(j \in \set{1, \dotsc, m}\), one defines \(L^{j, m - j}\) to be the set of the centres of faces in \(\brk{\mathscr{Q}}^{m - j}\), then recursively for \(i \in \set{m - j + 1, \dotsc, m}\), the set \(L^{j, i}\) as the union of segments joining the centre of any face \(\sigma \in \brk{\mathscr{Q}}^{i}\) with a point in \(\sigma \cap L^{j, i - 1}\) and finally letting \(L^{j} \defeq L^{j, m}\).
The \(0\)-dimensional dual skeleton \(L^0\) and the \(1\)-dimensional dual skeleton \(L^1  \) in the half-plane \(\Rset^2_+\) are illustrated on \cref{figure_dual}.

\begin{figure}
\begin{center}
\includegraphics[alt={The 0- and 1- dimensional skeletons of a dyadic decomposition of the halfspace}]{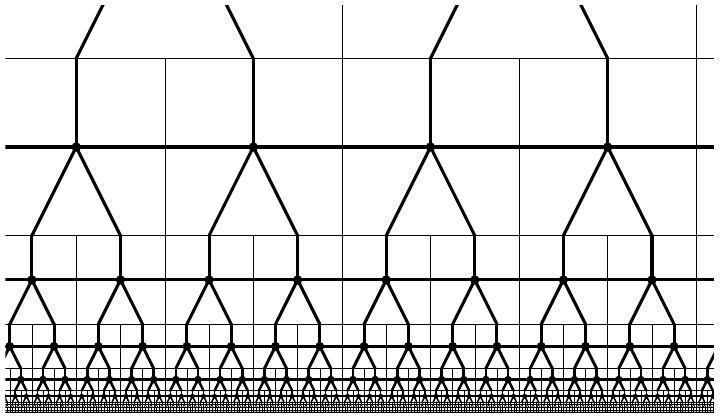}
\end{center}
\caption{The \(0\)-dimensional dual skeleton \(L^0\) of a dyadic decomposition \(\mathscr{Q}\) of \(\smash{\Rset^2_+}\) consists of the centres of its squares while its \(1\)-dimensional dual skeleton consists of the segment joining those points to the centres of its vertices. }
\label{figure_dual}
\end{figure}

\begin{proposition}
\label{proposition_Lipschitz_extension_skeleton}
Let \(\ell \in \set{1, \dotsc, m}\) and assume that the first homotopy groups \(\pi_1 \brk{\manifold{N}}, \dotsc, \pi_{\ell - 1} \brk{\manifold{N}}\) are finite.
For every \(\alpha \in \intvo{0}{\infty}\), there exists \(\beta \in \intvo{0}{\infty}\) such that 
given a dyadic decomposition \(\mathscr{Q}\) of \(\Rset^m_+\), 
collections of cubes \(\mathscr{Q}^{\mathrm{reg}}, \mathscr{Q}^{\mathrm{sing}} \subseteq \mathscr{Q}\) such that \(\mathscr{Q}^{\mathrm{reg}} \cap \mathscr{Q}^{\mathrm{sing}} = \emptyset\),
and a mapping \(W \in C \brk{\smash{\bigcup \mathscr{Q}^{\mathrm{reg}}\cup \bigcup \brk{\mathscr{Q}^{\mathrm{sing}}}^{\ell}}, \manifold{N}}\) such that for every \(x = \brk{x', x_m} \in \bigcup \smash{\mathscr{Q}^{\mathrm{reg}}}\), 
\begin{equation}
\label{eq_aed3oush2ahz6IexahYeich4}
  \operatorname{Lip} W \brk{x} \le \frac{\alpha}{x_{m}},
\end{equation}
there exists a mapping \(U \in C \brk{\bigcup \mathscr{Q}^{\mathrm{reg}} \cup \brk{\bigcup \mathscr{Q}^{\mathrm{sing}} \setminus L^{m - \ell - 1}}, \manifold{N}}\) such that 
\(\smash{U \restr{\bigcup \mathscr{Q}^{\mathrm{reg}}}} = \smash{W\restr{\bigcup \mathscr{Q}^{\mathrm{reg}}}}\) and such that for every \(x = \brk{x', x_m} \in\bigcup \mathscr{Q}^{\mathrm{sing}} \setminus L^{m - \ell - 1}\) 
\begin{align}
\label{eq_Umech5Ueyeic6oa7Ohngeevi}
\operatorname{Lip} U \brk{x} &\le \frac{\beta}{x_m} &&\text{if \(\ell = m\)},
\intertext{or}
\label{eq_wohch3beevohDuegeraiB6zu}
 \operatorname{Lip} U \brk{x} &\le \frac{\beta}{\dist \brk{x, L^{m - \ell - 1}}}
 &&\text{if \(\ell < m\)}.
\end{align}
Moreover, for every \(p \in \intvo{1}{\infty}\) such that either \(\ell = m\) or \(\ell > p - 1\), there exists a constant \(C \in \intvo{0}{\infty}\) such that if \(\smash{W\restr{\bigcup \mathscr{Q}^{\mathrm{reg}}}} \in \smash{\dot{W}}^{1, p} \brk{\bigcup \mathscr{Q}^{\mathrm{reg}}, \manifold{N}}\) and if 
\begin{equation}
\label{eq_ee9wei0noyah1zee0xuQuie2}
\sum_{k \in \Zset} 2^{k\brk{m - p}} \# \mathscr{Q}^{\mathrm{sing}}_k < \infty, 
\end{equation}
then \(U \in \smash{\dot{W}}^{1, p} \brk{\bigcup \mathscr{Q}^{\mathrm{reg}} \cup \bigcup \mathscr{Q}^{\mathrm{sing}}, \manifold{N}}\) and 
\begin{equation}
\label{eq_xuoY7IuceiwooH5Aicadai9a}
 \int_{\bigcup \brk{\mathscr{Q}^{\mathrm{reg}}\cup \mathscr{Q}^{\mathrm{sing}}}} \abs{\Deriv U}^p
 \le 
 \int_{\bigcup \mathscr{Q}^{\mathrm{reg}}} \abs{\Deriv W}^p
 +  C \beta \sum_{k \in \Zset} 2^{k\brk{m - p}} \# \mathscr{Q}^{\mathrm{sing}}_k.
\end{equation}

\end{proposition}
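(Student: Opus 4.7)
The construction of $U$ proceeds in two phases: first, an inductive Lipschitz modification of $W$ on the $\ell$-skeleton of $\mathscr{Q}^{\mathrm{sing}}$, leaving $W$ unchanged on faces shared with $\mathscr{Q}^{\mathrm{reg}}$; second, when $\ell < m$, a radial homogeneous extension from the $\ell$-skeleton to the higher-dimensional faces of $\mathscr{Q}^{\mathrm{sing}}$, whose iterated singular set is exactly the dual skeleton $L^{m - \ell - 1}$. When $\ell = m$ the second phase is vacuous, since $\brk{\mathscr{Q}^{\mathrm{sing}}}^\ell$ already covers $\bigcup \mathscr{Q}^{\mathrm{sing}}$.

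For the first phase, I would construct a continuous $\smash{\widetilde{W}}$ on $\bigcup \mathscr{Q}^{\mathrm{reg}} \cup \bigcup \brk{\mathscr{Q}^{\mathrm{sing}}}^\ell$ agreeing with $W$ on $\bigcup \mathscr{Q}^{\mathrm{reg}}$, such that on every $j$-face $\sigma$ of $\mathscr{Q}^{\mathrm{sing}}$ with $1 \le j \le \ell$ one has $\operatorname{Lip} \smash{\widetilde{W}}\vert_\sigma \le \beta_j/\edge \brk{\sigma}$ for universal constants $\beta_j \in \intvo{0}{\infty}$. The construction is inductive on $j$: on each $j$-face $\sigma$ not entirely contained in a regular cube, the inductive hypothesis gives that $\smash{\widetilde{W}}\vert_{\partial \sigma}$ is already Lipschitz; for $j < \ell$, I extend it by a Lipschitz map to $\sigma$ in the same rel-boundary homotopy class as $W\vert_\sigma$, which is possible because the set of such classes is a torsor under $\pi_j \brk{\manifold{N}}$, finite by assumption. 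For $j = \ell$ no homotopy matching is required, since $\smash{\widetilde{W}}\vert_{\partial \sigma}$ is null-homotopic in $\manifold{N}$ (being homotopic to $W\vert_{\partial \sigma}$, which bounds the continuous disc $W\vert_\sigma$), so any Lipschitz null-homotopic extension works. The universal bound $\beta_j$ is produced by rescaling each $j$-face to unit scale and invoking Arzel\`a--Ascoli compactness on the space of Lipschitz boundary data; its applicability on the resulting homotopy quotient relies on the finiteness of $\pi_1 \brk{\manifold{N}}, \dotsc, \pi_{\ell - 1} \brk{\manifold{N}}$, and this is the unique place where that hypothesis is used.

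For the second phase (with $\ell < m$), I extend $\smash{\widetilde{W}}$ inductively over $(j + 1)$-faces for $j = \ell, \dotsc, m - 1$ via the homogeneous extension $U \brk{x} \defeq \smash{\widetilde{W}} \brk{\pi_\tau \brk{x}}$, where $\pi_\tau$ is the radial projection from the centre $c_\tau$ of $\tau$ onto $\partial \tau$. A direct bookkeeping shows that the singular set grows by one dimension per step, and by construction matches precisely the recursive cone definition of $L^{m - \ell - 1}$ in the paper. Each radial step multiplies the tangential Lipschitz constant on $\partial \tau$ by the factor $\diam \brk{\tau}/\dist \brk{x, c_\tau}$; composing with the base bound $\beta_\ell/\edge$ from Phase~1 yields \eqref{eq_Umech5Ueyeic6oa7Ohngeevi} when $\ell = m$ (since $\edge \brk{Q} \sim x_m$ on any cube $Q \in \mathscr{Q}$) and \eqref{eq_wohch3beevohDuegeraiB6zu} otherwise. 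The Sobolev estimate \eqref{eq_xuoY7IuceiwooH5Aicadai9a} then follows from the codimension estimate
\begin{equation*}
 \int_Q \frac{\dif x}{\dist \brk{x, L^{m - \ell - 1}}^p} \le \C \cdot 2^{k\brk{m - p}},
\end{equation*}
valid for $Q \in \mathscr{Q}^{\mathrm{sing}}_k$ whenever $\ell > p - 1$ (or $\ell = m$), combined with the summability hypothesis \eqref{eq_ee9wei0noyah1zee0xuQuie2}; the first term of \eqref{eq_xuoY7IuceiwooH5Aicadai9a} accounts for $U = W$ on the regular cubes. The hardest step is Phase~1: propagating a universal Lipschitz bound $\beta_j$ uniformly across all scales $k \in \Zset$ while respecting the values of $W$ prescribed on faces shared with $\mathscr{Q}^{\mathrm{reg}}$, which is exactly what the finiteness of the homotopy quotient and the dyadic scale invariance together deliver via Arzel\`a--Ascoli.
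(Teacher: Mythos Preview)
Your proposal is correct and follows essentially the same two-phase approach as the paper: inductive Lipschitz replacement on the $j$-faces of $\mathscr{Q}^{\mathrm{sing}}$ for $j \le \ell$ using finiteness of $\pi_j \brk{\manifold{N}}$ (the paper's \cref{lemma_cube_extension_lipschitz_homotopic}), followed by homogeneous extension for $j > \ell$ (the paper's \cref{lemma_homogeneous_extension}). One point to make precise: at step $j$, your boundary values $\smash{\widetilde{W}}\vert_{\partial \sigma}$ already differ from $W\vert_{\partial \sigma}$, so ``same rel-boundary homotopy class as $W\vert_\sigma$'' is not literally meaningful; the paper resolves this by maintaining, via the homotopy extension property, an auxiliary continuous map $W^{j - 1}$ on the full $\ell$-skeleton that agrees with the current $U^{j - 1}$ on the $(j - 1)$-skeleton, and it is $W^{j - 1}\vert_\sigma$ whose rel-boundary class is matched.
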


The assumption in \cref{proposition_Lipschitz_extension_skeleton} that the \emph{first homotopy groups} \(\pi_1 \brk{\manifold{N}}, \dotsc, \pi_{\ell - 1} \brk{\manifold{N}}\) are finite plays a \emph{crucial role} in the proof; the discussion about its necessity for \cref{lemma_cube_extension_lipschitz_homotopic} also applies here for \cref{proposition_Lipschitz_extension_skeleton}.

For a set \(A \subseteq \Rset^m\), a mapping \(f \colon A \to \manifold{N}\) and \(x \in A\), the quantity \(\operatorname{Lip} f(x)\) in the estimates \eqref{eq_Umech5Ueyeic6oa7Ohngeevi} and \eqref{eq_wohch3beevohDuegeraiB6zu} of \cref{proposition_Lipschitz_extension_skeleton} is the \emph{Lipschitz number} of the mapping \(f\) at \(x\) defined as 
\begin{equation*}
 \operatorname{Lip} f(x)
 \defeq \limsup_{y \to x} \frac{d\brk{f \brk{y}, f\brk{x}}}{\abs{y - x}} \in \intvc{0}{\infty},
\end{equation*}
with the additional convention that \(\operatorname{Lip} f(x) = 0\) when \(x\) is an isolated point of \(A\); 
it is related to the \emph{Lipschitz constant} of \(f\) on \(A\) as follows:
\begin{equation*}
 \operatorname{Lip} f(x)
 \le \abs{f}_{\mathrm{Lip}} \defeq \sup_{y, z \in A} \frac{d\brk{f (y), f(z)}}{\abs{y - z}}.
\end{equation*}

\medbreak

The proof of \cref{proposition_Lipschitz_extension_skeleton} is done through suitable constructions to propagate the smoothness properties from the boundary of a face \(\sigma \in \brk{\mathscr{Q}}^j\) to its interior for \(j \in \set{2, \dotsc, m}\).

The simplest construction is performed for every face \(\sigma \in \brk{\mathscr{Q}}^j\) when \(j > \ell\). 
We extend then the mapping from the boundary \(\partial \sigma\) to the interior of the face \(\sigma\) homogeneously. To simplify the notation, we perform the construction on the unit cube \(\Qset^j \defeq \intvc{-1}{1}^j\).

\begin{lemma}
\label{lemma_homogeneous_extension}
For every \(j \in \Nset \setminus \set{0, 1}\) and every mapping \(f \colon \partial \Qset^j \to \manifold{N}\), the mapping \(F\colon \Qset^j \setminus \set{0} \to \manifold{N}\) defined for every \(x \in \Qset^j \setminus \set{0}\) by 
\begin{equation*}
 F (x) \defeq f \brk[\big]{\tfrac{x}{\abs{x}_\infty}}
\end{equation*}
has the following properties:
\begin{enumerate}[label=(\roman*)]
 \item 
 \label{it_iuphei7Lahng3co3aeju0woh}
 one has 
 \begin{equation*}
  \sup_{x \in \Qset^j \setminus \set{0}} \abs{x} \operatorname{Lip} F \brk{x} 
  \le j
  \sup_{x \in \partial \Qset^j} 
   \operatorname{Lip} f \brk{x},
 \end{equation*}
\item 
\label{it_IWigh3phei9Ohjoolaineequ}
if \(A \subseteq \partial \Qset^j\) and \(A \ne \emptyset\), then 
 \begin{equation*}
  \sup_{x \in \Qset^j \setminus \set{0}}\dist\brk{x, A^*} \operatorname{Lip} F \brk{x}
  \le \sqrt{j}\sup_{x \in \partial \Qset^j} 
  \dist\brk{x, A} \operatorname{Lip} f \brk{x},
 \end{equation*}
   where \(A^* \defeq \set{ tx \st x \in A \text{ and }t \in \intvc{0}{1}}\).
   \end{enumerate}
\end{lemma}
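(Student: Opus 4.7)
The plan is to factor $F = f \compose \pi$, where $\pi \colon \Qset^j \setminus \set{0} \to \partial \Qset^j$ is the radial projection $\pi(x) \defeq x/\abs{x}_\infty$. By the chain rule for Lipschitz numbers one has $\operatorname{Lip} F(x) \le \operatorname{Lip} f\brk{\pi(x)} \cdot \operatorname{Lip} \pi(x)$ for every $x \in \Qset^j \setminus \set{0}$, so both assertions reduce to the geometric estimate $\operatorname{Lip} \pi(x) \le \abs{\pi(x)}/\abs{x}_\infty$ together with, for \ref{it_IWigh3phei9Ohjoolaineequ}, the comparison between $\dist(x, A^*)$ and $\dist(\pi(x), A)$.

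To prove the estimate on $\operatorname{Lip} \pi$, I first work at a generic point $x$ where $\abs{x}_\infty$ is attained uniquely, say $\abs{x}_\infty = \epsilon x_k$ with $\epsilon = \pm 1$. Near such $x$ the map $\pi$ is smooth and a direct computation gives $D\pi(x) = (I - \epsilon\, \pi(x)\, e_k^T)/\abs{x}_\infty$. Since $\pi_k(x) = \epsilon$, the rank-one perturbation $M \defeq I - \epsilon\, \pi(x)\, e_k^T$ has $\pi(x)$ in its kernel and acts as the identity on the orthogonal complement of $\operatorname{span}\set{e_k, \pi(x)}$; a short singular-value computation in the two-dimensional invariant subspace (in the orthonormal basis formed by $e_k$ and the unit vector along the projection of $\pi(x)$ onto $e_k^\perp$) yields $\norm{M} = \abs{\pi(x)}$, and hence $\norm{D\pi(x)} = \abs{\pi(x)}/\abs{x}_\infty$. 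The exceptional set where $\abs{x}_\infty$ is attained at more than one signed coordinate is a finite union of codimension-one linear subspaces; since $\pi$ is locally Lipschitz on $\Qset^j \setminus \set{0}$, the pointwise bound extends there by continuity.

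To conclude \ref{it_iuphei7Lahng3co3aeju0woh}, I combine the chain rule with $\abs{x} = \abs{x}_\infty \abs{\pi(x)}$ (from $x = \abs{x}_\infty \pi(x)$) and $\abs{\pi(x)}^2 \le j$ (from $\abs{\pi(x)}_\infty = 1$) to obtain
\begin{equation*}
  \abs{x} \operatorname{Lip} F(x) \le \abs{\pi(x)}^2 \operatorname{Lip} f\brk{\pi(x)} \le j \sup_{\partial \Qset^j} \operatorname{Lip} f.
\end{equation*}
For \ref{it_IWigh3phei9Ohjoolaineequ}, I note that $\abs{x}_\infty a \in A^*$ whenever $a \in A$ and $\abs{x - \abs{x}_\infty a} = \abs{x}_\infty \abs{\pi(x) - a}$, so taking the infimum over $a \in A$ gives $\dist(x, A^*) \le \abs{x}_\infty \dist\brk{\pi(x), A}$; combined with the bound on $\operatorname{Lip} \pi$ and $\abs{\pi(x)} \le \sqrt{j}$ this yields
\begin{equation*}
  \dist(x, A^*) \operatorname{Lip} F(x) \le \abs{\pi(x)} \dist\brk{\pi(x), A} \operatorname{Lip} f\brk{\pi(x)} \le \sqrt{j} \sup_{\partial \Qset^j} \dist(\cdot, A) \operatorname{Lip} f.
\end{equation*}
The only non-routine point is the singular-value computation for $M$; everything else is bookkeeping around sup-norm versus Euclidean-norm comparisons in $\Rset^j$.
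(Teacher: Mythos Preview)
Your proof is correct and follows essentially the same route as the paper: both factor \(F = f \compose \pi\), use the chain rule for Lipschitz numbers, and rely on the identity \(\operatorname{Lip}\pi(x) = \abs{x}/\abs{x}_\infty^2 = \abs{\pi(x)}/\abs{x}_\infty\) together with the distance comparison \(\dist(x,A^*) \le \abs{x}_\infty \dist(\pi(x),A)\). The only difference is that you justify the value of \(\operatorname{Lip}\pi(x)\) by an explicit singular-value computation on \(D\pi\), whereas the paper simply asserts it.
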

\begin{proof}
Defining the map \(\Psi \colon \Qset^j\setminus \set{0} \to \partial \Qset^j\) for every \(x \in \Qset^j \setminus \set{0}\) by \(\Psi \brk{x} \defeq x/\abs{x}_\infty\), one has 
\(
  \operatorname{Lip} \Psi \brk{x} = \abs{x}/\abs{x}_\infty^2
\) and \(\abs{x}\le \sqrt{j} \abs{x}_\infty\),
so that
\begin{equation*}
 \operatorname{Lip} F \brk{x}
 \le \frac{\operatorname{Lip} f \brk{x/\abs{x}_\infty} \abs{x}}{\abs{x}_\infty^2}
 \le \frac{j \operatorname{Lip} f \brk{x/\abs{x}_\infty}} {\abs{x}},
\end{equation*}
and the assertion \ref{it_iuphei7Lahng3co3aeju0woh} follows.
Observing that \(\dist \brk{x, A^*} \le \abs{x}_\infty \dist \brk{x/\abs{x}_\infty, A}\), we have 
\begin{equation}
\begin{split}
  \dist\brk{x, A^*} \operatorname{Lip} F \brk{x}
  &\le \frac{\dist \brk{x/\abs{x}_\infty, A} \operatorname{Lip} f \brk{x/\abs{x}_\infty} \abs{x}}{\abs{x}_\infty}\\
  &\le \sqrt{j} \dist \brk{x/\abs{x}_\infty, A} \operatorname{Lip} f \brk{x/\abs{x}_\infty},
\end{split}
\end{equation}
and thus the assertion \ref{it_IWigh3phei9Ohjoolaineequ} holds.
\end{proof}

The second construction starts from the well-known fact that every Lipschitz-continuous mapping on the boundary of the cube with a continuous extension to the cube also has a Lipschitz-continuous extension to the cube, and combines this with a compactness argument to obtain a uniform bound on the extension. 

\begin{lemma}
\label{lemma_cube_extension_lipschitz}
Let \(j \in \Nset \setminus \set{0}\). 
For every \(\alpha \in \intvo{0}{\infty}\) there exists \(\beta \in \intvo{0}{\infty}\) such that for every mapping \(F \in C \brk{\Qset^j, \manifold{N}}\) satisfying
\(
 \seminorm{F \restr{\partial \Qset^j}}_{\mathrm{Lip}} \le \alpha
\),
there exists a mapping \(G \in C \brk{\Qset^j, \manifold{N}}\) such that 
\(G \restr{\partial \Qset^j} = F \restr{\partial \Qset^j}\) and \(\seminorm{G}_{\mathrm{Lip}} \le \beta\).
\end{lemma}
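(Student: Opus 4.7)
I argue by contradiction via compactness. Suppose that for some $\alpha \in \intvo{0}{\infty}$ the conclusion fails; then there is a sequence $\brk{F_n}_{n \in \Nset}$ in $C \brk{\Qset^j, \manifold{N}}$ with $\seminorm{F_n\restr{\partial \Qset^j}}_{\mathrm{Lip}} \le \alpha$ such that no continuous extension of $f_n \defeq F_n\restr{\partial \Qset^j}$ has Lipschitz seminorm at most $n$. Since $\manifold{N}$ is compact, the family $\brk{f_n}$ is equicontinuous and uniformly bounded on $\partial \Qset^j$, so by the Arzel\`a--Ascoli theorem, after extraction of a subsequence $f_n$ converges uniformly to an $\alpha$-Lipschitz map $f \colon \partial \Qset^j \to \manifold{N}$. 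For $n$ large enough that $\norm{f_n - f}_{L^\infty} < \delta_{\manifold{N}}$, the straight-line interpolation $\brk{1 - t} f + t f_n$ takes its values in the tubular neighborhood $\manifold{N} + \Bset^\nu_{\delta_{\manifold{N}}}$, and its composition with the nearest-point retraction $\Pi_{\manifold{N}}$ produces a Lipschitz homotopy in $\manifold{N}$ between $f$ and $f_n$; gluing $F_n$ on the interior of $\Qset^j$ with this homotopy in a thin collar of $\partial \Qset^j$ shows that $f$ itself admits a continuous extension $F \colon \Qset^j \to \manifold{N}$.

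Given the continuous extension $F$ of $f$, I produce an auxiliary Lipschitz extension $G$ of $f$ as follows: mollifying $F$ after extending it to a neighborhood of $\Qset^j$ by reflection and projecting via $\Pi_{\manifold{N}}$ yields a smooth map $\widetilde F \in C^\infty \brk{\Qset^j, \manifold{N}}$ such that $\norm{\widetilde F - F}_{L^\infty} < \delta_{\manifold{N}}/2$. The boundary difference $f - \widetilde F\restr{\partial \Qset^j}$ is then Lipschitz and small in sup norm; extending it from $\partial \Qset^j$ to $\Qset^j$ by Kirszbraun's theorem composed with a radial truncation in $\Rset^\nu$, multiplying by a Lipschitz cutoff supported in a collar of $\partial \Qset^j$ of sufficiently small width, adding the result to $\widetilde F$ and projecting via $\Pi_{\manifold{N}}$ produces the Lipschitz extension $G \in C \brk{\Qset^j, \manifold{N}}$ of $f$.

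Next, for $n$ large I build a Lipschitz extension $G_n$ of $f_n$ whose Lipschitz seminorm is bounded \emph{independently} of $n$, thereby contradicting the choice of the sequence $\brk{F_n}$. A truncated Kirszbraun extension of $\psi_n \defeq f_n - f$ from $\partial \Qset^j$ to $\Qset^j$ yields $\Psi_n \colon \Qset^j \to \Rset^\nu$ with $\seminorm{\Psi_n}_{\mathrm{Lip}} \le 2 \alpha$ and $\norm{\Psi_n}_{L^\infty} \le \norm{\psi_n}_{L^\infty}$. Taking $\varepsilon_n \defeq \sqrt{\norm{\psi_n}_{L^\infty}} \to 0$ and a cutoff $\eta_n \colon \Qset^j \to \intvc{0}{1}$ equal to $1$ on $\partial \Qset^j$, supported in $\set{x \in \Qset^j \st \dist \brk{x, \partial \Qset^j} \le \varepsilon_n}$ and satisfying $\seminorm{\eta_n}_{\mathrm{Lip}} \le 1/\varepsilon_n$, I define $\widehat G_n \defeq G + \eta_n \Psi_n$. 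Then $\widehat G_n\restr{\partial \Qset^j} = f_n$, the image of $\widehat G_n$ lies within $\norm{\psi_n}_{L^\infty}$ of $\manifold{N}$ and thus in $\manifold{N} + \Bset^\nu_{\delta_{\manifold{N}}}$ for $n$ large, and
\begin{equation*}
\seminorm{\widehat G_n}_{\mathrm{Lip}} \le \seminorm{G}_{\mathrm{Lip}} + 2 \alpha + \sqrt{\norm{\psi_n}_{L^\infty}}.
\end{equation*}
The map $G_n \defeq \Pi_{\manifold{N}} \compose \widehat G_n$ is therefore a Lipschitz extension of $f_n$ with Lipschitz seminorm bounded independently of $n$.

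The main obstacle in this plan is the balancing of the collar width $\varepsilon_n$ in the last step: for $\widehat G_n$ to remain in the tubular neighborhood of $\manifold{N}$ one needs $\norm{\eta_n \Psi_n}_{L^\infty}$ small, while for $\seminorm{\widehat G_n}_{\mathrm{Lip}}$ to stay uniformly bounded one needs $\seminorm{\eta_n}_{\mathrm{Lip}} \norm{\Psi_n}_{L^\infty} \le \norm{\psi_n}_{L^\infty} / \varepsilon_n$ under control. The choice $\varepsilon_n = \sqrt{\norm{\psi_n}_{L^\infty}}$ accommodates both requirements simultaneously, and an analogous balancing underlies the earlier construction of $G$ from the continuous extension $F$ of $f$.
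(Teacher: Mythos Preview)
Your argument is correct, and at its core it uses the same two ingredients as the paper --- Arzel\`a--Ascoli compactness of the family of $\alpha$-Lipschitz boundary data and correction via the nearest-point retraction $\Pi_{\manifold{N}}$ in a collar --- but the packaging is different. The paper argues \emph{directly}: it extracts from $\mathfrak{F}_\alpha$ a finite $\delta_{\manifold{N}}$-net $f_1,\dotsc,f_M$, fixes once and for all Lipschitz extensions $G_1,\dotsc,G_M$, and for a given $F$ builds $G$ by the explicit collar formula
\[
G(x)=\begin{cases}
\Pi_{\manifold{N}}\brk[\big]{(2\abs{x}_\infty-1)F(x/\abs{x}_\infty)+2(1-\abs{x}_\infty)G_i(x/\abs{x}_\infty)} & \text{if }\abs{x}_\infty\ge 1/2,\\
G_i(2x) & \text{otherwise},
\end{cases}
\]
so that $\beta=\max_i\seminorm{G_i}_{\mathrm{Lip}}$ up to a universal factor. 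You instead argue by contradiction, pass to a subsequential limit $f$, manufacture a single Lipschitz extension $G$ of $f$ via mollification and Kirszbraun, and then perturb $G$ in a shrinking collar to extend each $f_n$. The paper's route is shorter and avoids the auxiliary machinery (mollification, Kirszbraun, the $\varepsilon_n=\sqrt{\norm{\psi_n}_{L^\infty}}$ balancing); more importantly, its finite-net structure transfers directly to the homotopy-constrained \cref{lemma_cube_extension_lipschitz_homotopic}, where one simply enlarges the list $\{G_i\}$ to $\{G_i^h\}$ indexed by the finitely many homotopy classes. Your contradiction scheme would require additional work to track homotopy classes through the limit.
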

\begin{proof}
We consider the set of mappings 
\begin{equation*}
 \mathfrak{F}_\alpha
 \defeq 
 \set{F \restr{\partial \Qset^j} \st F \in C \brk{\Qset^j, \manifold{N}} \text{ and } \seminorm{F \restr{\partial \Qset^j}}_{\mathrm{Lip}} \le \alpha}
 \subseteq C \brk{\partial \Qset^j, \manifold{N}}.
\end{equation*}
By the classical Arzelà-Ascoli compactness criterion for continuous functions for the uniform distance, there are finitely many mappings \(f_1, \dotsc, f_M \in \mathfrak{F}_\alpha\) such that for every mapping \(F \in C \brk{\Qset^j, \manifold{N}}\) satisfying \(
 \seminorm{F \restr{\partial \Qset^j}}_{\mathrm{Lip}} \le \alpha
\), there exist some \(i \in \set{1, \dotsc, M}\) for which \(d\brk{F \restr{\partial \Qset^j}, f_i} \le \delta_{\manifold{N}}\) everywhere on \(\partial \Qset^j\), where \(\delta_{\manifold{N}}\) is a distance from \(\manifold{N}\) up to which the nearest point retraction  \(\Pi_{\manifold{N}}\) is well-defined and smooth.
We now choose for each \(i \in \set{1, \dotsc, M}\) a mapping \(G_i\in C\brk{\Qset^j, \manifold{N}}\) such that \(\seminorm{G_i}_{\mathrm{Lip}} < \infty\) and \(G_i \restr{\partial \Qset^j} = f_i\).

Given a mapping \(F \in C \brk{\Qset^j, \manifold{N}}\) satisfying \(
 \seminorm{F \restr{\partial \Qset^j}}_{\mathrm{Lip}} \le \alpha
\), there is some \(i \in \set{1, \dotsc, M}\) such that we have \(d\brk{F \restr{\partial \Qset^j}, G_i \restr{\partial \Qset^j}} \le \delta_{\manifold{N}}\) everywhere on \(\partial \Qset^j\).
We define then the mapping \(G \colon \Qset^j \to \manifold{N}\) for every \(x \in \Qset^j\) by
\begin{equation}
 G \brk{x}
 \defeq 
 \begin{cases}
   \Pi_{\manifold{N}} \brk[\big]{\brk{2\abs{x}_\infty - 1} F \brk{\tfrac{x}{\abs{x}_\infty}} + 2 \brk{1 -  \abs{x}_\infty} G_i \brk{\tfrac{x}{\abs{x}_\infty}}} & \text{if \(\abs{x}_{\infty} \ge 1/2\)},\\
   G_i \brk{2 x} & \text{otherwise}.
 \end{cases}
\end{equation}
The conclusion then follows.
\end{proof}

\Cref{lemma_cube_extension_lipschitz} is not strong enough for our purposes, 
because \emph{it does not guarantee any relationship between the original continuous extension \(F\) and the resulting Lipschitz-continuous extension \(G\).} 
In particular, if the mapping \(F\) had a continuous extension to a higher-dimensional face of a dyadic cubical complex, there is no guarantee whatsoever that the mapping \(G\) would also have one.
However, if we could ensure that the Lipschitz-continuous extension \(G\) is \emph{homotopic} to the extension \(F\) \emph{relatively to the boundary of the face,} then, thanks to the homotopy extension property, we could replace the mapping \(F\) by the more regular mapping \(G\) without losing the crucial property of having a continuous extension to higher-dimensional faces.
A key observation in the present work is that, as  known in quantitative homotopy theory  \citelist{\cite{Siegel_Williams_1989}\cite{Ferry_Weinberger_2013}}, this is indeed possible on a \(j\)-dimensional face, provided that the homotopy group \(\pi_{j} \brk{\manifold{N}}\) is \emph{finite}.

\begin{lemma}
\label{lemma_cube_extension_lipschitz_homotopic}
Let \(j \in \Nset \setminus \set{0}\). If the homotopy group \(\pi_{j} \brk{\manifold{N}}\) is finite, then for every \(\alpha \in \intvo{0}{\infty}\) there exists \(\beta \in \intvo{0}{\infty}\) such that for every mapping \(F \in C \brk{\Qset^j, \manifold{N}}\) satisfying
\(
 \seminorm{F \restr{\partial \Qset^j}}_{\mathrm{Lip}} \le \alpha
\),
there exists a mapping \(G \in C \brk{\Qset^j, \manifold{N}}\) such that 
\(G \restr{\partial \Qset^j} = F \restr{\partial \Qset^j}\), the maps \(F\) and \(G\) are homotopic relatively to \(\partial \Qset^j\) and \(\seminorm{G}_{\mathrm{Lip}} \le \beta\).
\end{lemma}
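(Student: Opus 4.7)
My plan is to adapt the proof of Lemma~\ref{lemma_cube_extension_lipschitz} by tracking homotopy classes rel boundary, which becomes feasible thanks to the finiteness of \(\pi_j \brk{\manifold{N}}\).

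First, exactly as in the proof of Lemma~\ref{lemma_cube_extension_lipschitz}, the Arzel\`a-Ascoli theorem provides finitely many maps \(f_1, \dotsc, f_M \in C \brk{\partial \Qset^j, \manifold{N}}\) such that every boundary restriction \(F \restr{\partial \Qset^j}\) with Lipschitz constant at most \(\alpha\) lies within \(\delta_{\manifold{N}}/2\) of some \(f_i\) in the uniform distance. I discard those \(f_i\) admitting no continuous extension to \(\Qset^j\); the remaining ones still cover all boundary maps that could arise, since any \(F\) furnishes an extension of its own boundary datum and hence of any nearby \(f_i\) via a nearest-point projection of an affine interpolation.

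Next, the set of homotopy classes rel \(\partial \Qset^j\) of continuous extensions of any such \(f_i\) is, once non-empty, a torsor over \(\pi_j \brk{\manifold{N}}\) and therefore finite. For each retained \(i\), Lemma~\ref{lemma_cube_extension_lipschitz} gives one Lipschitz extension \(G_{i, 1}\); further Lipschitz representatives \(G_{i, 2}, \dotsc, G_{i, N}\) of the remaining classes are produced by the standard bubble-insertion construction, i.e.\ by modifying \(G_{i, 1}\) inside a fixed small sub-ball so as to realise there a fixed smooth representative of the desired element of \(\pi_j \brk{\manifold{N}}\); since \(\pi_j \brk{\manifold{N}}\) is finite this only amounts to finitely many modifications, each with finite Lipschitz constant.

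Given now \(F\) with \(\seminorm{F \restr{\partial \Qset^j}}_{\mathrm{Lip}} \le \alpha\), I choose \(f_i\) within uniform distance \(\delta_{\manifold{N}}/2\) of \(F \restr{\partial \Qset^j}\) and transfer each \(G_{i, k}\) to an extension \(\smash{\widetilde{G}_{i, k}}\) of \(F \restr{\partial \Qset^j}\) by the construction at the end of the proof of Lemma~\ref{lemma_cube_extension_lipschitz}: keep \(G_{i, k} \brk{2 x}\) on \(\tfrac{1}{2} \Qset^j\), and on the annular shell \(\tfrac{1}{2} \le \abs{x}_\infty \le 1\) use \(\Pi_{\manifold{N}}\) applied to the affine interpolation between \(f_i \brk{x/\abs{x}_\infty}\) and \(F \brk{x/\abs{x}_\infty}\). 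Since the bubble that distinguishes the \(G_{i, k}\) sits in the inner half-cube and is untouched by the shell modification, this transfer is \(\pi_j \brk{\manifold{N}}\)-equivariant on homotopy classes, so \(\smash{\widetilde{G}_{i, 1}}, \dotsc, \smash{\widetilde{G}_{i, N}}\) realise every homotopy class of extensions of \(F \restr{\partial \Qset^j}\); one of them lies in the class of \(F\), and I take it as \(G\). The Lipschitz constants of all \(\smash{\widetilde{G}_{i, k}}\) are uniformly bounded by some \(\beta = \beta \brk{\alpha}\) because the families \(\brk{G_{i, k}}\) and \(\brk{f_i}\) are finite and the shell is controlled by \(\Pi_{\manifold{N}}\) together with the Lipschitz bound \(\alpha\) on \(F \restr{\partial \Qset^j}\) and \(f_i\).

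The main obstacle, and the only place where the finiteness of \(\pi_j \brk{\manifold{N}}\) is used in an essential way, is the uniform Lipschitz control of a complete list of representatives of the homotopy classes of extensions: if \(\pi_j \brk{\manifold{N}}\) were infinite, then the bubbles needed to realise successive classes would in general require arbitrarily large Lipschitz constants and no such \(\beta\) could be extracted, in accordance with the analytical obstructions discussed in the introduction.
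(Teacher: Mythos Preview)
Your proof is correct and follows essentially the same strategy as the paper's: Arzel\`a--Ascoli compactness to reduce to finitely many boundary data \(f_i\), one Lipschitz extension per element of \(\pi_j(\manifold{N})\) for each \(f_i\), and then the shell construction from Lemma~\ref{lemma_cube_extension_lipschitz} to transfer these to extensions of \(F\restr{\partial\Qset^j}\). The only minor variation is that the paper isolates the fact that the shell transfer is a bijection on homotopy classes rel boundary as a separate lemma (Lemma~\ref{lemma_homotopy_bijection}, proved via the homotopy extension property), whereas you argue it directly through the \(\pi_j(\manifold{N})\)-equivariance afforded by placing the distinguishing bubbles inside the inner half-cube, disjoint from the annular shell; both justifications are equivalent.
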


\Cref{lemma_cube_extension_lipschitz_homotopic} is the only place where we are using directly the assumption of finiteness on homotopy groups of \(\manifold{N}\); in all other statements where it appears, its use in the proof is mediated through \cref{lemma_cube_extension_lipschitz_homotopic}.

The assumption that the homotopy group \(\pi_{j}\brk{\manifold{N}}\) is finite is \emph{necessary} in \cref{lemma_cube_extension_lipschitz_homotopic}. 
Indeed, if the conclusion holds for some \(\alpha\), taking \(F\) to be constant on \(\partial \Qset^j\), by the Arzelà-Ascoli compactness criterion there would be a compact set that intersects any homotopy class of such maps, implying that there are only finitely many such homotopy classes and thus that the homotopy group \(\pi_{j} \brk{\manifold{N}}\) is finite.

The proof of \cref{lemma_cube_extension_lipschitz_homotopic} will elaborate the proof of its counterpart \cref{lemma_cube_extension_lipschitz} without the homotopy constraint and rely on the following homotopy result.

\begin{lemma}
\label{lemma_homotopy_bijection}
Given a homotopy \(H \in C\brk{\intvc{0}{1} \times \partial \Qset^j, \manifold{N}}\) and maps \(F_0, F_1 \in C \brk{\Qset^j, \manifold{N}}\) such that for each \(i \in \set{0, 1}\), \(F_i \restr{\partial \Qset^j} = H\brk{0, \cdot}\), if we define the mappings \(G_0, G_1 \in C \brk{\Qset^j, \manifold{N}}\) for every \(x \in \Qset^j\) by
\begin{equation*}
 G_i (x)
 \defeq
 \begin{cases}
   H \brk[\big]{2 \brk{1 - \abs{x}_\infty}, \tfrac{2 x}{\abs{x}_\infty}} &\text{if \(\abs{x}_\infty \ge 1/2\)},\\
   F_i \brk{2 x} & \text{otherwise},
  \end{cases}
\end{equation*}
then \(F_0\) and \(F_1\) are homotopic relatively to \(\partial \Qset^j\) if and only if \(G_0\) and \(G_1\) are homotopic relatively to \(\partial \Qset^j\).
\end{lemma}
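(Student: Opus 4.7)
Both implications follow from the naturality of the formula defining \(G_i\) from \(F_i\) under homotopies of the interior, with the non-trivial direction requiring the annular portion of the given homotopy to trivialise a family of loops.

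For the forward direction, I will take a rel-boundary homotopy \(K \in C\brk{\intvc{0}{1} \times \Qset^j, \manifold{N}}\) connecting \(F_0\) and \(F_1\), and substitute \(K\brk{s, \cdot}\) for \(F_i\) in the defining formula, producing
\begin{equation*}
\tilde K\brk{s, x}
\defeq \begin{cases} H\brk[\big]{2\brk{1 - \abs{x}_\infty}, x/\abs{x}_\infty} & \text{if \(\abs{x}_\infty \ge 1/2\)}, \\ K\brk{s, 2 x} & \text{otherwise}. \end{cases}
\end{equation*}
Continuity of \(\tilde K\) across the interface \(\abs{x}_\infty = 1/2\) reduces to exactly the compatibility already used in defining \(G_i\), since \(K\brk{s, \cdot}\restr{\partial \Qset^j}\) is independent of \(s\); the remaining boundary and endpoint conditions are immediate, so \(\tilde K\) is a homotopy of \(G_0\) to \(G_1\) relatively to \(\partial \Qset^j\).

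For the reverse direction, let \(L\) be a homotopy of \(G_0\) to \(G_1\) relatively to \(\partial \Qset^j\). The naive pullback \(L'\brk{s, x} \defeq L\brk{s, x/2}\) has the correct endpoint values \(L'\brk{i, \cdot} = F_i\), but its restriction to \(\partial \Qset^j\) is a non-trivial \(y\)-family of loops \(\gamma_y\brk{s} \defeq L\brk{s, y/2}\) based at the common boundary value. I plan to correct this using the annular portion of \(L\): for each \(y \in \partial \Qset^j\), the continuous square \(\Phi_y\brk{s, t} \defeq L\brk{s, t y}\) on \(\intvc{0}{1} \times \intvc{1/2}{1}\) has three boundary edges prescribed by \(H\) alone (the \(s = 0, 1\) edges equal \(t \mapsto H\brk{2\brk{1 - t}, y}\), and the outer \(t = 1\) edge is constant at \(H\brk{0, y}\)) while its \(t = 1/2\) edge is exactly \(\gamma_y\). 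Because \(\Phi_y\) exists, the concatenation of these four edges is null-homotopic in \(\manifold{N}\); cancelling the two \(H\)-paths against each other leaves a null-homotopy of \(\gamma_y\) rel basepoint, continuously parametrised by \(y\). Feeding this family of null-homotopies into the homotopy extension property of the cofibration \(\partial \Qset^j \hookrightarrow \Qset^j\)---or, equivalently, writing an explicit piecewise formula that glues \(L'\) on an inner cube \(\set{\abs{x}_\infty \le 1/2}\) to a collar interpolation built from \(\Phi_{x/\abs{x}_\infty}\) on \(\set{\abs{x}_\infty \ge 1/2}\)---upgrades \(L'\) to the required homotopy \(M\) of \(F_0\) to \(F_1\) relatively to \(\partial \Qset^j\).

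The hard part will be the gluing step of the reverse direction: producing an explicit continuous \(M\) on \(\intvc{0}{1} \times \Qset^j\) that agrees with \(L\brk{s, x/2}\) on a suitable inner core, with \(H\brk{0, \cdot}\) on \(\partial \Qset^j\), and with \(F_i\) at \(s = i\) requires threading three parameters (the homotopy variable \(s\), the radial coordinate \(\abs{x}_\infty\), and the interpolation parameter in \(\Phi_y\)) through several compatibility checks at the interfaces \(\abs{x}_\infty = 1/2\), \(\abs{x}_\infty = 1\), and \(s \in \set{0, 1}\). The alternative via the homotopy extension property is cleaner but more abstract; either route hinges on the key structural observation that the four-edged square \(\Phi_y\) encodes precisely the null-homotopy of \(\gamma_y\) needed to convert the pointwise obstruction into a rel-boundary homotopy.
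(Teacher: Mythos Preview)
Your proposal is correct. The forward direction is exactly what the paper means by ``immediate'': it writes no formula but your \(\tilde K\) is the obvious one.

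For the reverse direction, your route and the paper's diverge in emphasis. The paper simply says ``an application of the homotopy extension property yields the required homotopy'' and stops there; you instead first pull back to \(L'(s,x)=L(s,x/2)\), explicitly identify the boundary obstruction loop \(\gamma_y\), use the annular square \(\Phi_y\) to exhibit its null-homotopy, and only then invoke HEP (or an explicit gluing). Your analysis is sound---the square \(\Phi_y\) does have the three \(H\)-determined edges you describe, and feeding the resulting family of null-homotopies into the HEP for the pair \(\bigl([0,1]\times\Qset^j,\ \{0,1\}\times\Qset^j\cup[0,1]\times\partial\Qset^j\bigr)\) does produce the required \(M\). But this is more work than necessary: the paper's one-line HEP can be read as a single direct application, without first isolating \(\gamma_y\). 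Concretely, one can view the operation \(F\mapsto G_F\) as having a homotopy inverse \(G\mapsto\tilde F_G\) given by the same collar formula with \(H\) reversed; the composite adds a there-and-back collar \(H\ast H^{-1}\), which is null-homotopic rel endpoints, so \(\tilde F_{G_F}\simeq F\) rel \(\partial\Qset^j\) and the bijection on rel-\(\partial\) homotopy classes follows immediately. Your approach buys an explicit geometric picture of why the obstruction vanishes; the paper's buys brevity.
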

\begin{proof}
If \(F_0\) and \(F_1\) are homotopic relatively to \(\partial \Qset^j\), then one has immediately that \(G_0\) and \(G_1\) are homotopic relatively to \(\partial \Qset^j\).
Conversely, if \(G_0\) and \(G_1\) are homotopic relatively to \(\partial \Qset^j\), then an application of the homotopy extension property yields the required homotopy.
\end{proof}

\begin{proof}[Proof of \cref{lemma_cube_extension_lipschitz_homotopic}]
We proceed as in the proof of \cref{lemma_cube_extension_lipschitz} in order to define the maps \(f_1, \dotsc, f_M\).
Since all these maps have a continuous extension to \(\Qset^j\), we observe then that thanks to \cref{lemma_homotopy_bijection} for every \(i \in \set{1, \dotsc, M}\), there exist
mappings \(G_i^{1}, \dotsc, G_i^{k} \in  C\brk{\Qset^{j}, \manifold{N}}\), with \(k \defeq \# \pi_{j} \brk{\manifold{N}}\) such that for every \(h \in \set{1, \dotsc, k}\), \(\smash{\seminorm{G_i^h}_{\mathrm{Lip}}} < \infty\) and
\(\smash{G_i^{h}} \restr{\partial \Qset^j} = f_i\) and for every \(F \in C \brk{\Qset^j, \manifold{N}}\) such that \(\smash{F \restr{\partial \Qset^j}} = f_i\),
there is some \(h \in \set{1, \dotsc, k}\) for which the map \(F\) is homotopic to \(\smash{G_i^h}\) relatively to \(\smash{\partial \Qset^j}\).

We continue as in the  proof of \cref{lemma_cube_extension_lipschitz}, choosing instead of \(G_i\) a map \(\smash{G_i^h}\) which is homotopic to \(F\) relatively to \(\partial \Qset^j\).
\end{proof}

We have now all the ingredients to prove \cref{proposition_Lipschitz_extension_skeleton} as illustrated on \cref{figure_singext}.

\begin{figure}

 \begin{center}
  \includegraphics[alt={The construction of a controlled singular Sobolev extension from a continuous extension to a lower-dimensional skeleton}]{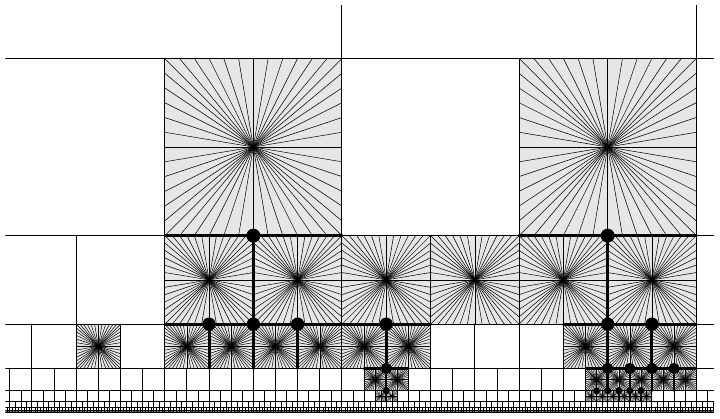}
 \end{center}
\caption{The construction of a controlled singular Sobolev extension from a continuous extension to a lower-dimensional skeleton of  \cref{proposition_Lipschitz_extension_skeleton} when \(m = 2\) and \(\ell = 1\):
on the lower-dimensional faces of \(\brk{\mathscr{Q}^{\mathrm{sing}}}^0\), represented as fat vertices,  \cref{lemma_cube_extension_lipschitz_homotopic} is used to replace the mapping by an homotopic map with controlled smoothness, on the critical-dimensional faces of \(\brk{\mathscr{Q}^{\mathrm{sing}}}^1\), represented as thick edges, \cref{lemma_cube_extension_lipschitz} is applied to replace the mapping by a map with controlled smoothness with no homotopy constraint, and finally on the higher-dimensional faces of \(\brk{\mathscr{Q}^{\mathrm{sing}}}^2\), represented as gray squares, a homogeneous extension is performed thanks to \cref{lemma_homogeneous_extension}.}
\label{figure_singext}
\end{figure}

\begin{proof}[Proof of \cref{proposition_Lipschitz_extension_skeleton}]
The map \(U\) will be the result of a recursive process that will treat separately the successive cases \(j = 0\), \(1 \le j \le \ell - 1\), \(j = \ell\), \(j = \ell + 1\) and \(\ell + 2 \le j \le m\).
(When \(\ell \in \set{m - 1, m}\), the last two or the very last step will be irrelevant and will not be performed; the resulting map will nevertheless satisfy the required properties.)

When \(j = 0\), we define the map \(U^0 \in C \brk{\bigcup \brk{\mathscr{Q}^{\mathrm{reg}} \cup \brk{\mathscr{Q}^{\mathrm{sing}}}^0}, \manifold{N}}\) in such a way that \(U^0 = W\) on \(\bigcup \mathscr{Q}^{\mathrm{reg}}\) and \(U^0 = b\) on \(\bigcup \brk{\mathscr{Q}^{\mathrm{sing}}}^0 \setminus \bigcup \mathscr{Q}^{\mathrm{reg}}\), where \(b \in \manifold{N}\) is a given fixed point. 
Since \(\bigcup \brk{\mathscr{Q}^{\mathrm{sing}}}^0\) consists only of isolated points, we immediately deduce from \eqref{eq_aed3oush2ahz6IexahYeich4}, that for every \(x = \brk{x', x_m} \in \bigcup \brk{\mathscr{Q}^{\mathrm{reg}} \cup \brk{\mathscr{Q}^{\mathrm{sing}}}^{0}}\), we have
\begin{equation}
\label{eq_aFaigohThoidu6uch7Yaed3f}
 \operatorname{Lip} U^0 \brk{x} \le \frac{\beta_0}{x_m}
\end{equation}
with \(\beta_0 \defeq \alpha\).
By the homotopy extension property there exists a mapping \(W^0 \in C \brk{\bigcup \brk{\mathscr{Q}^{\mathrm{reg}}\cup \brk{\mathscr{Q}^{\mathrm{sing}}}^{\ell}}, \manifold{N}}\) such that 
\begin{equation*}
U^0 = W^0 \restr{\bigcup \brk{\mathscr{Q}^{\mathrm{reg}} \cup \brk{\mathscr{Q}^{\mathrm{sing}}}^0}}.
\end{equation*}

Next, for \(j \in \set{1, \dotsc, \ell - 1}\), we assume that we have maps \(U^{j - 1} \in C \brk{\bigcup \brk{\mathscr{Q}^{\mathrm{reg}} \cup  \brk{\mathscr{Q}^{\mathrm{sing}}}^{j - 1}}, \manifold{N}}\) and \(W^{j - 1} \in C \brk{\bigcup \brk{\mathscr{Q}^{\mathrm{reg}} \cup \brk{\mathscr{Q}^{\mathrm{sing}}}^{\ell}}, \manifold{N}}\)
such that 
\begin{align*}
U^{j - 1} &= W^{j - 1} \restr{\bigcup \mathscr{Q}^{\mathrm{reg}}
\cup \brk{\mathscr{Q}^{\mathrm{sing}}}^{j - 1}}
&
\text{ and }
&
&
U^{j - 1} \restr{\bigcup \mathscr{Q}^{\mathrm{reg}}} &= W \restr{\bigcup \mathscr{Q}^{\mathrm{reg}}}
\end{align*}
and such that for every \(x= \brk{x', x_m} \in \bigcup \mathscr{Q}^{\mathrm{reg}}
\cup \brk{\mathscr{Q}^{\mathrm{sing}}}^{j - 1}\) we have 
\begin{equation}
\label{eq_ahgeibahtoh1ohx6Eis1wah8}
 \operatorname{Lip} U^{j - 1} \brk{x} \le \frac{\beta_{j - 1}}{x_m},
\end{equation}
with \(\beta_{j - 1}\) a constant that only depends on \(j - 1\) and \(\manifold{N}\).
(When \(j = 1\), this assumption is indeed satisfied according to the construction in the previous paragraph and \eqref{eq_aFaigohThoidu6uch7Yaed3f}; hence, the induction process is properly initialised.)
We define the map \(U^j \in C \brk{\bigcup \brk{\mathscr{Q}^{\mathrm{reg}} \cup \brk{\mathscr{Q}^{\mathrm{sing}}}^j}, \manifold{N}}\) to be \(U^{j - 1}\) on \(\bigcup \mathscr{Q}^{\mathrm{reg}}
\cup \brk{\mathscr{Q}^{\mathrm{sing}}}^{j - 1}\) so that it will remain to define it on every face \(\sigma \in \brk{\mathscr{Q}^\mathrm{sing}}^j \setminus \brk{\mathscr{Q}^\mathrm{reg}}^j\).
For such a face the mapping \(U^{j - 1}\) is defined on the relative boundary \(\partial \sigma \subseteq \bigcup \brk{\mathscr{Q}^{\mathrm{reg}} \cup \mathscr{Q}^{\mathrm{sing}}}^{j - 1}\) of the face \(\sigma\). 
Since the homotopy group \(\pi_j \brk{\manifold{N}}\) is finite by assumption, we can define \(U^j\) on \(\sigma\) with 
\cref{lemma_cube_extension_lipschitz_homotopic}.
In view of \eqref{eq_ahgeibahtoh1ohx6Eis1wah8}, \cref{lemma_cube_extension_lipschitz_homotopic} and a scaling argument, there exists a constant \(\beta_{j} \in \intvo{0}{\infty}\) depending just on \(j\), \(\beta_{j - 1}\) and \(\manifold{N}\) such that 
for every \(x= \brk{x', x_m} \in \bigcup \brk{\mathscr{Q}^{\mathrm{reg}} \cup \brk{\mathscr{Q}^{\mathrm{sing}}}^j}\),
\begin{equation}
\label{eq_ukee9aqu2shakaefohhahCha}
 \operatorname{Lip} U^j \brk{x} \le \frac{\beta_j}{x_m}.
\end{equation}
Moreover, we have 
\[
  U^j \restr{\bigcup \mathscr{Q}^{\mathrm{reg}}} = U^{j - 1} \restr{\bigcup \mathscr{Q}^{\mathrm{reg}}} = W \restr{\bigcup \mathscr{Q}^{\mathrm{reg}}}.
\]
By the homotopy extension property, there exists also a map \(W^j \in C \brk{\bigcup \brk{\mathscr{Q}^{\mathrm{reg}}\cup \brk{\mathscr{Q}^{\mathrm{sing}}}^{\ell}}, \manifold{N}}\) such that 
\begin{equation*}
U^j = W^j \restr{\bigcup \brk{\mathscr{Q}^{\mathrm{reg}} \cup \brk{\mathscr{Q}^{\mathrm{sing}}}^j}}.
\end{equation*}

When \(j = \ell\), we proceed as in previous case relying on \cref{lemma_cube_extension_lipschitz} instead of \cref{lemma_cube_extension_lipschitz_homotopic} so that no assumption has to be imposed on the homotopy group \(\pi_\ell \brk{\manifold{N}}\), and omitting the construction of the mapping \(W^\ell\).
We get thus a map \(U^\ell \in C \brk{\bigcup \brk{\mathscr{Q}^{\mathrm{reg}} \cup \brk{\mathscr{Q}^{\mathrm{sing}}}^{\ell}}, \manifold{N}}\) such that 
\[U^\ell \restr{\bigcup \mathscr{Q}^{\mathrm{reg}}} = U^{\ell - 1} \restr{\bigcup \mathscr{Q}^{\mathrm{reg}}} = W \restr{\bigcup \mathscr{Q}^{\mathrm{reg}}}
\]
and, in view of \eqref{eq_ukee9aqu2shakaefohhahCha}, for every \(x = \brk{x', x_m} \in \bigcup \brk{\mathscr{Q}^{\mathrm{reg}} \cup \brk{\mathscr{Q}^{\mathrm{sing}}}^{\ell}}\), 
\begin{equation}
 \operatorname{Lip} U^\ell \brk{x} \le \frac{\beta_\ell}{x_m}.
\end{equation}

For \(j = \ell + 1\), we define \(U^{\ell + 1}\) to coincide with \(U^\ell\) on \(\bigcup \brk{\mathscr{Q}^{\mathrm{reg}}\cup \brk{\mathscr{Q}^{\mathrm{sing}}}^{\ell}}\) and we apply \cref{lemma_homogeneous_extension} to define the mapping \(U^{\ell + 1}\) on every face \(\sigma \in \brk{\mathscr{Q}^{\mathrm{sing}}}^{\ell + 1}
\setminus \brk{\mathscr{Q}^{\mathrm{reg}}}^{\ell + 1}\).
The resulting map satisfies the condition 
\[
U^{\ell + 1} \restr{\bigcup \mathscr{Q}^{\mathrm{reg}}} = U^{\ell} \restr{\bigcup \mathscr{Q}^{\mathrm{reg}}} = W \restr{\bigcup \mathscr{Q}^{\mathrm{reg}}}
\]
and can be estimated for every \(x = \brk{x', x_m} \in \bigcup \mathscr{Q}^{\mathrm{reg}}\cup \brk{\bigcup \brk{\mathscr{Q}^{\mathrm{sing}}}^{\ell + 1} \setminus L^{m - \ell - 1}}\) in view of \cref{lemma_homogeneous_extension} \ref{it_iuphei7Lahng3co3aeju0woh} as 
\begin{equation}
 \operatorname{Lip} U^{\ell + 1} \brk{x} \le \frac{\beta_{\ell + 1}}{\dist \brk{x,L^{m - \ell - 1}}}.
\end{equation}

When \(\ell + 2\le j \le m\), assuming that there is a map \(U^{j - 1} \in C \brk{\bigcup \mathscr{Q}^{\mathrm{reg}} \cup \brk{\bigcup \brk{\mathscr{Q}^{\mathrm{sing}}}^{j - 1} \setminus L^{m - \ell - 1}}, \manifold{N}}\)
such that 
\[U^{j - 1} \restr{\bigcup \mathscr{Q}^{\mathrm{reg}}} = W \restr{\bigcup \mathscr{Q}^{\mathrm{reg}}},
\]
such that for every \(x \in \bigcup \mathscr{Q}^{\mathrm{reg}} 
\cup \brk{\bigcup \brk{\mathscr{Q}^{\mathrm{sing}}}^{j - 1} \setminus L^{m - \ell - 1} }\)
\begin{equation}
 \operatorname{Lip} U^{j - 1} \brk{x} \le \frac{\beta_{j - 1}}{\dist \brk{x,L^{m - \ell - 1}}}
\end{equation}
and such that 
\[
  U^{j - 1}\restr{\bigcup \mathscr{Q}^{\mathrm{reg}}} = W \restr{\bigcup \mathscr{Q}^{\mathrm{reg}}}
\]
(we have just proved this was indeed the case for \(j = \ell + 2\)),
we define the map \(U^j \in C \brk{\bigcup \mathscr{Q}^{\mathrm{reg}} \cup \brk{\bigcup \brk{\mathscr{Q}^{\mathrm{sing}}}^{j} \setminus L^{m - \ell - 1}}, \manifold{N}}\) by taking \(U^{j - 1}\) on the set \(\bigcup \mathscr{Q}^{\mathrm{reg}} \cup \brk{\bigcup \brk{\mathscr{Q}^{\mathrm{sing}}}^{j - 1} \setminus L^{m - \ell - 1}}\) and applying \cref{lemma_homogeneous_extension} on every face \(\sigma \in \brk{\mathscr{Q}^{\mathrm{sing}}}^{j} \setminus \brk{\mathscr{Q}^{\mathrm{reg}}}^{j}\); the resulting map satisfies in view of \cref{lemma_homogeneous_extension} \ref{it_IWigh3phei9Ohjoolaineequ} for every \(x \in \bigcup \mathscr{Q}^{\mathrm{reg}} \cup \brk{\bigcup \brk{\mathscr{Q}^{\mathrm{sing}}}^j \setminus L^{m - \ell - 1}}\) the estimate
\begin{equation}
 \operatorname{Lip} U^{j} \brk{x} \le \frac{\beta_{j}}{\dist \brk{x,L^{m - \ell - 1}}}
\end{equation}
and  the condition 
\[
U^j \restr{\bigcup \mathscr{Q}^{\mathrm{reg}}} = U^{j - 1} \restr{\bigcup \mathscr{Q}^{\mathrm{reg}}} = W \restr{\bigcup \mathscr{Q}^{\mathrm{reg}}}.
\]

In order to conclude we take \(U \defeq U^m\).
We check immediately that 
\[U \restr{\bigcup \mathscr{Q}^{\mathrm{reg}}} = U^{m} \restr{\bigcup \mathscr{Q}^{\mathrm{reg}}} = W \restr{\bigcup \mathscr{Q}^{\mathrm{reg}}}, 
\]
and that \(U\) satisfies the estimates
 \eqref{eq_Umech5Ueyeic6oa7Ohngeevi} and  \eqref{eq_wohch3beevohDuegeraiB6zu} when \(\ell = m\) and \(\ell < m\) respectively.
Finally, it follows from \eqref{eq_wohch3beevohDuegeraiB6zu} and \eqref{eq_Umech5Ueyeic6oa7Ohngeevi} that \(U\) is weakly differentiable and that if \(\ell = m\) one has 
\begin{equation}
\label{eq_do4aephiusheegh7tiChaeha}
 \int_{\bigcup \mathscr{Q}^{\mathrm{sing}}} \abs{\Deriv U}^p
 \le \C \beta \sum_{k \in \Zset} 2^{k\brk{m - p}} \# \mathscr{Q}^{\mathrm{sing}}_k,
\end{equation}
whereas if \(\ell > p - 1\),
\begin{equation}
\label{eq_aevioyoo6shohsh3coh7Naiz}
 \int_{\bigcup \mathscr{Q}^{\mathrm{sing}}} \abs{\Deriv U}^p
 \le \frac{\C \beta}{\ell + 1 - p}  \sum_{k \in \Zset} 2^{k\brk{m - p}} \# \mathscr{Q}^{\mathrm{sing}}_k.
\end{equation}
In view of \eqref{eq_ee9wei0noyah1zee0xuQuie2} and either \eqref{eq_do4aephiusheegh7tiChaeha} or \eqref{eq_aevioyoo6shohsh3coh7Naiz}, 
it follows that \(U \in \smash{\dot{W}}^{1, p} \brk{\bigcup \mathscr{Q}^{\mathrm{reg}} \cup \bigcup \mathscr{Q}^{\mathrm{sing}}, \manifold{N}}\), with the estimate \eqref{eq_xuoY7IuceiwooH5Aicadai9a}. 
\end{proof}

At this point of our construction of the extension of Sobolev mappings, we have gathered all the tools to recover Hardt and Lin's extension result, in which they assumed the first homotopy groups \(\pi_1 \brk{\manifold{N}}, \dotsc, \smash{\pi_{\floor{p - 1}}} \brk{\manifold{N}}\) to be \emph{all trivial} \cite{Hardt_Lin_1987} (see also \cite{Hardt_Kinderlehrer_Lin_1988}).
Indeed, taking the collections \(\mathscr{Q}^{\mathrm{sing}} \defeq \mathscr{Q}^{\mathrm{bad}}\) and
\(\mathscr{Q}^{\mathrm{reg}} \defeq \mathscr{Q}^{\mathrm{good}}\)
and the map \(W\) to be \(\Pi_{\manifold{N}} \compose V\) on \(\bigcup \mathscr{Q}^{\mathrm{reg}}\), which can be extended continuously to \(\smash{\bigcup \smash{\brk{\mathscr{Q}^{\mathrm{sing}}}^{\ell}}}\) with \(\ell =\min\set{\smash{\floor{p}}, m}\) since the homotopy groups \(\pi_1 \brk{\manifold{N}}, \dotsc, \pi_{\ell - 1} \brk{\manifold{N}}\) are trivial, and
applying then \cref{proposition_Lipschitz_extension_skeleton}, we get the map \(U \colon \bigcup \smash{\mathscr{Q}^{\vert \Omega}} \to \manifold{N}\); thanks to \eqref{eq_xuoY7IuceiwooH5Aicadai9a} and \eqref{eq_yie2te0ahrie6eeL2mah5Boh}, \(U \in \smash{\smash{\dot{W}}^{1, p} \brk{\bigcup \mathscr{Q}^{\vert \Omega}, \manifold{N}}}\) satisfies the required estimates;
by \cref{proposition_equal_traces}, we also have \(\tr_{\Omega} U = u\) so that \(U\) has the prescribed trace.

The resulting proof of the extension of traces when the first homotopy groups are trivial,
based on manipulations in the domain of the linear extension, is \emph{radically different} from the original construction of Hardt and Lin by manipulations in the target manifold thanks to a singular retraction.
In this particular case, where the first homotopy groups are all trivial, we also note that the proof of \cref{proposition_Lipschitz_extension_skeleton} can be simplified, since one can apply at each step \(j \in \set{1, \dots, \ell - 1}\) the simpler procedure used in the proof when \(j = \ell\); in particular \cref{lemma_cube_extension_lipschitz_homotopic} can be replaced by the less delicate \cref{lemma_cube_extension_lipschitz} and the successive auxiliary maps \(W^0, \dotsc, W^{\ell - 1}\) do not need to be constructed in the \(\ell\) first inductive steps of the proof of \cref{proposition_Lipschitz_extension_skeleton}.

\section{Supercritical integrability extension}
\label{section_supercritical}
\resetconstant

If \(u \in \smash{\dot{W}}^{1, p} \brk{\Omega, \manifold{N}}\), if \(p > m\) and if \(\mathscr{Q}\) is a dyadic decomposition of \(\Rset^m_+\), 
one can take \(k_0 \in \Zset\) small enough so that the collection of bad cubes \(\smash{\mathscr{Q}^{\mathrm{bad}}}\) defined in \eqref{eq_definition_bad_cube}
satisfies in view of the estimate \eqref{eq_yie2te0ahrie6eeL2mah5Boh} the inclusion
\begin{equation}
\label{eq_chiemooReix5bahChee8Vohn}
\mathscr{Q}^{\mathrm{bad}} \subseteq 
\bigcup_{k = k_0}^{\infty} \mathscr{Q}_k,
\end{equation}
with even a quantitative estimate on \(k_0\); 
hence the construction of \sectref{section_badset} readily yields a map \(\Pi_{\manifold{N}} \compose V \in \smash{\dot{W}}^{1, p} \brk{\smash{\widehat{\Omega}} \cap \brk{\Rset^{m - 1} \times \intvo{0}{2^{k_0}}}, \manifold{N}}\); and it remains to perform a suitable construction on \(\smash{\widehat{\Omega}} \cap \smash{\brk{\Rset^{m - 1} \times \intvo{2^{k_0}}{\infty}}}\).
In view of \cref{proposition_Lipschitz_extension_skeleton} it will be sufficient to perform a continuous extension on a collection of singular cubes \(\smash{\mathscr{Q}^{\mathrm{sing}}}\) that contains the collection of bad cubes \(\smash{\mathscr{Q}^{\mathrm{bad}}}\).

In order to postpone the more tedious handling of the domain \(\Omega\), of its tent \(\smash{\widehat{\Omega}}\) and of the associated dyadic cubes \(\mathscr{Q}^{\vert \Omega}\) to the main proof, we introduce the notion of descending map.

\begin{definition}
\label{definition_descending}
Given \(A \subseteq \Rset^m_+\), a 
mapping \(\Psi \colon A \to \Rset^{m}_+\) is \emph{descending} with respect to a dyadic decomposition \(\mathscr{Q}\) of \(\Rset^m_+\) whenever for every \(k \in \Zset\) and for every cube \(Q = Q' \times \intvc{2^{k}}{2^{k + 1}}  \in \mathscr{Q}_k\), one has 
\(\Psi \brk{A \cap Q} \subseteq Q' \times \intvl{0}{2^{k+1}}\).
\end{definition}

Geometrically, a map is descending with respect to \(\mathscr{Q}\) whenever the image of the intersection of its domain with any dyadic cube of \(\mathscr{Q}\) is contained in the convex hull of the cube and its orthogonal projection on the hyperplane \(\Rset^{m - 1} \times \set{0}\).
The crucial property of a descending mapping  \(\Psi \colon A \to \Rset^{m}_+\)  for our purpose is that for every open set \(\Omega \subseteq \Rset^{m}_+\), one has \(\Psi \brk{A \cap \smash{\bigcup \mathscr{Q}^{\vert \Omega}}} \subseteq \smash{\bigcup \mathscr{Q}^{\vert \Omega}}\).

The next proposition provides a collection of singular cubes \(\smash{\mathscr{Q}^{\mathrm{sing}}}\) that contains the collection of bad cubes \(\smash{\mathscr{Q}^{\mathrm{bad}}}\) and on which one can define a descending continuous mapping to non-singular cubes that can be used to perform extensions.

\begin{proposition}
\label{proposition_modifiedcubes_supercritical}
For every dyadic decomposition \(\mathscr{Q}\) of \(\Rset^{m}_+\), every \(k_0 \in \Zset\) and every collection of cubes 
\(
\mathscr{Q}^{\mathrm{bad}}
 \subseteq  \bigcup_{k = k_0}^{\infty} \mathscr{Q}_k
\),
there exists a collection of cubes \(\mathscr{Q}^{\mathrm{sing}} \subseteq \bigcup_{k = k_0}^{\infty} \mathscr{Q}_k\) and a mapping \(\Psi^{\mathrm{sing}}\colon \bigcup  \mathscr{Q} \to 
\bigcup \brk{\mathscr{Q} \setminus \mathscr{Q}^\mathrm{sing}}\)
such that 
\begin{enumerate}[label=(\roman*)]
  \item 
  \label{it_Naejishaes0taighei7bo4Ui}
  \(\mathscr{Q}^{\mathrm{sing}} \supseteq \mathscr{Q}^{\mathrm{bad}}\),
  \item 
  \label{it_jee7giebu5Moo3rei5aewohp}
  for every \(k \ge k_0\),
  \begin{equation}
    \label{eq_eSu0lae0quatheekusochie6}
    \# \mathscr{Q}^{\mathrm{sing}}_k
    \le 
    \smashoperator[r]{\sum_{i = k_0}^k} \# \mathscr{Q}^{\mathrm{bad}}_i,
  \end{equation}
  \item 
  \label{it_dee7moib9iihoNgieWahloog}
  \(\Psi^{\mathrm{sing}} = \id\) on 
  \(\bigcup \brk{\mathscr{Q} \setminus \mathscr{Q}^{\mathrm{sing}}}\),
  \item 
  \label{it_aic8oaw3VekeWiecawah4pho}
  \(\Psi^{\mathrm{sing}}\) is descending.
\end{enumerate}
\end{proposition}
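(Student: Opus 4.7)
The plan is to construct $\mathscr{Q}^{\mathrm{sing}}$ inductively by scale, starting from the lowest singular scale $k_0$ and, at each subsequent scale, throwing in the unique scale-$k$ ``parent'' of every singular cube of scale $k - 1$; the descending map will then be obtained by collapsing every singular cube vertically into the region $\set{x_m \le 2^{k_0 - 1}}$, where by design no cube is singular.

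First I would set $\mathscr{Q}^{\mathrm{sing}}_k \defeq \emptyset$ for $k < k_0$, $\mathscr{Q}^{\mathrm{sing}}_{k_0} \defeq \mathscr{Q}^{\mathrm{bad}}_{k_0}$, and, for $k > k_0$, let $\mathscr{Q}^{\mathrm{sing}}_k$ be $\mathscr{Q}^{\mathrm{bad}}_k$ together with every $Q \in \mathscr{Q}_k$ whose horizontal projection contains the horizontal projection of some $R \in \mathscr{Q}^{\mathrm{sing}}_{k - 1}$. Thanks to the nesting condition \eqref{eq_toovahg2ashahc4ohtohRouc}, the scale-$(k-1)$ horizontal projections refine the scale-$k$ ones, so there is a well-defined parent map $\pi_k \colon \mathscr{Q}_{k - 1} \to \mathscr{Q}_k$ sending $R$ to the unique scale-$k$ cube sitting directly above it, and by construction $\mathscr{Q}^{\mathrm{sing}}_k = \mathscr{Q}^{\mathrm{bad}}_k \cup \pi_k(\mathscr{Q}^{\mathrm{sing}}_{k - 1})$; consequently
\begin{equation*}
  \# \mathscr{Q}^{\mathrm{sing}}_k
  \le \# \mathscr{Q}^{\mathrm{bad}}_k + \# \pi_k(\mathscr{Q}^{\mathrm{sing}}_{k - 1})
  \le \# \mathscr{Q}^{\mathrm{bad}}_k + \# \mathscr{Q}^{\mathrm{sing}}_{k - 1}.
\end{equation*}
A direct induction in $k$ starting from $\mathscr{Q}^{\mathrm{sing}}_{k_0 - 1} = \emptyset$ then yields \eqref{eq_eSu0lae0quatheekusochie6}, which together with the inclusion $\mathscr{Q}^{\mathrm{bad}} \subseteq \mathscr{Q}^{\mathrm{sing}}$ built into the definition gives \ref{it_Naejishaes0taighei7bo4Ui} and \ref{it_jee7giebu5Moo3rei5aewohp}.

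For the descending map I would fix a height $h_* \in \intvo{0}{2^{k_0 - 1}}$ and an affine bijection $\phi_k \colon \intvc{2^k}{2^{k + 1}} \to \intvc{h_*/2}{h_*}$ for each $k \ge k_0$, then set $\Psi^{\mathrm{sing}} \defeq \id$ on $\bigcup (\mathscr{Q} \setminus \mathscr{Q}^{\mathrm{sing}})$ and $\Psi^{\mathrm{sing}} (x', x_m) \defeq (x', \phi_k (x_m))$ whenever $(x', x_m) \in Q \in \mathscr{Q}^{\mathrm{sing}}_k$. Property \ref{it_dee7moib9iihoNgieWahloog} is immediate, and for \ref{it_aic8oaw3VekeWiecawah4pho} I would check two cases: on a non-singular cube $Q = Q' \times \intvc{2^k}{2^{k + 1}}$ the image is $Q \subseteq Q' \times \intvl{0}{2^{k + 1}}$, while on a singular cube $Q \in \mathscr{Q}^{\mathrm{sing}}_k$ with $k \ge k_0$ the image lies in $Q' \times \intvc{h_*/2}{h_*} \subseteq Q' \times \intvl{0}{2^{k + 1}}$ since $h_* < 2^{k_0 - 1} \le 2^{k + 1}$. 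Finally, every point in the image has last coordinate at most $h_* < 2^{k_0 - 1}$ and therefore lies in some cube of scale at most $k_0 - 2$, which is by construction non-singular, so $\Psi^{\mathrm{sing}}$ indeed takes its values in $\bigcup (\mathscr{Q} \setminus \mathscr{Q}^{\mathrm{sing}})$.

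The conceptual obstacle I expect is not in any single verification but in isolating the right recursive rule: the parent map $\pi_k$ is generically many-to-one (with up to $2^{m - 1}$ scale-$(k - 1)$ cubes sharing the same parent), and it is precisely this non-injectivity that prevents $\# \mathscr{Q}^{\mathrm{sing}}_k$ from inflating beyond \eqref{eq_eSu0lae0quatheekusochie6}. This is exactly what makes the geometric series argument outlined between \eqref{eq_Reing5aicaitaiM7he8ohsha} and \eqref{eq_ceikat4ooJ5eegheQuaa6eeg} close in the supercritical range $p > m$ and reduces the whole extension construction to an application of \cref{proposition_Lipschitz_extension_skeleton}.
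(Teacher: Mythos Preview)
Your construction of $\mathscr{Q}^{\mathrm{sing}}$ is exactly the paper's: the ``parent map'' $\pi_k$ is precisely adjacency between a cube of $\mathscr{Q}_{k-1}$ and the unique cube of $\mathscr{Q}_k$ sitting directly above it, and the counting argument for \eqref{eq_eSu0lae0quatheekusochie6} is identical.

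The difference is in $\Psi^{\mathrm{sing}}$. The paper builds it inductively face by face: on each vertical $j$-face $\sigma$ of a singular cube it composes with the descending retraction of \cref{lemma_descending} from $\sigma$ onto its enclosing boundary $\partial_{\sqcup}\sigma$, so that the resulting map is \emph{continuous} and agrees with the identity on every face shared with a non-singular cube. Your map, by contrast, is a bare vertical collapse $(x',x_m)\mapsto(x',\phi_k(x_m))$ on each singular cube and the identity elsewhere. This is not even well-defined on faces shared between cubes (a point with $x_m=2^k$ on the common face of a singular $Q\in\mathscr{Q}^{\mathrm{sing}}_k$ and a singular $R\in\mathscr{Q}^{\mathrm{sing}}_{k-1}$ is sent to both $h_*/2$ and $h_*$), and however one resolves that ambiguity the map is discontinuous across any vertical face separating a singular cube from a non-singular one at the same scale, and across any horizontal face separating a singular cube from a non-singular cube below it.

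Strictly speaking the proposition does not state continuity of $\Psi^{\mathrm{sing}}$, so your map satisfies \ref{it_Naejishaes0taighei7bo4Ui}--\ref{it_aic8oaw3VekeWiecawah4pho} once the boundary ambiguity is resolved arbitrarily. But the whole purpose of $\Psi^{\mathrm{sing}}$ is to feed into the proof of \cref{theorem_extension_tent_supercritical}, where one sets $W \defeq \Pi_{\manifold{N}}\circ V\circ\Psi^{\mathrm{sing}}$ and then invokes \cref{proposition_Lipschitz_extension_skeleton}, which requires $W$ to be continuous on $\bigcup\mathscr{Q}^{\mathrm{reg}}\cup\bigcup(\mathscr{Q}^{\mathrm{sing}})^{\ell}$. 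With a discontinuous $\Psi^{\mathrm{sing}}$ that step fails, so your construction proves the letter of the proposition but not what is actually needed downstream. The paper's inductive retraction via \cref{lemma_descending} is precisely the device that secures continuity while remaining descending.
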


Although the statement of \cref{proposition_modifiedcubes_supercritical} uses  the same notation \(\smash{\mathscr{Q}^{\mathrm{bad}}}\) as in \sectref{section_badset}, the result and its proof are purely combinatorial and do not rely in any way on the definition of the collection of bad cubes in \eqref{eq_definition_bad_cube}.

\medbreak

Given \(j \in \set{1, \dotsc, m}\), each vertical face \(\sigma \in \brk{\mathscr{Q}}^{j, \perp}\) (as defined in  \eqref{eq_deeyooy8gah0vai7pheiSo0E} and \eqref{eq_Xai3tekei9ao1Ahsh9rae9ae}) can be written as \(\sigma = \sigma' \times \intvc{2^{k}}{2^{k + 1}}\)
for some \(k \in \Zset\) and \(\sigma' \subseteq \Rset^{m - 1}\) such that \(\sigma' \times \set{2^{k + 1}} \in \smash{\brk{\mathscr{Q}}^{j - 1, \perp}_k}\),
its \emph{upper boundary} is defined as 
\begin{equation}
\label{eq_hoofi5Emae8xaepa8queef7r}
\partial_{\top} \sigma
\defeq \sigma' \times \set{2^{k + 1}}
\end{equation}
and its \emph{enclosing boundary} as
\begin{equation}
\label{eq_ap8pee3oquaiH5iig9phahsh}
 \partial_{\sqcup} \sigma \defeq 
 \brk{\partial \sigma' \times \intvc{2^k}{2^{k + 1}}}
 \cup 
 \brk{\sigma' \times \set{2^k}}
\end{equation}
(\( \partial_{\sqcup} \sigma \) is also known as the parabolic boundary, although the relevance of this terminology is weak in the present work where there is no parabolic partial differential equation).
By the following lemma and a change of variable, there always exists a descending retraction from a vertical face \(\sigma\) to its enclosing boundary \(\partial_{\sqcup} \sigma\).

\begin{lemma}
\label{lemma_descending}
Let \(\Omega \subseteq \Rset^{m - 1}\) be convex, open and bounded. If \(0 \in \Omega\), then the map \(\Psi \colon \Omega \times \intvc{0}{1} \to \partial_{\sqcup} \brk{\Omega \times \intvc{0}{1}}\), where \(\partial_{\sqcup}\brk{\Omega \times \intvc{0}{1}} \defeq \brk{\Omega \times \set{0}} \cup \brk{\partial \Omega \times \intvc{0}{1}}\) defined for each \(x = \brk{x', x_m} \in \Omega \times \intvc{0}{1}\) by 
\begin{equation*}
\Psi \brk{x}
\defeq 
\begin{cases}
 \brk[\big]{\frac{2x'}{2 - x_m}, 0} &\text{if \(\gamma_\Omega (x') \le 1 - \frac{x_m}{2},\)}\\
 \brk[\big]{\frac{x'}{\gamma\brk{x'}}, 2 - \frac{2 - x_m}{\gamma \brk{x'}}}
 &\text{otherwise,}
\end{cases}
\end{equation*}
where \(\gamma_\Omega\colon \Rset^{m - 1} \to \intvr{0}{\infty}\) is the Minkowski functional  (gauge) of \(\Omega\) defined by 
\begin{equation*}
 \gamma_{\Omega} \defeq \inf \set{t \in \intvo{0}{\infty} \st x'/t \in \Omega},
\end{equation*}
is continuous and \(\Psi \restr{\partial_{\sqcup} \brk{\Omega \times \intvc{0}{1}}} = \id_{\partial_{\sqcup} \brk{\Omega \times \intvc{0}{1}}}\).
\end{lemma}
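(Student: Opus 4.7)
The plan is to interpret $\Psi$ as the central projection from the apex $a \defeq (0, 2) \in \Rset^m$ onto the enclosing boundary $\partial_{\sqcup}(\Omega \times \intvc{0}{1})$, and then verify the retraction and continuity properties directly from the explicit formula.

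First I would parameterise the ray from $a$ through a point $x = (x', x_m) \in \Omega \times \intvc{0}{1}$ as $a + t(x - a) = (t x', 2 + t(x_m - 2))$ with $t \ge 0$; it passes through $x$ at $t = 1$. The ray meets the bottom face $\overline{\Omega} \times \set{0}$ at time $t_{\mathrm{bot}} = 2/(2 - x_m)$, landing at $\brk{2x'/(2 - x_m), 0}$, and it meets the lateral face $\partial \Omega \times \intvc{0}{1}$ at time $t_{\mathrm{lat}} = 1/\gamma_\Omega(x')$, landing at $\brk{x'/\gamma_\Omega(x'), 2 - (2 - x_m)/\gamma_\Omega(x')}$. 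Comparing $t_{\mathrm{bot}}$ and $t_{\mathrm{lat}}$ shows that the ray exits through the bottom face exactly when $\gamma_\Omega(x') \le 1 - x_m/2$, reproducing precisely the dichotomy in the definition of $\Psi$.

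Next I would check that $\Psi$ indeed takes values in $\partial_{\sqcup}(\Omega \times \intvc{0}{1})$. In the first case, $y_m = 0$ and $\gamma_\Omega(y') = 2\gamma_\Omega(x')/(2 - x_m) \le 1$ by the case assumption, so $y' \in \overline{\Omega}$. In the second case, $\gamma_\Omega(y') = 1$ by positive homogeneity, so $y' \in \partial \Omega$; the bound $y_m \ge 0$ is exactly the case assumption $\gamma_\Omega(x') \ge 1 - x_m/2$, while $y_m \le 1$ reduces to $\gamma_\Omega(x') \le 2 - x_m$, which follows from $\gamma_\Omega(x') \le 1$ and $x_m \le 1$. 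On the transition locus $\gamma_\Omega(x') = 1 - x_m/2$, substituting into the second formula yields $y_m = 2 - (2 - x_m)/\gamma_\Omega(x') = 0$ and $y' = x'/\gamma_\Omega(x') = 2x'/(2 - x_m)$, matching the first formula exactly.

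Finally I would verify the identity property on $\partial_{\sqcup}(\Omega \times \intvc{0}{1})$: on the bottom face ($x_m = 0$) the inequality $\gamma_\Omega(x') \le 1 = 1 - x_m/2$ holds and the first formula returns $(x', 0)$; on the lateral face ($\gamma_\Omega(x') = 1$, $x_m > 0$) the reverse strict inequality holds, so the second formula applies and returns $(x', x_m)$. Continuity then follows from the continuity of the gauge $\gamma_\Omega$ (which is a norm on $\Rset^{m - 1}$ since $\Omega$ is a bounded, open, convex neighbourhood of the origin), from the matching of the two formulas across the transition locus, and from the fact that in the second case the denominator $\gamma_\Omega(x')$ is bounded below by $1 - x_m/2 \ge 1/2 > 0$ so that no singularity can arise at $x' = 0$. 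The argument is elementary; the only mild subtlety is the algebraic compatibility of the two formulas on the transition locus, which is really the crux of the calculation.
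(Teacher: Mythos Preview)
Your proof is correct and complete; the paper states the lemma without proof, leaving the verification as an elementary exercise. Your interpretation of $\Psi$ as the central projection from the apex $(0,2)$ is a helpful way to see where the explicit formula comes from and why the two cases are compatible.

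One minor imprecision: you call $\gamma_\Omega$ ``a norm on $\Rset^{m-1}$'', but this requires $\Omega$ to be symmetric about the origin, which is not assumed (and in the paper's applications $\Omega$ is a cube or simplex that need not be centred at $0$). What you actually need is only that $\gamma_\Omega$ is positively homogeneous and continuous, both of which hold for the Minkowski functional of any bounded open convex neighbourhood of the origin; your argument goes through unchanged with that correction.
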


We will also need the notion of adjacent faces:
two faces \(\sigma, \tau \in \brk{\mathscr{Q}}^j\) are \emph{adjacent} whenever \(\sigma \cap \tau \in \brk{\mathscr{Q}}^{j - 1}\).

The next lemma shows the relation between the upper boundary of faces and vertical faces of dyadic cubes of different generations.

\begin{lemma}
\label{lemma_cubes_top_boundary}
Let \(\mathscr{Q}\) be a dyadic decomposition of \(\Rset^m_+\), let \(\ell \in \set{1, \dotsc, m}\) and let \(j \in \set{1, \dotsc, \ell}\).
If \(\sigma \in \smash{\brk{\mathscr{Q}}^{j, \perp}_{k - 1}}\), if \(\tau \in \smash{\brk{\mathscr{Q}}^{\ell, \perp}_{k}}\) and if \(\partial_{\top} \sigma \subseteq \tau\),
then there exists \(\rho \in \smash{\brk{\mathscr{Q}}^{\ell, \perp}_{k - 1}}\) such that \(\rho\) is adjacent to \(\tau\) and \(\sigma \subseteq \rho  \). 
\end{lemma}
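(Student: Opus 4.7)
\emph{Plan.} The plan is to unwind the definitions of horizontal and vertical faces, reduce the claim to a statement about the horizontal projection \(\tau'\) of \(\tau\) to \(\Rset^{m-1}\), and then construct \(\rho = \rho' \times \intvc{2^{k-1}}{2^k}\) by refining \(\tau'\) with the finer \(\mathscr{Q}_{k-1}\)-grid. What makes the construction possible is precisely the lattice-compatibility \(\xi_k \in 2^{k-1}\Zset^{m-1} + \xi_{k-1}\) of \cref{definition_dyadic_decomposition}, which gives \(2^k\Zset^{m-1} + \xi_k \subseteq 2^{k-1}\Zset^{m-1} + \xi_{k-1}\).

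First, I would unpack: write \(\sigma = \sigma' \times \intvc{2^{k-1}}{2^k}\) with \(\sigma' \times \set{2^k}\) a \((j-1)\)-face of the top of some \(Q_0 \in \mathscr{Q}_{k-1}\), and \(\tau = \tau' \times \intvc{2^k}{2^{k+1}}\) with \(\tau'\) the horizontal projection of an \((\ell-1)\)-face on the top of some \(Q'' \in \mathscr{Q}_k\). The hypothesis \(\partial_{\top}\sigma \subseteq \tau\) boils down to \(\sigma' \subseteq \tau'\). In axis-parallel coordinates, \(\tau'\) is a product of \((m-\ell)\) fixed coordinates (boundary values of \(Q''\)) with \((\ell-1)\) free intervals of length \(2^k\), while \(\sigma'\) is a product of \((m-j)\) fixed coordinates (boundary values of \(Q_0\)) with \((j-1)\) free intervals of length \(2^{k-1}\). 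The inclusion \(\sigma' \subseteq \tau'\) forces the \((m-\ell)\) coordinates fixed by \(\tau'\) to be among the \((m-j)\) fixed by \(\sigma'\), with matching values, and the \((j-1)\) free coordinates of \(\sigma'\) to be a subset of the \((\ell-1)\) free ones of \(\tau'\).

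Next, I would construct \(\rho'\) as follows: give \(\rho'\) the same \((m-\ell)\) fixed coordinates and values as \(\tau'\); on the \((j-1)\) coordinates free in both \(\sigma'\) and \(\tau'\), take the same length-\(2^{k-1}\) intervals as \(\sigma'\); on the remaining \((\ell-j)\) coordinates, free in \(\tau'\) but fixed in \(\sigma'\), the fixed value of \(\sigma'\) lies in the lattice \(2^{k-1}\Zset + \xi_{k-1}\) and inside the \(\tau'\)-interval \(\intvc{\zeta_1}{\zeta_1 + 2^k}\) with \(\zeta_1 \in 2^k\Zset + \xi_k\); by lattice compatibility this value is one of \(\zeta_1\), \(\zeta_1 + 2^{k-1}\), \(\zeta_1 + 2^k\), so I pick one of the two length-\(2^{k-1}\) sub-intervals \(\intvc{\zeta_1}{\zeta_1 + 2^{k-1}}\) or \(\intvc{\zeta_1 + 2^{k-1}}{\zeta_1 + 2^k}\) containing it. Set \(\rho \defeq \rho' \times \intvc{2^{k-1}}{2^k}\).

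Finally, I verify the three required properties. By construction, \(\rho' \times \set{2^k}\) is an \((\ell-1)\)-face of the top of some cube \(Q_1 \in \mathscr{Q}_{k-1}\), hence \(\rho \in \brk{\mathscr{Q}}^{\ell, \perp}_{k-1}\). The inclusion \(\sigma' \subseteq \rho'\) is immediate, giving \(\sigma \subseteq \rho\). Adjacency follows from \(\rho \cap \tau = \brk{\rho' \cap \tau'} \times \set{2^k} = \rho' \times \set{2^k} \in \brk{\mathscr{Q}}^{\ell-1, \top}_{k-1} \subseteq \brk{\mathscr{Q}}^{\ell-1}\). The main obstacle worth singling out is the coordinate-by-coordinate fitting in the \((\ell-j)\) directions: one must use the lattice-compatibility of the dyadic decomposition to produce a length-\(2^{k-1}\) \(\mathscr{Q}_{k-1}\)-interval that both contains the corresponding fixed coordinate of \(\sigma'\) and lies inside the length-\(2^k\) \(\tau'\)-interval.
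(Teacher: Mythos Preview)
Your proof is correct and follows the same approach as the paper, which simply states that the lemma ``follows from the structure of the \(\ell\)-dimensional complex \(\brk{\mathscr{Q}}^{\ell, \perp}\)''; you have made explicit the coordinate-by-coordinate construction that the paper leaves implicit.
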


\begin{proof}
This follows from the structure of the \(\ell\)-dimensional complex \(\brk{\mathscr{Q}}^{\ell, \perp}\) defined in  \eqref{eq_deeyooy8gah0vai7pheiSo0E} and \eqref{eq_Xai3tekei9ao1Ahsh9rae9ae}. 
\end{proof}

\begin{figure}
\begin{center}
 \includegraphics[alt={The construction of the singuler cubes and the associated retraction in the supercritical case}]{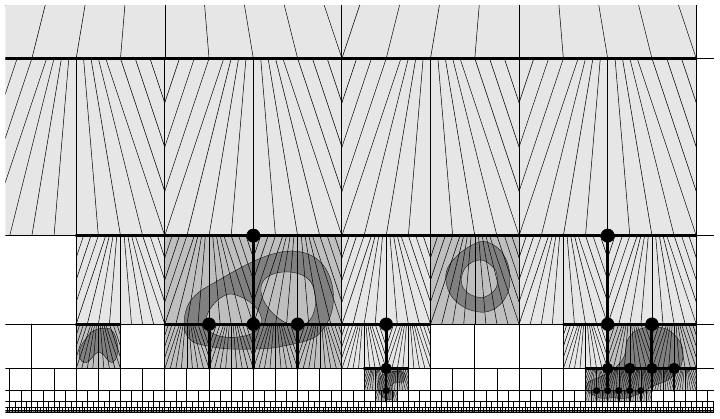}
\end{center}
\caption{
In the supercritical construction represented here when \(m = 2\) in \cref{proposition_modifiedcubes_supercritical}, the bad cubes \(\smash{\mathscr{Q}^{\mathrm{bad}}}\) are defined as those intersecting the bad set \(\smash{\widehat{\Omega}}^{\mathrm{bad}}\); singular cubes \(\smash{\mathscr{Q}^{\mathrm{bad}}}\) are defined recursively as being any bad cube or any cube adjacent to a singular cube at the previous scale.
The number of singular cubes at any scale is controlled by the number of bad cubes at previous scales (see \eqref{eq_eSu0lae0quatheekusochie6}).
The union of the singular cubes can be retracted on its boundary along the thick vertical lines and then along the bundles of lines visible on each square.
}
\label{figure_singretr2}
\end{figure}

\begin{proof}[Proof of \cref{proposition_modifiedcubes_supercritical}]
We define recursively the collection \(\smash{\mathscr{Q}^{\mathrm{sing}}_k} \subseteq \mathscr{Q}_k\) for every \(k \in \Zset\). 
We first set for \(k < k_0\), \(\smash{\mathscr{Q}^{\mathrm{sing}}_{k}} \defeq \emptyset\). 
Next assuming that \(k \ge k_0\) and that \(\smash{\mathscr{Q}^{\mathrm{sing}}_{k - 1}}\) has already been defined, we let  
\begin{equation}
\label{eq_daiXeivoht6ohXiew6oiShei}
\mathscr{Q}^\mathrm{prop}_k
\defeq 
 \set{Q \in \mathscr{Q}_k 
 \st Q \text{ is adjacent to some } R \in \mathscr{Q}^{\mathrm{sing}}_{k - 1}}
\end{equation}
and then 
\begin{equation}
\label{eq_nei9quaongie3gaivahguHie}
 \mathscr{Q}^{\mathrm{sing}}_{k}
 \defeq \mathscr{Q}^{\mathrm{bad}}_k
 \cup \mathscr{Q}^\mathrm{prop}_k.
\end{equation}
Since a cube \(R \in \mathscr{Q}_{k - 1}\) is adjacent to a single larger-scale cube in \(Q \in \mathscr{Q}_{k}\),
any cube in \(\smash{\mathscr{Q}^{\mathrm{sing}}_{k - 1}}\) is used at most once in the definition \eqref{eq_daiXeivoht6ohXiew6oiShei} of \(\mathscr{Q}^\mathrm{prop}_k\) so that 
\begin{equation}
\label{eq_ziez3taFeerae5ai0chied2n}
 \# \mathscr{Q}^\mathrm{prop}_k
 \le \# \mathscr{Q}^\mathrm{sing}_{k - 1},
\end{equation}
and thus by \eqref{eq_nei9quaongie3gaivahguHie} and \eqref{eq_ziez3taFeerae5ai0chied2n},
\begin{equation}
 \# \mathscr{Q}^\mathrm{sing}_{k}
 \le \# \mathscr{Q}^\mathrm{sing}_{k - 1} + \# \mathscr{Q}^{\mathrm{bad}}_k,
\end{equation}
from which the inequality \eqref{eq_eSu0lae0quatheekusochie6} follows.
We have thus proved the assertions \ref{it_Naejishaes0taighei7bo4Ui} and \ref{it_jee7giebu5Moo3rei5aewohp}.

In order to prove the assertion \ref{it_dee7moib9iihoNgieWahloog}, we need to define the mapping \(\Psi^{\mathrm{sing}}\).
First, we prescribe \(\Psi^{\mathrm{sing}}\) to be the identity on \(\smash{\bigcup \mathscr{Q} \setminus \mathscr{Q}^{\mathrm{sing}}}\).
Since 
\(
  \smash{\bigcup \mathscr{Q}^{\mathrm{sing}}} \subseteq
  \smash{
   \bigcup_{k = k_0}^{\infty} \mathscr{Q}_k}
\),
we proceed now by induction over \(k \in \Zset\) with \(k \ge k_0\) to define \(\Psi^{\mathrm{sing}}\) on \(\smash{\bigcup \mathscr{Q}_k}\).
We observe that \(\Psi^{\mathrm{sing}}\) is then defined on \(\smash{\bigcup \mathscr{Q}_{k - 1}}\) by definition when \(k \le k_0\) and by induction when \(k \ge k_0 + 1\).
We define inductively for every \(j \in \set{1, \dotsc, m}\) the map \(\Psi^{\mathrm{sing}}\) on each face 
\(\sigma \in \smash{\brk{\mathscr{Q}}^{j, \perp}_k \setminus  \brk{\mathscr{Q} \setminus \mathscr{Q}^{\mathrm{sing}}}^{j, \perp}_k}\) by noting that by \cref{lemma_cubes_top_boundary}, 
\(\partial_{\top} \sigma \not \in \smash{\brk{\mathscr{Q} \setminus \mathscr{Q}^{\mathrm{sing}}}^{j - 1}}\) so that \(\smash{\Psi^{\mathrm{sing}}}\) can be defined on \(\sigma\) thanks to a descending retraction from \(\sigma\) onto its enclosing boundary \(\partial_{\sqcup} \sigma\) given by \cref{lemma_descending} so that the assertions  \ref{it_dee7moib9iihoNgieWahloog} and \ref{it_aic8oaw3VekeWiecawah4pho} are satisfied.
\end{proof}


We are now in position to extend traces and estimate the resulting extension under the supercritical integrability assumption.

\begin{theorem}
\label{theorem_extension_tent_supercritical}
Let \(p \in \intvo{m}{\infty}\).
Assume that the homotopy groups \(\pi_{1} \brk{\manifold{N}}, \dotsc, \pi_{m - 1} \brk{\manifold{N}}\) are finite.
There exists a constant \(C \in \intvo{0}{\infty}\) such that 
for every open set \(\Omega \subseteq \Rset^{m - 1}\), every dyadic decomposition \(\mathscr{Q}\) of \(\Rset^m_+\) and every mapping \(u \in \smash{\smash{\dot{W}}^{1 - 1/p, p} \brk{\Omega, \manifold{N}}}\),
there exists a map \(U \colon \smash{\bigcup \mathscr{Q}^{\vert \Omega}} \to \manifold{N}\) 
such that 
\begin{enumerate}[label=(\roman*)]
 \item 
 \label{it_uqueiphealohyahc5tooZooh}
 \(U \in \smash{\dot{W}}^{1, p} \brk{\bigcup \mathscr{Q}^{\vert \Omega}, \manifold{N}}\) and
 \begin{equation*}
   \int_{\bigcup \mathscr{Q}^{\vert \Omega}} \abs{\Deriv U}^p
   \le C
   \smashoperator{\iint_{\Omega \times \Omega}} \frac{d \brk{u \brk{y}, u \brk{z}}^p}{\abs{y - z}^{p + m - 2}} \dif y \dif z,
 \end{equation*}
 \item 
 \label{it_vai7ne5eiwaNa3kaNgequ1ui}
 \(\tr_{\Omega} U = u\).
\end{enumerate}

\end{theorem}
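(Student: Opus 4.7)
The plan is to combine the three main tools of the preceding sections: the convolution extension of \sectref{section_badset}, the combinatorial construction of \cref{proposition_modifiedcubes_supercritical}, and the passage from a continuous to a Sobolev extension given by \cref{proposition_Lipschitz_extension_skeleton}, specialised to the case \(\ell = m\). First I would let \(V \in \smash{\dot{W}}^{1, p} \brk{\widehat{\Omega}, \Rset^\nu}\) be the convolution extension \eqref{eq_shee0Ceingooghe2weefei4A} of \(u\), so that \(\tr_\Omega V = u\) and both the Sobolev bound \eqref{eq_joos6eteeB6aSh5pa2ahnga9} and the pointwise gradient bound \eqref{eq_Di5lainai8xoa5ruacuoxeih} hold. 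From this \(V\) I would form the good and bad cubes \(\mathscr{Q}^{\mathrm{good}}, \mathscr{Q}^{\mathrm{bad}} \subseteq \mathscr{Q}^{\vert \Omega}\) as in \eqref{eq_definition_good_cube}--\eqref{eq_definition_bad_cube} and select \(k_0 \in \Zset\) small enough that \(\mathscr{Q}^{\mathrm{bad}} \subseteq \bigcup_{k \ge k_0} \mathscr{Q}_k\), which is possible since the weighted sum in \eqref{eq_yie2te0ahrie6eeL2mah5Boh} is finite.

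The next step would be to invoke \cref{proposition_modifiedcubes_supercritical} to produce an enlarged collection \(\mathscr{Q}^{\mathrm{sing}} \supseteq \mathscr{Q}^{\mathrm{bad}}\) together with a descending retraction \(\Psi^{\mathrm{sing}} \colon \bigcup \mathscr{Q} \to \bigcup \brk{\mathscr{Q} \setminus \mathscr{Q}^{\mathrm{sing}}}\). Setting \(\mathscr{Q}^{\mathrm{reg}} \defeq \mathscr{Q}^{\vert \Omega} \setminus \mathscr{Q}^{\mathrm{sing}} \subseteq \mathscr{Q}^{\mathrm{good}}\), the central combinatorial estimate combines \eqref{eq_eSu0lae0quatheekusochie6} with Fubini and the geometric series \(\sum_{k \ge i} 2^{k(m-p)} = 2^{i(m-p)}/\brk{1 - 2^{m - p}}\), which converges precisely because \(p > m\), to yield
\begin{equation*}
 \sum_{k \in \Zset} 2^{k(m-p)} \# \mathscr{Q}^{\mathrm{sing}}_k
 \le \frac{1}{1 - 2^{m-p}} \sum_{k \in \Zset} 2^{k(m-p)} \# \mathscr{Q}^{\mathrm{bad}}_k,
\end{equation*}
which by \eqref{eq_yie2te0ahrie6eeL2mah5Boh} is controlled by the Gagliardo seminorm of \(u\). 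The descending property of \(\Psi^{\mathrm{sing}}\) together with the vertical stability of the tent \(\widehat{\Omega}\) forces \(\Psi^{\mathrm{sing}} \brk{\bigcup \mathscr{Q}^{\vert \Omega}} \subseteq \bigcup \mathscr{Q}^{\mathrm{reg}}\), so the continuous map \(W \defeq \Pi_{\manifold{N}} \compose V \compose \Psi^{\mathrm{sing}} \colon \bigcup \mathscr{Q}^{\vert \Omega} \to \manifold{N}\) is well-defined and coincides on \(\bigcup \mathscr{Q}^{\mathrm{reg}}\) with \(\Pi_{\manifold{N}} \compose V\), which by \eqref{eq_Di5lainai8xoa5ruacuoxeih} and Lipschitzness of \(\Pi_{\manifold{N}}\) satisfies \(\operatorname{Lip} W \brk{x} \le \alpha/x_m\) for some universal \(\alpha \in \intvo{0}{\infty}\).

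Since \(p > m\), the applicable case of \cref{proposition_Lipschitz_extension_skeleton} is \(\ell = m\), whose hypothesis on the first \(m - 1\) homotopy groups matches the assumption of the present theorem; invoking it yields a mapping \(U \in \smash{\dot{W}}^{1, p} \brk{\bigcup \mathscr{Q}^{\vert \Omega}, \manifold{N}}\) with \(U = W = \Pi_{\manifold{N}} \compose V\) on \(\bigcup \mathscr{Q}^{\mathrm{reg}}\) and satisfying the energy bound \eqref{eq_xuoY7IuceiwooH5Aicadai9a}. Combining that bound with the Sobolev control of \(\int_{\bigcup \mathscr{Q}^{\mathrm{reg}}} \abs{\Deriv W}^p\) inherited from \eqref{eq_joos6eteeB6aSh5pa2ahnga9} and with the displayed combinatorial estimate then proves assertion \ref{it_uqueiphealohyahc5tooZooh}. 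For assertion \ref{it_vai7ne5eiwaNa3kaNgequ1ui} I would apply \cref{proposition_equal_traces} to \(U\) and \(V\): since \(U = \Pi_{\manifold{N}} \compose V\) on \(\bigcup \mathscr{Q}^{\mathrm{reg}}\), the set \(A_\varepsilon\) defined by \eqref{eq_eiquee2Iax3mei2OobaeXooZ} is contained in the union of those singular cubes of generation \(k\) with \(2^k < \varepsilon\), whose Lebesgue measure is at most \(\varepsilon^p \sum_k 2^{k(m-p)} \# \mathscr{Q}^{\mathrm{sing}}_k\), so that \(\mathcal{L}^m \brk{A_\varepsilon}/\varepsilon = O \brk{\varepsilon^{p - 1}} \to 0\).

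The main obstacle is the control of the combinatorial growth of the singular cubes, where the hypothesis \(p > m\) is used in an essential way through the convergence of the geometric series; by contrast, the finiteness of the first \(m - 1\) homotopy groups of \(\manifold{N}\) enters only at the last step through the application of \cref{proposition_Lipschitz_extension_skeleton}.
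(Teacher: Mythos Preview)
Your proposal is correct and follows essentially the same route as the paper's proof. The one minor simplification the paper makes is in verifying the trace: since \(\mathscr{Q}^{\mathrm{sing}} \subseteq \bigcup_{k \ge k_0} \mathscr{Q}_k\), one has \(U = \Pi_{\manifold{N}} \compose V\) on all of \(\bigcup \mathscr{Q}^{\vert\Omega} \cap \brk{\Rset^{m-1} \times \intvo{0}{2^{k_0}}}\), so \(\tr_\Omega U = u\) follows directly without invoking \cref{proposition_equal_traces}.
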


\begin{proof}
We define the linear extension \(V \colon \widehat{\Omega} \to \Rset^\nu\) and the collection of bad cubes \(\mathscr{Q}^{\mathrm{bad}} \subseteq \mathscr{Q}^{\vert \Omega}\) according to \eqref{eq_shee0Ceingooghe2weefei4A} and \eqref{eq_definition_bad_cube} respectively.
We let the collection \(\smash{\mathscr{Q}^{\mathrm{sing}}} \subseteq \mathscr{Q}\) and the mapping \(\smash{\Psi^{\mathrm{sing}}} \colon \Rset^m_+ \to \bigcup \mathscr{Q} \setminus \smash{\mathscr{Q}^{\mathrm{bad}}}\) be given by \cref{proposition_modifiedcubes_supercritical},
and we define the map \(W \defeq \Pi_{\manifold{N}} \compose V \compose \smash{\Psi^{\mathrm{sing}}} \colon 
\smash{\bigcup \mathscr{Q}^{\vert \Omega}} \to \manifold{N}\).
Since \(\Psi^{\mathrm{sing}}\) is descending, the mapping \(W\) is well-defined.
We let \(U \colon \bigcup \mathscr{Q}^{\vert \Omega} \to \manifold{N}\) be the map given by \cref{proposition_Lipschitz_extension_skeleton}
with the collections \(\smash{\mathscr{Q}^{\vert \Omega}} \cap \smash{\mathscr{Q}^{\mathrm{sing}}}\) and \(\smash{\mathscr{Q}^{\vert \Omega}} \setminus \smash{\mathscr{Q}^{\mathrm{sing}}}\)
instead of \(\smash{\mathscr{Q}^{\mathrm{sing}}}\) and \(\smash{\mathscr{Q}^{\mathrm{reg}}}\) in the statement respectively.

In order to get the estimate \ref{it_uqueiphealohyahc5tooZooh} as a consequence of  \eqref{eq_xuoY7IuceiwooH5Aicadai9a} in \cref{proposition_Lipschitz_extension_skeleton},
we estimate the size of the singular cubes \(\smash{\mathscr{Q}^{\mathrm{sing}}}\) by \eqref{eq_eSu0lae0quatheekusochie6} and by \eqref{eq_yie2te0ahrie6eeL2mah5Boh}
\begin{equation}
\label{eq_oujoh1zaicae8OonaiHiefoa}
\begin{split}
\smashoperator[r]{\sum_{k = k_0}^\infty} 2^{k\brk{m - p}}\# \mathscr{Q}^{\mathrm{sing}}_k
 &\le \sum_{k =k_0}^\infty \smashoperator[r]{\sum_{i = k_0}^k}  2^{k\brk{m - p}} \# \mathscr{Q}^{\mathrm{bad}}_i\\
 &= \sum_{i = k_0}^\infty \sum_{k = i}^\infty 2^{-k\brk{p- m}} \# \mathscr{Q}^{\mathrm{bad}}_i\\
 &\le  \frac{1}{1 - 2^{-\brk{p - m}}} \smashoperator{\sum_{i = k_0}^\infty} 2^{i\brk{m- p}} \# \mathscr{Q}^{\mathrm{bad}}_i\\
 &\le \C
 \smashoperator{\iint_{\Omega \times \Omega}} \frac{\brk{d \brk{u(y), u (z)} - \delta_*}_+}{\abs{y - z}^{p + m - 2}} \dif y \dif z ,
\end{split}
\end{equation}
relying on the condition \(p > m\) in the summation of the geometric series.

The trace condition of assertion \ref{it_vai7ne5eiwaNa3kaNgequ1ui} follows from the fact that \(\tr_{\Omega} V = u\) and from the fact that \(U = \Pi_{\manifold{N}} \compose V\) in a neighbourhood of \(\Omega \times \set{0}\), and the proof is complete.
\end{proof}

The estimate \eqref{eq_oujoh1zaicae8OonaiHiefoa} of the size of the singular cubes is reminiscent to integral estimates on the quantity \(d \brk{x}\) describing in several works the size of the neighbourhood in which a linear extension stays close to the target manifold
for \emph{critical Sobolev estimates} on the \emph{Brouwer degree} \cite{Bourgain_Brezis_Mironescu_2005}*{Th.\thinspace{}1.1 and Th.\thinspace{}0.6}, on the \emph{lifting} \cite{Bourgain_Brezis_Mironescu_2005}*{Th.\thinspace{}0.1} and on the \emph{distributional Jacobian} \cite{Bourgain_Brezis_Mironescu_2005}*{Th.\thinspace{}0.8}.  

\medbreak

As a byproduct of \cref{theorem_extension_tent_supercritical}, we immediately deduce the linear estimate for the extension to the half-space of traces of mappings in Sobolev spaces with supercritical integrability of \cref{theorem_estimate_supercritical_halfspace}.

\begin{proof}[Proof of \cref{theorem_estimate_supercritical_halfspace}]
This follows from \cref{theorem_extension_tent_supercritical} with \(\Omega = \Rset^{m - 1}\), since one has then
\(\smash{\bigcup \mathscr{Q}^{\vert \Omega}} = \smash{\bigcup \mathscr{Q}} = \smash{\widehat{\Omega}} = \Rset^m_+\).
\end{proof}

\section{Critical and subcritical integrability extension}
\label{section_subcritical}

We now turn our attention to the critical and subcritical integrability case \(p \le m\).
As already discussed for the supercritical case in \sectref{section_supercritical}, we know from \sectref{section_badset} that the nearest-point retraction \(\Pi_{\manifold{N}} \compose V\) of the linear extension \(V\) is a suitable extension in each of the good cubes of the collection \(\smash{\mathscr{Q}^{\mathrm{good}}}\) and that the size of the collection of bad cubes \(\smash{\mathscr{Q}^{\mathrm{bad}}}\) is controlled by the Gagliardo energy of the boundary datum \(u\) by \eqref{eq_yie2te0ahrie6eeL2mah5Boh}. 
Thus, our goal is to perform a continuous extension on the \(\floor{p}\)-dimensional skeleton \(\smash{\brk{\mathscr{Q}^{\mathrm{sing}}}^{\floor{p}}}\) of a collection of singular cubes \(\smash{\mathscr{Q}^{\mathrm{sing}}} \supseteq \smash{\mathscr{Q}^{\mathrm{bad}}}\) so that the construction of the Sobolev extension can be realised thanks to \cref{proposition_Lipschitz_extension_skeleton}.

When \(p > m\), we could find in \cref{proposition_modifiedcubes_supercritical} a collection of singular cubes \(\mathscr{Q}^{\mathrm{sing}} \supseteq \mathscr{Q}^{\mathrm{bad}}\) where the map \(\Pi_{\manifold{N}} \compose V\) could be extended to continuously while the size of the collection of singular cubes \(\smash{\mathscr{Q}^{\mathrm{sing}}}\) was controlled by the size of the collection of bad cubes \(\smash{\mathscr{Q}^{\mathrm{bad}}}\) by \eqref{eq_eSu0lae0quatheekusochie6} and then by the fractional energy of the boundary datum in  \eqref{eq_oujoh1zaicae8OonaiHiefoa}.
This proof will not work when \(p \le m\), because the geometric series that appears in \eqref{eq_oujoh1zaicae8OonaiHiefoa} then diverges, and also, to a lesser extent, because the inclusion \eqref{eq_chiemooReix5bahChee8Vohn}, which was used to initialise the construction of the collection of singular cubes \(\smash{\mathscr{Q}^{\mathrm{sing}}}\), fails if \(p < m\);
 we develop in this section appropriate tools to deal with this case.
 
\subsection{Spawning cubes over the bad cubes near the boundary}
When \(p \le m\), the boundary datum \(u \in  \smash{\smash{\dot{W}}^{1 - 1/p, p} \brk{\Omega, \manifold{N}}}\) need not be continuous and so there is no reason for the inclusion \eqref{eq_chiemooReix5bahChee8Vohn} to hold; indeed it may fail if \(p < m\).
A natural way to bypass this issue would be to first construct an extension for a continuous boundary datum \(u\) and then approximate any boundary datum by continuous ones, and pass to the limit in Sobolev spaces in the extension thanks to uniform Sobolev estimates on the extension.

However uniformly continuous mappings of the Sobolev space \(\smash{\dot{W}}^{1 - 1/p, p} \brk{\Omega, \manifold{N}}\) are \emph{not} in general weakly sequentially dense in the space \(\smash{\smash{\dot{W}}^{1 - 1/p, p} \brk{\Omega, \manifold{N}}}\) because of global topological obstructions.
Nonetheless, if one defines for the open set \(\Omega \subseteq \Rset^{m - 1}\) and for every \(\ell \in \set{0, \dotsc, m - 2}\), similarly to \eqref{eq_quaeP5sosiC8Nahbaipaek7W} and \eqref{eq_TieQuee3iyoh9aequepo9Ahx}, the set
\begin{equation}
\label{eq_Quaibaibai5iajiel3yah9ie}
\begin{split}
 R^1_\ell \brk{\Omega, \manifold{N}}
 \defeq
 \Bigl\{ u \colon \Omega \to \manifold{N}
 \st[\Big] & u \in C^1 \brk{\Omega \setminus \Sigma, \manifold{N}}
 \text{ and } \smash{\sup_{x \in \Omega \setminus \Sigma}} \dist \brk{x, \Sigma} \abs{\Deriv u \brk{x}} < \infty\\
 &\; \text{where \(\Sigma \subseteq \Rset^{m - 1}\) is compact and contained in a finite }\\[-.5em]
 &\;\text{union of \(\ell\)-dimensional embedded smooth submanifolds}
 \Bigr\},
 \end{split}
 \raisetag{4.5em}
\end{equation}
then the set \(\smash{R^1_{m - \floor{p} - 1}\brk{\Omega, \manifold{N}}}\) is still strongly dense in the  space \(\smash{\smash{\dot{W}}^{1 - 1/p, p} \brk{\Omega, \manifold{N}}}\) \citelist{\cite{Bethuel_1995}\cite{Mucci_2009}*{Th.\thinspace{}1}\cite{Brezis_Mironescu_2015}*{Th.\thinspace{}3}}.
If moreover the homotopy group \(\smash{\pi_{\floor{p - 1}} \brk{\manifold{N}}}\) is trivial, as we assume in Theorems \ref{theorem_extension_halfspace}, \ref{theorem_extension_collar} and \ref{theorem_extension_global}, then it follows from the same methods (see \cite{Hang_Lin_2003_III}*{\S 4}), that the smaller set \(\smash{R^1_{m - \floor{p} - 2}\brk{\Omega, \manifold{N}}}\) of somehow less discontinuous mappings is still strongly dense in the fractional Sobolev space \(\smash{\dot{W}}^{1 - 1/p, p} \brk{\Omega, \manifold{N}}\).
We will thus analyse the structure of the collection of bad cubes originating from some mapping \(u \in \smash{R^1_{m - \floor{p} - 2}\brk{\Omega, \manifold{N}}}\).

We begin by observing that the singular set of a mapping \(u \colon \Omega \to \manifold{N}\) controls the bad set \(\smash{\widehat{\Omega}}^{\mathrm{bad}}\) defined in \eqref{eq_vuaPh9vaengoo9aiHeeshaer} of its linear extension \(V\colon \smash{\widehat{\Omega}} \to \Rset^\nu\) given by \eqref{eq_shee0Ceingooghe2weefei4A}.

\begin{proposition}
\label{proposition_neat_singular_set}
For every Borel-measurable mapping \(u \colon \Omega \to \manifold{N}\) and every non-empty closed set \(\Sigma \subseteq \Rset^{m - 1}\), if \(u \in C^1 \brk{\Omega \setminus \Sigma, \manifold{N}}\) and if
\begin{equation}
\label{eq_ahshohvoeBeemu1Chuphooxa}
  \sup_{y \in \Omega \setminus \Sigma} \dist \brk{y, \Sigma} \abs{\Deriv u \brk{y}} < \infty,
\end{equation}
then there exists \(\kappa \in \intvo{0}{\infty}\) such that
\begin{equation}
\label{eq_hediexie5aiz6ree2ieg2aeL}
 \smash{\widecheck{\Sigma}}^{\kappa} \cap
 \widehat{\Omega}^{\mathrm{bad}} = \emptyset,
\end{equation}
where
\begin{equation}
\label{eq_eeh3aeguoDao3fahj6dah2Ae}
 \smash{\widecheck{\Sigma}}^{\kappa}
 \defeq 
 \set[\big]{x = \brk{x', x_m} \in \Rset^m_+ \st \Bset^{m - 1}_{\kappa x_m}\brk{x'} \cap \Sigma = \emptyset}.
\end{equation}
\end{proposition}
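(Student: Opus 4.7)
The plan is to combine the pointwise gradient control \eqref{eq_ahshohvoeBeemu1Chuphooxa} on \(u\) outside \(\Sigma\) with the averaging structure of the convolution-based extension \eqref{eq_shee0Ceingooghe2weefei4A}, exploiting the fact that when \(\kappa\) is large, the condition \(x \in \smash{\widecheck{\Sigma}}^{\kappa}\) forces \(V\brk{x}\) to be a weighted average of values of \(u\) taken on a ball where \(u\) varies very little, which in turn forces \(V\brk{x}\) close to \(\manifold{N}\).

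Setting \(M \defeq \sup_{y \in \Omega \setminus \Sigma} \dist \brk{y, \Sigma} \abs{\Deriv u \brk{y}}\), which is finite by \eqref{eq_ahshohvoeBeemu1Chuphooxa}, the first step is to observe that if \(\kappa > 1\) and \(x = \brk{x', x_m} \in \smash{\widecheck{\Sigma}}^{\kappa} \cap \widehat{\Omega}\), then by the triangle inequality every point \(y\) in the convex ball \(\overline{\Bset^{m - 1}_{x_m} \brk{x'}}\) satisfies \(\dist \brk{y, \Sigma} \ge \brk{\kappa - 1} x_m\); in particular, \(u\) is of class \(C^1\) on this ball (which sits inside \(\Omega\) by the definition \eqref{eq_epheiY6eilo4phaichoC9ton} of the tent), with the pointwise bound \(\abs{\Deriv u \brk{y}} \le M / \brk{\brk{\kappa - 1} x_m}\). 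Integrating along segments contained in the ball would then yield the oscillation estimate
\begin{equation*}
 \sup_{y_1, y_2 \in \overline{\Bset^{m - 1}_{x_m} \brk{x'}}} \abs{u \brk{y_1} - u \brk{y_2}} \le \frac{2 M}{\kappa - 1}.
\end{equation*}

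Since \(\supp \varphi \subseteq \Bset^{m - 1}\), every point of the form \(x' - x_m y\) with \(y \in \supp \varphi\) lies in the preceding ball, so fixing any \(y_0 \in \supp \varphi\) and letting \(p_0 \defeq u \brk{x' - x_m y_0} \in \manifold{N}\), the definition \eqref{eq_shee0Ceingooghe2weefei4A} of \(V\) together with \(\int \varphi = 1\) would give
\begin{equation*}
 \dist \brk{V \brk{x}, \manifold{N}} \le \abs{V \brk{x} - p_0} \le \frac{2 M \norm{\varphi}_{L^1}}{\kappa - 1}.
\end{equation*}
Choosing \(\kappa\) large enough that the right-hand side does not exceed \(\delta_{\manifold{N}}\) places \(x\) inside the good set \eqref{eq_BuCo1theeQuierowech2fai4}, and therefore outside the bad set \eqref{eq_vuaPh9vaengoo9aiHeeshaer}, which proves the disjointness \eqref{eq_hediexie5aiz6ree2ieg2aeL} (points of \(\smash{\widecheck{\Sigma}}^{\kappa}\) lying outside \(\widehat{\Omega}\) being trivially outside \(\widehat{\Omega}^{\mathrm{bad}}\)).

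There is no genuine obstacle in this plan: the whole argument amounts to a routine combination of the pointwise gradient bound \eqref{eq_ahshohvoeBeemu1Chuphooxa}, the convexity of Euclidean balls, and the \(L^1\)-normalisation of the convolution kernel. The only mild point of care is that the segments witnessing the oscillation estimate indeed sit inside \(\Omega\), which is automatic once \(x \in \widehat{\Omega}\).
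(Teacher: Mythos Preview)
Your proof is correct. It differs slightly from the paper's route: rather than invoking the mean-oscillation inclusion \eqref{eq_haw0fai3ohlie0Eitha9Aes2} and then bounding \(\meanosc_{\delta_*}(u)(x)\) via the Poincar\'e inequality, you bypass the mean oscillation entirely and bound \(\dist(V(x),\manifold{N})\) directly from the convolution formula \eqref{eq_shee0Ceingooghe2weefei4A}, using the mean-value theorem on the convex ball \(\overline{\Bset^{m-1}_{x_m}(x')}\). Both arguments rest on the same pointwise gradient control \(\abs{\Deriv u(y)} \le M/((\kappa-1)x_m)\) on that ball; your version is marginally more self-contained, while the paper's version reuses the mean-oscillation machinery already set up in \sectref{section_badset}. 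Neither approach offers a real advantage over the other for this particular statement.
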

\begin{proof}
By \eqref{eq_haw0fai3ohlie0Eitha9Aes2}, the definition of the mean oscillation in \eqref{eq_moo7thiequ2zug8ahSoh3oth}, the Poincaré inequality and by the assumption \eqref{eq_ahshohvoeBeemu1Chuphooxa}, we have for every \(x = \brk{x', x_m} \in \smash{\widehat{\Omega}}^{\mathrm{bad}}\)
\begin{equation*}
\begin{split}
 \delta_* \le \meanosc_{\delta_*} \brk{u} \brk{x}
 &\le \frac{\C}{x_m^{2 \brk{m - 1}}} \smashoperator{\iint_{\brk{\Bset^{m-1}_{x_m}\brk{x'}}^2}} \abs{u \brk{y} - u \brk{z}} \dif y \dif z\\
 &\le \frac{\C}{x_m^{m - 2}} \int_{\Bset^{m-1}_{x_m}\brk{x'}} \abs{\Deriv u}
 \le \frac{\Cl{cst_Noh7aquaeghooh8niseeph3m} }{\brk{\dist \brk{x', \Sigma}/x_m - 1}_+},
 \end{split}
\end{equation*}
and therefore \(\dist \brk{x', \Sigma} \ge \brk{1 + \Cr{cst_Noh7aquaeghooh8niseeph3m}/\delta_*} x_m\).
Setting \(\kappa \defeq \brk{1 + \Cr{cst_Noh7aquaeghooh8niseeph3m}/\delta_*}\), we have the conclusion \eqref{eq_eeh3aeguoDao3fahj6dah2Ae}.
\end{proof}

The set \(\Rset^{m}_+ \setminus \brk{\Sigma \times \intvo{0}{\infty}}\) can be retracted to the set \(\smash{\widecheck{\Sigma}}^{\kappa}\) given by \eqref{eq_eeh3aeguoDao3fahj6dah2Ae}.

\begin{lemma}
\label{lemma_retraction_invtent}
Let \(\Sigma \subseteq \Rset^{m - 1}\) be closed and \(\kappa \in \intvo{0}{\infty}\).
The map \(\Psi_{\Sigma, \kappa} \colon \brk{\Rset^{m - 1} \setminus \Sigma} \times \intvo{0}{\infty} \to \smash{\widecheck{\Sigma}}^{\kappa}\) defined by
\begin{equation}
\label{eq_eetoongekuo5Ooth2Yapiatu}
 \Psi_{\Sigma, \kappa}\brk{x}
 \defeq
 \begin{cases}
   x &\text{if \(x \in \smash{\widecheck{\Sigma}}^{\kappa}\)},\\
   \brk{x', \dist \brk{x', \Sigma}/\kappa} & \text{otherwise},
 \end{cases}
\end{equation}
is a well-defined and continuous retraction and is a descending map.
\end{lemma}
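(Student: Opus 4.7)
The statement is the elementary fact that the explicit map $\Psi_{\Sigma,\kappa}$ is a well-defined continuous retraction with a mild geometric property. The plan is to check each of the four claimed properties — well-definedness, the retraction identity, continuity, and the descending condition — separately, by splitting into the two branches of the definition \eqref{eq_eetoongekuo5Ooth2Yapiatu}. A useful preliminary observation is that, because $\Bset^{m-1}_{\kappa x_m}\brk{x'}$ is the \emph{open} ball, the defining condition \eqref{eq_eeh3aeguoDao3fahj6dah2Ae} is equivalent to $\dist \brk{x', \Sigma} \ge \kappa x_m$, and the ``otherwise'' branch covers exactly the case $\dist \brk{x', \Sigma} < \kappa x_m$.

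For \textbf{well-definedness}, nothing is to be checked on the identity branch. On the other branch, the hypothesis $x' \in \Rset^{m-1} \setminus \Sigma$ combined with the closedness of $\Sigma$ gives $\dist \brk{x', \Sigma} > 0$, so that $\Psi_{\Sigma,\kappa}\brk{x} \in \Rset^m_+$; and since $\kappa \cdot \brk{\dist \brk{x', \Sigma}/\kappa} = \dist \brk{x', \Sigma}$, the open ball $\Bset^{m-1}_{\dist \brk{x', \Sigma}}\brk{x'}$ does not meet $\Sigma$, placing $\Psi_{\Sigma,\kappa}\brk{x}$ in $\smash{\widecheck{\Sigma}}^{\kappa}$. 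The \textbf{retraction identity} $\Psi_{\Sigma,\kappa}\restr{\smash{\widecheck{\Sigma}}^{\kappa}} = \id$ is immediate from the first branch.

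For \textbf{continuity}, I would use that $\dist \brk{\cdot, \Sigma}$ is $1$-Lipschitz, so each branch yields a continuous expression on its respective open set $\set{\dist \brk{x', \Sigma} > \kappa x_m}$ and $\set{\dist \brk{x', \Sigma} < \kappa x_m}$; on the common locus $\dist \brk{x', \Sigma} = \kappa x_m$ both formulas return $\brk{x', x_m}$, so the two pieces match continuously across the transition. For the \textbf{descending} property, fix a dyadic cube $Q = Q' \times \intvc{2^k}{2^{k+1}} \in \mathscr{Q}_k$ and $x = \brk{x', x_m} \in Q$. In both branches the first coordinate is preserved, so it lies in $Q'$. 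For the last coordinate: on the identity branch it is $x_m \in \intvc{2^k}{2^{k+1}} \subseteq \intvl{0}{2^{k+1}}$; on the other branch it is $\dist \brk{x', \Sigma}/\kappa$, which is positive and strictly smaller than $x_m \le 2^{k+1}$, hence again in $\intvl{0}{2^{k+1}}$.

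Since every step is a direct verification, there is no substantive obstacle; the only subtle point is to exploit the open-ball convention in \eqref{eq_eeh3aeguoDao3fahj6dah2Ae} when reconciling the boundary case $\dist \brk{x', \Sigma} = \kappa x_m$ with the identity branch, both for checking that the image lands in $\smash{\widecheck{\Sigma}}^{\kappa}$ and for gluing continuously across the two definitions.
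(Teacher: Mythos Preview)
Your proof is correct and follows essentially the same approach as the paper's, which simply states that the result follows from the continuity of the distance function, the definition of $\smash{\widecheck{\Sigma}}^{\kappa}$, and the definition of descending map. Your version just spells out the details of each verification, including the useful observation that the open-ball convention in \eqref{eq_eeh3aeguoDao3fahj6dah2Ae} makes membership in $\smash{\widecheck{\Sigma}}^{\kappa}$ equivalent to $\dist(x',\Sigma) \ge \kappa x_m$.
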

\begin{proof}
This follows from the continuity of the distance function, from the definition of the set \(\smash{\widecheck{\Sigma}}^{\kappa}\) in  \eqref{eq_eeh3aeguoDao3fahj6dah2Ae} and from the definition of descending map (\cref{definition_descending}).
\end{proof}

Now we transfer the construction of \cref{proposition_neat_singular_set} and \cref{lemma_retraction_invtent} to a dyadic decomposition of \(\Rset^m_+\).

\begin{proposition}[Spawning cubes]
\label{proposition_cubes_spawning}
Let \(\ell \in \set{0, \dotsc, m - 2}\).
If \(\Sigma \subseteq \Rset^{m - 1}\) is closed and contained in a finite union of compact \(\brk{m - \ell - 2}\)-dimensional submanifolds of \(\Rset^{m - 1}\) and if \(\kappa \in \intvo{0}{\infty}\),
then there a exist dyadic decomposition \(\smash{\mathscr{Q}^{\infty}}\) of \(\Rset^m_+\), a collection of cubes \(\smash{\mathscr{Q}^{\mathrm{sing}, \infty}} \subseteq \smash{
\mathscr{Q}^{\infty}}\) and a mapping \(\Psi^{\mathrm{sing}, \infty} \colon \smash{\bigcup} \smash{\brk{\smash{\mathscr{Q}^{\infty}}}^{\ell}} \to \smash{\smash{\widecheck{\Sigma}}^{\kappa}}\) such that
\begin{enumerate}[label=(\roman*)]
 \item 
 \label{it_feta4AQu4ahja7ci7Esie5ah}
 \(\smash{\widecheck{\Sigma}}^{\kappa} \cap \smash{\bigcup \smash{\mathscr{Q}^{\mathrm{sing}, \infty}}}= \emptyset\),
 \item 
 \label{it_OaKei0aizi8Cheedo0ohqu7v}
 \(\smash{\Psi^{\mathrm{sing}, \infty}} = \id\) on \(\bigcup \brk{\mathscr{Q}^{\infty} \setminus \smash{\mathscr{Q}^{\mathrm{sing}, \infty}}}^{\ell}\),
 \item 
 \label{it_ohd6RaeyuiWooGoyo6easiey}
 there exists a constant \(M \in \intvo{0}{\infty}\), depending on \(\Sigma\) and \(\kappa\),
 such that for every \(k \in \Zset\), 
 \begin{equation*}
   \# \smash{\mathscr{Q}^{\mathrm{sing}, \infty}_k}
   \le M \, 2^{-k\brk{m - \ell - 2}}.
 \end{equation*}
\end{enumerate}
\end{proposition}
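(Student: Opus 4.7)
The plan is to construct $\mathscr{Q}^\infty$, $\mathscr{Q}^{\mathrm{sing}, \infty}$ and $\Psi^{\mathrm{sing}, \infty}$ by combining a generically chosen dyadic decomposition with the descending retraction $\Psi_{\Sigma, \kappa}$ of \cref{lemma_retraction_invtent}. First I would choose the shifts $\brk{\xi_k}_{k \in \Zset}$ of \cref{definition_dyadic_decomposition} so that the $\ell$-dimensional skeleton $\bigcup \brk{\mathscr{Q}^\infty}^\ell$ avoids $\Sigma \times \intvo{0}{\infty}$: since $\Sigma \times \intvo{0}{\infty}$ is contained in a finite union of embedded submanifolds of dimension $m - \ell - 1$, the $\ell$-skeleton is $\ell$-dimensional, and the two dimensions sum to $m - 1 < m$, a transversality or Baire-category argument applied scale by scale on the dyadic solenoid of \cref{remark_solenoid} produces the desired shifts while respecting the compatibility \eqref{eq_toovahg2ashahc4ohtohRouc}. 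The $\ell$-skeleton then lies in the domain of $\Psi_{\Sigma, \kappa}$, so I may set $\Psi^{\mathrm{sing}, \infty} \defeq \Psi_{\Sigma, \kappa} \restr{\bigcup \brk{\mathscr{Q}^\infty}^\ell}$, which is continuous and takes values in $\widecheck{\Sigma}^\kappa$ by \cref{lemma_retraction_invtent}.

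For the singular cubes I would declare a cube $Q = Q' \times \intvc{2^k}{2^{k+1}}$ in $\mathscr{Q}^\infty_k$ to belong to $\mathscr{Q}^{\mathrm{sing}, \infty}$ precisely when $Q \cap \widecheck{\Sigma}^\kappa = \emptyset$, which, after optimising in the coordinate $x_m$ (attaining its minimum $2^k$ on $Q$), is equivalent to the condition $Q' \subseteq N_{\kappa 2^k}\brk{\Sigma}$ on the horizontal projection in $\Rset^{m-1}$; property (i) then holds by construction. The counting estimate (iii) follows from the tube formula for finite unions of compact smooth $\brk{m - \ell - 2}$-dimensional embedded submanifolds of $\Rset^{m-1}$, which gives $\mathcal{L}^{m-1}\brk{N_r\brk{\Sigma}} \le C_\Sigma\, r^{\ell + 1}$ for every $r \in \intvo{0}{\infty}$ with $C_\Sigma$ depending only on $\Sigma$; dividing by the volume $2^{k\brk{m - 1}}$ of an $\brk{m-1}$-cube of edge $2^k$ yields $\# \mathscr{Q}^{\mathrm{sing}, \infty}_k \le M \cdot 2^{-k\brk{m - \ell - 2}}$ with $M \defeq C_\Sigma\, \kappa^{\ell + 1}$.

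The main obstacle is property (ii): since $\Psi_{\Sigma, \kappa}$ coincides with the identity exactly on $\widecheck{\Sigma}^\kappa$, one must ensure that the $\ell$-skeleton of every non-singular cube already lies inside $\widecheck{\Sigma}^\kappa$. A mixed cube whose projection $Q'$ straddles the tube $N_{\kappa 2^k}\brk{\Sigma}$ is non-singular yet could carry an $\ell$-face whose upper edge at $x_m = 2^{k+1}$ meets the larger tube $N_{\kappa 2^{k+1}}\brk{\Sigma}$ and exits $\widecheck{\Sigma}^\kappa$. I would resolve this by enlarging $\mathscr{Q}^{\mathrm{sing}, \infty}$ to absorb every cube $Q$ with $Q' \subseteq N_{\kappa 2^{k+1}}\brk{\Sigma}$, after first replacing the parameter $\kappa$ supplied by \cref{proposition_neat_singular_set} by its double $2 \kappa$ (which is still admissible since $\widecheck{\Sigma}^{2\kappa} \subseteq \widecheck{\Sigma}^{\kappa}$ and the conclusion of \cref{proposition_neat_singular_set} therefore remains valid) so that property (i) survives the enlargement; the tube formula applied at the larger radius $\kappa 2^{k+1}$ preserves the scaling $2^{-k\brk{m - \ell - 2}}$ in (iii) up to a larger constant $M$, while every remaining non-singular cube now has its $\ell$-skeleton at distance at least $\kappa 2^{k+1}$ from $\Sigma$, securing (ii) through the identity clause of \cref{lemma_retraction_invtent}.
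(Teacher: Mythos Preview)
Your overall strategy --- choose the shifts generically so that the $\ell$-skeleton misses $\Sigma \times \intvo{0}{\infty}$, restrict the retraction $\Psi_{\Sigma,\kappa}$ of \cref{lemma_retraction_invtent} to that skeleton, and count singular cubes via a tube estimate --- is exactly the paper's. Two points deserve comment, one cosmetic and one a genuine gap.

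First, the scale-by-scale transversality on the solenoid is overkill. Since $\bigcup \brk{\mathscr{Q}}^{\ell}$ is already a countable union of $\ell$-dimensional affine pieces and $\Sigma \times \intvo{0}{\infty}$ sits inside finitely many $\brk{m-\ell-1}$-dimensional submanifolds, a \emph{single} horizontal translation $\xi \in \Rset^{m-1}$ chosen from a full-measure set makes the \emph{entire} shifted skeleton $\xi + \bigcup \brk{\mathscr{Q}}^{\ell}$ miss $\Sigma \times \intvo{0}{\infty}$ at all scales simultaneously; the compatibility \eqref{eq_toovahg2ashahc4ohtohRouc} is then trivially satisfied because every $\xi_k$ equals $\xi$. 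This is what the paper does.

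Second, and more seriously, your definition of $\mathscr{Q}^{\mathrm{sing},\infty}$ is the wrong one, and your patch does not repair it. You declare $Q$ singular when $Q \cap \widecheck{\Sigma}^\kappa = \emptyset$, then try to rescue \ref{it_OaKei0aizi8Cheedo0ohqu7v} by enlarging to all $Q$ with $Q' \subseteq N_{\kappa 2^{k+1}}\brk{\Sigma}$, asserting that a non-singular cube then ``has its $\ell$-skeleton at distance at least $\kappa 2^{k+1}$ from $\Sigma$''. That inference is false: $Q' \not\subseteq N_{\kappa 2^{k+1}}\brk{\Sigma}$ only says \emph{some} point of $Q'$ lies outside the tube, not that every point of $Q'$ (hence every point of the $\ell$-skeleton of $Q$) does. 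For instance, with $m=2$, $\ell=0$, $\Sigma=\set{0}$, $\kappa=1$, the cube $Q = \intvc{2^k}{2^{k+1}} \times \intvc{2^k}{2^{k+1}}$ has $Q' = \intvc{2^k}{2^{k+1}} \not\subseteq \intvo{-2^{k+1}}{2^{k+1}}$, so it is non-singular after your enlargement; yet its vertex $\brk{2^k, 2^{k+1}}$ has $\dist\brk{2^k,\Sigma} = 2^k < \kappa \cdot 2^{k+1}$, so it lies outside $\widecheck{\Sigma}^\kappa$ and $\Psi_{\Sigma,\kappa}$ is not the identity there. Doubling $\kappa$ does not help: the same corner lies outside $\widecheck{\Sigma}^{2\kappa}$ too. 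The paper avoids the whole issue by taking the opposite convention,
\[
\mathscr{Q}^{\mathrm{sing},\infty} \defeq \set[\big]{Q \in \mathscr{Q}^\infty \st Q \not\subseteq \widecheck{\Sigma}^\kappa},
\]
so that a non-singular cube is by definition \emph{entirely contained} in $\widecheck{\Sigma}^\kappa$; assertion \ref{it_OaKei0aizi8Cheedo0ohqu7v} is then immediate, and the counting estimate \ref{it_ohd6RaeyuiWooGoyo6easiey} still follows from the tube formula because $Q \not\subseteq \widecheck{\Sigma}^\kappa$ forces $Q'$ to meet $N_{\kappa 2^{k+1}}\brk{\Sigma}$.
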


\begin{proof}
Let \(\mathscr{Q}\) by an arbitrary dyadic decomposition of \(\Rset^m_+\).
Since the set \(\Sigma\) is contained in a finite union of compact \(\brk{m - \ell - 2}\)-dimensional submanifolds of \(\Rset^{m - 1}\), for almost every \(\xi \in \Rset^{m - 1} \times \set{0} \simeq \Rset^{m - 1}\), we have \[
  \brk{\Sigma \times \intvo{0}{\infty}} \cap \brk{\xi + \textstyle \bigcup \brk{\mathscr{Q}}^\ell} = \emptyset.                                                                        
\]
Fixing such a \(\xi \in \Rset^{m - 1} \times \set{0}\), we define successively the collections
\begin{align*}
\mathscr{Q}^\infty &\defeq \set{Q + \xi \st Q \in \mathscr{Q}},&
 \smash{\mathscr{Q}^{\mathrm{sing}, \infty}}
 &\defeq \set{Q \in \mathscr{Q}^{\infty} \st Q \not \subseteq \smash{\widecheck{\Sigma}}^{\kappa} }
\end{align*}
and the mapping
\begin{equation*}
\Psi^{\mathrm{sing}, \infty} \defeq \Psi_{\Sigma, \kappa} \restr{\bigcup \brk{\mathscr{Q }^{\infty}}^\ell },
\end{equation*}
with the map \(\Psi_{\Sigma, \kappa}\) defined in \eqref{eq_eetoongekuo5Ooth2Yapiatu}.
We have proved the assertions \ref{it_feta4AQu4ahja7ci7Esie5ah} and \ref{it_OaKei0aizi8Cheedo0ohqu7v}.
The assertion \ref{it_ohd6RaeyuiWooGoyo6easiey} follows from the structure of the set \(\Sigma\), from the definition of the set \(\smash{\widecheck{\Sigma}}^{\kappa}\) in \eqref{eq_eeh3aeguoDao3fahj6dah2Ae} and from \cref{lemma_retraction_invtent}.
\end{proof}

\begin{remark}
\label{remark_probabilistic_spawing}
The statement and the proof of \cref{proposition_cubes_spawning} can be interpreted probabilistically. 
Indeed, as explained in \cref{remark_solenoid}, dyadic decompositions of \(\Rset^m_+\) can be represented by  \(\smash{\brk{\boldsymbol{\Sigma}_2}^{m - 1}}\), 
where \(\boldsymbol{\Sigma}_2\) is the dyadic solenoid that can be endowed with its Haar probability measure, which is the inverse limit of the usual Haar probability measure on the unit circle.
The proof of \cref{proposition_cubes_spawning} shows that if the dyadic decomposition \(\smash{\mathscr{Q}^\infty}\) is taken randomly, then the conclusion of \cref{proposition_cubes_spawning} holds almost surely, with a constant \(M\) in \ref{it_ohd6RaeyuiWooGoyo6easiey} which is deterministic, that is, the quantity \(M\) depends on the mapping \(u\) but not on the dyadic decomposition \(\mathscr{Q}^\infty\).
\end{remark}

\begin{remark}
The fact that the singular set \(\Sigma\) has dimension at most \(m - \ell - 2\) plays a crucial role in the transversality argument in the first part of the proof of \cref{proposition_cubes_spawning}, which eventually ensures that the mapping \(\Psi^{\mathrm{sing}, \infty}\) is well-defined and continuous.
Indeed, if such a mapping \(\smash{\Psi^{\mathrm{sing}, \infty}}\) existed, then for every \(f \in C \brk{\Sset^{\ell - 1}, \Omega \setminus \Sigma}\) which is homotopic to a constant in \(C \brk{\smash{\Sset^{\ell - 1}}, \Omega}\),  the map
 \(\smash{\Psi^{\mathrm{sing}, \infty}}\) could be used to construct a homotopy to a constant in \(C \brk{\smash{\Sset^{\ell - 1}}, \Omega \setminus \Sigma}\), which is not possible in general if \(\dim \Sigma = m - \ell - 1\).
In particular if \(\ell = \floor{p}\), then \cref{proposition_cubes_spawning} does not apply in the context of mappings in \(\smash{R^1_{m - \ell - 1} \brk{\Omega, \manifold{N}}}\) which, as a strongly dense subset of \(\smash{\smash{\dot{W}}^{1 - 1/p, p} \brk{\Omega, \manifold{N}}}\), would be the most natural set to work with.
When \(p \not \in \Nset\), \cref{proposition_cubes_spawning} turns out to be the only place where we use the necessary assumptions in Theorems \ref{theorem_extension_halfspace}, \ref{theorem_extension_collar} and \ref{theorem_extension_global} that the homotopy group \(\smash{\pi_{\floor{p - 1}}}\brk{\manifold{N}}\) is trivial
and that \(u\) has a suitable approximation in Theorems \ref{theorem_characterization_halfspace}, \ref{theorem_characterization_collar} and \ref{theorem_characterization_global};
in the case \(p \in \Nset\) it will also appear at a second place for analytical reasons.
\end{remark}

%
%

Given \(u \in \smash{R^1_{m - \floor{p} - 2} \brk{\Omega, \manifold{N}}}\),
let \(\Sigma \subseteq \Rset^{m - 1}\) be the closed set given by the definition \eqref{eq_Quaibaibai5iajiel3yah9ie} of \(\smash{R^1_{m - \floor{p} - 2}} \brk{\Omega, \manifold{N}}\) and let \(\kappa \in \intvo{0}{\infty}\) be given by \cref{proposition_neat_singular_set}.
With the dyadic decomposition \(\smash{\mathscr{Q}^\infty}\) of \(\smash{\Rset^m_+}\), the
collection \(\smash{\mathscr{Q}^{\mathrm{sing}, \infty}} \subseteq \smash{\mathscr{Q}^\infty}\) and the mapping  \(\smash{\Psi^{\mathrm{sing}, \infty}} \colon \smash{\bigcup} \smash{\brk{\smash{\mathscr{Q}^{\infty}}}^{\floor{p}}} \to \smash{\smash{\widecheck{\Sigma}}^{\kappa}}\) given by \cref{proposition_cubes_spawning} with \(\ell = \floor{p}\), applying \cref{proposition_Lipschitz_extension_skeleton}
to the collections \(\smash{\mathscr{Q}^{\mathrm{reg}}} \defeq \smash{\mathscr{Q}^{\infty \mid \Omega} \setminus \mathscr{Q}^{\mathrm{sing}, \infty}}\) and \(\smash{\mathscr{Q}^{\mathrm{sing}} \defeq \smash{\mathscr{Q}^{\infty \mid \Omega}} \cap \smash{\mathscr{Q}^{\mathrm{sing}, \infty}}}\)
and to the mapping
\[W \defeq \Pi_{\manifold{N}} \compose V \compose \Psi^{\mathrm{sing},\infty}\restr{\bigcup \mathscr{Q}^{\mathrm{reg}}
\,\cup\, \bigcup \brk{\mathscr{Q}^{\mathrm{sing}}}^{\floor{p}}},
\]
which is well defined in view of  \eqref{eq_hediexie5aiz6ree2ieg2aeL},
we get a map \(U \colon \bigcup \mathscr{Q}^{\vert \Omega} \to \manifold{N}\) such that, in view of \eqref{eq_xuoY7IuceiwooH5Aicadai9a}, for every \(k_0 \in \Zset\)
\begin{equation}
\label{eq_Neuchathie3teef1Oogh5ree}
\begin{split}
  \smashoperator[r]{\int_{\brk{\Rset^{m - 1} \times \intvo{0}{2^{k_0}}} \,\cap\, \bigcup \mathscr{Q}^{\vert \Omega}}}
  \abs{\Deriv U}^p &\le \Cl{cst_sheepeilie3oozupooghaiQu} \smashoperator{\iint_{\Omega \times \Omega}}
   \frac{d \brk{u \brk{y}, u \brk{z}}^p}{\abs{y - z}^{p + m - 2}} \dif y \dif z + \C \smashoperator{\sum_{k = -\infty}^{k_0 - 1}} 2^{k \brk{m - p}} \# \smash{\mathscr{Q}^{\mathrm{sing}, \infty}_k}\\
  &\le 
  \Cr{cst_sheepeilie3oozupooghaiQu} \brk[\bigg]{\smashoperator{\iint_{\Omega \times \Omega}}
   \frac{d \brk{u \brk{y}, u \brk{z}}^p}{\abs{y - z}^{p + m - 2}} \dif y \dif z +
   \C\,M \,
  2^{k_0 \brk{2 + \floor{p} - p}}},
\end{split}
\end{equation}
so that in particular the second term in right-hand side of \eqref{eq_Neuchathie3teef1Oogh5ree} goes to \(0\) as \(k_0 \to -\infty\), since \(\floor{p} < p + 1\).
It should be noted that the constant \(M\) in \eqref{eq_Neuchathie3teef1Oogh5ree} depends on the bounds on the derivative of the function \(u\) and on the structure of its singular set \(\Sigma\) only, so that the constant \(M\) will only play a perturbative role in the  estimate \eqref{eq_Neuchathie3teef1Oogh5ree}.

\subsection{Propagation and decay of singular cubes}
As explained above, the estimate \eqref{eq_eSu0lae0quatheekusochie6} is too weak to guarantee a suitable bound on the size of the collection of singular cubes when \(p \le m\) due to the divergence of the geometric series in \eqref{eq_oujoh1zaicae8OonaiHiefoa}.
To follow the same strategy we need to improve the bound \eqref{eq_eSu0lae0quatheekusochie6}.
A better estimate on the collection of singular cubes is possible if we accept an extension to a skeleton of the collection of singular cubes of lower dimension \(\ell\).
A key point in the proof is that we can perform it thanks to the crucial observation that the fraction of vertical faces of \(\smash{\brk{\mathscr{Q}}^{\ell, \perp}_{k - 1}}\) that are adjacent to vertical faces at the next scale of \(\smash{\brk{\mathscr{Q}}^{\ell, \perp}_{k}}\) is \(\smash{1/2^{m - \ell}}\); a suitable averaging argument shows that such a small fraction can be effectively achieved in the propagation of vertical singular faces so that we obtain a suitable exponential decay of the number of singular cubes generated by bad cubes.
The resulting bound will be sufficient to close the argument when \(\ell < p < \ell + 1\); 
when \(p\) is an integer, the dimension \(p - 1\) of the skeleton we can continuously extend to will not match the dimension \(p\) required in \cref{proposition_Lipschitz_extension_skeleton} to perform a singular extension with the desired Sobolev integrability, and we will crucially need the triviality of the homotopy group \(\pi_{p - 1} \brk{\manifold{N}}\), which may seem like an ad hoc trick in the proof but turns out to be a necessary condition because of the critical analytical obstruction \cite{Mironescu_VanSchaftingen_2021_AFST}*{Th.\thinspace{}1.5 (b)}.

\begin{proposition}
\label{proposition_general_modification}
Let \(\ell \in \set{1, \dotsc, m}\).
Given a dyadic
decomposition \(\smash{\mathscr{Q}^{\infty}}\) of 
\(\Rset^{m}_+\), a collection of cubes \(\smash{\mathscr{Q}^{\mathrm{sing}, \infty}} \subseteq \smash{\mathscr{Q}^{\infty}}\), an open set \(\smash{A^{\mathrm{bad}}} \subseteq \bigcup \smash{\mathscr{Q}^{\mathrm{sing}, \infty}} \), 
and a descending map \(\smash{\Psi^{\mathrm{sing}, \infty}} \colon \bigcup \smash{\brk{\mathscr{Q}^\infty}^\ell}
\to \Rset^m_+ \setminus  \smash{A^{\mathrm{bad}}}\) such that \(\Psi^{\mathrm{sing}, \infty} = \id\) on 
\(\smash{\bigcup \brk{\smash{\mathscr{Q}^{\infty}} \setminus \smash{\smash{\mathscr{Q}^{\mathrm{sing}, \infty}}}}^\ell}\),
then for every \(k_0 \in \Zset\), there exists a dyadic decomposition \(\smash{\mathscr{Q}^{k_0}}\) of \(\Rset^m_+\), 
a collection of cubes \(\smash{\mathscr{Q}^{\mathrm{sing}, k_0} \subseteq \mathscr{Q}^{k_0}}\),
and a descending mapping \(\smash{\Psi^{\mathrm{sing}, k_0} \colon \bigcup \brk{\mathscr{Q}^{k_0}}^{\ell} \to \Rset^m_+ \setminus \smash{A^{\mathrm{bad}}}}
\) 
such that if 
\begin{equation}
\label{eq_che7ahweeghieCi4aqu7mi5b}
  \mathscr{Q}^{\mathrm{bad}, k_0}
  \defeq 
  \set{Q \in \mathscr{Q}^{k_0} \st Q \cap \smash{A^{\mathrm{bad}}} \ne \emptyset },
\end{equation}
then 
\begin{enumerate}[label=(\roman*)]
 \item 
 \label{it_Dih1Poh0nohhahTh2chi8sah}
 for every \(k \in \Zset\) such that \(k < k_0\), one has
 \(
  \mathscr{Q}^{k_0}_k
  = \smash{\mathscr{Q}^{\infty}_k}\) and 
 \(
  \mathscr{Q}^{\mathrm{sing}, k_0}_{k}= \smash{\mathscr{Q}^{\mathrm{sing}, \infty}_{k}}
 \),

 \item 
 \label{eq_EiKah0oreekaBohwohbighie}
 \(
 \mathscr{Q}^{\mathrm{bad}, k_0}
 \subseteq 
   \mathscr{Q}^{\mathrm{sing}, k_0}\),
 \item 
 \label{it_omei1eoC1Iebibohhoh8so0z}
 \(\Psi^{\mathrm{sing}, k_0} = \id\) 
 on \(\bigcup  \brk{\mathscr{Q}^{k_0} \setminus \mathscr{Q}^{\mathrm{sing}, k_0}}^{\ell}\),
 \item 
 \label{it_taa6web7ien1quae8Xuey1xa}
 for every \(k  \in \Zset\) such that \(k \ge k_0\),
\begin{equation}
\label{eq_mai6Xieph9uachahChengoon}
  \# \mathscr{Q}^{\mathrm{sing}, k_0}_{k}
  \le 
 \tbinom{m - 1}{\ell - 1}
  \brk[\Big]{2^{-\brk{k - k_0}\brk{m - \ell}} \# \smash{\mathscr{Q}^{\mathrm{sing}, \infty}_{k_0 - 1}} + \smashoperator{\sum_{i = k_0}^{k}}
  2^{-\brk{k - i - 1}\brk{m - \ell}} \# \mathscr{Q}^{\mathrm{bad}, k_0}_i}.
\end{equation}
\end{enumerate}
\end{proposition}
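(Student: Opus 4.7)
The plan is to build $\mathscr{Q}^{k_0}$, $\mathscr{Q}^{\mathrm{sing},k_0}$ and $\Psi^{\mathrm{sing},k_0}$ by induction on the scale $k$, freezing the input data below scale $k_0$ as required by \ref{it_Dih1Poh0nohhahTh2chi8sah} and choosing, scale by scale, a new dyadic shift $\xi_k^{k_0}$ in the set $2^{k-1}\Zset^{m - 1}+\xi_{k-1}^{k_0}$ of $2^{m-1}$ admissible shifts, together with an auxiliary collection $\mathscr{F}^{\mathrm{sing},k_0}_k$ of \emph{singular vertical $\ell$-faces} inside $(\mathscr{Q}^{k_0})^{\ell,\perp}_k$. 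The seed of the recursion is $\mathscr{F}^{\mathrm{sing},k_0}_{k_0-1} \defeq \brk{\mathscr{Q}^{\mathrm{sing},\infty}_{k_0-1}}^{\ell,\perp}$, which by counting vertical $\ell$-faces of a cube satisfies $\#\mathscr{F}^{\mathrm{sing},k_0}_{k_0-1}\le \binom{m-1}{\ell-1}2^{m-\ell}\#\mathscr{Q}^{\mathrm{sing},\infty}_{k_0-1}$. The propagation rule at scale $k\ge k_0$ is then to declare $\tau \in (\mathscr{Q}^{k_0})^{\ell,\perp}_k$ singular exactly when either $\tau$ is a vertical face of some bad cube in $\mathscr{Q}^{\mathrm{bad},k_0}_k$ (as defined in \eqref{eq_che7ahweeghieCi4aqu7mi5b}) or $\partial_\top\sigma\subseteq\tau$ for some $\sigma\in\mathscr{F}^{\mathrm{sing},k_0}_{k-1}$, in the sense of \cref{lemma_cubes_top_boundary}; the cubes $\mathscr{Q}^{\mathrm{sing},k_0}_k$ are the bad cubes together with those containing at least one face of $\mathscr{F}^{\mathrm{sing},k_0}_k$, so \ref{eq_EiKah0oreekaBohwohbighie} is automatic.

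The key quantitative step is the averaging argument that selects $\xi_k^{k_0}$. A fixed face $\sigma\in \mathscr{F}^{\mathrm{sing},k_0}_{k-1}$ with spatial part $\sigma'$ has $\ell-1$ ``interval'' coordinates and $m-\ell$ ``point'' coordinates; for $\partial_\top\sigma=\sigma'\times\set{2^k}$ to be contained in a scale-$k$ vertical $\ell$-face one needs each of those point coordinates, which lies in $\xi_{k-1}^{k_0}+2^{k-1}\Zset$, to fall on the finer grid $\xi_{k}^{k_0}+2^{k}\Zset$. Identifying the shift increment with an element of $\set{0,1}^{m-1}$, this is a prescribed dyadic parity in each of the $m-\ell$ coordinates independently, so exactly $2^{-(m-\ell)}$ of the admissible shifts are favourable to $\sigma$. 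Summing over $\sigma$ (and noting that different $\sigma$'s with the same top produce a single propagating face), the expected number of propagating scale-$k$ faces is at most $2^{-(m-\ell)}\#\mathscr{F}^{\mathrm{sing},k_0}_{k-1}$, and a pigeon-hole selection of $\xi_k^{k_0}$ realises this average. Adding the contribution from bad cubes at scale $k$ (at most $\binom{m-1}{\ell-1}2^{m-\ell}\#\mathscr{Q}^{\mathrm{bad},k_0}_k$ new faces), we obtain
\begin{equation*}
 \#\mathscr{F}^{\mathrm{sing},k_0}_k
 \le
 2^{-(m-\ell)}\,\#\mathscr{F}^{\mathrm{sing},k_0}_{k-1}
 + \tbinom{m-1}{\ell-1}\,2^{m-\ell}\,\#\mathscr{Q}^{\mathrm{bad},k_0}_k.
\end{equation*}
Unrolling this geometric recursion from the seed bound, and bounding $\#\mathscr{Q}^{\mathrm{sing},k_0}_k$ by $\#\mathscr{Q}^{\mathrm{bad},k_0}_k$ plus an appropriate multiple of $\#\mathscr{F}^{\mathrm{sing},k_0}_k$ (each face lies in at most $2^{m-\ell}$ cubes, and the bad term can be absorbed into the $i=k$ contribution of the sum), yields exactly \eqref{eq_mai6Xieph9uachahChengoon}.

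The descending mapping $\Psi^{\mathrm{sing},k_0}$ is then produced face by face, in order of increasing dimension $j=1,\dots,\ell$ inside each scale $k\ge k_0$. On any face of $\bigcup(\mathscr{Q}^{k_0}\setminus\mathscr{Q}^{\mathrm{sing},k_0})^{\ell}$ it is the identity, securing \ref{it_omei1eoC1Iebibohhoh8so0z}. On a singular vertical $j$-face $\sigma$, the map is already defined on the enclosing boundary $\partial_\sqcup\sigma$ either by the induction on $j$ (lateral sides), by $\Psi^{\mathrm{sing},\infty}$ on the bottom $\sigma'\times\{2^k\}$ when $k=k_0$ (using that the seed $\mathscr{F}^{\mathrm{sing},k_0}_{k_0-1}$ sits above $\mathscr{Q}^{\mathrm{sing},\infty}_{k_0-1}$, where $\Psi^{\mathrm{sing},\infty}$ is defined and already maps into $\Rset^m_+\setminus A^{\mathrm{bad}}$), or by the previous scale otherwise; we extend it to $\sigma$ by pre-composing with the descending retraction of \cref{lemma_descending}, after a routine rescaling. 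Compatibility across adjacent faces and the descending property pass to $\Psi^{\mathrm{sing},k_0}$ by construction; the image avoids $A^{\mathrm{bad}}$ because every retraction step sends its face into the enclosing boundary, i.e.\ outward or downward, and $A^{\mathrm{bad}}$ is contained in the original singular cubes whose $\ell$-faces are processed through the inherited $\Psi^{\mathrm{sing},\infty}$.

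The hard part of the proof is the averaging: one has to choose a single shift $\xi_k^{k_0}$ that simultaneously handles all $\binom{m-1}{\ell-1}$ orientation classes of vertical $\ell$-faces and all prior singular faces without producing combinatorial blow-up. Once one recognises that the favourable-shift set depends only on the $m-\ell$ point coordinates of $\sigma'$ and that linearity of expectation combines the orientations additively, the pigeon-hole yields the desired $2^{-(m-\ell)}$-contraction; the rest of the argument is combinatorial bookkeeping and the by-now standard construction of the descending retraction.
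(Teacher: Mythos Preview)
Your proposal is correct and follows the paper's approach almost verbatim: the paper also builds the decomposition scale by scale, tracks a collection of singular vertical \(\ell\)-faces, uses the same averaging over the \(2^{m-1}\) admissible shifts to realise the \(2^{-(m-\ell)}\) contraction, and assembles \(\Psi^{\mathrm{sing},k_0}\) from the descending retractions of \cref{lemma_descending}. Two minor differences are worth noting. First, the paper declares a face ``bad'' when it itself meets \(A^{\mathrm{bad}}\) (and seeds with faces not lying in any non-singular cube); since all \(2^{m-\ell}\) cubes containing such a face are then bad (resp.\ singular), a double-count yields \(\#\mathscr{T}^{\mathrm{bad}}_k\le\tbinom{m-1}{\ell-1}\#\mathscr{Q}^{\mathrm{bad}}_k\) without your extra \(2^{m-\ell}\) factor---so your bookkeeping gives a fixed multiple of \eqref{eq_mai6Xieph9uachahChengoon} rather than the exact constant, which is harmless for every application in the paper. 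Second, you describe \(\Psi^{\mathrm{sing},k_0}\) only on vertical faces, whereas \(\bigcup(\mathscr{Q}^{k_0})^{\ell}\) also contains horizontal \(\ell\)-faces at heights \(2^{k+1}\) for \(k\ge k_0-1\); the paper handles these in a separate short induction (identity when the boundary lies in non-singular vertical faces and the face misses \(A^{\mathrm{bad}}\), otherwise a descending retraction of the \((\ell+1)\)-dimensional vertical face beneath), using exactly the mechanism you already invoke.
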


\begin{figure}
\begin{center}
 \includegraphics[alt={The construction of the singuler cubes and the associated retraction in the subcritical case}]{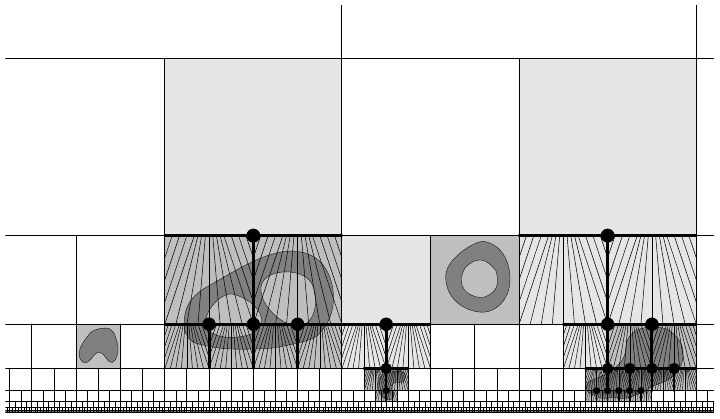}
\end{center}
\caption{
In the subcritical construction represented here when \(m = 2\) and \(\ell = 1\) in \cref{proposition_general_modification}, the singular vertical faces \(\smash{\mathscr{T}^{\mathrm{sing}, k_0}_k}\) are recursively defined as those that either touch only bad cubes or those that touch a singular vertical face at a lower scale; the singular cubes are then defined as those that either are bad or contain the upper boundary \(\partial_{\top} \sigma\) of a singular face \(\sigma\).
The number of singular cubes at each scale is controlled by the number of bad cubes at the previous scales (see \eqref{eq_mai6Xieph9uachahChengoon}), provided that the dyadic decomposition is well chosen at each step thanks to an averaging argument. 
The continuous extension is performed on the singular vertical lines and then extended to the thick horizontal singular lines along the bundles of lines visible on each square.}
\end{figure}

As discussed for \cref{proposition_modifiedcubes_supercritical},
whereas the same notation \(\smash{\mathscr{Q}^{\mathrm{bad}}}\) as in \sectref{section_badset} appears in the statement \cref{proposition_general_modification}, the result itself and its proof are purely combinatorial and do not rely in any way on the definition of bad cubes in \eqref{eq_definition_bad_cube}, but merely on the presence of an arbitrary open set \(\smash{A^{\mathrm{bad}}} \subseteq \Rset^m_+\).

\begin{proof}[Proof of \cref{proposition_general_modification}]
We are first going to define inductively vectors \(\xi_k \in \Rset^{m - 1} \simeq \Rset^{m - 1} \times \set{0}\) and collections of vertical faces 
\begin{equation}
\label{eq_av8chae6av3Niashahquoo5i}
  \mathscr{T}^{\mathrm{sing}, k_0}_{k} \subseteq \smash{\mathscr{Q}^{\ell, \perp}_{k}} 
 \defeq \set{\sigma + \xi_{k}  \st \sigma \in \brk{\smash{\mathscr{Q}^{\infty}}}^{\ell, \perp}_{k}}
\end{equation}
for each \(k \in \Zset\) such that \(k \ge k_0 - 1\).
We let \(\xi_{k_0 - 1} \defeq 0 \in \Rset^{m - 1}\) 
and
\begin{equation}
\label{eq_og1quaePhaelahbahchail5y}
 \mathscr{T}^{\mathrm{sing}, k_0}_{k_0 - 1}
 \defeq \set{\sigma \in \brk{\mathscr{Q}^\infty}^{\ell, \perp}_{k_0 - 1} \st \sigma \not \subseteq \textstyle \bigcup \brk{\mathscr{Q} \setminus \smash{\mathscr{Q}^{\mathrm{sing}, \infty}}}}
 \subseteq \smash{\mathscr{Q}^{\ell, \perp}_{k_0 - 1}} .
\end{equation}
Assuming that \(k \ge k_0\) and that the vector \(\xi_{k - 1} \in \Rset^{m - 1}\) and the collection
\(
 \smash{\mathscr{T}^{\mathrm{sing},k_0}_{k - 1}}
 \subseteq \smash{\mathscr{Q}^{\ell, \perp}_{k - 1}}\)
have already been defined, for every \(\zeta \in \set{0, 2^{k - 1}}^{m - 1} \subseteq \Rset^{m - 1}\), we set
\begin{equation}
\label{eq_uzahcohYe1haiche0gaiX1bo}
  \mathscr{Q}^{\ell, \perp}_{k, \zeta}
  \defeq \set{\sigma + \xi_{k - 1} + \zeta \st \sigma \in \brk{\smash{\mathscr{Q}^{\infty}}}^{\ell, \perp}_{k}}
\end{equation}
and 
\begin{equation}
\label{eq_kuz0tooRuph1eevahs4ieb7r}
 \mathscr{T}^{\mathrm{prop}, k_0}_{k, \zeta}
 \defeq 
 \set{\sigma \in 
  \mathscr{Q}^{\ell, \perp}_{k, \zeta} \st 
  \sigma \text{ is adjacent to some \(\tau \in \mathscr{T}^{\mathrm{sing}, k_0}_{k - 1}\) }
  }.
\end{equation}
Noting that for every face \(\tau \in \smash{\mathscr{Q}^{\ell, \perp}_{k - 1}}\) there are exactly \(2^{\ell - 1}\) translations \(\zeta \in \set{0, 2^{k - 1}}^{m - 1}\) such that \(\tau\) is adjacent to some vertical face of \(\sigma \in \smash{\mathscr{Q}^{\ell, \perp}_{k, \zeta}}\),
we have 
\begin{equation}
 \smashoperator[r]{\sum_{\zeta \in \set{0, 2^{k - 1}}^{m - 1}}}
 \# \mathscr{T}^{\mathrm{prop}, k_0}_{k, \zeta}
 \le 2^{\ell - 1} \# \mathscr{T}^{\mathrm{sing}, k_0}_{k - 1}.
\end{equation}
We choose now \(\zeta_k \in \set{0, 2^{k - 1}}^{m - 1}\) such that 
\begin{equation}
\label{eq_eiDeuGi8dee6aeB9phua9uel}
 \# \mathscr{T}^{\mathrm{prop}, k_0}_{k, \zeta_k}
 \le 
 \frac{1}{2^{m - 1}}\hspace{-1em}
  \smashoperator[r]{\sum_{\zeta \in \set{0, 2^{k - 1}}^{m - 1}}}
 \# \mathscr{T}^{\mathrm{prop}, k_0}_{k, \zeta}
 \le 2^{-\brk{m - \ell}}
 \# \mathscr{T}^{\mathrm{sing},k_0}_{k - 1}.
\end{equation}
and we set 
\begin{equation}
\label{eq_eo6oneiQuee1aic4oon6ek9w}
\xi_{k} \defeq \xi_{k - 1} + \zeta_k, 
\end{equation}
 so that in view of \eqref{eq_av8chae6av3Niashahquoo5i}, \eqref{eq_uzahcohYe1haiche0gaiX1bo} and \eqref{eq_eo6oneiQuee1aic4oon6ek9w}, we have \(\mathscr{Q}^{j, \perp}_{k} = \mathscr{Q}^{j, \perp}_{k, \zeta_k}\).
We then 
define successively
\begin{equation}
\label{eq_oov6aer7rouFueghiefoovua}
 \mathscr{T}^{\mathrm{prop}, k_0}_{k} \defeq \mathscr{T}^{\mathrm{prop}, k_0}_{k, \zeta_k} \subseteq \mathscr{Q}^{j, \perp}_{k},
\end{equation}
\begin{equation}
\label{eq_Da8pei5Va7aihaig1aethoch}
 \mathscr{T}^{\mathrm{bad}, k_0}_k
 \defeq \set{\sigma \in \mathscr{Q}^{j, \perp}_{k} \st
 \sigma \cap \smash{A^{\mathrm{bad}}} \ne \emptyset},
\end{equation}
and 
\begin{equation}
\label{eq_Zo4ux0ooj6uXeev9ahphejov}
  \mathscr{T}^{\mathrm{sing}, k_0}_k
 \defeq 
 \mathscr{T}^{\mathrm{prop}, k_0}_k
 \cup 
 \mathscr{T}^{\mathrm{bad}, k_0}_k \subseteq \mathscr{Q}^{j, \perp}_{k}.
\end{equation}
It follows from \eqref{eq_eiDeuGi8dee6aeB9phua9uel}, \eqref{eq_oov6aer7rouFueghiefoovua} and  \eqref{eq_Zo4ux0ooj6uXeev9ahphejov} that 
\begin{equation}
 \# \mathscr{T}^{\mathrm{sing}, k_0}_k
 \le 2^{-\brk{m - \ell}} \# \mathscr{T}^{\mathrm{sing}, k_0}_{k - 1} + \# \mathscr{T}^{\mathrm{bad}, k_0}_{k},
\end{equation}
which implies by induction in view of \eqref{eq_eiDeuGi8dee6aeB9phua9uel} and  \eqref{eq_oov6aer7rouFueghiefoovua} again that 
\begin{equation}
\label{eq_quaiFee2sheixau3oocaekoo}
 \# \mathscr{T}^{\mathrm{sing}, k_0}_k
 \le 2^{-\brk{k + 1 - k_0}\brk{m - \ell}} \mathscr{T}^{\mathrm{sing}, k_0}_{k_0 - 1} + \smash{\sum_{i = k_0}^{k}} 2^{-\brk{k - i}\brk{m - \ell}}\# \mathscr{T}^{\mathrm{bad}, k_0}_i.
\end{equation}

We define the collection
\begin{equation}
  \mathscr{Q}^{k_0} \defeq \set{Q + \xi_k \st k \in \Zset \text{ and }Q \in \mathscr{Q}^\infty_k},
\end{equation}
with the convention that \(\xi_k = 0\) when \(k < k_0\).
Since, in view of \eqref{eq_eo6oneiQuee1aic4oon6ek9w}, \(\xi_k - \xi_{k - 1} = \zeta_k \in 2^{k - 1} \Zset^{m - 1}\), the collection \(\mathscr{Q}^{k_0}\) is a dyadic decomposition of \(\Rset^m_+\) in the sense of \cref{definition_dyadic_decomposition}.
The collection \(\smash{\mathscr{Q}^{\mathrm{bad}, k_0}}\) can then be defined by \eqref{eq_che7ahweeghieCi4aqu7mi5b}.
We note that if \(\sigma \in \smash{\mathscr{T}^{\mathrm{bad}, k_0}_k}\),
then any of the \(2^{m - \ell}\) cubes in \(\smash{\mathscr{Q}^{k_0}_k}\) containing \(\sigma\) should be in \(\smash{\mathscr{Q}^{\mathrm{bad}, k_0}}\); on the other hand any of the latter cubes has \(2^{m - \ell} \smash{\binom{m - 1}{\ell - 1}}\) vertical faces of dimension \(\ell\), and hence we have
\begin{equation}
\label{eq_eighiequooc3xeir3AeNgie1}
  \# \mathscr{T}^{\mathrm{bad}, k_0}_k
  \le \tbinom{m - 1}{\ell - 1} \# \mathscr{Q}^{\mathrm{bad}, k_0}_k;
\end{equation}
similarly, we have in view of \eqref{eq_og1quaePhaelahbahchail5y}
\begin{equation}
\label{eq_Mio5oovengahmoh4xiPusuzo}
  \# \mathscr{T}^{\mathrm{sing}, k_0}_{k_0 - 1}
  \le \tbinom{m - 1}{\ell - 1} \# \mathscr{Q}^{\mathrm{sing},\infty}_{k_0 - 1}.
\end{equation}

We also define for every \(k \in \Zset\) such that \(k_0 \ge k\), the collection
\begin{equation}
 \mathscr{Q}^{\mathrm{prop}, k_0}_k
 \defeq \set{Q \in \mathscr{Q}^{k_0}_k \st Q \supseteq \partial_{\top} \sigma \text{ for some \(\sigma \in \mathscr{T}^{\mathrm{sing}, k_0}_{k - 1}\) 
 }},
\end{equation}
with the upper boundary \(\partial_{\top}\sigma\) defined in \eqref{eq_hoofi5Emae8xaepa8queef7r}.
Every \(\sigma \in \smash{\brk{\mathscr{Q}^{k_0}}_{k - 1}^{\ell, \perp}}\) has its upper boundary \(\partial_{\top} \sigma\) being  contained in \(2^{m - \ell}\) cubes of \(\smash{\mathscr{Q}^{k_0}_k}\), so that 
\begin{equation}
\label{eq_eSooPhae6ohcoseiceeFoiba}
 \# \mathscr{Q}^{\mathrm{prop}, k_0}_k
 \le 2^{m - \ell} \# \mathscr{T}^{\mathrm{sing}, k_0}_{k - 1}.
\end{equation}
Inserting \eqref{eq_eighiequooc3xeir3AeNgie1}, \eqref{eq_Mio5oovengahmoh4xiPusuzo} and \eqref{eq_eSooPhae6ohcoseiceeFoiba} in \eqref{eq_quaiFee2sheixau3oocaekoo}, we have for every \(k \in \Zset\) such that \(k \ge k_0\)
\begin{equation}
\label{eq_aiw7thee9zuelie3Ong3ohSe}
\begin{split}
 \# \mathscr{Q}^{\mathrm{prop}, k_0}_k
 & \le 2^{-\brk{k - k_0}\brk{m - \ell}} \# \mathscr{T}^{\mathrm{sing}, \infty}_{k_0 - 1} + \smashoperator{\sum_{i = k_0}^{k - 1}} 2^{-\brk{k - i - 1}\brk{m - \ell}}\# \mathscr{T}^{\mathrm{bad}, k_0}_i\\[-.8em]
 &\le 
 \tbinom{m - 1}{\ell - 1}
 \brk[\Big]{2^{-\brk{k - k_0}\brk{m - \ell}} \# \smash{\mathscr{Q}^{\mathrm{sing}, \infty}_{k_0 - 1}} + \smashoperator{\sum_{i = k_0}^{k - 1}} 2^{-\brk{k - i - 1}\brk{m - \ell}}\# \mathscr{Q}^{\mathrm{bad}, k_0}_i}.
\end{split}
\end{equation}
Defining the collection of cubes
\begin{equation}
\label{eq_phuupahg4Aj0uev6uhoojaW9}
 \mathscr{Q}^{\mathrm{sing},k_0}
 \defeq 
 \smashoperator{\bigcup_{k = -\infty}^{k_0 - 1}} \smash{\mathscr{Q}^{\mathrm{sing}, \infty}_k}
 \cup 
 \smashoperator{
 \bigcup_{k = k_0}^{\infty}}
 \mathscr{Q}^{\mathrm{bad}, k_0}_k
 \cup 
 \smashoperator{\bigcup_{k = k_0}^{\infty}}
 \mathscr{Q}^{\mathrm{prop}, k_0}_k,
\end{equation}
we have by \eqref{eq_aiw7thee9zuelie3Ong3ohSe} and \eqref{eq_phuupahg4Aj0uev6uhoojaW9} for every \(k \in \Zset\) such that \(k \ge k_0\)
\begin{equation}
 \# \mathscr{Q}^{\mathrm{sing}, k_0}_k
 \le 
 \tbinom{m - 1}{\ell - 1}
 \brk[\Big]{2^{-\brk{k - k_0}\brk{m - \ell}} \# \smash{\mathscr{Q}^{\mathrm{sing}, \infty}_{k_0 - 1}} + \smashoperator{\sum_{i = k_0}^{k}}  2^{-\brk{k - i - 1}\brk{m - \ell}}\# \mathscr{Q}^{\mathrm{bad}, k_0}_i},
\end{equation}
and we have thus proved the assertions \ref{it_Dih1Poh0nohhahTh2chi8sah}, \ref{eq_EiKah0oreekaBohwohbighie} and \ref{it_taa6web7ien1quae8Xuey1xa}.

In order to define the mapping \(\Psi^{\mathrm{sing}, k_0}\) satisfying assertion \ref{it_omei1eoC1Iebibohhoh8so0z},
we first define 
\(\Psi^{\mathrm{sing}, k_0} \defeq \Psi^{\mathrm{sing}, \infty}\) 
on the set \(\smash{\bigcup_{k = -\infty}^{k_0 - 1} \smash{\brk{\mathscr{Q}^{k_0}}^{\ell, \perp}_k}
\cup \bigcup_{k = -\infty}^{k_0 - 2} \brk{\mathscr{Q}^{k_0}}^{\ell, \top}_k}\).
Next we extend \(\Psi^{\mathrm{sing}, k_0}\) inductively to the set \(\bigcup \smash{\brk{\mathscr{Q}}^{\ell, \perp}_k}\) for every \(k \in \Zset\) such that \(k \ge k_0\).

If \(\Psi^{\mathrm{sing}, k_0}\) has already been defined for some \(k \in \Zset\) on the set 
\(\smash{\bigcup \brk{\mathscr{Q}^{k_0}}^{\ell, \perp}_{k - 1}}\) in such a way that \(\smash{\Psi^{\mathrm{sing}, k_0}} = \id\) 
 on \(\smash{\bigcup \brk{\brk{\mathscr{Q}^{k_0}}^{\ell, \perp}_{k - 1} \setminus \mathscr{T}^{\mathrm{sing}, k_0}_{k - 1}}}\), 
we are going to extend it to the set \(\smash{\bigcup \brk{\mathscr{Q}^{k_0}}^{j, \perp}_{k}}\) 
for every \(j \in \set{0, \dotsc, \ell}\).
The case \(j = 0\) is trivial since there is no vertical face of dimension \(0\).

We suppose now that \(j \in \set{1, \dotsc, \ell}\) and that the mapping \(\Psi^{\mathrm{sing}, k_0}\) has been defined on the set \(\smash{\bigcup \brk{\mathscr{Q}^{k_0}}^{\ell, \perp}_{k - 1}
\cup \bigcup \brk{\mathscr{Q}^{k_0}}^{j - 1, \perp}_k}\) 
in such a way that \(\Psi^{\mathrm{sing}, k_0}= \id\) on the set \(\brk{\bigcup \smash{\brk{\mathscr{Q}^{k_0}}^{\ell, \perp}_{k - 1}} \setminus \smash{\mathscr{T}^{\mathrm{sing}, k_0}_{k - 1}}} \cup 
\brk{\bigcup \smash{\brk{\mathscr{Q}^{k_0}}^{j - 1, \perp}_k} \cap \brk{\bigcup \smash{\brk{\mathscr{Q}^{k_0}}^{\ell, \perp}_{k}} \setminus \smash{\mathscr{T}^{\mathrm{sing}, k_0}_{k}}}}\).
Given any face \(\sigma \in \smash{\brk{\mathscr{Q}^{k_0}}^{j, \perp}_{k}}\), we observe that \(\partial_{\sqcup} \sigma \subseteq \bigcup \smash{\brk{\mathscr{Q}^{k_0}}^{\ell, \perp}_{k - 1}} \cup \bigcup \smash{\brk{\mathscr{Q}^{k_0}}^{j - 1, \perp}_{k}}\), with the enclosing boundary \(\partial_{\sqcup} \sigma\) defined in \eqref{eq_ap8pee3oquaiH5iig9phahsh}, and thus \(\Psi^{\mathrm{sing}, k_0}\) is already defined on \(\partial_{\sqcup} \sigma\).
We first consider  the case where \(\sigma \subseteq \bigcup \smash{\brk{\brk{\mathscr{Q}^{k_0}}^{\ell, \perp}_k \setminus \mathscr{T}^{\mathrm{sing}, k_0}_k}}\).
By definition of \(\smash{\mathscr{T}^{\mathrm{sing},k_0}_k}\) in \eqref{eq_Zo4ux0ooj6uXeev9ahphejov} we have
\(\sigma \cap \smash{A^{\mathrm{bad}}} = \emptyset\) by \eqref{eq_Da8pei5Va7aihaig1aethoch} and 
\(\partial_{\sqcup} \sigma \subseteq \brk{\bigcup \smash{\brk{\mathscr{Q}^{k_0}}^{\ell, \perp}_{k - 1}} \setminus \smash{\mathscr{T}^{\mathrm{sing}, k_0}_{k - 1}}} \cup 
\brk{\bigcup \smash{\brk{\mathscr{Q}^{k_0}}^{j - 1, \perp}_k} \cap \bigcup \brk{\smash{\brk{\mathscr{Q}^{k_0}}^{\ell, \perp}_{k}} \setminus \smash{\mathscr{T}^{\mathrm{sing}, k_0}_{k}}}}\) by \eqref{eq_kuz0tooRuph1eevahs4ieb7r} and by \cref{lemma_cubes_top_boundary}, so that \(\smash{\Psi^{\mathrm{sing}, k_0}}\) is the identity on \(\partial_{\sqcup} \sigma\) and we extend then \(\smash{\Psi^{\mathrm{sing}, k_0}}\) to \(\sigma\) by taking the identity on \(\sigma\).
Otherwise, we can take \(\smash{\Psi^{\mathrm{sing}, k_0}}\) on \(\sigma\) to be an descending extension to \(\sigma\) of \(\smash{\Psi^{\mathrm{sing}, k_0}}\) already defined on \(\partial_{\sqcup} \sigma\) thanks to \cref{lemma_descending}.

It remains now to extend \(\Psi^{\mathrm{sing}, k_0}\) to
\(\bigcup \smash{\brk{\mathscr{Q}^{k_0}}^{\ell, \top}_{k}}\) inductively for \(k \ge k_0 - 1\).
We assume that it has already been done for \(\bigcup \smash{\brk{\mathscr{Q}^{k_0}}^{\ell, \top}_{k - 1}}\);
as this is indeed the case for \(k = k_0 - 1\), we will thus have made a construction by induction.
Given any face \(\sigma \in \smash{\brk{\mathscr{Q}^{k_0}}^{\ell, \top}_{k}}\),
we either have \(\partial \sigma \subseteq \bigcup \brk{\smash{\brk{\mathscr{Q}^{k_0}}^{\ell, \perp}_k} \setminus \smash{\mathscr{T}^{\mathrm{sing}, k_0}_k}}\) and \(\sigma \cap \smash{A^{\mathrm{bad}}} = \emptyset\), in which case \(\smash{\Psi^{\mathrm{sing}, k_0}}\) is the identity on \(\partial \sigma\) and we can extend \(\smash{\Psi^{\mathrm{sing}, k_0}}\) by the identity on \(\sigma\);
otherwise we have \(\sigma = \partial_{\top} \tau\) for some \(\tau \in \smash{\brk{\mathscr{Q}^{k_0}}^{\ell + 1, \perp}_{k - 1}}\), the map \(\Psi^{\mathrm{sing}, k_0}\) is then defined on \(\sigma = \partial_{\top} \tau\) by a descending retraction of \(\tau\) on \(\partial_{\sqcup} \tau\) thanks to \cref{lemma_descending}; in this case we have \(\sigma \not \subseteq \bigcup \brk{ \mathscr{Q}^{k_0} \setminus \mathscr{Q}^{\mathrm{sing}, k_0}}\). This concludes the construction of \(\Psi^{\mathrm{sing},k_0}\) satisfying the assertion \ref{it_omei1eoC1Iebibohhoh8so0z}.
\end{proof}

\begin{remark}
\label{remark_probabilistic_propagation_decay}
\Cref{proposition_general_modification} also has a probabilistic formulation in the same framework as in \cref{remark_probabilistic_spawing}.
If the dyadic decomposition \(\mathscr{Q}^{k_0}\) is taken randomly under the condition \(\mathscr{Q}^\infty_k = \mathscr{Q}^{k_0}_k\) for \(k < k_0\), then 
the expectations of \(\smash{\# \mathscr{Q}^{\mathrm{sing},k_0}_k}\) satisfy \ref{it_taa6web7ien1quae8Xuey1xa};
the key ingredient is the averaging estimate \eqref{eq_eSooPhae6ohcoseiceeFoiba}, which has a manifest probabilistic flavour.
\end{remark}

\subsection{Extension of traces under subcritical integrability}
We now have all the tools to prove an extension result on the set \(\bigcup \mathscr{Q}^{\vert \Omega}\) from which the extension and its estimate in \cref{theorem_extension_halfspace,theorem_characterization_halfspace,theorem_estimate_critical_halfspace} will follow.

\begin{theorem}
\label{theorem_extension_tent_subcritical}
Let \(p \in \intvl{1}{m}\).
Assume that the homotopy groups \(\pi_{1} \brk{\manifold{N}}, \dotsc, \smash{\pi_{\floor{p - 1}}} \brk{\manifold{N}}\) are finite and, when \(p \in \Nset\), that the group \(\pi_{p - 1} \brk{\manifold{N}}\) is trivial.
There exists a constant \(C \in \intvo{0}{\infty}\), such that 
for every open set \(\Omega \subseteq \smash{\Rset^{m - 1}}\) and every map \(u \in \smash{\smash{\dot{W}}^{1 - 1/p, p}} \brk{\Omega, \manifold{N}}\) which is the weak limit of a sequence of maps in \(\smash{R^1_{m - \floor{p} - 2}}\brk{\Omega, \manifold{N}} \cap \smash{\smash{\dot{W}}^{1 - 1/p, p}} \brk{\Omega, \manifold{N}} \), there exists
a dyadic decomposition \(\mathscr{Q}\) of \(\Rset^m_+\) and a collection of cubes \(\mathscr{Q}^{\mathrm{sing}} \subseteq \smash{\mathscr{Q}^{\vert \Omega}}\) such that 
\begin{enumerate}[label=(\roman*)]
 \item 
 \label{it_aighie9iush4hei9opoo3Quo}
 \(U \in \smash{\dot{W}}^{1, p} \brk{\bigcup \smash{\mathscr{Q}^{\vert \Omega}}, \manifold{N}}\)
 and 
 \begin{equation}
 \label{eq_IeV9Fo4agazu5ceiTheewad0}
   \int_{ \bigcup \mathscr{Q}^{\vert \Omega}} \abs{\Deriv U}^p
   \le C
   \smashoperator{\iint_{\Omega \times \Omega}} \frac{d \brk{u \brk{y}, u \brk{z}}^p}{\abs{y - z}^{p + m - 2}} \dif y \dif z,
 \end{equation}
 \item \(\tr_{\Omega} U = u\),
 \item
 \label{it_aShee7jee4iu6OhYie0noofo}
 for almost every \(x \in \bigcup \mathscr{Q}^{\vert \Omega} \setminus \brk{L^{m - \floor{p} - 1} \cap \bigcup \mathscr{Q}^{\mathrm{sing}}} \),
 \begin{equation*}
    \abs{\Deriv U \brk{x}} \le
    \frac{C}{\dist \brk{x, \partial \Rset^{m}_+ \cup \brk{L^{m - \floor{p} - 1} \cap \bigcup \mathscr{Q}^{\mathrm{sing}}}}},
 \end{equation*}
 \item 
 \label{it_bug3Hee0chaophaita7pheew}
 one has 
  \begin{equation}
  \label{eq_oo4quep0ahShuqu8aib8aib5}
   \sum_{k \in \Zset} 2^{k \brk{m - p}} \# \mathscr{Q}^{\mathrm{sing}}_k
   \le C
   \smashoperator{\iint_{\substack{\Omega \times \Omega}}} \frac{d \brk{u \brk{y}, u \brk{z} - \delta_*}_+}{\abs{y - z}^{p + m - 2}} \dif y \dif z.
 \end{equation}
\end{enumerate}
\end{theorem}

The set \(L^{m - \floor{p} - 1}\) is the dual skeleton defined at the beginning of \sectref{section_topological_to_sobolev}. 
\begin{proof}[Proof of \cref{theorem_extension_tent_subcritical}]
By a standard approximation and compactness argument, we can assume that \(u\in \smash{R^1_{m - \floor{p} - 2} \brk{\Omega, \manifold{N}}}\); the assertions \ref{it_aShee7jee4iu6OhYie0noofo} and \ref{eq_oo4quep0ahShuqu8aib8aib5} pass to the limit thanks to the control \eqref{eq_oo4quep0ahShuqu8aib8aib5} on the singular cubes.

We first assume that \(p \in \intvo{\ell}{\ell + 1}\)
for some \(\ell \in \set{0, \dotsc, m - 1}\), so that \(\ell = \floor{p} < p \not \in \Nset\).
By definition of the set \(\smash{R^1_{m - \ell - 2}} \brk{\Omega, \manifold{N}}\) in \eqref{eq_Quaibaibai5iajiel3yah9ie},
there is a compact set \(\Sigma \subseteq \Rset^{m - 1}\) contained in finitely many smooth submanifolds of dimension \(m - \ell - 2\) of \(\Rset^{m - 1}\) such that
\(u \in C^1 \brk{\Omega \setminus \Sigma, \manifold{N}}\) and \eqref{eq_ahshohvoeBeemu1Chuphooxa} holds.

If \(\Sigma \ne \emptyset\), we let \(\kappa \in \intvo{0}{\infty}\) be given by \cref{proposition_neat_singular_set} so that \eqref{eq_hediexie5aiz6ree2ieg2aeL} holds and we let \(\smash{\mathscr{Q}^{\infty}}\) be the dyadic decomposition of \(\smash{\Rset^m_+}\), \(\smash{\mathscr{Q}^{\mathrm{sing}, \infty}} \subseteq \smash{\mathscr{Q}^{\infty}}\)
be the collection of cubes and \(\smash{\Psi^{\mathrm{sing}, \infty}} \colon \smash{\bigcup \brk{\mathscr{Q}^{\Omega}}^\ell} \to \smash{\widecheck{\Sigma}}^{\kappa} \subseteq \Rset^m_+ \setminus \smash{\widehat{\Omega}}^{\mathrm{bad}}\) be the mapping
given for \(\Sigma\) and \(\kappa\) by \cref{proposition_cubes_spawning}.

Given \(k_0 \in \Zset\), we let \(\mathscr{Q}^{k_0}\)
be the dyadic decomposition of \(\Rset^m_+\), \(\mathscr{Q}^{\mathrm{bad}, k_0}\) and \(\mathscr{Q}^{\mathrm{sing}, k_0}\) be the collections of cubes and \(\smash{\Psi^{\mathrm{sing}, k_0}} \colon \smash{ \bigcup \brk{\mathscr{Q}^{k_0}}^{\ell} \to \Rset^{m}_+ \setminus \smash{\smash{\widehat{\Omega}}^{\mathrm{bad}}}}\) be the mapping given by \cref{proposition_general_modification} with \(A^{\mathrm{bad}} = \smash{\widehat{\Omega}}^{\mathrm{bad}}\).
We define the mapping \(W \colon \smash{\bigcup \brk{\mathscr{Q}^{k_0 \vert \Omega} \setminus \mathscr{Q}^{\mathrm{sing}, k_0}}}\cup \smash{\bigcup \brk{\mathscr{Q}^{k_0 \vert \Omega} \cap \mathscr{Q}^{\mathrm{sing}, k_0}}^\ell} \to \manifold{N}\)
to be \(\Pi_{\manifold{N}} \compose V \) on the set  \(\smash{\bigcup \brk{\mathscr{Q}^{k_0 \vert \Omega} \setminus \mathscr{Q}^{\mathrm{sing}, k_0}}}\)
and \(\smash{\Pi_{\manifold{N}}} \compose V \compose\smash{\Psi^{\mathrm{sing}, k_0}}\) on the set \(\smash{\bigcup \brk{\mathscr{Q}^{k_0 \vert \Omega} \cap \mathscr{Q}^{\mathrm{sing}, k_0}}^\ell}\).
Since the map \(\Psi^{\mathrm{sing}, k_0}\) is descending, \(W\) is well defined on the latter set.
In order to estimate the size of the singular cubes \(\smash{\mathscr{Q}^{\mathrm{sing}, k_0}}\), we decompose in view of \ref{it_Dih1Poh0nohhahTh2chi8sah} in \cref{proposition_general_modification}
\begin{equation}
\label{eq_it_taa6web7ien1quae8Xuey1xa}
\begin{split}
  \sum_{k \in \Zset}
  2^{k \brk{m - p}}
  \# \mathscr{Q}^{\mathrm{sing}, k_0}_{k}
  =
  \smashoperator{\sum_{k = -\infty}^{k_0 - 1}} 2^{k \brk{m - p}}
  \# \smash{\mathscr{Q}^{\mathrm{sing}, \infty}_k}
  + \smashoperator{\sum_{k = k_0}^{\infty}} 2^{k \brk{m - p}}
  \# \mathscr{Q}^{\mathrm{sing}, k_0}_{k}.
\end{split}
\end{equation}
By \cref{proposition_cubes_spawning} \ref{it_ohd6RaeyuiWooGoyo6easiey}, we have for every \(k \in \Zset\)
\begin{equation}
\label{eq_Aa2waiveeNgeethe1ahdagho}
 \# \smash{\mathscr{Q}^{\mathrm{sing}, \infty}_k}
 \le M \, 2^{-k \brk{m - \ell - 2}},
\end{equation}
so that we can estimate the first term in the right-hand side of \eqref{eq_it_taa6web7ien1quae8Xuey1xa} as  
\begin{equation}
\label{eq_au5oom5sic4YaiGhoogeichu}
 \smashoperator[r]{\sum_{k = -\infty}^{k_0 - 1}} 2^{k \brk{m - p}}
  \# \smash{\mathscr{Q}^{\mathrm{sing}, \infty}_k}
  \le M \smashoperator{\sum_{k = -\infty}^{k_0 - 1}} 2^{k \brk{\ell + 2 - p}}
  \le M \frac{2^{k_0 \brk{\ell + 2 - p}}}{2^{\ell + 2 - p}-1}.
\end{equation}

On the other hand, in order to estimate the second term in the right-hand side of \eqref{eq_it_taa6web7ien1quae8Xuey1xa}, we have by \cref{proposition_general_modification} \ref{it_taa6web7ien1quae8Xuey1xa}
\begin{equation}
\label{eq_zaex0MemieFaejahy5quaiZ1}
\begin{split}
 \smashoperator[r]{\sum_{k = k_0}^{\infty}} 2^{k \brk{m - p}}\# \mathscr{Q}^{\mathrm{sing},k_0}_k
 &\le \C \brk[\Big]{\smashoperator[r]{\sum_{k = k_0}^{\infty}} 2^{k\brk{\ell - p}+ k_0 \brk{m - \ell}} \# \smash{\mathscr{Q}^{\mathrm{sing}, \infty}_{k_0 - 1}}\\[-1em]
 &\qquad \qquad + \smashoperator[r]{\sum_{k = k_0}^{\infty}} 2^{k \brk{m - p}} \smashoperator[r]{\sum_{i = k_0}^{k}} 2^{-\brk{k - i}\brk{m - \ell}}\# \mathscr{Q}^{\mathrm{bad}, k_0}_i}.
\end{split}
\end{equation}
By \eqref{eq_Aa2waiveeNgeethe1ahdagho} again, we have, since \(\ell < p\)
\begin{equation}
\label{eq_qua5phoh3Naeg6queefeshah}
 \smashoperator[r]{\sum_{k = k_0}^{\infty}} 2^{k\brk{\ell - p}+ k_0 \brk{m - \ell}} \# \smash{\mathscr{Q}^{\mathrm{sing}, \infty}_{k_0 - 1}}
 \le M \smashoperator{\sum_{k = k_0}^{\infty}} 2^{k\brk{\ell - p}+ 2 k_0}
 \le \frac{M \, 2^{k_0 \brk{\ell + 2 - p}}}{1 - 2^{-\brk{p - \ell}}}.
\end{equation}
We finally estimate, since \(p > \ell\),
\begin{equation}
\label{eq_gie6Eimie5chezoos6kout4t}
\begin{split}
  \smashoperator[r]{\sum_{k = k_0}^{\infty}} 2^{k \brk{m - p}} \smashoperator[r]{\sum_{i = k_0}^{k}} 2^{-\brk{k - i}\brk{m - \ell}}\# \mathscr{Q}^{\mathrm{bad}, k_0}_i
  &= \sum_{i = k_0}^{\infty}
  \sum_{k = i}^\infty 2^{-k \brk{p - \ell}} 2^{i\brk{m - \ell}} \# \mathscr{Q}^{\mathrm{bad}, k_0}_i\\[-.3em]
  &= \frac{1}{1 - 2^{-\brk{p - \ell}}} \sum_{i = k_0}^\infty 2^{i \brk{m - p}}\# \mathscr{Q}^{\mathrm{bad}, k_0}_i.
\end{split}
\end{equation}
Combining \eqref{eq_zaex0MemieFaejahy5quaiZ1}, \eqref{eq_qua5phoh3Naeg6queefeshah} and \eqref{eq_gie6Eimie5chezoos6kout4t}, we get
\begin{equation}
\label{eq_Bee3thoM2Iene9ov2xohZ7fo}
 \smashoperator[r]{\sum_{k = k_0}^{\infty}} 2^{k \brk{m - p}}\# \mathscr{Q}^{\mathrm{sing}, k_0}_k
 \le \C 
  \brk[\Big]{M\,  2^{k_0 \brk{\ell + 2 - p}} + \smashoperator[r]{\sum_{k \in \Zset }} 2^{k \brk{m - p}}\# \mathscr{Q}^{\mathrm{bad}, k_0}_k}.
\end{equation}
Since \(\ell + 2 - p > 0\), taking \(k_0 \in \Zset\) small enough, depending on \(u\) through \(\Sigma\) and \(\kappa\), we get by \eqref{eq_it_taa6web7ien1quae8Xuey1xa}, \eqref{eq_au5oom5sic4YaiGhoogeichu} and \eqref{eq_Bee3thoM2Iene9ov2xohZ7fo},
\begin{equation}
\label{eq_Kiebieb4ieL5kaiphaet5ohy}
\sum_{k \in \Zset} 2^{k \brk{m - p}}
  \# \mathscr{Q}^{\mathrm{sing}, k_0}_{k}
  \le
  \C \sum_{k \in \Zset } 2^{k \brk{m - p}}
  \# \mathscr{Q}^{\mathrm{bad}, k_0}_k.
\end{equation}

We let \(\mathscr{Q} \defeq  \mathscr{Q}^{k_0}\) and \(U \colon \bigcup \mathscr{Q}^{\vert \Omega} \to \manifold{N}\) be the mapping defined by \cref{proposition_Lipschitz_extension_skeleton}
with \(\smash{\mathscr{Q}^{\mathrm{reg}}} \defeq \smash{\mathscr{Q}^{k_0 \vert \Omega}} \cap \smash{\mathscr{Q}^{\mathrm{sing},k_0}}\) and \(\smash{\mathscr{Q}^{\mathrm{sing}}} \defeq \smash{\mathscr{Q}^{k_0 \vert \Omega}} \setminus \smash{\mathscr{Q}^{\mathrm{sing},k_0}}\). By \eqref{eq_yie2te0ahrie6eeL2mah5Boh}, \eqref{eq_xuoY7IuceiwooH5Aicadai9a} and \eqref{eq_Kiebieb4ieL5kaiphaet5ohy}, the estimates \eqref{eq_IeV9Fo4agazu5ceiTheewad0} and \eqref{eq_oo4quep0ahShuqu8aib8aib5} hold.

\medbreak

If \(\Sigma = \emptyset\), we let \(\smash{\mathscr{Q}^{\infty}}\) be any dyadic decomposition of \(\smash{\Rset^m_+}\).
Since the map \(u\) is then uniformly continuous, there exists some \(k_0 \in \Zset\) such that \(
\smash{\widehat{\Omega}}^{\mathrm{bad}} \subseteq
\bigcup_{k = k_0}^{\infty} \mathscr{Q}^{\infty}_k\).
Taking the collection of cubes \(\smash{\mathscr{Q}^{\mathrm{sing}, \infty}} \defeq \smash{\bigcup_{k = k_0}^{\infty} \mathscr{Q}^{\infty}_k}\) and setting for every \(x = \brk{x', x_m} \in \smash{\bigcup \brk{\mathscr{Q}^\infty}^\ell}\),
\(\smash{\Psi^{\mathrm{sing}, \infty}} \brk{x} \defeq \smash{\brk{x', \min\brk{x_m, 2^{k_0}}}}\), we note that \(\smash{\Psi^{\mathrm{sing}, \infty}}\) is continuous and descending and that  \eqref{eq_Aa2waiveeNgeethe1ahdagho} holds then with \(M = 0\) and we proceed as in the case \(\Sigma \ne \emptyset\) with \(M = 0\) (and there is then no need to take \(k_0 \in \Zset\) small enough to get the estimate \eqref{eq_Kiebieb4ieL5kaiphaet5ohy}).

\medbreak

We now consider the case where \(p \in \Nset\).
Defining \(\ell \defeq p - 1\), we proceed as in the case \(p \not \in \Nset\) up to the point where we have a mapping
\(W \colon \smash{\bigcup \brk{\smash{\mathscr{Q}^{k_0 \vert \Omega}} \setminus \smash{\mathscr{Q}^{\mathrm{sing}, k_0}}}} \cup \smash{\bigcup \brk{\smash{\mathscr{Q}^{k_0 \vert \Omega}} \cap \smash{\mathscr{Q}^{\mathrm{sing}, k_0}}}^{p - 1}} \to \manifold{N}\).
Since the homotopy group \(\pi_{p - 1} \brk{\manifold{N}}\) is trivial, the mapping \(W\) can be further extended into a continuous map from \(\smash{\bigcup \brk{\mathscr{Q}^{k_0 \vert \Omega} \setminus \smash{\mathscr{Q}^{\mathrm{sing}, k_0}}}} \cup \smash{\bigcup \brk{\smash{\mathscr{Q}^{k_0 \vert \Omega}} \cap \smash{\mathscr{Q}^{\mathrm{sing}, k_0}}}^{p}}\) to \(\manifold{N}\) and we proceed then through the application of \cref{proposition_Lipschitz_extension_skeleton} where we take \(\ell\) to be \(p\).
\end{proof}

\begin{remark}
\Cref{theorem_extension_tent_subcritical} has probabilistic proof and formulation, in the prolongation of \cref{remark_probabilistic_spawing} and \cref{remark_probabilistic_propagation_decay}.
Indeed although the cardinals of the subcollections \(\smash{\# \mathscr{Q}^{\mathrm{sing},k_0}_k}\) are not independent random variables, one can still use the linearity property of the expectation to get an estimate on the expectation of the size of bad cubes similar to \eqref{eq_Bee3thoM2Iene9ov2xohZ7fo};
since moreover the quantity \(M\) in \cref{proposition_cubes_spawning} is deterministic as explained in \cref{remark_probabilistic_propagation_decay},
the conclusions \ref{it_aighie9iush4hei9opoo3Quo} and \ref{it_bug3Hee0chaophaita7pheew} hold for the expectation of the left-hand side over random dyadic decompositions of \(\Rset^m_+\). 
\end{remark}

\begin{remark}
\label{remark_p_dependence}
The constants of the estimates \eqref{eq_IeV9Fo4agazu5ceiTheewad0} and \eqref{eq_oo4quep0ahShuqu8aib8aib5} can be tracked in the proof of \cref{theorem_extension_tent_subcritical}: if \(\ell \in \set{1, \dotsc, m - 1}\), then there exists a constant \(C \in \intvo{0}{\infty}\) such that if \(\ell < p < \ell + 1\) and if the first homotopy groups \(\pi_1\brk{\manifold{N}}, \dotsc, \pi_{\ell - 1} \brk{\manifold{N}}\) are all finite, then
\begin{multline}
\label{eq_thuiShohr1omah4ebeeThech}
   \int_{\bigcup \mathscr{Q}^{\vert \Omega}} \abs{\Deriv U}^p
   \le
   C
   \brk[\bigg]{
   \smashoperator{\iint_{\Omega \times \Omega}} \frac{d \brk{u \brk{y}, u \brk{z}}^p}{\abs{y - z}^{p + m - 2}} \dif y \dif z\\[-1em]
   +
   \frac{1}{\brk{\ell + 1 - p}\brk{p - \ell}}
   \smashoperator{\iint_{\Omega \times \Omega}}  \frac{\brk{d \brk{u \brk{y}, u \brk{z}} - \delta_*}_+}{\abs{y - z}^{p + m - 2}} \dif y \dif z }
 \end{multline}
and 
\begin{equation}  
   \smashoperator[r]{\sum_{k \in \Zset}} 2^{k \brk{m - p}} \# \mathscr{Q}^{\mathrm{sing}}_k
   \le
   \frac{C}{p - \ell}
   \smashoperator{\iint_{\Omega \times \Omega}} \frac{d \brk{u \brk{y}, u \brk{z} - \delta_*}_+^p}{\abs{y - z}^{p + m - 2}} \dif y \dif z.
\end{equation}
The crucial estimates for the dependence are \eqref{eq_gie6Eimie5chezoos6kout4t} in the summation of the geometric series and \eqref{eq_xuoY7IuceiwooH5Aicadai9a} for the integrability of the resulting function.
Because of the critical analytical obstruction  \cite{Mironescu_VanSchaftingen_2021_AFST}*{Th.\thinspace{}1.5 (b)}, the estimate \eqref{eq_thuiShohr1omah4ebeeThech} is optimal as \(p \to \ell\) and as \(p \to \ell + 1\) when the homotopy groups \(\pi_{\ell - 1} \brk{\manifold{N}}\) and \(\pi_{\ell} \brk{\manifold{N}}\) are non-trivial respectively.
\end{remark}

\begin{remark}
The estimates \eqref{eq_IeV9Fo4agazu5ceiTheewad0} and \eqref{eq_oo4quep0ahShuqu8aib8aib5} can also be localized.
Through a careful inspection of the proof and a suitable limiting argument when \(k_0 \to -\infty\), it can be proved that there exists a constant \(C\) such that for every \(\Bar{k}\in \Zset\), one has
 \begin{equation}
   \smashoperator[r]{\int_{ \brk{\Rset^{m - 1} \times \intvo{0}{2^{\Bar{k}}} } \,\cap\, \bigcup \mathscr{Q}^{\vert \Omega} }} \quad \abs{\Deriv U}^p
   \le
   C
   \smashoperator{\iint_{\substack{\brk{y, z} \in \Omega \times \Omega\\
   d\brk{y, z} \le 2^{\Bar{k}}}}} \frac{d \brk{u \brk{y}, u \brk{z}}^p}{\abs{y - z}^{p + m - 2}} \dif y \dif z,
 \end{equation}
and 
  \begin{equation}
   \smashoperator[r]{\sum_{k = -\infty}^{\Bar{k}}} 2^{k \brk{m - p}} \# \mathscr{Q}^{\mathrm{sing}}_k
   \le C
   \smashoperator{\iint_{\substack{\brk{y, z} \in \Omega \times \Omega\\
   d\brk{y, z} \le 2^{\Bar{k}}}}} \frac{d \brk{u \brk{y}, u \brk{z} - \delta_*}_+}{\abs{y - z}^{p + m - 2}} \dif y \dif z.
 \end{equation}
The localisation of the present remark can be combined with the explicit dependence in \(p\) of \cref{remark_p_dependence}.
\end{remark}

We are now in position to the subcritical extension results on the half-space \(\Rset^m_+\).

\begin{proof}[Proof of \cref{theorem_extension_halfspace}]
This follows from \cref{theorem_extension_tent_subcritical} with \(\Omega = \Rset^{m - 1}\), so that \(\smash{\widehat{\Omega}} = \Rset^m_+\) 
and from the fact that, since the homotopy group \(\smash{\pi_{\floor{p - 1}}} \brk{\manifold{N}}\) is trivial, the set \(\smash{R^{1}_{m - \floor{p} - 2}}\smash{\brk{\Rset^{m - 1}, \manifold{N}}}\) is strongly dense in the space \(\smash{\smash{\dot{W}}^{1 - 1/p, p}\brk{\Rset^{m - 1}, \manifold{N}}}\) \citelist{\cite{Bethuel_1995}\cite{Brezis_Mironescu_2015}\cite{Mucci_2009}}.
\end{proof}

The use of the density result for fractional Sobolev mappings \citelist{\cite{Brezis_Mironescu_2015}\cite{Mucci_2009}} is essential in the proof of \cref{theorem_extension_halfspace} and represents by itself a substantial contribution to the length and complexity to a self-contained proof of \cref{theorem_extension_halfspace}.

\medbreak

We can also prove the extension of limits of continuous maps.
\begin{proof}[Proof of the extension result \cref{theorem_characterization_halfspace}]
This follows immediately from \cref{theorem_extension_tent_subcritical} with \(\Omega = \Rset^{m - 1}\).
\end{proof}

Finally, we can construct an extension satisfying a linear estimate in the critical case.

\begin{proof}[Proof of \cref{theorem_estimate_critical_halfspace}]
\label{proof_theorem_estimate_critical_halfspace}
This follows from \cref{theorem_extension_tent_subcritical} with \(\Omega = \Rset^{m - 1}\), since smooth maps are then strongly dense in the space \(\smash{\smash{\dot{W}}^{1 - 1/p, p}\brk{\Rset^{m - 1}, \manifold{N}}}\) \citelist{\cite{Brezis_Mironescu_2015}\cite{Mucci_2009}}.
\end{proof}

In this critical case \(p = m\), the proof can be simplified by noting that \(\Sigma = \emptyset\) in the proof of \cref{theorem_extension_tent_supercritical}, so that \(M=0\) in the estimates and one does not rely on the spawning construction of \cref{proposition_neat_singular_set} and \cref{proposition_cubes_spawning}.

\section{Collar neighbourhood extension}
\resetconstant 

\label{section_collar}
In order to get the extensions to collar neighbourhoods \(\manifold{M}' \times \intvr{0}{1}\) when \(\manifold{M}'\) is a compact \(\brk{m - 1}\)-dimensional Riemannian manifold of 
\cref{theorem_extension_collar,theorem_characterization_collar,theorem_estimate_supercritical_collar,theorem_estimate_critical_collar}, our strategy will be to show that, up to a deformation, the boundary datum \(u\) can be extended to \(\smash{\manifold{M}'^{m - 2}}\times \intvo{0}{1}\), where \(\smash{\manifold{M}'^{m - 2}}\) is the \(\brk{m - 2}\)-dimensional component of a given triangulation of \(\manifold{M}\), and then to perform extensions to \(\sigma \times \intvo{0}{1}\) for every \(\brk{m - 1}\)-dimensional face \(\sigma\) of the triangulation of \(\manifold{M}'\).

In a first step, we show the existence of a deformation for which the mapping \(u\) behaves well on the \(\brk{m - 2}\)-dimensional component of the triangulation.
This is essentially a triangulation version of the Fubini-type classical slicing property for fractional Sobolev spaces on the Euclidean space \cite{Strichartz_1968} (see \cite{Leoni_2023}*{Th.\thinspace{}6.35}).

\begin{proposition}
\label{proposition_fractional_Fubini_perturbation}
Let \(\manifold{M}'\) be an \(\brk{m - 1}\)-dimensional compact triangulated Riemannian manifold embedded into \(\Rset^\mu\).
If \(p \in \intvr{1}{\infty}\) and if \(\delta \in \intvo{0}{\infty}\) is small enough, then there exists a constant \(C \in \intvo{0}{\infty}\) such that 
for every \(u \in \smash{\dot{W}}^{1 - 1/p, p} \brk{\manifold{M}', \manifold{N}}\) and 
for every \(h \in \Bset^{\mu}_{\delta}\), the map \(u_h\colon \manifold{M}' \to \manifold{N}\) defined for each \(x \in \manifold{M}'\) by 
\begin{equation}
\label{eq_foh5eewaivaigol1aezi8VeB}
 u_h \brk{x} 
 \defeq u
 \brk{\Pi_{\manifold{M}'}\brk{x + h}}
\end{equation}
is well defined and one has 
\begin{equation}
\label{eq_vaino8raCh2yeechieVohqua}
\int_{\Bset^{\mu}_{\delta}}
\brk[\bigg]{\smashoperator[r]{\iint_{\manifold{M}' \times \manifold{M}'^{m  - 2}}}
\frac{d \brk{u_h \brk{y}, u_h \brk{z}}^p}{d \brk{y, z}^{p + m - 2}}\dif y \dif z} \dif h 
\le 
C \smashoperator{\iint_{\manifold{M}' \times \manifold{M}'}}
\frac{d \brk{u \brk{y}, u \brk{z}}^p}{d \brk{y, z}^{p + m - 2}}\dif y \dif z.
\end{equation}
\end{proposition}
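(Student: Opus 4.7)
The plan is to combine a global change of variables based on the smooth diffeomorphism \(\pi_h \defeq \Pi_{\manifold{M}'} \brk{\cdot + h}\) of \(\manifold{M}'\) with a localized coarea-type argument that merges the \(\mu\)-dimensional \(h\)-integration and the \(\brk{m - 2}\)-dimensional integration over \(\manifold{M}'^{m - 2}\) into a single integration over \(\manifold{M}'\) against a volume factor of order \(\delta^{\mu - 1}\). The analysis is carried out in flattening charts around each \(\brk{m - 2}\)-dimensional face of the triangulation, and summing over the finitely many faces and a finite atlas of such charts will yield \eqref{eq_vaino8raCh2yeechieVohqua} with a constant depending only on \(\delta\), the triangulation, the embedding and \(p\).

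I would first choose \(\delta \in \intvo{0}{\infty}\) small enough for \(\Pi_{\manifold{M}'}\) to be smooth on the tubular neighborhood \(\manifold{M}' + \Bset^\mu_{2 \delta}\). This guarantees that \(u_h\) in \eqref{eq_foh5eewaivaigol1aezi8VeB} is well defined and that for every \(h \in \Bset^\mu_\delta\) the mapping \(\pi_h \colon \manifold{M}' \to \manifold{M}'\) is a \(C^1\)-diffeomorphism uniformly close to the identity, hence bi-Lipschitz with constants uniform in \(h\). I then fix an \(\brk{m - 2}\)-face \(\tau \subseteq \manifold{M}'^{m - 2}\) and a point \(p \in \tau\). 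Standard differential topology provides a bi-Lipschitz \(C^1\)-chart on a neighborhood of \(p\) in \(\Rset^\mu\), with coordinates \(\brk{x_1, x_2, x_3} \in \Rset^{m - 2} \times \Rset \times \Rset^{\mu - m + 1}\), in which \(\manifold{M}'\) corresponds to \(\set{x_3 = 0}\), \(\tau\) corresponds to \(\set{x_2 = 0,\, x_3 = 0}\), and \(\Pi_{\manifold{M}'} \brk{x_1, x_2, x_3} = \brk{x_1, x_2, 0}\).

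The key step is the following successive change of variables, performed in such a flat chart. At fixed \(h\), I substitute \(\tilde y = \pi_h \brk{y}\) and \(\tilde z = \pi_h \brk{z}\); using that \(d \brk{y, z} \ge c\, d \brk{\tilde y, \tilde z}\) by the uniform bi-Lipschitz property of \(\pi_h\) together with bounded Jacobians, the inner integral is dominated by an integral over \(\manifold{M}' \times \pi_h \brk{\tau}\) of the integrand \(d \brk{u \brk{\tilde y}, u \brk{\tilde z}}^p / d \brk{\tilde y, \tilde z}^{p + m - 2}\). Writing \(h = \brk{h_1, h_2, h_3}\) in agreement with the coordinate splitting and noting that \(\pi_h \brk{z} = z + \brk{h_1, h_2, 0}\) for \(z \in \tau\) in these coordinates, the map \(\brk{h, z} \mapsto \brk{\tilde z, h_1, h_3}\) has absolute value of Jacobian equal to \(1\) in the flat model, and uniformly bounded Jacobian in the curved case. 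Since the integrand now depends only on \(\brk{\tilde y, \tilde z}\), integrating out the residual \(\brk{\mu - 1}\)-dimensional parameter \(\brk{h_1, h_3}\) over the set \(\set{\abs{h_1}^2 + \abs{h_3}^2 < \delta^2}\), of Lebesgue measure at most \(C \delta^{\mu - 1}\), produces a local upper bound of the form
\begin{equation*}
 C \delta^{\mu - 1} \smashoperator{\iint_{\manifold{M}' \times \manifold{M}'}} \frac{d \brk{u \brk{\tilde y}, u \brk{\tilde z}}^p}{d \brk{\tilde y, \tilde z}^{p + m - 2}} \dif \tilde y \dif \tilde z.
\end{equation*}

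The main technical obstacle will be the uniform control of the Jacobians of the flattening charts and of the substitution \(\brk{h, z} \mapsto \brk{\tilde z, h_1, h_3}\) across the full range \(h \in \Bset^\mu_\delta\), which amounts to a quantitative use of the smoothness of \(\Pi_{\manifold{M}'}\) and the compactness of \(\manifold{M}'\); in particular, one must verify that the curvature corrections to the flat-model Jacobians remain bounded above and below uniformly on the tubular neighborhood. Once these uniform bounds are in place, summing the local estimates over a finite atlas of flattening charts for each face and over the finitely many \(\brk{m - 2}\)-faces of the triangulation assembles them into \eqref{eq_vaino8raCh2yeechieVohqua}.
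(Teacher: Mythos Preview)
Your proposal is correct and is precisely a detailed elaboration of the one-line proof the paper gives, namely ``Fubini's theorem and transversality properties of the nearest-point projection \(\Pi_{\manifold{M}'}\)''. The coarea-type change of variables \((h,z)\mapsto(\tilde z,h_1,h_3)\) in flattening charts is exactly the transversality-plus-Fubini mechanism the paper alludes to; you have simply spelled out the bookkeeping (bi-Lipschitz bounds on \(\pi_h\), Jacobian control, finite covering of \(\manifold{M}'^{m-2}\)) that the paper leaves implicit.
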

In particular, the inner integral in the left-hand side of \eqref{eq_vaino8raCh2yeechieVohqua} is finite for almost every \(h \in \Bset^{\mu}_{\delta}\).

\begin{proof}[Proof of \cref{proposition_fractional_Fubini_perturbation}]
This follows from Fubini's theorem and transversality properties of the nearest-point projection \(\Pi_{\manifold{M}'}\).
\end{proof}

The boundedness of the inner integral in the left-hand side \eqref{eq_vaino8raCh2yeechieVohqua} implies that the boundary datum map \(u\) can be extended to a suitable \(\brk{m - 1}\)-dimensional skeleton.
Given \(\manifold{M}'\) an \(\brk{m - 1}\)-dimensional triangulated compact Riemannian manifold and \(\ell \in \set{0, \dotsc, m - 1}\), we define 
\begin{equation*}
 \manifold{M}'{}^{\sqcup, \ell}
 \defeq 
 \brk{\manifold{M}'{}^{\ell} \times \set{0}} \cup 
 \brk{\manifold{M}'{}^{\ell - 1} \times \intvc{0}{1}}.
\end{equation*}

\begin{proposition}
\label{proposition_fractional_Sobolev_triangulation_collar}
Let \(\manifold{M}'\) be an \(\brk{m - 1}\)-dimensional triangulated compact Riemannian manifold.
If \(p \in \intvr{1}{\infty}\), then there exists a constant \(C \in \intvo{0}{\infty}\) such that for every Borel-measurable mapping \(u \colon \manifold{M}' \to \manifold{N}\) one has 
\begin{multline}
\label{eq_ieB2Uvohv6Eifahgeejoal8j}
\smashoperator[r]{\iint_{\manifold{M}'{}^{\sqcup, m - 1} \times \manifold{M}'{}^{\sqcup, m - 1}}}
\frac{d \brk{u \brk{y'}, u \brk{z'}}^p}{d \brk{y, z}^{p + m - 2}}\dif y \dif z\\[-1em]
\le \smashoperator{\iint_{\manifold{M}' \times \manifold{M}'}}
\frac{d \brk{u \brk{y}, u \brk{z}}^p}{d \brk{y, z}^{p + m - 2}}\dif y \dif z
+
C \smashoperator{\iint_{\manifold{M}' \times \smash{\manifold{M}'^{m - 2}}}}
\frac{d \brk{u \brk{y}, u \brk{z}}^p}{d \brk{y, z}^{p + m - 2}}\dif y \dif z.
\end{multline}
\end{proposition}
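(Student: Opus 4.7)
The plan is to decompose $\manifold{M}'^{\sqcup, m - 1}$ into its two $(m - 1)$-dimensional strata $A \defeq \manifold{M}' \times \set{0}$ and $B \defeq \manifold{M}'^{m - 2} \times \intvc{0}{1}$, split the double integral into the four pieces $A \times A$, $A \times B$, $B \times A$ and $B \times B$, and estimate each one separately. Locally, the distance on $\manifold{M}' \times \intvc{0}{1}$ is comparable to $\brk{d\brk{y', z'}^2 + \brk{y_m - z_m}^2}^{1/2}$ (with $y_m = 0$ when $y \in A$), which allows Fubini-type reductions in the vertical variable.

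\textbf{The simple pieces.} On $A \times A$ the integrand reproduces exactly the first right-hand side term $\iint_{\manifold{M}' \times \manifold{M}'} d\brk{u \brk{y}, u \brk{z}}^p /d\brk{y, z}^{p + m - 2} \dif y \dif z$. For $A \times B$ (and symmetrically $B \times A$), writing $y = \brk{y', 0}$ and $z = \brk{z', z_m}$ and integrating out $z_m$ yields the one-dimensional estimate
\begin{equation*}
    \int_0^1 \frac{\dif z_m}{\brk{r^2 + z_m^2}^{\brk{p + m - 2}/2}}
    \le \frac{C}{r^{p + m - 3}},
\end{equation*}
with $r \defeq d \brk{y', z'}$, valid when $p + m > 3$ and replaced by a logarithmic or bounded substitute in the degenerate cases. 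Compactness of $\manifold{M}'$ gives $r \le \diam \brk{\manifold{M}'}$, so $r^{-\brk{p + m - 3}} \le C \, r^{-\brk{p + m - 2}}$, and this piece is absorbed into $C \iint_{\manifold{M}' \times \manifold{M}'^{m - 2}} d\brk{u \brk{y}, u \brk{z}}^p / d\brk{y, z}^{p + m - 2}$.

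\textbf{Main obstacle: the $B \times B$ piece.} Applying Fubini in both $y_m$ and $z_m$ produces
\begin{equation*}
    \int_B \int_B \frac{d \brk{u \brk{y'}, u \brk{z'}}^p}{d \brk{y, z}^{p + m - 2}} \dif y \dif z
    \le C \smashoperator[r]{\iint_{\manifold{M}'^{m - 2} \times \manifold{M}'^{m - 2}}} \frac{d \brk{u \brk{y'}, u \brk{z'}}^p}{d \brk{y', z'}^{p + m - 3}} \dif y' \dif z',
\end{equation*}
which sits on a product of $(m - 2)$-dimensional strata and is not directly controlled by the mixed term on $\manifold{M}' \times \manifold{M}'^{m - 2}$. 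A Poincar\'e-type averaging is required: for each $w \in \Bset_r \brk{y'} \cap \manifold{M}'$ with $r \defeq d \brk{y', z'}$, the triangle inequality gives
\begin{equation*}
    d \brk{u \brk{y'}, u \brk{z'}}^p \le 2^{p - 1} \brk[\big]{d \brk{u \brk{y'}, u \brk{w}}^p + d \brk{u \brk{w}, u \brk{z'}}^p},
\end{equation*}
and averaging $w$ over $\Bset_r \brk{y'} \cap \manifold{M}'$ (of $(m-1)$-volume of order $r^{m-1}$) and swapping the order of integration reduces the estimate to bounding integrals of the form $\int d \brk{\cdot, \cdot}^{-\brk{p + 2 m - 4}} \dif \zeta$ over $\zeta$ in $\manifold{M}'^{m - 2}$ satisfying the constraint $d \brk{y', z'} \ge d \brk{w, y'}$. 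The constraint forces the relevant distance to be $\ge d \brk{w, z'}/2$ (or $\ge d \brk{w, y'}$), and polar integration in the $(m - 2)$-dimensional stratum yields $\int_{d\brk{w, \cdot}/2}^{\diam \manifold{M}'} t^{m - 3 - \brk{p + 2m - 4}} \dif t \le C / d \brk{w, \cdot}^{p + m - 2}$ (using $p + m > 2$). Both terms therefore reduce to the mixed Sobolev bound.

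\textbf{Summary.} Summing the three contributions proves the inequality. The delicate step is the Poincar\'e averaging in the $B \times B$ piece: one must verify that $\Bset_r \brk{y'} \cap \manifold{M}'$ carries $(m - 1)$-dimensional volume of order $r^{m - 1}$ uniformly in $y' \in \manifold{M}'^{m - 2}$ (using the local Euclidean structure of the triangulation near the $(m - 2)$-skeleton) and that the interchange of integrals produces the claimed polar decay. The exceptional cases $p + m \le 3$ lead only to logarithmic or constant modifications absorbed into $C$.
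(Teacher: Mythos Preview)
Your proposal is correct and follows essentially the same route as the paper: the paper decomposes \(\manifold{M}'^{\sqcup,m-1}\) into \(\manifold{M}'\times\{0\}\) and \(\manifold{M}'^{m-2}\times[0,1]\), handles the mixed piece by integrating out the vertical variable and using compactness (your \(A\times B\) estimate), and treats the \(B\times B\) piece by first reducing to \(\iint_{\manifold{M}'^{m-2}\times\manifold{M}'^{m-2}} d(u(y),u(z))^p/d(y,z)^{p+m-3}\) and then applying exactly your Poincar\'e-type averaging over an auxiliary point in \(\manifold{M}'\) followed by a swap of integrals and the polar estimate on the \((m-2)\)-skeleton. The only organisational difference is that the paper packages this last step as a separate lemma (\cref{proposition_fractional_Fubini_mixed}), whereas you do it inline.
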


The proof of \cref{proposition_fractional_Sobolev_triangulation_collar} will rely on the following estimate.

\begin{proposition}
\label{proposition_fractional_Fubini_mixed}
Under the assumptions of \cref{proposition_fractional_Sobolev_triangulation_collar}, there exists a constant \(C \in \intvo{0}{\infty}\) such that for every Borel-measurable mapping \(u \colon \manifold{M}' \to \manifold{N}\) one has 
\begin{equation}
\label{eq_eeyoob6aegie4GaXai6too6k}
\smashoperator{\iint_{\manifold{M}'{}^{m - 2}\times \manifold{M}'{}^{m - 2}}}
 \frac{d \brk{u \brk{y}, u \brk{z}}^p}{d \brk{y, z}^{p + m - 3}}\dif y \dif z
 \le C 
 \smashoperator{\iint_{ \manifold{M}' \times \manifold{M}'{}^{m - 2}}}
 \frac{d \brk{u \brk{y}, u \brk{z}}^p}{d \brk{y, z}^{p + m - 2}}\dif y \dif z.
 \end{equation}
\end{proposition}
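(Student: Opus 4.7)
The inequality is a Gagliardo-type trace estimate comparing the fractional seminorm of exponent $1-1/p$ on the $(m-2)$-skeleton with the ``mixed'' seminorm in which one variable is restricted to the skeleton. The plan is the classical averaging--triangle-inequality--Fubini argument. For $y, z \in \manifold{M}'^{m-2}$ with $r \defeq d(y,z)$ smaller than the injectivity radius of \(\manifold{M}'\), I would consider the geodesic ball $B \defeq B_{\manifold{M}'}(y, r/4)$, whose $(m-1)$-dimensional volume is \(\sim r^{m-1}\) and on which $d(z, w) \in \intvc{3r/4}{5r/4}$ for every $w \in B$. Averaging the triangle inequality $d(u(y), u(z))^p \leq 2^{p-1} \brk{d(u(y), u(w))^p + d(u(w), u(z))^p}$ over $w \in B$ gives
\begin{equation*}
 \frac{d(u(y), u(z))^p}{d(y,z)^{p+m-3}} \leq \frac{C}{r^{p+2m-4}} \int_B \brk{d(u(y), u(w))^p + d(u(w), u(z))^p} \dif w.
\end{equation*}

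I would then integrate both sides over $(y, z) \in \manifold{M}'^{m-2} \times \manifold{M}'^{m-2}$ and apply Fubini to each summand separately. For the first, fix $y \in \manifold{M}'^{m-2}$ and $w \in \manifold{M}'$ with $s \defeq d(y, w)$; the admissible $z$ satisfy $d(y, z) \geq 4s$, and Ahlfors $(m-2)$-regularity of the triangulated skeleton yields
\begin{equation*}
 \int_{\set{z \in \manifold{M}'^{m-2} \st d(y,z) \geq 4s}} \frac{\dif z}{d(y,z)^{p+2m-4}} \leq C \int_{4s}^{\infty} \frac{t^{m-3}}{t^{p+2m-4}} \dif t \leq \frac{C}{s^{p+m-2}},
\end{equation*}
producing, after integration over $(y, w)$, exactly the right-hand side of \eqref{eq_eeyoob6aegie4GaXai6too6k}. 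The second summand is handled by the symmetric computation: fixing $(z, w)$ with $t \defeq d(w,z)$, the admissible $y$ lie in $B_{\manifold{M}'}(w, t/3) \cap \manifold{M}'^{m-2}$ of $(m-2)$-dimensional measure at most $C t^{m-2}$, while $d(y,z) \sim t$ throughout this region, yielding the same bound after renaming variables.

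The main technical obstacle will be the rigorous localization of this argument: the averaging step requires geodesic balls within a uniform injectivity radius, and the Ahlfors $(m-2)$-regularity of the skeleton must hold uniformly in $y$, including at the lower-dimensional strata where several $(m-2)$-simplices meet. Both issues are resolved by a partition of unity subordinate to the simplices of the triangulation, reducing each piece to standard Euclidean estimates on affine simplices where these volume comparisons are classical. The complementary contribution from pairs with $d(y,z)$ bounded below is finite by compactness of both \(\manifold{M}'\) and \(\manifold{N}\) and is easily dominated by the right-hand side via a further crude averaging of the triangle inequality against an integration over a fixed chart in $\manifold{M}'$.
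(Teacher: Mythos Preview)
Your argument is correct and is essentially the same averaging--triangle-inequality--Fubini approach as the paper's. The only stylistic differences are that the paper averages over the full ball of radius \(d(y,z)\) centred at each of \(y\) and \(z\) and then invokes the symmetry in \(y,z\) to reduce to a single term, whereas you average over a smaller ball centred at \(y\) and treat the two resulting terms separately; both routes lead to the same Ahlfors-regularity estimate on the innermost integral, which the paper simply calls a ``direct estimate''.
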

\begin{proof}
By convexity and by the triangle inequality, we have for every \(y, z \in \manifold{M}'{}^{m - 2}\)
\begin{equation}
\label{eq_iuGhoShieGahphieth6wah7e}
  d \brk{u (y), u (z)}^p
  \le \frac{\Cl{cst_shahCa1aim7ahFaBo7ael7ee}}{d \brk{y, z}^{m - 1}}
  \brk[\bigg]{
  \smashoperator[r]{\int_{\substack{x \in \manifold{M}'\\ 
  d (x, y) \le d (y, z)}}} 
  d \brk{u (x), u (y)}^p \dif x 
  +
 \smashoperator{\int_{\substack{x \in \manifold{M}'\\ 
  d (x, z) \le d (y, z)}}} 
  d \brk{u (x), u (z)}^p \dif x 
 }.
\end{equation}
By integration of \eqref{eq_iuGhoShieGahphieth6wah7e} and by symmetry, and then by interchanging the integrals, we get
\begin{equation}
\label{eq_IeSh9Uuth7zuiSail5ieyieP}
\begin{split}
\smashoperator[r]{\iint_{\manifold{M}'{}^{m - 2}\times \manifold{M}'{}^{m - 2}}}
 \frac{d \brk{u \brk{y}, u \brk{z}}^p}{d \brk{y, z}^{p + m - 3}}\dif y \dif z
&\le 2\,\Cr{cst_shahCa1aim7ahFaBo7ael7ee}
  \smashoperator[l]{\iint_{\manifold{M}'{}^{m - 2} \times \manifold{M}'{}^{m - 2}}}
  \smashoperator[r]{\int_{\substack{x \in \manifold{M}'\\ 
  d (x, y) \le d (y, z)}}} 
  \frac{d \brk{u (x), u (z)}^p}{d (y, z)^{p + 2m - 4}} \dif x \dif y \dif z\\
&=  2\,\Cr{cst_shahCa1aim7ahFaBo7ael7ee}
  \int_{\manifold{M}'{}^{m - 2}} \int_{\manifold{M}'}
  \brk[\bigg]{
  \smashoperator[r]{\int_{\substack{y \in \manifold{M}'{}^{m - 2}\\ 
  d (x, y) \le d (y, z)}}} 
  \frac{d \brk{u (x), u (z)}^p}{d (y, z)^{p + 2m - 4}} 
  \dif y}
  \dif x \dif z.
  \end{split}
  \raisetag{2em}
\end{equation}
The conclusion \eqref{eq_eeyoob6aegie4GaXai6too6k} follows from a direct estimate on the innermost integral in the right-hand side of \eqref{eq_IeSh9Uuth7zuiSail5ieyieP}.
\end{proof}

\begin{proof}[Proof of \cref{proposition_fractional_Sobolev_triangulation_collar}]
We first have  by integration of \(z_m\) over \(\intvo{0}{1}\)
\begin{equation}
\label{eq_Tuolooturavaenoopeichue2}
\begin{split}
 \smashoperator[r]{\iint_{\manifold{M}'\times \brk{\manifold{M}'{}^{m - 2} \times \intvo{0}{1}}}}
 \frac{d \brk{u \brk{y}, u \brk{z'}}^p}{d \brk{y, z}^{p + m - 2}}\dif y \dif z
 &\le \C \smashoperator{\iint_{\manifold{M}' \times \smash{\manifold{M}'^{m - 2}}}}
\frac{d \brk{u \brk{y}, u \brk{z}}^p}{d \brk{y, z}^{p + m - 3}}\dif y \dif z\\
&\le \C \smashoperator{\iint_{\manifold{M}' \times \smash{\manifold{M}'^{m - 2}}}}
\frac{d \brk{u \brk{y}, u \brk{z}}^p}{d \brk{y, z}^{p + m - 2}}\dif y \dif z,
\end{split}
\end{equation}
since \(\manifold{M}'\) is bounded.
Next we have by integration with respect to \(y_m\) and \(z_m\) over \(\intvo{0}{1}\)
and then by \cref{proposition_fractional_Fubini_mixed}
\begin{equation}
\label{eq_Nohp5foo2chahshaawaijahp}
\begin{split}
\smashoperator[r]{\iint_{\brk{\manifold{M}'{}^{m - 2} \times \intvo{0}{1}}\times \brk{\manifold{M}'{}^{m - 2} \times \intvo{0}{1}}}}
 \frac{d \brk{u \brk{y'}, u \brk{z'}}^p}{d \brk{y, z}^{p + m - 2}}\dif y \dif z
 &\le \C\smashoperator{\iint_{\manifold{M}'{}^{m - 2}\times \manifold{M}'{}^{m - 2}}}
 \frac{d \brk{u \brk{y}, u \brk{z}}^p}{d \brk{y, z}^{p + m - 3}}\dif y \dif z\\
&\le \C \smashoperator{\iint_{\manifold{M}' \times \smash{\manifold{M}'^{m - 2}}}}
\frac{d \brk{u \brk{y}, u \brk{z}}^p}{d \brk{y, z}^{p + m - 2}}\dif y \dif z.
\end{split}
\end{equation}
The conclusion \eqref{eq_ieB2Uvohv6Eifahgeejoal8j} follows then from \eqref{eq_Tuolooturavaenoopeichue2} and \eqref{eq_Nohp5foo2chahshaawaijahp}.
\end{proof}

\medbreak 
We can now perform the extensions on collar neighbourhoods of Theorems \ref{theorem_extension_collar}, \ref{theorem_characterization_collar}, \ref{theorem_estimate_supercritical_collar} and \ref{theorem_estimate_critical_collar}.

\begin{proof}[Proof of Theorems \ref{theorem_extension_collar}, \ref{theorem_characterization_collar} (sufficiency for the extension), \ref{theorem_estimate_supercritical_collar} and \ref{theorem_estimate_critical_collar}]
By standard approximation and compactness arguments, we can assume that \(u \in \smash{R^1_{m - \floor{p} - 2}\brk{\manifold{M}', \manifold{N}}}\).
Applying \cref{proposition_fractional_Fubini_perturbation}, we get some \(h_* \in \Bset^{\mu}_{\delta}\) such that if we set \(u_* \defeq u_{h_*}\) we have 
\begin{equation}
\label{eq_quoyaisohchoT1aeJ0coo0wi}
\smashoperator[r]{\iint_{\manifold{M}' \times \smash{\manifold{M}'^{m - 2}}}}
\frac{d \brk{u_* \brk{y}, u_* \brk{z}}^p}{d \brk{y, z}^{p + m - 2}}\dif y \dif z
\le 
\C \smashoperator{\iint_{\manifold{M}' \times \manifold{M}'}}
\frac{d \brk{u \brk{y}, u \brk{z}}^p}{d \brk{y, z}^{p + m - 2}}\dif y \dif z.
\end{equation}
We can assume moreover that \(h_*\in \Bset^{\mu}_{\delta}\) was chosen in such a way that we also have
\(\smash{u_*\restr{\manifold{M}'{}^{m - 2}}}\in \smash{R^1_{m - \floor{p} - 3} \smash{\brk{\manifold{M}'{}^{m - 2}, \manifold{N}}}}\).

We define the mapping \(u_*^{\sqcup} \colon \manifold{M}'{}^{\sqcup, m - 1} \to \manifold{N}\) for every \(x = \brk{x', x_m} \in \manifold{M}'{}^{\sqcup, m - 1} \subseteq \manifold{M}' \times \intvc{0}{1}\) by \(u_*^{\sqcup} \brk{x', x_m} \defeq u_* \brk{x'}\).
By \cref{proposition_fractional_Sobolev_triangulation_collar}, we have 
\begin{equation}
\label{eq_Eem4esue7Ieho1voa2shai2C}
\smashoperator[r]{\iint_{\manifold{M}'{}^{\sqcup, m - 1} \times \manifold{M}'{}^{\sqcup, m - 1}}}
\frac{d \brk{u_*^{\sqcup} \brk{y'}, u_*^{\sqcup} \brk{z'}}^p}{d \brk{y, z}^{p + m - 2}}\dif y \dif z
\le \C \smashoperator{\iint_{\manifold{M}' \times \smash{\manifold{M}'^{m - 2}}}}
\frac{d \brk{u_* \brk{y}, u_* \brk{z}}^p}{d \brk{y, z}^{p + m - 2}}\dif y \dif z.
\end{equation}
We also have 
\(u_*^{\sqcup} \in \smash{
R^1_{m - \floor{p} - 2} \brk{\manifold{M}'{}^{\sqcup, m - 1}, \manifold{N}}}\).

For every \(\brk{m - 1}\)-dimensional simplex \(\sigma\) in the triangulation of \(\manifold{M}'\), we apply \cref{theorem_extension_tent_supercritical} or \cref{theorem_extension_tent_subcritical}, depending on whether \(p > m\) or \(p \le m\), and a suitable change of variable, to define the map \(U_*\) on \(\sigma \times \intvo{0}{1}\) as a map in \(\smash{\smash{\dot{W}}^{1, p}} \brk{\sigma \times \intvo{0}{1}, \manifold{N}}\) having \(u_*^{\sqcup}\) as trace on the set \(\partial_{\sqcup} \sigma = \brk{\sigma \times \set{0}} \cup \brk{\partial \sigma \times \intvc{0}{1}}
\subseteq \smash{\manifold{M}'{}^{\sqcup, m - 1}}\).
The map \(U_* \colon \manifold{M}' \times \intvr{0}{1} \to \manifold{N}\) is then well defined and satisfies the estimate
\begin{equation}
\label{eq_lah0aegu7BaeD9ise4axetee}
\smashoperator[r]{\int_{\manifold{M}' \times \intvr{0}{1}}} \abs{\Deriv U_*}^p 
\le 
\C \; \smashoperator{\iint_{\manifold{M}'{}^{\sqcup, m - 1} \times \manifold{M}'{}^{\sqcup, m - 1}}}
\frac{d \brk{u \brk{y'}, u \brk{z'}}^p}{d \brk{y, z}^{p + m - 2}}\dif y \dif z.
\end{equation}

Finally, we define the mapping \(U \colon \manifold{M}' \times \intvr{0}{1}\to \manifold{N}\) for each \(x = \brk{x', x_m} \in  \manifold{M}' \times \intvr{0}{1}\) by 
\begin{equation*}
 U \brk{x', x_m} 
 \defeq 
 U_* \brk{\Pi_{\manifold{M}'} \restr{\manifold{M}' + h_*}^{-1} \brk{x'} - h_*, x_m},
\end{equation*}
so that by the estimates \eqref{eq_quoyaisohchoT1aeJ0coo0wi}, \eqref{eq_Eem4esue7Ieho1voa2shai2C} and \eqref{eq_lah0aegu7BaeD9ise4axetee} and by a change of variable, we have \(U \in \smash{\smash{\dot{W}}^{1, p}} \brk{\manifold{M}' \times \intvr{0}{1}, \manifold{N}}\) and 
\begin{equation*}
\smashoperator[r]{\int_{\manifold{M}' \times \intvr{0}{1}}} \abs{\Deriv U}^p 
\le \C \smashoperator{\iint_{\manifold{M}' \times \manifold{M}'}}
\frac{d \brk{u \brk{y}, u \brk{z}}^p}{d \brk{y, z}^{p + m - 2}}\dif y \dif z,
\end{equation*}
and thus \(U\) satisfies the announced estimate \eqref{eq_jooFofo1au1aephae7sa0joh}, \eqref{eq_mei0keegeef7aiVaiSu1aegi} or \eqref{eq_eing7aeth4uk1yiemee4Wa4N},
while in view of the definition of \(u_* = u_{h_*}\) in  \eqref{eq_foh5eewaivaigol1aezi8VeB} we have for almost every \(x' \in \manifold{M}'\)
\begin{equation*}
\begin{split}
 \tr_{\manifold{M}' \times \set{0}} U \brk{x'}
 &= \tr_{\manifold{M}' \times \set{0}} U_* \brk{\Pi_{\manifold{M}'} \restr{\manifold{M}' + h_*}{}^{-1} \brk{x'} - h_*}\\
 &= u_* \brk{\Pi_{\manifold{M}'} \restr{\manifold{M}' + h_*}{}^{-1} \brk{x'} - h_*}
 = u \brk{\Pi_{\manifold{M}'} \brk{\Pi_{\manifold{M}'} \restr{\manifold{M}' + h_*}{}^{-1}\brk{x'}}}
 = u \brk{x'},
 \end{split}
\end{equation*}
so that the map \(U\) has all the required properties.
\end{proof}

\section{Global extension}
\label{section_global}
\resetconstant

We finally consider the global Sobolev extension of traces to a manifold \(\manifold{M}\) from its boundary \(\partial \manifold{M}\) of  \cref{theorem_extension_global,theorem_characterization_global}. 

\subsection{Global extension of a nice mapping}
Our first step towards the global Sobolev extension of traces is to extend mappings that already possess the required Sobolev regularity \(\smash{\dot{W}^{1, p}}\) on the boundary and on high-dimensional triangulations of the boundary, and also have a continuous extension on a suitable skeleton.

\begin{proposition}
\label{proposition_Sobolev_homotopy_extension_triangulation}
Let \(\manifold{M}\) be an \(m\)-dimensional triangulated Riemannian manifold with boundary \(\partial \manifold{M}\), let \(p \in \intvr{1}{m}\) and let \(\ell \in \set{0, \dotsc, m}\).
If \(\ell < p + 1\), then there exists a constant \(C \in \intvo{0}{\infty}\) and a convex function \(\Theta \colon \intvr{0}{\infty} \to \intvr{0}{\infty}\) such that \(\Theta \brk{0} = 0\) and such that if  \(u \in \smash{\smash{\dot{W}}^{1, p}} \brk{\partial \manifold{M}, \manifold{N}}\), if for every \(j \in \set{\ell - 1, \dotsc, m - 1}\), 
\begin{equation*}u^j \defeq \tr_{\partial \manifold{M}^{j}} u^{j + 1} = u \restr{\partial \manifold{M}^{j}} \in \smash{\dot{W}}^{1, p} \brk{\partial \manifold{M}^{j}, \manifold{N}}, 
\end{equation*}
with \(u^m \defeq u\)
and if there exists some mapping \(W^\ell \in C \brk{\manifold{M}^{\ell},\manifold{N}}\) such that \(W^\ell \restr{\partial \manifold{M}^{\ell - 1}} = u^{\ell - 1}\), then there exists a mapping \(U \in \smash{\smash{\dot{W}}^{1, p}} \brk{\manifold{M}, \manifold{N}}\) such that 
\(\tr_{\partial \manifold{M}} U = u\) and 
\begin{equation}
\label{eq_ubei7leruth5eechaep6eaSh}
 \int_{\manifold{M}} \abs{\Deriv U}^p
 \le C \sum_{i = \ell}^{m - 1} \int_{\partial \manifold{M}^i} \abs{\Deriv u^i}^p
 + 
 \Theta 
 \brk[\bigg]{\int_{\partial \manifold{M}^{\ell - 1}} \abs{\Deriv u^{\ell - 1}}^p}.
\end{equation}
\end{proposition}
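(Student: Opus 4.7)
The plan is to construct $U$ inductively on the skeleta $\widebar{\manifold{M}}^j$ for $j = \ell, \ell + 1, \dots, m$, producing at each level a Sobolev mapping $U^j \in \smash{\dot{W}}^{1, p} \brk{\widebar{\manifold{M}}^j, \manifold{N}}$ that extends $u \restr{\partial \manifold{M}^j}$, has a continuous representative on each interior simplex (inherited from $W^\ell$ via the homotopy extension property), and whose cumulative Sobolev energy is bounded by $C \sum_{i = \ell}^{j} \int_{\partial \manifold{M}^i} \abs{\Deriv u^i}^p + \Theta \brk{\int_{\partial \manifold{M}^{\ell - 1}} \abs{\Deriv u^{\ell - 1}}^p}$. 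The triangulation reduces the construction to an extension problem on each simplex, and the final $U = U^m$ is the desired mapping.

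The base case at $j = \ell$ is the source of the nonlinear term $\Theta$. On each interior $\ell$-simplex $\sigma \subseteq \widebar{\manifold{M}}^\ell$, I would first apply \cref{theorem_extension_collar} on a thin collar of $\partial \sigma \cap \partial \manifold{M}^{\ell - 1}$ inside $\sigma$ to convert the Sobolev datum $u^{\ell - 1}$ into a Sobolev mapping on the collar with a linear energy bound, and then apply \cref{lemma_cube_extension_lipschitz_homotopic} (after a bi-Lipschitz identification of the simplex with a cube) in the remaining interior region of $\sigma$ to produce a Lipschitz replacement of $W^\ell \restr{\sigma}$ matching the collar extension at the inner interface and remaining in the homotopy class of $W^\ell$ rel.\ $\partial \sigma$. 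Because $\ell - 1 < p$, Morrey's embedding makes $u^{\ell - 1}$ continuous on the $(\ell - 1)$-dimensional $\partial \manifold{M}^{\ell - 1}$, so the interface matching is well defined. The Lipschitz constant provided by \cref{lemma_cube_extension_lipschitz_homotopic} depends nonlinearly on the boundary Lipschitz constant, and summing over all interior $\ell$-simplices yields the convex function $\Theta$ in \eqref{eq_ubei7leruth5eechaep6eaSh}.

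For the inductive step from $j$ to $j + 1$ with $\ell \le j \le m - 1$, on each interior $(j + 1)$-simplex $\tau$ the previously constructed $U^j$ provides Sobolev data on $\partial \tau$ together with a continuous representative, which, by the homotopy extension property inherited from $W^\ell$, extends continuously into $\tau$. A bi-Lipschitz identification of $\tau$ with a collar $\partial \tau \times \intvo{0}{1}$, combined with \cref{theorem_extension_collar} when $j + 1 \le p$ or with its supercritical counterpart \cref{theorem_estimate_supercritical_collar} when $j + 1 > p$, then produces a Sobolev extension on $\tau$ with a linear energy bound. Since a continuous extension is available throughout, no homotopy assumption on $\manifold{N}$ is needed at this step. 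Summing over all interior $(j+1)$-simplices and combining with $u^{j+1}$ on boundary simplices gives $U^{j+1}$ with the cumulative bound adding $C \int_{\partial \manifold{M}^{j + 1}} \abs{\Deriv u^{j + 1}}^p$ at each inductive step.

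The main obstacle lies in the base case: producing on each interior $\ell$-simplex a Sobolev extension that simultaneously (i) has the prescribed Sobolev trace $u^{\ell - 1}$, (ii) realises the homotopy class of $W^\ell \restr{\sigma}$ rel.\ $\partial \sigma$, and (iii) carries a controlled nonlinear Sobolev energy. The delicate matching at the interface between the linear-estimate collar extension and the Lipschitz replacement of $W^\ell$ must preserve both the trace and the homotopy class; this is an essential use of \cref{lemma_cube_extension_lipschitz_homotopic}. A secondary subtlety, handled at each inductive step by invoking the homotopy extension property, is verifying that the continuous representative from $\widebar{\manifold{M}}^\ell$ propagates coherently through each higher-dimensional skeleton, as it is precisely this continuity that allows the linear estimate in the inductive step without any triviality or finiteness hypothesis on the homotopy groups of $\manifold{N}$.
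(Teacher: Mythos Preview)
Your inductive step contains a geometric error: a closed $(j+1)$-simplex $\tau$ is contractible, whereas $\partial\tau \times \intvo{0}{1}$ is homotopy equivalent to $\partial\tau \simeq \Sset^{j}$, so no bi-Lipschitz identification between them exists. The collar theorems you cite extend a map from $\manifold{M}'$ to $\manifold{M}' \times \intvo{0}{1}$; they do not fill a cell from its boundary sphere. The paper fills each interior $j$-simplex for $j \ge \ell + 1$ by \emph{homogeneous} (radial) extension, which introduces a point singularity at the barycentre but remains in $\dot{W}^{1,p}$ because the cell dimension exceeds $p$; no continuous filling is needed or claimed.

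Your appeal to the homotopy extension property is also misplaced. The hypothesis supplies $W^\ell \in C(\manifold{M}^\ell,\manifold{N})$ and nothing on $\manifold{M}^{\ell+1}$; HEP extends a homotopy on a subcomplex given a map on the whole complex agreeing at time~$0$, it does not manufacture a continuous extension to a larger skeleton. So there is no mechanism in your scheme guaranteeing that $U^j\restr{\partial\tau}$ extends continuously into an interior $(j+1)$-simplex $\tau$. Relatedly, your base case invokes \cref{lemma_cube_extension_lipschitz_homotopic}, which requires $\pi_\ell(\manifold{N})$ finite, and your inductive step invokes \cref{theorem_extension_collar} and \cref{theorem_estimate_supercritical_collar}, which carry their own homotopy-group hypotheses; \cref{proposition_Sobolev_homotopy_extension_triangulation} assumes none of these. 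The paper's base case instead uses a compactness argument (\cref{proposition_skeleton_close_extension}) to produce $V^\ell \in \dot{W}^{1,p}(\manifold{M}^\ell,\manifold{N})$ whose trace $v^{\ell-1}$ is only $\delta$-close to $u^{\ell-1}$, then bridges $u^{\ell-1}$ and $v^{\ell-1}$ by geodesic interpolation on a collar $\partial\manifold{M}\times\intvc{0}{1}$ glued to $\widebar{\manifold{M}}$. This is where $\Theta$ arises, with no hypothesis on $\pi_\ell(\manifold{N})$.
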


Our first step towards the proof of \cref{proposition_skeleton_close_extension}
is exhibiting a controlled approximate extension to \(\manifold{M}^\ell\).
\begin{lemma}
\label{proposition_skeleton_close_extension}
Under the assumptions of \cref{proposition_Sobolev_homotopy_extension_triangulation}, there is a constant \(C \in \intvo{0}{\infty}\) and, for every \(\delta \in \intvo{0}{\infty}\), there is a convex function \(\Theta \colon \intvr{0}{\infty} \to \intvr{0}{\infty}\) such that \(\Theta = 0\) on an neighbourhood of \(0\) and such that 
if \(W^{\ell} \in C \brk{\manifold{M}^{\ell}, \manifold{N}}\) and 
\begin{equation*}
u^{\ell - 1} \defeq W^\ell\restr{\partial \manifold{M}^{\ell -1}}
\in \smash{\dot{W}}^{1, p}\brk{\partial \manifold{M}^{\ell -1}, \manifold{N}},
\end{equation*}
then there exists a mapping \(V^{\ell} \in \smash{\dot{W}}^{1, p} \brk{\manifold{M}{}^{\ell}, \manifold{N}}\)
such that 
\begin{equation*}v^{\ell - 1} \defeq \tr_{\partial \manifold{M}^{\ell - 1}} V^{\ell} \in \smash{\dot{W}}^{1, p} \brk{\partial \manifold{M}^{\ell - 1}, \manifold{N}},
\end{equation*}
\begin{equation}
\label{eq_aif2bieheiX8Iaph8cae6nee}
d\brk{v^{\ell - 1}, u^{\ell - 1}}^p \le \min \set[\bigg]{\delta^p,
C\int_{\partial \manifold{M}^{\ell - 1}} \abs{\Deriv u^{\ell - 1}}^p} \qquad \text{almost everywhere on \(\partial \manifold{M}^{\ell - 1}\)},
\end{equation}
\begin{equation}
\label{eq_joofeengie7cahCielong9ka}
  \int_{\partial \manifold{M}^{\ell - 1}} \abs{\Deriv v^{\ell - 1}}^p
  \le C \int_{\partial \manifold{M}^{\ell - 1}} \abs{\Deriv u^{\ell - 1}}^p,
\end{equation}
and 
\begin{equation}
\label{eq_dah9Yeeteepaequiteijeequ}
 \int_{\manifold{M}^\ell} \abs{\Deriv V^\ell}^p 
 \le \Theta \brk[\bigg]{\int_{\partial \manifold{M}^{\ell - 1}} \abs{\Deriv u^{\ell - 1}}^p}.
\end{equation}
\end{lemma}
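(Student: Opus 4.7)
My plan is to prove this smoothing lemma by a dichotomy based on the magnitude of \(E \defeq \int_{\partial \manifold{M}^{\ell - 1}} |\Deriv u^{\ell - 1}|^p\) relative to a threshold \(\eta = \eta(\delta)>0\). The central analytical ingredient is the Morrey--Sobolev embedding \(\smash{\dot{W}}^{1,p}(\partial \manifold{M}^{\ell-1}, \manifold{N}) \hookrightarrow C^{0,\alpha}\) with \(\alpha = 1 - (\ell-1)/p > 0\), which is available because the hypothesis \(\ell < p+1\) is equivalent to \(p > \ell-1\). This produces a quantitative pointwise oscillation bound of the form \(d(u^{\ell-1}(x),u^{\ell-1}(y))^p \le C\, d(x,y)^{p\alpha}\, E\) on each \((\ell-1)\)-dimensional face of the skeleton, and in particular ensures that \(u^{\ell - 1}\) admits a continuous representative.

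In the small-energy regime \(E \le \eta\) with \(\eta\simeq \delta^p\) chosen with the appropriate Morrey constant, I would take \(v^{\ell - 1}\) to be a locally constant approximation of \(u^{\ell - 1}\) --- one constant per connected component of \(\partial \manifold{M}^{\ell-1}\) --- with the constants chosen consistently so that they extend to a locally constant \(V^\ell\) on \(\manifold{M}^\ell\). The continuous \(W^\ell\) supplies the topological gluing data needed to select compatible constants. The Morrey bound gives \(d(v^{\ell-1}, u^{\ell-1})^p \le CE \le \delta^p\), verifying \eqref{eq_aif2bieheiX8Iaph8cae6nee}. The estimates \eqref{eq_joofeengie7cahCielong9ka} and \eqref{eq_dah9Yeeteepaequiteijeequ} then hold trivially, since \(v^{\ell - 1}\) and \(V^\ell\) are locally constant and one can set \(\Theta(E) = 0\) in this regime.

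In the large-energy regime \(E > \eta\), I would invoke density arguments in \(\smash{\dot{W}}^{1,p} \cap C^0\) (available by Morrey) to approximate \(u^{\ell-1}\) by a Lipschitz mapping \(v^{\ell-1}\colon \partial \manifold{M}^{\ell-1} \to \manifold{N}\) with \(\|v^{\ell-1} - u^{\ell-1}\|_\infty \le \delta\) and \(\|\Deriv v^{\ell-1}\|_{L^p}^p \le CE\). Uniform \(\delta\)-closeness makes \(v^{\ell - 1}\) homotopic to \(u^{\ell-1}\), and the homotopy extension property combined with the given continuous extension \(W^\ell\) produces a continuous extension of \(v^{\ell-1}\) to \(\manifold{M}^\ell\), which I would then regularize to a Lipschitz mapping \(V^\ell\). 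Its Sobolev energy is bounded by some \(K(\delta)\) depending only on \(\delta\) and the ambient geometry, and \(\Theta\) is then defined as any convex function vanishing on \([0,\eta]\) and dominating \(K(\delta)\) beyond it, for instance \(\Theta(t) \defeq K(\delta)\max(0, (t-\eta)/\eta)\), which is convex, vanishes on a neighbourhood of \(0\), and is non-decreasing.

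The main technical obstacle is the compatibility requirement in the small-energy regime: when a single connected component \(K\) of \(\manifold{M}^\ell\) is adjacent to several components of \(\partial \manifold{M}^{\ell-1}\), the locally constant values assigned on those boundary components by \(v^{\ell-1}\) must agree in order for a locally constant \(V^\ell\) to exist on \(K\). Selecting this common constant requires a careful simplicial-topological argument using \(W^\ell\), whose continuous image on \(K\) forms a path-connected subset of \(\manifold{N}\) linking the various boundary traces; the Morrey oscillation bound together with this path-connected linkage is what permits the required approximation within the tolerance \(\min(\delta^p, CE)\). The large-energy regime, by contrast, should follow cleanly from standard density and homotopy arguments.
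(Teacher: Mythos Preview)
Your small-energy regime matches the paper's approach (the paper simply takes \(V^{\ell}\) to be a single constant value \(u^{\ell-1}(a)\), which is even simpler than your locally-constant scheme, though it implicitly relies on the same Morrey oscillation bound). The large-energy regime, however, contains a genuine gap.

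You assert that the regularized extension \(V^{\ell}\) obtained via homotopy extension has Sobolev energy bounded by some \(K(\delta)\) depending only on \(\delta\) and the ambient geometry. This is not justified: the homotopy extension property gives \emph{some} continuous extension of \(v^{\ell-1}\), but the extension depends on the specific homotopy and on \(W^{\ell}\), and nothing in your construction controls its Lipschitz constant or Sobolev energy uniformly over all admissible \(u^{\ell-1}\). Two boundary data with the same energy \(E\) could lead to extensions with arbitrarily different energies. Without such a uniform bound, \(\Theta\) cannot be chosen independently of \(u^{\ell-1}\), which is precisely what the lemma requires.

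The paper closes this gap by a \emph{compactness argument}. For each dyadic energy shell \(2^{k}\eta < E \le 2^{k+1}\eta\), the Morrey--Sobolev embedding (from \(p > \ell - 1\)) makes the set of admissible \(u^{\ell-1}\)'s equicontinuous, hence precompact in \(C(\partial\manifold{M}^{\ell-1},\manifold{N})\) by Arzel\`a--Ascoli. One then selects \emph{finitely many} fixed maps \(V_{k,1},\dotsc,V_{k,I_k}\in C^{1}(\manifold{M}^{\ell},\manifold{N})\) whose boundary traces form a \(\delta\)-net for this compact set (and which can be taken to satisfy \(\int_{\partial\manifold{M}^{\ell-1}}|\Deriv V_{k,i}|^{p}\le 2^{k+2}\eta\)). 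Any given \(u^{\ell-1}\) in the shell is then \(\delta\)-close to one of these finitely many prototypes, and \(\Theta(2^{k}\eta)\) is defined to dominate \(\max_{i}\int_{\manifold{M}^{\ell}}|\Deriv V_{k,i}|^{p}\). This is the mechanism that makes \(\Theta\) a fixed function of the energy alone; your construction lacks an analogue of it.
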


The convex function \(\Theta \colon \intvr{0}{\infty} \to \intvr{0}{\infty}\) appearing in \cref{proposition_Sobolev_homotopy_extension_triangulation} and in Theorems~\ref{theorem_extension_global}, \ref{theorem_characterization_halfspace} and \ref{theorem_estimate_critical_global} comes from \cref{proposition_skeleton_close_extension} only.

When \(\ell = 0\), it should be understood that any mapping \(V^0 \colon \manifold{M}^0 \to \manifold{N}\) satisfies trivially the conclusion of \cref{proposition_skeleton_close_extension}.

The essential ingredient of the proof of \cref{proposition_skeleton_close_extension} is a compactness argument (see for example \citelist{\cite{Petrache_VanSchaftingen_2017}*{Th.\thinspace{}4}\cite{Petrache_Riviere_2014}*{Prop.\thinspace{}2.8}}).

\begin{proof}[Proof of \cref{proposition_skeleton_close_extension}]
The case \(\ell = 0\) being trivial, we assume that \(\ell \ge 1\).
Since \(p >  \ell - 1\), in view of the Morrey-Sobolev embedding, we have for almost every \(x, y \in \partial \manifold{M}^{\ell - 1}\)
\begin{equation*}
\label{eq_Keu9aigethuD7aigh9phieze}
 d \brk{u^{\ell- 1} (x), u^{\ell - 1} (y)}^p \le
 \Cl{cst_Muphaidaisathei3Jai9zah4} \int_{\partial \manifold{M}^{\ell - 1}} \abs{\Deriv u}^p.
\end{equation*}
In particular, if  we set \(\eta  \defeq \delta^p/\Cr{cst_Muphaidaisathei3Jai9zah4}\),
then if 
\begin{equation*}
\int_{\partial \manifold{M}^{\ell - 1}} \abs{\Deriv u^{\ell - 1}}^p \le \eta,
\end{equation*}
it follows from \eqref{eq_Keu9aigethuD7aigh9phieze} that
we have for almost every  \(x, y \in \partial \manifold{M}^{\ell - 1}\)
\begin{equation*}
 d \brk{u^{\ell- 1} (x), u^{\ell - 1} (y)}^p \le \delta^p.
\end{equation*}
We take then \(V^\ell \defeq u^{\ell - 1} \brk{a}\) for a suitable point \(a \in \partial \manifold{M}^{\ell - 1}\).

Next, we assume that  \(k \in \Nset\) and that
\begin{equation}
\label{eq_ahd9xeth1ie9queThii0uphe}
2^{k} \eta < \int_{\partial \manifold{M}^{\ell - 1}} \abs{\Deriv u^{\ell - 1}}^p \le 2^{k + 1} \eta;
\end{equation}
since \(p > \ell - 1\), 
thanks to the Morrey-Sobolev embedding again and the Ascoli-Arzela compactness criterion, there exist \(I_k \in \Nset\) and mappings \(V_{k, 1}, \dotsc, V_{k, I_k} \in C^1 \brk{\manifold{M}^\ell, \manifold{N}}\) such that if the mapping \(u^{\ell - 1}\) satisfies the condition \eqref{eq_ahd9xeth1ie9queThii0uphe} and is the restriction to \(\partial \manifold{M}^{\ell - 1}\) of a continuous function \(W^{\ell} \colon \manifold{M}^\ell \to \manifold{N}\), then \(d \brk{u^{\ell - 1}, V_{k, i} \restr{\partial \manifold{M}^{\ell- 1}}} \le \delta\) almost everywhere in \(\partial \manifold{M}^{\ell - 1}\) for some \(i \in \set{1, \dotsc, I_k}\) (see the similar argument in the proof of \cref{lemma_cube_extension_lipschitz}). 
Moreover we can assume that for every \(i \in \set{1, \dotsc, I_k}\), 
\begin{equation}
  \int_{\partial \manifold{M}^{\ell - 1}} \abs{\Deriv V_{k, i}\restr{\partial \manifold{M}^{\ell - 1}}}^p
  \le 2^{k+ 2} \eta.
\end{equation}

Taking the function \(\Theta \colon \intvr{0}{\infty} \to \intvr{0}{\infty}\) to be convex and to satisfy the conditions that \(\Theta = 0\) on \(\intvc{0}{\eta}\) and that for every \(k \in \Nset\) and every \(i \in  \set{1, \dotsc, I_k}\)
\begin{equation}
 \Theta \brk{2^{k} \eta} \ge \int_{\manifold{M}^{\ell}} \abs{\Deriv V_{k,i}}^p,
\end{equation}
we reach the conclusion by taking \(V^{\ell} \defeq V_{k, i}\) for suitable \(k \in \Nset\) and \(i \in \set{1, \dotsc, I_k}\).
\end{proof}

\medbreak

We now prove \cref{proposition_Sobolev_homotopy_extension_triangulation}.

\begin{proof}[Proof of \cref{proposition_Sobolev_homotopy_extension_triangulation}]
We take the mapping \(V^\ell \in \smash{\dot{W}}^{1, p}\brk{\manifold{M}^\ell, \manifold{N}}\) and the mapping \(v^{\ell - 1} = \tr_{\partial \manifold{M}^{\ell - 1}} V^\ell \in \smash{\dot{W}}^{1, p}\brk{\partial \manifold{M}^{\ell - 1}, \manifold{N}}\)
given by \cref{proposition_skeleton_close_extension} with \(\delta \in \intvo{0}{\infty}\) chosen small enough so that the map \(Y^{\ell} \colon \partial \manifold{M}^{\sqcup, \ell} \to \manifold{N}\) given for
\(\brk{x', x_m} \in \partial \manifold{M}^{\sqcup, \ell}\) by 
\begin{equation}
\label{eq_Oow2aepah2Pieghu9lauv0ik}
 Y^{\ell} \brk{x', x_m} 
 \defeq \begin{cases}
 \Pi_{\manifold{N}} \brk{\brk{1 - x_m} u^{\ell - 1} \brk{x'} + x_m v^{\ell - 1} \brk{x'}} & \text{if \(x' \in \partial \manifold{M}^{\ell - 1}\)},\\
 u^{\ell} (x) & \text{otherwise}
 \end{cases}
\end{equation}
is well defined
 and satisfies, in view of \eqref{eq_aif2bieheiX8Iaph8cae6nee} and \eqref{eq_joofeengie7cahCielong9ka} the estimate
\begin{equation}
\label{eq_iNgee0xool1cheeyaewohje7}
\begin{split}
 \int_{\partial \manifold{M}^{\sqcup, \ell}}
 \abs{\Deriv Y^\ell}^p 
 &\le \int_{\partial \manifold{M}^{\ell}}\abs{\Deriv u^{\ell}}^p
 + \C 
 \int_{\partial \manifold{M}^{\ell - 1}}\brk[\big]{\abs{\Deriv u^{\ell - 1}}^p + d\brk{u^{\ell - 1}, v^{\ell - 1}}^p + \abs{\Deriv v^{\ell - 1}}^p}\\
 &\le \C \sum_{i = \ell -1}^{\ell} 
 \int_{\partial \manifold{M}^{i}}\abs{\Deriv u^{i}}^p.
\end{split}
\end{equation}

We define next inductively the mappings 
\(Y^{j}\colon \partial \manifold{M}^{\sqcup, j} \to \manifold{N}\) for \(j \in \set{\ell + 1, \dotsc, m}\)
by imposing \(\smash{Y^j \restr{\partial \manifold{M}^{j - 1} \times \set{0}} = u^{j} \restr{\partial \manifold{M}^{j - 1}}}\) 
and  defining \(Y^{j}\) on \(\sigma \times \intvc{0}{1}\) for each \(\brk{j - 1}\)-dimensional face \(\sigma\) of \(\partial \manifold{M}^{j - 1}\) by a retraction of \(\sigma \times \intvc{0}{1}\) to 
\(\partial_{\sqcup} \brk{\sigma \times \intvc{0}{1}} = \partial \sigma \times \intvc{0}{1} \cup \sigma \times \set{0} \subseteq \partial \manifold{M}^{\sqcup, j - 1}\) as in \cref{lemma_descending}. 
One gets in such a way a mapping \(Y^{j} \in \smash{\dot{W}}^{1, p} \brk{\partial \manifold{M}^{\sqcup, j}, \manifold{N}}\) such that \(\tr_{\partial \manifold{M}^{\sqcup, j - 1}} Y^j = Y^{j - 1}\), \(\tr_{\partial \manifold{M}^{j - 1} \times \set{0}} Y^j = u^{j - 1}\) and, by an induction argument based on \eqref{eq_iNgee0xool1cheeyaewohje7}
\begin{equation}
\label{eq_zied3lungich2ohvoM1quai3}
\begin{split}
 \int_{\partial \manifold{M}^{\sqcup, j}}
 \abs{\Deriv Y^j}^p 
 &\le \int_{\partial \manifold{M}^j} \abs{\Deriv u^j}^p  + \C  \int_{\partial \manifold{M}^{\sqcup, j - 1}}
 \abs{\Deriv Y^{j - 1}}^p \le \C \sum_{i = \ell - 1}^{\max\brk{j, \ell}} \int_{\partial \manifold{M}^{i}}\abs{\Deriv u^{i}}^p.
\end{split}
\end{equation}

Moreover, it also follows frow the construction and from \eqref{eq_zied3lungich2ohvoM1quai3} that we also have for every \(j \in \set{\ell, \dotsc, m - 1}\), 
\begin{equation*}
v^{j} 
\defeq 
Y^{j + 1} \restr{\partial \manifold{M}^{j}\times \set{1}} 
= 
\tr_{\partial \manifold{M}^{j}\times \set{1}} Y^{j + 1} 
\in \smash{\dot{W}^{1, p}} \brk{\partial \manifold{M}^{j}, \manifold{N}}, 
\end{equation*}
together with 
\begin{equation}
\label{eq_quieg4Dohn0jahh0aph1ofae}
\begin{split}
 \int_{\partial \manifold{M}^{j}}
 \abs{\Deriv v^{j}}^p 
 \le \C\int_{\partial \manifold{M}^{\sqcup, j}}
 \abs{\Deriv Y^{j}}^p \le \C \sum_{i = \ell - 1}^{\max(j, \ell)} \int_{\partial \manifold{M}^{i}}\abs{\Deriv u^{i}}^p,
\end{split}
\end{equation}
and \(\tr_{\partial \manifold{M}^{j - 2}} v^{j - 1} = v^{j - 2}\).

\medbreak

We proceed now to define a mapping \(\widebar{V}{}^m \in \smash{\dot{W}}^{1, p} \brk{\manifold{M}, \manifold{N}}\) such \(\tr_{\partial \manifold{M}} \widebar{V}{}^m = v^{m - 1}\).
We first define \(\widebar{V}{}^\ell\) by the condition that \(\widebar{V}{}^\ell\restr{\partial \manifold{M}^\ell} = v^\ell\) and 
\(\widebar{V}{}^\ell\restr{\manifold{M}^\ell \setminus \partial \manifold{M}^{\ell}} = V^\ell\)
and . It satisfies the estimate by \eqref{eq_dah9Yeeteepaequiteijeequ} and by \eqref{eq_quieg4Dohn0jahh0aph1ofae}
\begin{equation}
\label{eq_gie9zailiedoo8ooG8Aethix}
\begin{split}
  \int_{\manifold{M}^{\ell}}
 \abs{\Deriv \widebar{V}{}^\ell}^p
 &= \int_{\manifold{M}^{\ell}}
 \abs{\Deriv V^\ell}^p
 + \int_{\partial \manifold{M}^\ell}
 \abs{\Deriv v^\ell}^p\\
 &\le  \Theta \brk[\bigg]{\int_{\partial \manifold{M}^{\ell - 1}} \abs{\Deriv u^{\ell - 1}}^p }
 + \sum_{i = \ell - 1}^{\ell} \int_{\partial \manifold{M}^{i}}\abs{\Deriv u^{i}}^p.
\end{split}
\end{equation}

Next we construct inductively the mapping \(\widebar{V}{}^j \colon \manifold{M}^j \to \manifold{N}\) for every \(j \in \set{\ell + 1, \dotsc, m}\) as follows:
we take \(\widebar{V}{}^j\restr{\partial \manifold{M}^{j}} = v^{j}\) (with the understanding that when \(j=m\), we have \(\partial \manifold{M}^{j} = \emptyset\) and we do thus nothing); for every \(j\)-dimensional face \(\sigma\) of \(\manifold{M}^j \setminus \partial \manifold{M}^j\) we  use a homogeneous extension similar to \cref{lemma_homogeneous_extension} to define \(\widebar{V}{}^j\) on \(\sigma\). 
By induction, we have \(\widebar{V}{}^j \in \smash{\smash{\dot{W}}^{1, p}} \brk{\manifold{M}^{j}, \manifold{N}}\), \(\tr_{\partial \manifold{M}^{j - 1}} \widebar{V}{}^j = v^{j - 1}\) and
\begin{equation}
\label{eq_Ko8johQueeworees6Dijaisi}
\begin{split}
 \int_{\manifold{M}^{j}}
 \abs{\Deriv \widebar{V}{}^j}^p
 &\le 
 \C
 \brk[\bigg]{\int_{\manifold{M}^{j - 1}} \abs{\Deriv \widebar{V}{}^{j - 1}}^p
  + \int_{\partial \manifold{M}^{j}} \abs{\Deriv v^{j}}^p}\\
  &\le \C
 \brk[\bigg]{\int_{\manifold{M}^{\ell}} \abs{\Deriv \widebar{V}{}^{\ell}}^p
  + \sum_{i = \ell + 1}^{j} \int_{\partial \manifold{M}^i} \abs{\Deriv v^i}^p}\\
  &
   \le \C \brk[\bigg]{ \sum_{i = \ell - 1}^{j} \int_{\partial \manifold{M}^i} \abs{\Deriv u^i}^p
 + 
 \Theta 
 \brk[\bigg]{\int_{\partial \manifold{M}^{\ell - 1}} \abs{\Deriv u^{\ell - 1}}^p}},
  \end{split}
\end{equation}
in view of \eqref{eq_gie9zailiedoo8ooG8Aethix}.

In order to conclude, we note that the manifolds \(\manifold{M}\) and \(\partial \manifold{M} \times \intvc{0}{1}\) can be glued in such a way that \(\partial \manifold{M}\) and \(\partial \manifold{M} \times \set{1}\) are identified and that the resulting manifold with boundary is diffeomorphic to \(\manifold{M}\).
Since by construction \(\smash{\tr_{\partial \manifold{M} \times \set{1}}Y^m} = v^{m - 1} =  \tr_{\partial \manifold{M}} \widebar{V}{}^m\), the maps \(Y^m\) and \(\widebar{V}{}^m\) can be glued to define the map \(U\) that has the required properties and satisfies as a consequence of \eqref{eq_zied3lungich2ohvoM1quai3} and \eqref{eq_Ko8johQueeworees6Dijaisi} the announced estimate \eqref{eq_ubei7leruth5eechaep6eaSh} with an appropriate constant \(C \in \intvo{0}{\infty}\) and a suitable convex function \(\Theta \colon \intvr{0}{\infty} \to \intvr{0}{\infty}\).
\end{proof}

\subsection{Good restrictions to skeletons}

In order to apply \cref{proposition_Sobolev_homotopy_extension_triangulation},
we will show that the collar neighbourhood extensions given by \cref{theorem_extension_collar,theorem_characterization_collar} satisfy the required condition up to a diffeomorphism.

\begin{proposition}
\label{proposition_collar_generic_restriction_triangulation_trace}
Given \(U \in \smash{\dot{W}}^{1, p} \brk{\manifold{M}'\times \intvo{0}{1}, \manifold{N}}\),
define for every \(h = \brk{h', h_{\mu + 1}} \in \Bset^{\mu}_\delta \times \intvo{\frac{1}{2}}{1}\) the mapping
\(U_h \colon \manifold{M}' \times \intvr{0}{1} \to \manifold{N}\) for every \(\brk{x', x_m} \in \manifold{M}' \times \intvr{0}{1}\) by 
\begin{equation*}
 U_h \brk{x} 
 \defeq U
 \brk{\Pi_{\manifold{M}'} \brk{x'+ h'}, x_m h_{\mu + 1}}
 ,
\end{equation*}
for every \(j \in \set{1, \dotsc, m}\),
\(
 U_h^j \defeq  U_h \restr{\manifold{M}'{}^{j - 1} \times \intvo{0}{1}}\) 
 and for every \(j \in \set{0, \dotsc, m - 1}\) \(v_h^{j - 1} \defeq  U_h \restr{\manifold{M}'{}^{j - 1} \times \set{1}}\).
Then, the following assertions hold
\begin{enumerate}[label=(\roman*)]
\item for every \(j \in \set{1, \dotsc, m}\) and for almost every \(h \in \Bset^{\mu}_{\delta} \times \intvo{\frac{1}{2}}{1}\), we have 
\(U_{h}^{j}\in \smash{\dot{W}}^{1, p} \brk{\manifold{M}'{}^{j - 1} \times \intvo{0}{1}, \manifold{N}}\) and 
\begin{equation*}
 \int_{\Bset^{\mu}_\delta \times \intvo{\frac{1}{2}}{1}} \brk[\bigg]{\int_{\manifold{M}'{}^{j - 1} \times \intvo{0}{1}}
 \abs{\Deriv U_h^j}^p} \dif h
 \le C
 \int_{\manifold{M}'{} \times \intvo{0}{1}} \abs{\Deriv U}^p,
\end{equation*}
\item for every \(j \in \set{1, \dotsc, m - 1}\) and for almost every \(h \in \Bset^{\mu}_{\delta} \times \intvo{\frac{1}{2}}{1}\), we have 
\(\smash{U_{h}^{j}} = \smash{\tr_{\manifold{M}'^{j - 1} \times \intvo{0}{1}} U_h^{j + 1}}\),
\item for every \(j \in \set{0, \dotsc, m - 1}\) and for almost every \(h \in \Bset^{\mu}_{\delta} \times \intvo{\frac{1}{2}}{1}\), we have  
\(
v_{h}^{j}  \in \smash{\dot{W}^{1, p}} \brk{\manifold{M}'{}^j, \manifold{N}}\) and 
\begin{equation*}
 \int_{\Bset^{\mu}_\delta \times \intvo{\frac{1}{2}}{1}} \brk[\bigg]{\int_{\manifold{M}'{}^j}
 \abs{\Deriv v_h^j}^p} \dif h
 \le C
 \int_{\manifold{M}'{} \times \intvo{0}{1}} \abs{\Deriv U}^p,
\end{equation*}
\item for every \(j \in \set{0, \dotsc, m - 2}\) and for almost every \(h \in \Bset^{\mu}_{\delta} \times \intvo{\frac{1}{2}}{1}\), we have  
\(v_{h}^{j}  = \smash{ \tr_{\manifold{M}'^{j}} v_h^{j + 1}}\),
\item for every \(j \in \set{0, \dotsc, m - 1}\) and for almost every \(h \in \Bset^{\mu}_{\delta} \times \smash{\intvo{\frac{1}{2}}{1}}\), we have 
\(v_h^j = \smash{\tr_{\partial \manifold{M}'{}^{j} \times \set{1}} U_h^{j + 1}}\).
\end{enumerate}
If moreover \(u \defeq U \restr{\manifold{M}' \times \set{0}} = \tr_{\manifold{M}' \times \set{0}} U \in \smash{\dot{W}}^{1, p} \brk{\manifold{M}', \manifold{N}}\),
defining for every \(
h = \brk{h', h_{\mu + 1}} \in \Bset^{\mu}_{\delta} \times \intvo{\frac{1}{2}}{1}\) and \(x' \in \manifold{M}'\),
\begin{equation*}
 u_h \brk{x'} \defeq  u
 \brk{\Pi_{\manifold{M}'} \brk{x'+ h'}}
\end{equation*}
and for every \(j \in \set{0, \dotsc, m - 1}\), \(u_h^j \defeq u_h \restr{\manifold{M}'{}^j}\), the following assertions hold
\begin{enumerate}[label=(\roman*),resume]
 \item for every \(j \in \set{0, \dotsc, m - 2}\) and for almost every \(h \in \Bset^{\mu}_{\delta} \times \intvo{\frac{1}{2}}{1}\), we have 
 \(u^j_h  \in \smash{\dot{W}}^{1, p} \brk{\manifold{M}'{}^{j}, \manifold{N}}\)
 and 
 \begin{equation*}
   \int_{\Bset^{\mu}_\delta \times \intvo{\frac{1}{2}}{1}} \brk[\bigg]{\int_{\manifold{M}'{}^j}
 \abs{\Deriv u_h^j}^p} \dif h
 \le C
 \int_{\manifold{M}'{} \times \intvo{0}{1}} \abs{\Deriv u}^p,
 \end{equation*}
\item 
for every \(j \in \set{0, \dotsc, m - 2}\) and for almost every \(h \in \Bset^{\mu}_{\delta} \times \intvo{\frac{1}{2}}{1}\), we have 
 \(u^j_h = \tr_{\manifold{M}'{}^{j}} u^{j + 1}_h\),
\item for every \(j \in \set{0, \dotsc, m - 1}\) and for almost every \(h \in \Bset^{\mu}_{\delta} \times \intvo{\frac{1}{2}}{1}\), we have \(u^j_h = \tr_{\manifold{M}'{}^{j} \times \set{0}} \smash{U^{j + 1}_h}\).
\end{enumerate}
\end{proposition}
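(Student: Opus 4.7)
The plan is to establish all eight assertions simultaneously via Fubini's theorem combined with the transversality of the nearest-point projection \(\Pi_{\manifold{M}'}\colon \manifold{M}' + \Bset^{\mu}_\delta \to \manifold{M}'\), which is smooth for \(\delta\) small enough. For each \(h = \brk{h', h_{\mu + 1}} \in \Bset^\mu_\delta \times \intvo{\frac{1}{2}}{1}\), introduce the smooth map \(\Phi_h \colon \manifold{M}' \times \intvo{0}{1} \to \manifold{M}' \times \intvo{0}{1}\) defined by \(\Phi_h \brk{x', x_m} \defeq \brk{\Pi_{\manifold{M}'}(x' + h'), x_m h_{\mu + 1}}\), so that \(U_h = U \compose \Phi_h\) on \(\manifold{M}' \times \intvo{0}{1}\) and \(u_h = u \compose \Pi_{\manifold{M}'}(\cdot + h')\) on \(\manifold{M}'\); since \(\Phi_h\) is smooth with uniformly bounded derivatives and \(h_{\mu + 1} > 1/2\), it preserves the spaces \(\dot{W}^{1,p}\) for the unperturbed maps, and the same chain-rule considerations apply to the restrictions to \(\brk{\manifold{M}'^{j - 1} \times \intvo{0}{1}}\), to \(\manifold{M}'^j \times \set{1}\), and to \(\manifold{M}'^j \times \set{0}\).

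First I would prove the integrated \(\smash{\dot{W}}^{1, p}\)-bounds. For each fixed simplex \(\sigma \subseteq \manifold{M}'^{j - 1}\) (which is a finite union of embedded \(j\)-dimensional smooth submanifolds with boundary \(\partial \manifold{M}\) trivial on this side), the map
\begin{equation*}
  \brk{h', x'} \in \Bset^{\mu}_\delta \times \sigma
  \longmapsto \Pi_{\manifold{M}'}\brk{x' + h'} \in \manifold{M}'
\end{equation*}
is a smooth submersion onto a neighbourhood of \(\sigma\) in \(\manifold{M}'\) whose fibres are \(\brk{\mu - m + 1 + j}\)-dimensional, which is the essential transversality point. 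Applying the coarea formula (or a direct Fubini argument with a partition of unity adapted to \(\Pi_{\manifold{M}'}\)) together with the change of variable \(y_m \defeq x_m h_{\mu + 1}\) yields
\begin{equation*}
 \smashoperator[r]{\int_{\Bset^{\mu}_\delta \times \intvo{\frac{1}{2}}{1}}} \brk[\bigg]{\int_{\sigma \times \intvo{0}{1}} \abs{\Deriv U_h}^p} \dif h
 \le \C \int_{\manifold{M}' \times \intvo{0}{1}} \abs{\Deriv U}^p,
\end{equation*}
from which the first, third, and sixth assertions follow by summation over the finitely many faces. In particular, for almost every \(h\), \(U_h^j\), \(v_h^j\) and \(u_h^j\) all belong to \(\smash{\dot{W}}^{1, p}\) of the corresponding skeleton.

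Next I would handle the trace-compatibility assertions. For almost every \(h\), each of the maps \(U_h^j\), \(v_h^j\), \(u_h^j\) is the composition of the Sobolev map \(U\) (respectively \(u\)) with the smooth injection \(\Phi_h\) restricted to the corresponding skeleton; the commutativity of taking traces with composition by a smooth transverse map (see Sobolev trace theory, e.g.\ the statement of \cref{proposition_fractional_Fubini_perturbation}) immediately gives
\(U_h^j = \tr_{\manifold{M}'^{j - 1} \times \intvo{0}{1}} U_h^{j + 1}\),
\(v_h^j = \tr_{\manifold{M}'^j} v_h^{j + 1} = \tr_{\manifold{M}'^j \times \set{1}} U_h^{j + 1}\),
\(u_h^j = \tr_{\manifold{M}'^j} u_h^{j + 1} = \tr_{\manifold{M}'^j \times \set{0}} U_h^{j + 1}\),
at least for those \(h\) in a full-measure set adapted to each assertion.

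The main (purely bookkeeping) obstacle is that the conclusions must hold \emph{simultaneously} for a common full-measure set of perturbations \(h\); this is resolved by intersecting the countable collection of full-measure sets produced above, each associated with a value of \(j\) and with one of the eight assertions, the remaining intersection still having full Lebesgue measure in \(\Bset^{\mu}_\delta \times \intvo{\frac{1}{2}}{1}\). No deeper analytical difficulty is present beyond the transversality of \(\Pi_{\manifold{M}'}\) that was already exploited in \cref{proposition_fractional_Fubini_perturbation}.
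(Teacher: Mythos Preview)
Your approach via Fubini and transversality of \(\Pi_{\manifold{M}'}\) is the same core mechanism the paper uses for the integrated \(\dot{W}^{1,p}\) bounds (assertions (i), (iii), (vi)), and that part is fine.

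The gap is in the trace-compatibility assertions (ii), (iv), (v), (vii), (viii). You write that ``the commutativity of taking traces with composition by a smooth transverse map immediately gives'' these identities, but this is not a statement you can simply invoke: for a generic Sobolev map \(U\), the object \(U_h\restr{\manifold{M}'{}^{j-1}\times\intvo{0}{1}}\) is a restriction to a set of codimension \(m-j\), and one must justify why it coincides almost everywhere with the \emph{iterated} trace \(\tr_{\manifold{M}'{}^{j-1}\times\intvo{0}{1}} U_h^{j+1}\). The reference to \cref{proposition_fractional_Fubini_perturbation} does not help here, since that proposition concerns fractional energy estimates, not trace identities between skeleta of different dimensions.

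The paper closes this gap by an explicit approximation argument: it picks a sequence \((U_n)_{n\in\Nset}\) in \(C^\infty(\manifold{M}'\times\intvo{0}{1},\Rset^\nu)\) with \(\sum_n\int\abs{\Deriv U_n-\Deriv U}^p<\infty\) and \(\sum_n\int\abs{U_n-U}^p<\infty\), packages these series into two nonnegative functions \(G\) and \(H\), and applies the same Fubini/transversality estimate to \(G\) and \(H\) rather than to \(\abs{\Deriv U}^p\) alone. For almost every \(h\) the integrals of \(G_h\) and \(H_h\) over every skeleton are finite, which forces \(U_{n,h}\to U_h\) in \(\dot{W}^{1,p}\) \emph{on each skeleton simultaneously}. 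Since for the smooth \(U_{n,h}\) restriction and trace trivially agree, continuity of the trace operator transfers all the identities to \(U_h\). This is the missing step in your outline; once you insert it, your proof and the paper's coincide.
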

\begin{proof}
Taking a sequence \(\brk{U_n}_{n \in \Nset}\) in \(C^{\infty}\brk{\manifold{M}' \times \intvr{0}{1}, \Rset^{\nu}}\) such that 
\begin{equation}
\label{eq_shong1weDeepheiNee5iedae}
 \sum_{n \in \Nset} \int_{\manifold{M}' \times \intvr{0}{1}} 
 \abs{\Deriv U_{n} - \Deriv U}^p
 \le \int_{\manifold{M}} \abs{\Deriv U}^p
\end{equation}
and
\begin{equation}
\label{eq_ooroh5iej6oozeij5riQuoh8}
 \sum_{n \in \Nset} \int_{\manifold{M}' \times \intvr{0}{1}} 
 \abs{U_{n} - U}^p
 < \infty ,
\end{equation}
and defining the functions \(G \colon \manifold{M}' \times \intvr{0}{1} \to \intvr{0}{\infty}\) and \(H \colon \manifold{M}' \times \intvr{0}{1} \to \intvr{0}{\infty}\) by
\begin{align*}
  G &\defeq \abs{\Deriv U}^p + \sum_{n \in \Nset} \abs{\Deriv U_{n} - \Deriv U}^p&
  &\text{ and }&
  H&\defeq \sum_{n \in \Nset} \abs{U_{n} - U}^p,
\end{align*}
we have by \eqref{eq_shong1weDeepheiNee5iedae} and \eqref{eq_ooroh5iej6oozeij5riQuoh8}
\begin{align}
\label{eq_Eb8ahRaenahngai9Ohngoor1}
 \int_{\manifold{M}' \times \intvr{0}{1}} G
 &\le 2 \int_{\manifold{M}} \abs{\Deriv U}^p&
 &\text{ and }&
 \int_{\manifold{M}' \times \intvr{0}{1}} H
 &< \infty.
\end{align}
Defining also for  every \(h = \brk{h', h_{\mu + 1}} \in \Bset^{\mu}_\delta \times \intvo{\frac{1}{2}}{1}\) the functions \(G_h \colon \manifold{M}' \times \intvr{0}{1} \to \intvr{0}{\infty}\) and 
\(H_h \colon \manifold{M}' \times \intvr{0}{1} \to \intvr{0}{\infty} \)
for each \(\brk{x', x_m} \in \manifold{M}' \times \intvr{0}{1}\) by 
\begin{align*}
 G_h \brk{x} 
 &\defeq 
 G
 \brk{\Pi_{\manifold{M}'} \brk{x'+ h'}, x_m h_{\mu + 1}}&
 &\text{ and }&
 H_h \brk{x} 
 &\defeq 
 H
 \brk{\Pi_{\manifold{M}'} \brk{x'+ h'}, x_m h_{\mu + 1}}, 
\end{align*}
we have by Fubini's theorem and by a transversality argument 
\begin{equation}
\label{eq_ethee4dei7zeeS5cooSh0yah}
 \int_{\Bset^{\mu}_\delta \times \intvo{\frac{1}{2}}{1}}
 \brk[\bigg]{
 \sum_{j = 0}^{m - 1}  \int_{\manifold{M}'{}^j \times \set{1}} G_h
 +  \sum_{j = 1}^{m - 1} \int_{\manifold{M}'^{j - 1} \times \intvo{0}{1}} G_h}
 \le \C \int_{\manifold{M}' \times \intvr{0}{1}} G;
\end{equation}
and 
\begin{equation}
\label{eq_uteusahzo3eid7quiej2aiNg}
\int_{\Bset^{\mu}_\delta \times \intvo{\frac{1}{2}}{1}}
 \brk[\bigg]{
 \sum_{j = 0}^{m - 1}  \int_{\manifold{M}'{}^j \times \set{1}} H_h
 +  \sum_{j = 1}^{m - 1} \int_{\manifold{M}'^{j - 1} \times \intvo{0}{1}} H_h}
 \le \C \int_{\manifold{M}' \times \intvr{0}{1}} H; 
\end{equation}
hence by \eqref{eq_Eb8ahRaenahngai9Ohngoor1},
the integrand of the outermost integral in the left-hand side of \eqref{eq_ethee4dei7zeeS5cooSh0yah} and \eqref{eq_uteusahzo3eid7quiej2aiNg} is finite for almost every \(h \in \Bset^{\mu}_\delta \times \intvo{\frac{1}{2}}{1}\); for any such \(h\), defining 
\begin{equation*}
  U_{n, h} \brk{x} 
 \defeq 
 U_{n}
 \brk{\Pi_{\manifold{M}'} \brk{x'+ h'}, x_m h_{\mu + 1}},
\end{equation*}
the sequences \(\smash{\brk{U_{h, n} \restr{\manifold{M}'{}^{j} \times \set{1}}}_{n \in \Nset}}\) and \(\smash{\brk{U_{h, n} \restr{\manifold{M}'{}^{j - 1} \times \intvo{0}{1}}}_{n \in \Nset}}\) converge in the Sobolev spaces
\(\smash{\dot{W}}^{1, p} \brk{\manifold{M}'{}^{j} \times \set{1}, \Rset^\nu}\) and \(\smash{\dot{W}}^{1, p} \brk{\manifold{M}'{}^{j} \times \intvo{0}{1}, \Rset^\nu}\) to \(\smash{U \restr{\manifold{M}'{}^{j} \times \set{1}}}\) and \(\smash{U \restr{\manifold{M}'{}^{j} \times \intvo{0}{1}}}\)
when \(j \in \set{0, \dotsc, m - 1}\) and \(j \in \set{1, \dotsc, m - 1}\) respectively; this allows to show  that the mappings \(\smash{v^j_h} \colon \smash{\manifold{M}'{}^{j}} \times \set{1} \to \manifold{N}\) and \(\smash{U^j_h}\colon \smash{\manifold{M}'{}^{j - 1}} \times \intvo{0}{1} \to \manifold{N}\) have all the required properties and satisfy the announced estimates.

If we assume moreover that \(u \defeq \tr_{\manifold{M}' \times \set{0}} U \in \smash{\dot{W}}^{1, p} \brk{\manifold{M}' \times \set{0}, \manifold{N}}\), then we proceed as above with an additional approximation of \(u\).
\end{proof}

We can now prove the global extension result.

\begin{proof}[Proof of  \cref{theorem_extension_global}]
This follows from \cref{theorem_extension_collar} respectively, \cref{proposition_Sobolev_homotopy_extension_triangulation} and \cref{proposition_collar_generic_restriction_triangulation_trace}, together with a gluing argument as in the proof of \cref{proposition_skeleton_close_extension}.
\end{proof}

\subsection{Proof of the global characterisations}

In order to prove the characterisation for the global extension, 
we need to show that the extension condition is satisfied in \cref{proposition_Sobolev_homotopy_extension_triangulation}.
We observe that this is the case up to a generic diffeomorphism for the restriction to \(\manifold{M}\) of a mapping in \(U \in R^{1}_{m -\ell - 1} \brk{\manifold{M}, \manifold{N}}\).

\begin{proposition}
If \(\ell \in \set{1, \dotsc, m}\) and if \(U \in R^{1}_{m -\ell - 1} \brk{\manifold{M}, \manifold{N}}\), then for almost every \(h \in \Bset^{\mu}_{\delta}\), there exists \(W_h \in C\brk{\manifold{M}^{\ell}, \manifold{N}}\) such that for every \(x' \in \partial \manifold{M}^{\ell - 1}\),
\(
  U\brk{\Pi_{\partial \manifold{M}} \brk{x' + h }}
  = W_h \brk{x}
\).
\end{proposition}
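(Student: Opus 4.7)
The plan is to exploit the fact that $U$ is smooth off an $(m-\ell-1)$-dimensional singular set $\Sigma$ transverse to $\partial\manifold{M}$, extend the boundary perturbation to an ambient isotopy of $\widebar{\manifold{M}}$, and then use a Sard/Fubini argument to push the $\ell$-dimensional skeleton off $\Sigma$ for almost every parameter $h$.

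First, unpack the definition: from $U \in R^1_{m-\ell-1}(\widebar{\manifold{M}}, \manifold{N})$ one extracts a compact set $\Sigma \subseteq \widebar{\manifold{M}}$ contained in a finite union of embedded $(m-\ell-1)$-dimensional submanifolds transversal to $\partial\manifold{M}$, with $U \in C^1(\widebar{\manifold{M}} \setminus \Sigma, \manifold{N})$; by transversality, $\Sigma \cap \partial\manifold{M}$ is contained in a finite union of $(m-\ell-2)$-dimensional submanifolds of $\partial\manifold{M}$. Next, shrinking $\delta$ if necessary and using a smooth cutoff $\varphi \in C^\infty(\widebar{\manifold{M}}, \intvc{0}{1})$ equal to $1$ in a tubular neighbourhood of $\partial\manifold{M}$ (and taking $\varphi \cdot h$ to live in tangential directions near $\partial\manifold{M}$ via the tubular coordinates), construct a smooth family of diffeomorphisms $\Phi_h \colon \widebar{\manifold{M}} \to \widebar{\manifold{M}}$, parametrised by $h \in \Bset^{\mu}_\delta$, with $\Phi_0 = \id$ and $\Phi_h(x') = \Pi_{\partial\manifold{M}}(x' + h)$ for every $x' \in \partial\manifold{M}$. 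The family can be arranged so that the evaluation map $F(x, h) \defeq \Phi_h(x)$ is a submersion from $\widebar{\manifold{M}} \times \Bset^\mu_\delta$ to $\widebar{\manifold{M}}$ and, restricted to $\partial\manifold{M} \times \Bset^\mu_\delta$, a submersion onto $\partial\manifold{M}$ (the boundary case works because $\Pi_{T_{x'}\partial\manifold{M}}$ has rank $m-1$).

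The transversality/Fubini step is as follows. For each of the finitely many embedded submanifolds $N_i$ whose union contains $\Sigma$, the preimage $F^{-1}(N_i)$ inside $\widebar{\manifold{M}}^\ell \times \Bset^\mu_\delta$ has dimension at most $\ell + \mu - (\ell + 1) = \mu - 1$ by the submersion property, since $N_i$ has codimension $\ell + 1$ in $\widebar{\manifold{M}}$. Its projection to $\Bset^\mu_\delta$ therefore has Lebesgue measure zero. The same dimension count, carried out on $\partial\manifold{M}^{\ell - 1} \times \Bset^\mu_\delta \to \partial\manifold{M}$, handles the boundary contribution: $\Sigma \cap \partial\manifold{M}$ has codimension $\ell + 1$ in $\partial\manifold{M}$, so the analogous preimage again projects to a set of measure zero in $\Bset^\mu_\delta$. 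Hence for almost every $h \in \Bset^\mu_\delta$ one has $\Phi_h(\widebar{\manifold{M}}^\ell) \cap \Sigma = \emptyset$.

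For any such $h$, define $W_h \colon \manifold{M}^\ell \to \manifold{N}$ by $W_h(x) \defeq U(\Phi_h(x))$. Because $\Phi_h(\manifold{M}^\ell) \subseteq \widebar{\manifold{M}} \setminus \Sigma$, the composition is continuous. On $\partial\manifold{M}^{\ell - 1} \subseteq \manifold{M}^\ell \cap \partial\manifold{M}$ we have $\Phi_h(x') = \Pi_{\partial\manifold{M}}(x' + h)$ by construction, so $W_h(x') = U(\Pi_{\partial\manifold{M}}(x' + h))$ as required. The main technical obstacle is the clean construction of the family $\Phi_h$: one needs a single smooth family of diffeomorphisms of $\widebar{\manifold{M}}$ whose boundary behaviour is prescribed and whose global evaluation map is a submersion, including at boundary points where only the $(m-1)$ tangential components of $h$ act; separating the normal and tangential components via the tubular neighbourhood of $\partial\manifold{M}$ and using the dimension count on $\partial\manifold{M}$ alone for boundary points resolves this point.
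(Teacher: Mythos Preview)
Your overall strategy is the same as the paper's---Fubini/transversality to push skeletons off the singular set $\Sigma$---and your treatment of the boundary step (showing $U\bigl(\Pi_{\partial\manifold{M}}(\cdot+h)\bigr)$ is continuous on $\partial\manifold{M}^{\ell-1}$ for a.e.\ $h$) is correct and matches the paper's first sentence.

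The gap is in the interior step. Your dimension count $\dim\bigl(F^{-1}(N_i)\cap(\widebar{\manifold{M}}{}^{\ell}\times\Bset^\mu_\delta)\bigr)\le\mu-1$ requires that $F$ \emph{restricted to} $\widebar{\manifold{M}}{}^{\ell}\times\Bset^\mu_\delta$ be transversal to $N_i$, not merely that $F$ be a submersion on the full domain (which is automatic since each $\Phi_h$ is a diffeomorphism). At a point $x\in\manifold{M}^\ell$ with $\Phi_h(x)\in N_i$, transversality needs $D\Phi_h(T_x\manifold{M}^\ell)+\operatorname{im}D_h\Phi_h(x)+T_{\Phi_h(x)}N_i=T_{\Phi_h(x)}\widebar{\manifold{M}}$; the first summand has dimension $\ell$ and the third has dimension $m-\ell-1$, so you need $D_h\Phi_h(x)$ to contribute at least one more direction. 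But your construction via the cutoff $\varphi$ and tubular coordinates naturally makes $\Phi_h$ independent of $h$ outside a collar of $\partial\manifold{M}$, so $D_h\Phi_h(x)=0$ there and the count fails. The submersion property of $F$ on the full domain does not repair this: it gives you that $F^{-1}(N_i)$ is smooth in $\widebar{\manifold{M}}\times\Bset^\mu_\delta$, but says nothing about how it meets the lower-dimensional stratum $\widebar{\manifold{M}}{}^{\ell}\times\Bset^\mu_\delta$.

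The paper avoids this by decoupling the two steps: once $h$ is fixed with $u_h$ continuous on $\partial\manifold{M}^{\ell-1}$, a \emph{separate} general-position perturbation of the interior cells of $\manifold{M}^\ell$ (relative to the already-perturbed boundary $\partial\manifold{M}^{\ell-1}$) pushes $\manifold{M}^\ell$ off $\Sigma$, using only that $\dim\manifold{M}^\ell+\dim\Sigma<m$; one then sets $W_h=U$ on the perturbed skeleton. If you want to keep your single-family approach, you must build $\Phi_h$ so that $h\mapsto\Phi_h(x)$ is a submersion onto $\widebar{\manifold{M}}$ at \emph{every} $x\in\widebar{\manifold{M}}$---for instance by embedding $\widebar{\manifold{M}}$ itself (not just $\partial\manifold{M}$) into a Euclidean space and using its nearest-point projection throughout, then checking compatibility with the prescribed boundary action.
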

\begin{proof}
By a Fubini-type argument and by the transversality of the nearest point retraction \(\Pi_{\partial \manifold{M}}\), the mapping 
\(U\brk{\Pi_{\partial \manifold{M}} \brk{\cdot + h }}\) is continuous for almost every \(h\);
the mapping \(W\) can then be defined by a generic transversality argument.
\end{proof}

\begin{proof}[Proof of \cref{theorem_characterization_global} and \cref{,theorem_estimate_critical_global}]
We first assume that \(u = U\restr{\partial \manifold{M}}\) for \(U\in \smash{R^{1}_{m -\floor{p} - 1}} \brk{\manifold{M}, \manifold{N}}\).
By \cref{theorem_characterization_collar} and \cref{theorem_estimate_critical_collar} respectively, we get a map 
\(\smash{\widebar{U}} \in \smash{\dot{W}}^{1, p} \brk{\partial \manifold{M}, \manifold{N}}\) such that \(\smash{\tr_{\partial \manifold{M}\times \set{0}} \widebar{U} = u}\).
Applying \cref{proposition_collar_generic_restriction_triangulation_trace} to \(\smash{\widebar{U}}\) and choosing some \(h_* \in \smash{\Bset^{\mu}_{\delta}}\) such that the conclusions holds with suitable estimate, we have in particular \(\smash{u_{h_*}^{\floor{p - 1}} = \smash{\tr_{\partial \manifold{M}^{\floor{p - 1}} \times \set{0}}} \smash{\widebar{U}_{h_*}^{\floor{p}}}}\) and \(\smash{v_{h_*}^{\floor{p - 1}}} = \smash{\tr_{\partial \manifold{M}^{\floor{p - 1}} \times \set{1}} \smash{\widebar{U}_{h_*}^{\floor{p}}}}\).
Since we also have \(\smash{U_{h_*}^{\floor{p}}} \in \smash{\smash{\dot{W}}^{1, p} \brk{\partial \manifold{M}^{\floor{p - 1}}\times \intvo{0}{1}, \manifold{N}}}\), the maps \(\smash{u_{h_*}}\) and \(\smash{v_{h_*}}\) are homotopic (see \citelist{\cite{Brezis_Nirenberg_1996}*{Cor.\thinspace{}3}\cite{Bethuel_Demengel_1995}*{Th.\thinspace{}2}}), and \(\smash{v_{h_*}^{\floor{p - 1}}}\) has a continuous extension to \(\smash{\manifold{M}^{\floor{p}}}\).
We proceed then as in the proof of \cref{theorem_extension_global}, applying \cref{proposition_Sobolev_homotopy_extension_triangulation} and performing a suitable gluing.
\end{proof}

\section{Necessity of the approximation conditions}
\label{section_necessity_approximation}

We prove that traces have strong approximations, beginning  with the case of the half-space.

\begin{proposition}
\label{proposition_approximation_Rm}
If \(1 < p < m\), then
\[
 \operatorname{tr}_{\Rset^{m - 1}} \brk{\smash{\smash{\dot{W}}^{1, p} \brk{\Rset^m_+, \manifold{N}}}}
 \subseteq
 \overline{C^1 \brk{\Rset^{m - 1}, \manifold{N}}}^{\smash{\dot{W}}^{1 - 1/p, p} \brk{\Rset^{m - 1}, \manifold{N}}}.
\]
\end{proposition}
\begin{proof}
We assume that \(u = \smash{\tr_{\Rset^{m - 1}} U}\) for some mapping \(U \in \smash{\smash{\dot{W}}^{1, p}} \brk{\Rset^m_+, \manifold{N}}\).
From the classical approximation result for Sobolev mappings \cite{Bethuel_1991}*{Th.\thinspace{}2} (see also \cite{Hang_Lin_2003_II}*{Th.\thinspace{}6.1}), we get a sequence \(\brk{U_n}_{n \in \Nset}\) in \(\smash{R^1_{m - \floor{p} - 1}} \brk{\Rset^m_+, \manifold{N}}\) that converges strongly to \(U\) in \(\smash{\dot{W}}^{1, p} \brk{\Rset^m_+, \manifold{N}}\). 
By the continuity of the trace operator, the sequence \(\brk{\tr_{\Rset^{m - 1}} U_n}_{n \in \Nset}\) converges to \(u\) in the space \(\smash{\dot{W}^{1 - 1/p, p}} \brk{\Rset^{m - 1}, \manifold{N}}\). 
However, we do not necessarily have that \(\tr_{\Rset^{m - 1}} U_n \in \smash{R^1_{m - \floor{p} - 2}} \brk{\Rset^{m - 1}, \manifold{N}}\).
In order to remedy this, we observe that since the singular set is contained in locally finitely many smooth manifolds, by Sard’s lemma and the trace theory, we have for almost every \(x_m \in \intvo{0}{\infty}\), 
\begin{equation*}
\tr_{\Rset^{m - 1}\times \set{x_m}} U_n = U_n \restr{\Rset^{m - 1} \times \set{x_m}} \in \smash{R^1_{m - \floor{p} - 2}} \brk{\Rset^{m - 1}, \manifold{N}}.
\end{equation*}
Since moreover the sequence \(\smash{\brk{\tr_{\Rset^{m - 1} \times \set{x_m}} U_n}_{m \in \Nset}}\) converges to \(\smash{\tr_{\Rset^{m - 1} \times \set{0}} U_n}\) strongly in the space \(\smash{\smash{\dot{W}^{1 - 1/p, p}}} \brk{\Rset^{m - 1}, \manifold{N}}\), there exists some sequence \((t_n)_{n \in \Nset}\) in \(\intvr{0}{\infty}\) such that the sequence \(\smash{\brk{\tr_{\Rset^{m - 1} \times \set{t_n}} U_n}_{n \in \Nset}}\) converges strongly to the mapping \(\smash{\tr_{\Rset^{m - 1} \times \set{0}} U}\) in \(\smash{\smash{\dot{W}}^{1 - 1/p, p}} \brk{\smash{\Rset^{m - 1}}, \manifold{N}}\) and such that for every \(n \in \Nset\),
\begin{equation*}
\tr_{\Rset^{m - 1}\times \set{t_n}} U_n = U_n \restr{\Rset^{m - 1} \times \set{t_n}} \in \smash{R^1_{m - \floor{p} - 2}} \brk{\Rset^{m - 1}, \manifold{N}}.
\end{equation*}
We conclude by approximating for each \(n \in \Nset\) the map \(\tr_{\Rset^{m - 1} \times \set{t_n}} U_n\) by smooth maps  strongly in \(\smash{\smash{\dot{W}}^{1 - 1/p, p}} \brk{\smash{\Rset^{m - 1}}, \manifold{N}}\) (cfr.\ \citelist{\cite{Hang_Lin_2003_II}*{Cor.\thinspace{}1.6 \& 6.2}})).
\end{proof}

We continue with the characterisation of the maps that have have a collar neighbourhood extension.

\begin{proposition}
Let \(\manifold{M}'\) be an \((m - 1)\)-dimensional compact Riemannian manifold.
If \(1 < p < m\), then
\[
\tr_{\manifold{M}'}
\brk{ \smash{\smash{\dot{W}}^{1, p} \brk{\manifold{M}' \times \intvr{0}{1}, \manifold{N}}}}
  \subseteq \smash{\overline{R^1_{m - \floor{p} - 2} \brk{\smash{\manifold{M}'}, \manifold{N}}}}^{\smash{\smash{\dot{W}}^{1 - 1/p, p} \brk{\manifold{M}', \manifold{N}}}}.
\]
\end{proposition}
\begin{proof}
Given \(U \in \smash{\smash{\dot{W}}^{1, p} \brk{\manifold{M}' \times \intvr{0}{1}, \manifold{N}}}\), we proceed as in the proof of \cref{proposition_approximation_Rm} to get a sequence \(\brk{U_n}_{n \in \Nset}\) in \(\smash{R^1_{m - \floor{p} - 1}} \brk{\manifold{M}' \times \intvr{0}{1}, \manifold{N}}\) converging strongly to \(U\) in \(\smash{\smash{\dot{W}}^{1, p} \brk{\manifold{M}' \times \intvr{0}{1}, \manifold{N}}}\)
and a sequence \(\brk{t_n}_{n \in \Nset}\) in \(\intvr{0}{1}\) converging to \(0\) such that for every \(n \in \Nset\), 
\begin{equation*}
\tr_{\Rset^{m - 1}\times \set{t_n}} U_n = U_n \restr{\Rset^{m - 1} \times \set{t_n}} \in \smash{R^1_{m - \floor{p} - 2}} \brk{\Rset^{m - 1}, \manifold{N}},
\end{equation*}
and \(\brk{\tr_{\Rset^{m - 1}\times \set{t_n}} U_n}_{n \in \Nset}\) is then the required approximating sequence.
\end{proof}

\medbreak

Finally we consider the problem of global extension of \cref{theorem_characterization_global}.

\begin{proposition}Let \(\manifold{M}\) be an \(m\)-dimensional manifold with boundary \(\partial \manifold{M}\).
If \(1 < p < m\),
then
\[
\tr_{\partial \manifold{M}}
 \smash{ \smash{\dot{W}}^{1, p} \brk{\manifold{M}, \manifold{N}}}
 \subseteq \smash{\overline{\tr_{\partial \manifold{M}} R^1_{m - \floor{p} - 1} \brk{\manifold{M}, \manifold{N}}}}^{\smash{\dot{W}^{1 - 1/p, p} \brk{\partial \manifold{M}, \manifold{N}}}}.
\]
\end{proposition}

\begin{proof}
Assuming that \(u = \tr_{\partial \manifold{M}} U\) for some mapping \(U \in \smash{\smash{\dot{W}}^{1, p}} \brk{\manifold{M}, \manifold{N}}\),
we define \(\smash{\manifold{M}^{\mathcal{D}}}\) to be the double of the manifold \(\manifold{M}\) (to keep a smooth structure, \(\smash{\manifold{M}^{\mathcal{D}}}\) is endowed with a Riemannian metric which is equivalent but not necessarily isometric to the original one on the two copies of \(\manifold{M}\) it consists of) and we take the map \(U^{\mathcal{D}} \in  \smash{\smash{\dot{W}}^{1, p}} \brk{\manifold{M}^{\mathcal{D}}, \manifold{N}}\) to be the corresponding extension of the map \(U\) to \(\smash{\manifold{M}^{\mathcal{D}}}\).
We proceed then through approximation by maps in \(\smash{R^{1}_{m -  \floor{p} - 1}} \brk{\smash{\manifold{M}^{\mathcal{D}}}, \manifold{N}}\), Sard's lemma and the continuity of traces to reach the conclusion.
\end{proof}

\begin{bibdiv}

\begin{biblist}

\bib{Abbondandolo_1996}{article}{
    author={Abbondandolo, Alberto},
    title={On the homotopy type of VMO},
    journal={Topol. Methods Nonlinear Anal.},
    volume={7},
    date={1996},
    number={2},
    pages={431--436},
    issn={1230-3429},
    doi={10.12775/TMNA.1996.018},
}

\bib{Bethuel_1991}{article}{
    author={Bethuel, Fabrice},
    title={The approximation problem for Sobolev maps between two manifolds},
    journal={Acta Math.},
    volume={167},
    date={1991},
    number={3-4},
    pages={153--206},
    issn={0001-5962},
    doi={10.1007/BF02392449},
}

\bib{Bethuel_1995}{article}{
    author={Bethuel, Fabrice},
    title={Approximations in trace spaces defined between manifolds},
    journal={Nonlinear Anal.},
    volume={24},
    date={1995},
    number={1},
    pages={121--130},
    issn={0362-546X},
    doi={10.1016/0362-546X(93)E0025-X},
}
  
\bib{Bethuel_2014}{article}{
    author={Bethuel, Fabrice},
    title={A new obstruction to the extension problem for Sobolev maps between manifolds},
    journal={J. Fixed Point Theory Appl.},
    volume={15},
    date={2014},
    number={1},
    pages={155--183},
    issn={1661-7738},
    doi={10.1007/s11784-014-0185-0},
}
    
\bib{Bethuel_2020}{article}{
    author={Bethuel, Fabrice},
    title={A counterexample to the weak density of smooth maps between manifolds in Sobolev spaces},
    journal={Invent. Math.},
    volume={219},
    date={2020},
    number={2},
    pages={507--651},
    issn={0020-9910},
    doi={10.1007/s00222-019-00911-3},
}
    
\bib{Bethuel_Chiron_2007}{article}{
    author={Bethuel, Fabrice},
    author={Chiron, David},
    title={Some questions related to the lifting problem in Sobolev spaces},
    conference={
        title={Perspectives in nonlinear partial differential equations},
    },
    book={
        series={Contemp. Math.},
        volume={446},
        publisher={Amer. Math. Soc.}, 
        address={Providence, R.I.},
    },
    isbn={978-0-8218-4190-7},
    date={2007},
    pages={125--152},
    doi={10.1090/conm/446/08628},
}

\bib{Bethuel_Demengel_1995}{article}{
    author={Bethuel, F.},
    author={Demengel, F.},
    title={Extensions for Sobolev mappings between manifolds},
    journal={Calc. Var. Partial Differential Equations},
    volume={3},
    date={1995},
    number={4},
    pages={475--491},
    issn={0944-2669},
    doi={10.1007/BF01187897},
}

\bib{Bethuel_Zheng_1988}{article}{
    author={Bethuel, Fabrice},
    author={Zheng, Xiao Min},
    title={Density of smooth functions between two manifolds in Sobolev spaces},
    journal={J. Funct. Anal.},
    volume={80},
    date={1988},
    number={1},
    pages={60--75},
    issn={0022-1236},
    doi={10.1016/0022-1236(88)90065-1},
}    

\bib{Bourgain_Brezis_Mironescu_2000}{article}{
    author={Bourgain, Jean},
    author={Brezis, Ha\"{\i}m},
    author={Mironescu, Petru},
    title={Lifting in Sobolev spaces},
    journal={J. Anal. Math.},
    volume={80},
    date={2000},
    pages={37--86},
    issn={0021-7670},
    doi={10.1007/BF02791533},
}

\bib{Bourgain_Brezis_Mironescu_2005}{article}{
    author={Bourgain, Jean},
    author={Brezis, Ha\"{\i}m},
    author={Mironescu, Petru},
    title={Lifting, degree, and distributional Jacobian revisited},
    journal={Comm. Pure Appl. Math.},
    volume={58},
    date={2005},
    number={4},
    pages={529--551},
    issn={0010-3640},
    doi={10.1002/cpa.20063},
}

\bib{Bousquet_Ponce_VanSchaftingen_2014}{article}{
   author={Bousquet, Pierre},
   author={Ponce, Augusto C.},
   author={Van Schaftingen, Jean},
   title={Strong approximation of fractional Sobolev maps},
   journal={J. Fixed Point Theory Appl.},
   volume={15},
   date={2014},
   number={1},
   pages={133--153},
   issn={1661-7738},
   doi={10.1007/s11784-014-0172-5},
}

\bib{Brezis_2011}{book}{
   author={Brezis, Haim},
   title={Functional analysis, Sobolev spaces and partial differential
   equations},
   series={Universitext},
   publisher={Springer},
   address={New York},
   date={2011},
   pages={xiv+599},
   isbn={978-0-387-70913-0},
   doi={10.1007/978-0-387-70914-7},
}

\bib{Brezis_Mironescu_2015}{article}{
    author={Brezis, Ha\"{\i}m},
    author={Mironescu, Petru},
    title={Density in \(W^{s,p}(\Omega;N)\)},
    journal={J. Funct. Anal.},
    volume={269},
    date={2015},
    number={7},
    pages={2045--2109},
    issn={0022-1236},
    doi={10.1016/j.jfa.2015.04.005},
}
    
\bib{Brezis_Mironescu_2021}{book}{
    author={Brezis, Ha\"{\i}m},
    author={Mironescu, Petru},
    title={Sobolev maps to the circle},
    subtitle={From the perspective of analysis, geometry, and topology},
    series={Progress in Nonlinear Differential Equations and their
    Applications},
    volume={96},
    publisher={Birkh\"{a}user/Springer}, 
    address={New York},   
    date={2021},
    pages={xxxi+530},
    isbn={978-1-0716-1510-2},
    isbn={978-1-0716-1512-6},
    doi={10.1007/978-1-0716-1512-6},
}

\bib{Brezis_Nirenberg_1995}{article}{
    author={Brezis, Ha\"{\i}m},
    author={Nirenberg, Louis},
    title={Degree theory and BMO},
    part={I}, 
    subtitle={Compact manifolds without boundaries},
    journal={Selecta Math. (N.S.)},
    volume={1},
    date={1995},
    number={2},
    pages={197--263},
    issn={1022-1824},
    doi={10.1007/BF01671566},
}

\bib{Brezis_Nirenberg_1996}{article}{
    author={Brezis, Ha\"{\i}m},
    author={Nirenberg, Louis},
    title={Degree theory and BMO},
    part={II},
    subtitle={Compact manifolds with boundaries},
    journal={Selecta Math. (N.S.)},
    volume={2},
    date={1996},
    number={3},
    pages={309--368},
    issn={1022-1824},
    doi={10.1007/BF01587948},
}

\bib{Bulanyi_VanSchaftingen}{article}{
    author={Bulanyi, Bohdan},
    author={Van Schaftingen, Jean},
    title={Singular extension of critical Sobolev mappings under an exponential weak-type estimate},
    journal={arXiv preprint},
    doi={10.48550/arXiv.2309.12874},
}

\bib{DiBenedetto_2016}{book}{
   author={DiBenedetto, Emmanuele},
   title={Real analysis},
   series={Birkh\"{a}user Advanced Texts: Basler Lehrb\"{u}cher},
   edition={2},
   publisher={Birkh\"{a}user/Springer, New York},
   date={2016},
   pages={xxxii+596},
   isbn={978-1-4939-4003-5},
   isbn={978-1-4939-4005-9},
   doi={10.1007/978-1-4939-4005-9},
}

\bib{Eells_Lemaire_1978}{article}{
    author={Eells, J.},
    author={Lemaire, L.},
    title={A report on harmonic maps},
    journal={Bull. Lond. Math. Soc.},
    volume={10}, 
    pages={1--68},
    date={1978},
    doi={10.1112/blms/10.1.1},
}

\bib{Ericksen_Truesdell_1958}{article}{
    author={Ericksen, J. L.},
    author={Truesdell, C.},
    title={Exact theory of stress and strain in rods and shells},
    journal={Arch. Rational Mech. Anal.},
    volume={1},
    date={1958},
    pages={295--323},
    issn={0003-9527},
    doi={10.1007/BF00298012},
}

\bib{Ferry_Weinberger_2013}{article}{
   author={Ferry, Steve},
   author={Weinberger, Shmuel},
   title={Quantitative algebraic topology and Lipschitz homotopy},
   journal={Proc. Natl. Acad. Sci. USA},
   volume={110},
   date={2013},
   number={48},
   pages={19246--19250},
   issn={0027-8424},
   doi={10.1073/pnas.1208041110},
}

\bib{Gagliardo_1957}{article}{
    author={Gagliardo, Emilio},
    title={Caratterizzazioni delle tracce sulla frontiera relative ad alcune
    classi di funzioni in $n$ variabili},
    journal={Rend. Sem. Mat. Univ. Padova},
    volume={27},
    date={1957},
    pages={284--305},
    issn={0041-8994},
}

\bib{Hajlasz_1994}{article}{
   author={Haj\l asz, Piotr},
   title={Approximation of Sobolev mappings},
   journal={Nonlinear Anal.},
   volume={22},
   date={1994},
   number={12},
   pages={1579--1591},
   issn={0362-546X},
   doi={10.1016/0362-546X(94)90190-2},
}

\bib{Hang_Lin_2003_I}{article}{
   author={Hang, Fengbo},
   author={Lin, Fanghua},
   title={Topology of Sobolev mappings},
   journal={Math. Res. Lett.},
   volume={8},
   date={2001},
   number={3},
   pages={321--330},
   issn={1073-2780},
   doi={10.4310/MRL.2001.v8.n3.a8},
}

\bib{Hang_Lin_2003_II}{article}{
    author={Hang, Fengbo},
    author={Lin, Fanghua},
    title={Topology of Sobolev mappings},
    part={II},
    journal={Acta Math.},
    volume={191},
    date={2003},
    number={1},
    pages={55--107},
    issn={0001-5962},
    doi={10.1007/BF02392696},
}

\bib{Hang_Lin_2003_III}{article}{
    author={Hang, Fengbo},
    author={Lin, Fanghua},
    title={Topology of Sobolev mappings},
    part={III},
    journal={Comm. Pure Appl. Math.},
    volume={56},
    date={2003},
    number={10},
    pages={1383--1415},
    issn={0010-3640},
    doi={10.1002/cpa.10098},
}

\bib{Hang_Lin_2005_IV}{article}{
    author={Hang, Fengbo},
    author={Lin, Fanghua},
    title={Topology of Sobolev mappings},
    part={IV},
    journal={Discrete Contin. Dyn. Syst.},
    volume={13},
    date={2005},
    number={5},
    pages={1097--1124},
    issn={1078-0947},
    doi={10.3934/dcds.2005.13.1097},
}

\bib{Hardt_Kinderlehrer_Lin_1988}{article}{
   author={Hardt, R.},
   author={Kinderlehrer, D.},
   author={Lin, Fang-Hua},
   title={Stable defects of minimizers of constrained variational
   principles},
   volume={5},
   date={1988},
   number={4},
   pages={297--322},
   issn={0294-1449},
   doi={10.1016/S0294-1449(16)30340-7},
}

\bib{Hardt_Lin_1987}{article}{
    author={Hardt, Robert},
    author={Lin, Fang-Hua},
    title={Mappings minimizing the \(L^p\) norm of the gradient},
    journal={Comm. Pure Appl. Math.},
    volume={40},
    date={1987},
    number={5},
    pages={555--588},
    issn={0010-3640},
    doi={10.1002/cpa.3160400503},
}

\bib{Hatcher_2002}{book}{
   author={Hatcher, Allen},
   title={Algebraic topology},
   publisher={Cambridge University Press, Cambridge},
   date={2002},
   pages={xii+544},
   isbn={0-521-79160-X},
   isbn={0-521-79540-0},
}

\bib{Hewitt_Ross_1979}{book}{
    author={Hewitt, Edwin},
    author={Ross, Kenneth A.},
    title={Abstract harmonic analysis},
    volume={I},
    series={Grundlehren der Mathematischen Wissenschaften},
    edition={2},
    subtitle={Structure of topological groups, integration theory, group
    representations},
    publisher={Springer}, 
    address={Berlin-New York},
    date={1979},
    pages={ix+519},
    isbn={3-540-09434-2},
    doi={10.1007/978-1-4419-8638-2},
}

\bib{Huang_Tong_Wei_Bao_2011}{article}{
  title={Boundary aligned smooth 3D cross-frame field},
  author={Huang, Jin },
  author={Tong, Yiying },
  author={Wei, Hongyu},
  author={Bao, Hujun},
  journal={ACM Transactions on Graphics},
  volume={30},
  date={2011},
  doi={10.1145/2070781.2024177},
}

\bib{Isobe_2003}{article}{
    author={Isobe, Takeshi},
    title={Obstructions to the extension problem of Sobolev mappings},
    journal={Topol. Methods Nonlinear Anal.},
    volume={21},
    date={2003},
    number={2},
    pages={345--368},
    issn={1230-3429},
    doi={10.12775/TMNA.2003.021},
}

\bib{Leoni_2023}{book}{
   author={Leoni, Giovanni},
   title={A first course in fractional Sobolev spaces},
   series={Graduate Studies in Mathematics},
   volume={229},
   publisher={American Mathematical Society}, 
   address={Providence, R.I.},
   date={2023},
   pages={xv+586},
   isbn={[9781470468989]},
   isbn={[9781470472535]},
   isbn={[9781470472528]},
   doi={10.1090/gsm/229},
}

\bib{Mazowiecka_VanSchaftingen_2023}{article}{
    author={Mazowiecka, Katarzyna},
    author={Van Schaftingen, Jean},
    title={Quantitative characterization of traces of Sobolev maps},
    journal={Commun. Contemp. Math.},
    volume={25},
    date={2023},
    number={2},
    pages={Paper No. 2250003, 31},
    issn={0219-1997},
    doi={10.1142/S0219199722500031},
}

\bib{Mazya_2011}{book}{
   author={Maz'ya, Vladimir},
   title={Sobolev spaces with applications to elliptic partial differential
   equations},
   series={Grundlehren der mathematischen Wissenschaften},
   volume={342},
   edition={augmented edition},
   publisher={Springer}, 
   address={Heidelberg},
   date={2011},
   pages={xxviii+866},
   isbn={978-3-642-15563-5},
   doi={10.1007/978-3-642-15564-2},
}

\bib{Mermin1979}{article}{
    author={Mermin, N. D.},
    title={The topological theory of defects in ordered media},
    journal={Rev. Modern Phys.},
    volume={51},
    date={1979},
    pages={591--648},
    issn={0034-6861},
    doi={10.1103/RevModPhys.51.591},
}

\bib{Mironescu_VanSchaftingen_2021_APDE}{article}{
    author={Mironescu, Petru},
    author={Van Schaftingen, Jean},
    title={Lifting in compact covering spaces for fractional Sobolev
    mappings},
    journal={Anal. PDE},
    volume={14},
    date={2021},
    number={6},
    pages={1851--1871},
    issn={2157-5045},
    doi={10.2140/apde.2021.14.1851},
}

\bib{Mironescu_VanSchaftingen_2021_AFST}{article}{
    author={Mironescu, Petru},
    author={Van Schaftingen, Jean},
    title={Trace theory for Sobolev mappings into a manifold},
    journal={Ann. Fac. Sci. Toulouse Math. (6)},
    volume={30},
    date={2021},
    number={2},
    pages={281--299},
    issn={0240-2963},
    doi={10.5802/afst.1675},
}

\bib{Monteil_VanSchaftingen_2019}{article}{
    author={Monteil, Antonin},
    author={Van Schaftingen, Jean},
    title={Uniform boundedness principles for Sobolev maps into manifolds},
    journal={Ann. Inst. H. Poincar\'{e} C Anal. Non Lin\'{e}aire},
    volume={36},
    date={2019},
    number={2},
    pages={417--449},
    issn={0294-1449},
    doi={10.1016/j.anihpc.2018.06.002},
}

\bib{Moser_2005}{book}{
    author={Moser, Roger},
    title={Partial regularity for harmonic maps and related problems},
    publisher={World Scientific},
    address={Hackensack, N.J.},
    date={2005},
    pages={viii+184},
    isbn={981-256-085-8},
    doi={10.1142/9789812701312},
}

\bib{Mucci_2009}{article}{
    author={Mucci, Domenico},
    title={Strong density results in trace spaces of maps between manifolds},
    journal={Manuscripta Math.},
    volume={128},
    date={2009},
    number={4},
    pages={421--441},
    issn={0025-2611},
    doi={10.1007/s00229-008-0234-3},
}

\bib{Mucci_2010}{article}{
    author={Mucci, Domenico},
    title={The homological singularities of maps in trace spaces between manifolds},
    journal={Math. Z.},
    volume={266},
    date={2010},
    number={4},
    pages={817--849},
    issn={0025-5874},
    doi={10.1007/s00209-009-0600-1},
}

\bib{Nash_1956}{article}{
   author={Nash, John},
   title={The imbedding problem for Riemannian manifolds},
   journal={Ann. of Math. (2)},
   volume={63},
   date={1956},
   pages={20--63},
   issn={0003-486X},
   doi={10.2307/1969989},
}

  \bib{Petrache_Riviere_2014}{article}{
    author={Petrache, Mircea},
    author={Rivi\`ere, Tristan},
    title={Global gauges and global extensions in optimal spaces},
    journal={Anal. PDE},
    volume={7},
    date={2014},
    number={8},
    pages={1851--1899},
    issn={2157-5045},
    doi={10.2140/apde.2014.7.1851},
}

\bib{Petrache_VanSchaftingen_2017}{article}{
    author={Petrache, Mircea},
    author={Van Schaftingen, Jean},
    title={Controlled singular extension of critical trace Sobolev maps from spheres to compact manifolds},
    journal={Int. Math. Res. Not. IMRN},
    date={2017},
    number={12},
    pages={3647--3683},
    issn={1073-7928},
    doi={10.1093/imrn/rnw109},
}

\bib{Riviere_2000}{article}{
    author={Rivi\`ere, Tristan},
    title={Dense subsets of \(H^{1/2}(S^2,S^1)\)},
    journal={Ann. Global Anal. Geom.},
    volume={18},
    date={2000},
    number={5},
    pages={517--528},
    issn={0232-704X},
    doi={10.1023/A:1006655723537},
}

\bib{Schoen_Uhlenbeck_1982}{article}{
    author={Schoen, Richard},
    author={Uhlenbeck, Karen},
    title={A regularity theory for harmonic maps},
    journal={J. Differential Geom.},
    volume={17},
    date={1982},
    number={2},
    pages={307--335},
    doi={10.4310/jdg/1214436923},
    issn={0022-040X},
}

\bib{Schoen_Uhlenbeck_1983}{article}{
    author={Schoen, Richard},
    author={Uhlenbeck, Karen},
    title={Boundary regularity and the Dirichlet problem for harmonic maps},
    journal={J. Differential Geom.},
    volume={18},
    date={1983},
    number={2},
    pages={253--268},
    issn={0022-040X},
    doi={10.4310/jdg/1214437663},
}

\bib{Siegel_Williams_1989}{article}{
    author={Siegel, Jerrold},
    author={Williams, Frank},
    title={Uniform bounds for isoperimetric problems},
    journal={Proc. Amer. Math. Soc.},
    volume={107},
    date={1989},
    number={2},
    pages={459--464},
    issn={0002-9939},
    doi={10.2307/2047836},
}

\bib{Strichartz_1968}{article}{
    author={Strichartz, Robert S.},
    title={Fubini-type theorems},
    journal={Ann. Scuola Norm. Sup. Pisa Cl. Sci. (3)},
    volume={22},
    date={1968},
    pages={399--408},
    issn={0391-173X},
}

\bib{VanDantzig}{article}{
    author={Van Dantzig, David}, 
    journal={Fundam. Math.},
    title={Über topologisch homogene Kontinua},
    volume={15}, 
    pages={102--125},
    date={1930},
    doi={10.4064/fm-15-1-102-125},
}

\bib{Vietoris_1927}{article}{
    author={Vietoris, L.},
    title={\"{U}ber den h\"{o}heren Zusammenhang kompakter R\"{a}ume und eine Klasse von zusammenhangstreuen Abbildungen},
    journal={Math. Ann.},
    volume={97},
    date={1927},
    number={1},
    pages={454--472},
    issn={0025-5831},
    doi={10.1007/BF01447877},
}

\bib{White_1986}{article}{
    author={White, Brian},
    title={Infima of energy functionals in homotopy classes of mappings},
    journal={J. Differential Geom.},
    volume={23},
    date={1986},
    number={2},
    pages={127--142},
    issn={0022-040X},
    doi={10.4310/jdg/1214440023},
}

\bib{White_1988}{article}{
    author={White, Brian},
    title={Homotopy classes in Sobolev spaces and the existence of energy
    minimizing maps},
    journal={Acta Math.},
    volume={160},
    date={1988},
    number={1-2},
    pages={1--17},
    issn={0001-5962},
    doi={10.1007/BF02392271},
}

\bib{Whitehead_1949}{article}{
   author={Whitehead, J. H. C.},
   title={Combinatorial homotopy. I},
   journal={Bull. Amer. Math. Soc.},
   volume={55},
   date={1949},
   pages={213--245},
   issn={0002-9904},
   doi={10.1090/S0002-9904-1949-09175-9},
}

\bib{Willem_2013}{book}{
   author={Willem, Michel},
   title={Functional analysis},
   series={Cornerstones},
   subtitle={Fundamentals and applications},
   publisher={Birkh\"{a}user/Springer}, 
   address={New York},
   date={2013},
   pages={xiv+213},
   isbn={978-1-4614-7003-8},
   isbn={978-1-4614-7004-5},
   doi={10.1007/978-1-4614-7004-5},
}
    
  \end{biblist}

\end{bibdiv}

\end{document}